\DeclareMathOperator\Cov{Cov}
\DeclareMathOperator\Corr{Corr}
\DeclareMathOperator\Var{Var}
\DeclareMathOperator\Dimh{Dim_{_{\rm H}}}
\DeclareMathOperator\Dimm{Dim_{_{\rm M}}}
\DeclareMathOperator\lDimh{\underline{\rm Dim}_{_{\rm H}}}
\DeclareMathOperator\Den{Den_\nu}
\newcommand{\1}{\mathds{1}}
\renewcommand{\P}{\mathds{P}}
\newcommand{\E}{\mathds{E}}
\newcommand{\R}{\mathds{R}}
\newcommand{\Z}{\mathds{Z}}
\newcommand{\N}{\mathds{N}}
\newcommand{\cN}{\mathcal{N}}
\newcommand{\cS}{\mathcal{S}}
\newcommand{\cV}{\mathcal{V}}
\newcommand{\cP}{\mathcal{P}}
\renewcommand{\d}{\mathrm{d}}
\newcommand{\e}{\mathrm{e}}
\newcommand{\dimh}{\dim_{_{\rm H}}}
\newtheorem{obs}{Observation}
\newtheorem{stat}{Statement}[section]
\newtheorem{proposition}[stat]{Proposition}
\newtheorem{corollary}[stat]{Corollary}
\newtheorem{theorem}[stat]{Theorem}
\newtheorem{lemma}[stat]{Lemma}
\theoremstyle{definition}
\newtheorem{definition}[stat]{Definition}
\newtheorem{remark}[stat]{Remark}
\numberwithin{equation}{section}
\renewcommand{\geq}{\geqslant}
\renewcommand{\leq}{\leqslant}
\renewcommand{\ge}{\geqslant}
\renewcommand{\le}{\leqslant}
\begin{document}

\title{Intermittency and multifractality:\\
	A case study via parabolic stochastic PDEs%
	\thanks{Research supported in part by NSF grant DMS-1307470.}
}
\author{Davar Khoshnevisan\\University of Utah
	\and
	Kunwoo Kim\\University of Utah
	\and
	Yimin Xiao\\Michigan State University
}

\date{Last update: March 20, 2015}
\maketitle

\begin{abstract}
	Let $\xi$ denote space-time white noise,
	and consider the following stochastic partial differential equations:
	(i) $\dot{u}=\frac12 u'' + u\xi$, started identically at one; 
	and (ii) $\dot{Z}=\frac12 Z'' + \xi$, started identically at zero.
	It is well known that the solution to (i) is intermittent, whereas the solution
	to (ii) is not. And the two equations are known to be in different
	universality classes. 
	
	We prove that the tall peaks of both 
	systems are multifractals in a natural large-scale sense. Some of this work is 
	extended to also establish the multifractal behavior of the peaks of
	stochastic PDEs on $\R_+\times\R^d$ with $d\ge 2$. G. Lawler
	has asked us if intermittency is the same as multifractality.
	The present work gives a negative answer to this question. 
	
	As a byproduct of  our methods, we prove also that the peaks of the Brownian motion
	form a large-scale monofractal, whereas the peaks of the
	Ornstein--Uhlenbeck process on $\R$ are multifractal.
	
	Throughout,
	we make extensive use of the macroscopic fractal theory of M.T. Barlow and 
	S.J. Taylor \cite{BarlowTaylor,BarlowTaylor1}. We expand on aspects of 
	the Barlow--Taylor theory, as well.\\
	
	\noindent{\it Keywords:} Intermittency, multifractality, macroscopic/large-scale
	Hausdorff dimension, stochastic partial differential equations.\\
	
	\noindent{\it \noindent AMS 2010 subject classification:}
	Primary. 60H15; Secondary. 35R60, 60K37.
\end{abstract}

\section{Introduction, and main results}

The principle aim of this article is to answer the following question 
that was posed to us by Gregory Lawler in January of 2012 [private communication]: 
 {\it Is  ``intermittency'' the same property as
``multifractality''}? We will argue below that the short answer is no.
Among other things, it follows that the macroscopic analysis of disordered systems
can unravel a great deal more complexity than its microscopic counterparts
(as compared with the theory of Paladin et al \cite{PaladinPelitiVulpiani}, for example).

The two quoted terms, ``intermittency'' and ``multifractality,'' 
appear also in the title of this article. They
are meant to be understood as informal descriptions of behavior that is commonly
observed in a vast array of complex scientific problems in which there
are infinitely-many natural length scales. This sort of behavior is common in,
but not limited to, problems in full-blown turbulence.

Intermittency is a well-defined mathematical property
which we recall first. Multifractality will be treated  afterward, and will
require more effort. In fact, one of the novel parts of this paper
is to set up a mathematical framework within which we can understand
macroscopic multifractality in a way that is meaningful in the present context. 

In order to motivate some of the results of this paper
let us consider the eigenvalue problem for the random heat operator,
\begin{equation}
	{\rm H} := \frac{\partial}{\partial t} - \frac12\frac{\partial^2}{\partial x^2}
	- {\rm M}\qquad(t>0,x\in\R),
\end{equation}
acting on space-time functions $f=f(t\,,x)$ such that $f(0\,,x)\equiv1$
for all $x\in\R$, say.
Here, ${\rm M}$ denotes the random multiplication operator,
defined via
\begin{equation}
	({\rm M} f)(t\,,x) := f(t\,,x)\xi(t\,,x)
	\qquad(t>0,x\in\R),
\end{equation}
where $\xi$ is a space-time white noise. Because $\xi$ is a random
Schwartz-type distribution, and not a nice classically-defined random process, 
the operator ${\rm M}$ needs to be understood
in integrated form: As a Wiener-integral map when $f$ is nonrandom; and 
more generally as 
a Walsh-type stochastic integral operator when $f$ is a predictable random field.

It is well known, and also easy to see, that
the spectrum of $\rm{H}$ is all of $\R$, and that the eigenfunction 
of $\rm H$ that corresponds
to eigenvalue $\lambda\in\R$ is $\exp(-\lambda t) u(t\,,x)$, where 
$u=u(t\,,x)$ solves the \emph{parabolic Anderson model},
which we may understand rigorously
as the solution to the following Walsh-type
stochastic partial differential equation \cite{Walsh}:
$$\left[\begin{split}
	&\dot{u}= \tfrac12u'' + u\xi\quad\text{on $(0\,,\infty)\times\R$,}\\
	&\text{subject to }u(0\,,x)=1\text{ for all $x\in\R$}.
\end{split}\right.
\eqno({\rm PAM})
$$

It is well known that the parabolic Anderson model (PAM) 
has a unique solution which is a predictable random field that is continuous
in both variables $t$ and $x$.\footnote{The term predictable is used as in
Walsh \cite{Walsh}, and refers to predictability with respect to the filtration
generated by the white noise $\xi$.} Moreover, there exist finite
and positive constants $U_0,U,L_0,L$ such that 
\begin{equation}\label{moments:intro}
	L_0^k \exp\left( Lk^3t\right) \le \E\left(|u(t\,,x)|^k\right)\le
	U_0^k \exp\left( Uk^3t\right),
\end{equation}
valid uniformly for all real numbers $t>0$, $k\ge 2$, and $x\in\R$.
[In fact, these moments do not depend on the value of $x$.] 
See for example Chapter 5 of \cite{cbms} for a self-contained account.
As a consequence, the moment Lyapunov exponents of the solution $u$
are strictly positive
and finite, where the \emph{$k$th moment Lyapunov exponent} of
$u$ is defined as
\begin{equation}
	\lambda(k) := \lim_{t\to\infty} t^{-1} \log\E\left(|u(t\,,0)|^k\right),
\end{equation}
for every real number $k>0$.

One can devise a subadditivity argument in order to show 
that these Lyapunov exponents exist. It is widely believed that the following 
\emph{Kardar formula}
holds \cite{Kardar,KPZ,KZ}:\footnote{%
	Khoshnevisan \cite[Chapter 6]{cbms} has shown that 
	$L\ge k(k^2-1)/24$ for every integer $k\ge 2$. Thus, in particular,
	$\lambda(k)\ge k(k^2-1)/24$ for all integers $k\ge2$.
	Borodin and Corwin \cite{BorCor} have found the first completely-rigorous
	proof of \eqref{Kardar} in the closely-related case that $k\ge 2$
	is an integer and the initial value of (PAM) is replaced by $\delta_0$.}
\begin{equation}\label{Kardar}
	\lambda(k)= k(k^2-1)/24\quad\text{for all real numbers
	$k>0$}.
\end{equation}
The \emph{a priori} bounds \eqref{moments:intro} and convexity
considerations together imply that
\begin{equation}\label{intermittency}
	k\mapsto k^{-1}\lambda(k)
	\quad\text{is strictly increasing on $[2\,,\infty)$.}
\end{equation}
This property is known as \emph{intermittency} and 
was referred to earlier on in a broader context. 

It is possible to argue that because of intermittency the solution to the parabolic
Anderson model (PAM) has very tall peaks on infinitely-many different length scales.
We have not yet described very precisely what it means to ``have tall peaks
on infinitely-many different scales.'' Still, the adjective ``multifractal''
is supposed to reflect the presence of such behavior.
The interested reader can find different heuristic accounts of
how intermittency might explain multifractality in
the Introductions of Bertini and Cancrini \cite{BertiniCancrini}
and Carmona and Molchanov \cite{CM}.
Chapter 7 of Khoshnevisan \cite[\S7.1]{cbms} yields a related
but slightly more precise explanation. A recent paper by
Gibbon and Titi \cite{GT}
contains an account of how, and why, intermittency and other attempts
at describing ``multifractality'' arise naturally in a large number
of multi-scale problems in science.

Now that we recalled ``intermittency,'' within context,
we begin to propose a mathematical model of ``multifractality'' 
that is general enough for our later needs, but concrete enough
that it is amenable to exact analysis. It turns out to be helpful to
not begin with a too-concrete setting such as stochastic PDEs.
Therefore, let $X:=\{X(t)\}_{t\in\R^n_+}$ be a  
real-valued stochastic process with continuous trajectories.
We are interested in saying that $X$
has tall peaks that are ``multifractal'' in a large-scale sense.
In order to do this, let us posit that the tall peaks of the stochastic process $X$ are
described by a \emph{gauge function} $g:\R_+\to\R_+$. By this we
mean that $g$ is a nonrandom and increasing function such that
$\lim_{r\to\infty} g(r)=\infty$ and
\begin{equation}\label{LIL:X}
	\limsup_{\|t\|\to\infty} \frac{|X(t)|}{g(\|t\|)}=1
	\qquad\text{a.s.}
\end{equation}
That is, we assume that there exists a \emph{non-random} 
gauge function $g$ which describes
the largest possible scale on which we can compute the tall
peaks of $X$ in a macroscopic sense. Let us fix the gauge function $g$
in our mind, and then consider the random set,
\begin{equation}\label{L_X}
	\cP_X(\gamma) := \left\{ t\in\R^n_+:\ \|t\|>c_0,\
	\frac{|X(t)|}{g(\|t\|)} > \gamma \right\},
\end{equation}
where $\gamma>0$ is a tuning parameter and $c_0$
is a fixed positive constant. We may think of
every point $t\in\cP_X(\gamma)$ as a \emph{tall peak}
of the process $X$---suitably normalized by $g$--- \emph{viewed
in length scale $\gamma$}. 

Because $\cP_X(\gamma_1)\subseteq\cP_X(\gamma_2)$
when $\gamma_1>\gamma_2$, we might expect the
existence of a phase transition that does not depend on our prior choice of
$c_0$. This is correct, and not difficult to explain.
In fact, the growth condition \eqref{LIL:X}
implies  that there exists a uniquely-defined transition point at
$\gamma=1$:
$\cP_X(\gamma)$ is a.s.\ unbounded when $\gamma<1$
whereas $\cP_X(\gamma)$ is a.s.\ bounded when $\gamma>1$.
Motivated by this simple observation, let us introduce the following.

\begin{definition}\label{def:multifractal}
	Choose and fix a gauge function $g$ and
	consider the tall peaks of $X$, as defined in \eqref{L_X}.
	We say that the tall peaks of $X$ are 
	\emph{multifractal} when there exist 
	infinitely-many length scales $\gamma_1>\gamma_2>\cdots>0$
	such that with probability one,
	\begin{equation}\label{DimDim:H}
		\Dimh\left(\cP_X(\gamma_{i+1})\right) 
		< \Dimh \left( \cP_X(\gamma_i) \right)
		\quad\text{for all $i\ge 1$}.
	\end{equation}
	If, on the other hand,
	\begin{equation}
		\Dimh(\cP_X(\gamma)) = \begin{cases}
			1&\text{whenever $\gamma<1$},\\
			0&\text{whenever $\gamma>1$},
		\end{cases}
	\end{equation}
	then we say that the tall peaks of $X$ are \emph{monofractal}.\footnote{In
	principle, it can happen that the tall peaks of $X$ are neither multifractal
	nor monofractal. That is, when \eqref{DimDim:H} holds for finitely-many
	$\gamma_1>\gamma_2>\cdots>\gamma_n$. It might be interesting to
	construct non-trivial examples of stochastic processes whose tallest peaks
	are of this latter type. We aren't aware of any natural examples at this time.
	}
\end{definition}

In the preceding, $\Dimh (E)$ denotes the macroscopic Hausdorff dimension of
$E\subset\R^n$. This notion is due to
Barlow and Taylor \cite{BarlowTaylor,BarlowTaylor1}, and  will be reviewed
in the next section. For the time being, it suffices to know only that
$\Dimh(E)$ is a real number between $0$ and $n$,
and describes the large-scale geometry of $E\subset\R^n_+$
in a way that parallels how the usual notions of fractal dimension
try to describe small-scale geometry.

In order to see how our definition of multifractality can be used in 
stochastic PDEs, let us return to the stochastic heat equation, (PAM). A deep theorem of
Mueller \cite{Mueller} asserts that 
\begin{equation}\label{eq:pos:PAM}
	u(t\,,x) >0\quad\text{for all $t\ge0$ and $x\in\R$},
\end{equation}
outside a single event of 
probability zero. Because the natural logarithm is strictly monotonic, 
the tall peaks of the random function $x\mapsto u(t\,,x)$ are in one-to-one
correspondence with the tall peaks of $x\mapsto X_t(x)$
at all times $t>0$, where
\begin{equation}\label{X:pam}
	X_t(x) := \left( \frac{32}{9t}\right)^{1/3}\log u(t\,,x).
\end{equation}
Conus et al \cite{CJK} have proved that for all $t>0$, 
\begin{equation}\label{CJK:pam}
	0<\limsup_{x\to\infty} \frac{X_t(x)}{g(x)}<\infty\quad
	\text{a.s., where } g(x) := (\log_+ x)^{2/3}.
\end{equation}
More recently,  Chen \cite{Chen} has found the following 
improvement to \eqref{CJK:pam}: For all $t>0$,
\begin{equation}\label{Chen:pam}
	\limsup_{x\to\infty} \frac{X_t(x)}{g(x)} =1
	\qquad\text{a.s.}
\end{equation}
In other words,
$g(x) := (\log_+ x)^{2/3}$  is a natural gauge function
for measuring the tall peaks of the random height function 
$x\mapsto X_t(x)$. 
The particular normalization of $X$ in \eqref{X:pam} is there to merely
ensure that the lim sup is one, which matches the form of \eqref{Chen:pam}
with that of \eqref{LIL:X}. Having said this, we are ready to mention one of
the results of this work.

\begin{theorem}\label{th:multifractal:pam}
	Consider the gauge function $ g(x) := (\log_+ x)^{2/3}$
	and, for all $t>0$, define $X_t = \{X_t(x), \, x \in \R\}$ as in \eqref{X:pam}.
	Then the tall peaks of $X_t$ are multifractal with probability one
	for all $t>0$. In fact, for every $t,\gamma>0$,
	\begin{equation}\label{multifractal:spectrum:pam}
		\Dimh \left( \cP_{X_t}(\gamma)\right) =
		1 - \gamma^{3/2}\quad\text{a.s.,}
	\end{equation}
	where $\Dimh(E)<0$ means that $E$ is bounded.
\end{theorem}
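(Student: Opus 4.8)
The plan is to prove the two inequalities $\Dimh(\cP_{X_t}(\gamma))\le 1-\gamma^{3/2}$ and $\Dimh(\cP_{X_t}(\gamma))\ge 1-\gamma^{3/2}$ separately, using the moment bounds \eqref{moments:intro} together with the definition of macroscopic Hausdorff dimension. Throughout, fix $t>0$ and abbreviate $X=X_t$; note that \eqref{moments:intro} and \eqref{X:pam} translate the third-moment-Lyapunov growth of $u$ into the statement that, for every $\beta>0$,
\begin{equation*}
	\P\{X(x)>\beta(\log_+ x)^{2/3}\}
	= \P\{|u(t\,,x)|^k > \exp(ck\beta(\log_+ x)^{2/3})\}
\end{equation*}
for a suitable constant $c=c(t)$, and optimizing the Chernoff/Markov bound $\P\{|u|^k>\lambda\}\le U_0^k\e^{Uk^3t}\lambda^{-k}$ over $k$ gives a Gaussian-type tail $\P\{X(x)>\beta(\log_+ x)^{2/3}\}\le C x^{-a\beta^{3/2}}$ for an explicit constant $a$; the normalization in \eqref{X:pam} is chosen precisely so that $a=1$ and this exponent reads $x^{-\beta^{3/2}}$.

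For the upper bound, I would cover $\cP_X(\gamma)$ by the dyadic annuli $S_n=\{x:2^n\le|x|<2^{n+1}\}$ used to define $\Dimh$. Inside $S_n$ one bounds the expected number of unit intervals (or $\kappa$-mesh intervals, in the Barlow--Taylor convention) on which $X(x)>\gamma g(x)\approx\gamma(n\log 2)^{2/3}$ by $2^n\cdot C2^{-n\gamma^{3/2}}=C2^{n(1-\gamma^{3/2})}$, using the tail bound above together with a chaining/continuity estimate to pass from pointwise control to control of the supremum over each small interval (here one can invoke the known spatial modulus of continuity of $\log u(t\,,\cdot)$, or a Borel--Cantelli argument on a fine grid). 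Summing the resulting macroscopic Hausdorff content contributions $\sum_n 2^{-n\rho}\cdot 2^{n(1-\gamma^{3/2})}$ shows this is finite for $\rho>1-\gamma^{3/2}$, hence $\Dimh(\cP_X(\gamma))\le 1-\gamma^{3/2}$; when $\gamma>1$ the same computation shows $\cP_X(\gamma)$ meets only finitely many annuli, i.e.\ is bounded.

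The lower bound is the crux and the main obstacle. One must exhibit, with probability one, enough tall peaks in each annulus $S_n$ to force the macroscopic content from below, which requires a \emph{second-moment} / correlation argument rather than a first-moment one. The natural route is: (i) lower-bound $\P\{X(x)>\gamma g(x)\}$ by $x^{-\gamma^{3/2}-o(1)}$, which needs a matching lower tail for $|u(t\,,x)|^k$ — this does not follow from \eqref{moments:intro} alone and is where the real work lies; one expects to need either Chen's sharp results behind \eqref{Chen:pam} or a direct argument controlling the lower deviations of $\log u(t\,,x)$ via the Feynman--Kac/chaos representation. (ii) Show the events $\{X(x_i)>\gamma g(x_i)\}$ over a well-separated grid $\{x_i\}\subset S_n$ of size $\approx 2^{n(1-\gamma^{3/2})}$ are asymptotically independent, using the (near-)independence of $u(t\,,x)$ and $u(t\,,y)$ for $|x-y|$ large — again obtainable from localization of the stochastic heat equation. (iii) Feed a Barlow--Taylor-type mass-distribution (Frostman) lemma for macroscopic Hausdorff dimension — stated in the next section — to these random points to conclude $\Dimh(\cP_X(\gamma))\ge 1-\gamma^{3/2}$ a.s. I expect step (i), the sharp lower tail of $\log u$, to be the single hardest ingredient, with the decorrelation in (ii) a close second; both should be quotable from or adaptable from \cite{CJK,Chen,cbms}.
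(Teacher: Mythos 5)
Your overall architecture (sharp tail estimates plus a covering argument for the upper bound; localization/independence plus a mass-distribution principle for the lower bound) points in the right direction, but there are two genuine gaps. First, the sharp exponent cannot be extracted from \eqref{moments:intro} alone: there the constants $L$ and $U$ do not match, so optimizing Markov's inequality over $k$ only yields $\P\{X_t(x)>\beta(\log_+x)^{2/3}\}\le x^{-c\beta^{3/2}}$ with $c$ depending on the unspecified $U$ — it does not and cannot give $a=1$ after the normalization \eqref{X:pam}. To pin down the constant you need the exact moment asymptotics $\log\E(|u_t(x)|^k)=\frac{k^3t}{24}(1+o(1))$, which is precisely what the paper imports in Proposition \ref{pr:pam:tail} (via the moment comparison principle and Chen's Theorem 5.5, combined with the G\"artner--Ellis theorem for the matching lower tail). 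You correctly flag this for the lower tail in your step (i), but the same input is already needed for your upper bound; without it you only get $\Dimh\le 1-c\gamma^{3/2}$ for some non-explicit $c$. (Your chaining step is fine in spirit, but note the paper must run it quantitatively in $k$, via a Kolmogorov-continuity bound with moments growing like $\e^{\tau k^3t}$, so that the supremum over a unit interval obeys the same sharp tail; condition \eqref{cond:UB} is then verified and Theorem \ref{th:Gen:LIL:UB} gives the upper bound.)

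Second, the lower-bound mechanism you propose — asymptotic independence over a grid of $\approx\e^{n(1-\gamma^{3/2})}$ points plus a second-moment/Frostman argument on the resulting random points — is underpowered as stated: Paley--Zygmund-type arguments give only a probability bounded away from zero that a given shell carries enough exceedances, whereas the definition \eqref{Def:Hdim} requires an almost-sure lower bound on the contents over the shells, and the failure probabilities must be summable to run Borel--Cantelli. The paper sidesteps second moments entirely. It couples $u$ to the iterated localized fields $u^{(B,B)}$, which are \emph{exactly} independent at spatial separation $2B^{3/2}\sqrt t$ (Lemmas \ref{lem:u-uBm} and \ref{lem:pam:ind}), verifying the coupling condition \eqref{cond:LB}; then Theorem \ref{th:Gen:LIL:LB} places $\approx\e^{n(\theta-\delta)}$ well-separated points inside each of $\approx\e^{n(1-\theta)}$ boxes of side $\e^{\theta n}$, so the probability that a given box contains no peak is of order $\exp(-\e^{cn})$, Borel--Cantelli shows the peak set is $\theta$-thick a.s., and Proposition \ref{pr:thick} (where the Frostman lemma is applied to the deterministic skeleton measure, not to the random set) converts thickness into $\Dimh\ge 1-\theta$; letting $\theta\downarrow\gamma^{3/2}$ finishes. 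If you want to salvage your route, you would have to boost the per-shell success probability to summably-close-to-one, which in effect reproduces this block/skeleton construction. Finally, Theorem \ref{th:multifractal:pam} itself is deduced in the paper from the more general Theorem \ref{th:pam:LIL} and Corollary \ref{co:PAM:WN} by the change of normalization \eqref{X:pam}, under which $1-\frac{4\sqrt2}{3}\bigl(\gamma(9/32)^{1/3}\bigr)^{3/2}=1-\gamma^{3/2}$.
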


Theorem \ref{th:multifractal:pam} shows that the tall peaks of the
solution $u$ to (PAM) are multifractal in the sense
described more precisely in Theorem \ref{th:multifractal:pam}. 
And we can think of \eqref{multifractal:spectrum:pam}
as a description of large-scale ``multifractal spectrum'' of the peaks of 
the random field $u$. Theorem \ref{th:multifractal:pam} is a consequence
of a much deeper result which says, using informal language, that the tall peaks of
the solution to every known multifractal 
nonlinear stochastic PDE \cite{FA:whitenoise} are generically multifractal. 
See Theorem \ref{co:PAM:WN}
and Corollary \ref{co:LIL:pam} below.

Earlier we mentioned  that $u$ is
intermittent in the sense of \eqref{intermittency}.
Together with Theorem \ref{th:multifractal:pam},
this assertion says that intermittency and multifractality
can coexist. Next we describe a \emph{non-intermittent} system whose
tall peaks are also multifractal. Consequently, non-intermittency
and multifractality can also co-exist, whence follows  the negative answer
to Professor Lawler's question with which we began.

Consider the linear stochastic heat equation,
$$\left[\begin{split}
	&\dot{Z}= \tfrac12Z'' + \xi\quad\text{on $(0\,,\infty)\times\R$,}\\
	&\text{subject to }Z(0\,,x)=0\text{ for all $x\in\R$}.
\end{split}\right.
\eqno({\rm HE})
$$
It is well known that a solution exists, is unique,
and is a continuous centered Gaussian random field. 
It is also possible to prove that the moment Lyapunov exponents
of $Z$ are all zero, and so $Z$ is not an intermittent process.\footnote{This
can be deduced from inspecting the proof of Theorem 2.1 of
ref.\ \cite{FA:whitenoise},
for example.}

Also, one can prove fairly easily that for all $t>0$
the following holds with probability one:
\begin{equation}
	\limsup_{x\to\infty} \frac{Z(t\,,x)}{g(x)} = 
	(t/\pi)^{1/4}\quad\text{a.s.,
	where $g(x):=(2\log_+ x)^{1/2}$.}
\end{equation}
A ``steady state'' version of this fact
appears in print, for example, in  
Collela and Lanford \cite[Theorem 1.1(c)]{CollelaLanford}.
And the fact itself follows by specializing an even earlier, very general, theory
of Pickands \cite{Pickands}; see also Qualls and Watanabe \cite{QW}.

Let
\begin{equation}\label{X:he}
	X_t(x) := (\pi/t)^{1/4} Z(t\,,x).
\end{equation}
The following shows that the tall peaks of $X_t$---hence also
those of $Z(t\,,\cdot)$---are multifractal even though $Z$
is non-intermittent.

\begin{theorem}\label{th:multifractal:he}
	Consider the gauge function $ g(x) := (\log_+ x)^{1/2}$
	and, for all $t>0$, define $X_t = \{X_t(x), \, x \in \R\}$  as in \eqref{X:he}.
	Then the tall peaks of $X_t$ are multifractal with probability one
	for all $t>0$. In fact, for every $t,\gamma>0$,
	\begin{equation}
		\Dimh \left( \cP_{X_t}(\gamma)\right) =
		1-\gamma^2\quad\text{a.s.,}
	\end{equation}
	where $\Dimh(E)<0$ means that $E$ is bounded.
\end{theorem}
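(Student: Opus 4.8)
The plan is to compute $\Dimh(\cP_{X_t}(\gamma))$ for the Gaussian field $X_t(x) = (\pi/t)^{1/4} Z(t,x)$ by establishing matching upper and lower bounds. The key structural fact is that, for fixed $t$, the process $x \mapsto X_t(x)$ is a stationary centered Gaussian process whose variance is normalized so that $\Var(X_t(x)) = \tfrac12$ (this is exactly what makes $(2\log_+ x)^{1/2}$, equivalently our $g(x) = (\log_+ x)^{1/2}$ up to the constant absorbed into $\gamma$, the correct gauge). Indeed, $\E[Z(t,x)^2] = \sqrt{t/\pi}$, so $X_t(x)/g(x)$ has the form $N/(\log_+ x)^{1/2}$ with $N$ standard-normal-like marginals; the classical Gaussian tail $\P\{N > \lambda\} \asymp \exp(-\lambda^2/2)$ then gives $\P\{X_t(x) > \gamma g(x)\} \asymp x^{-\gamma^2}$ (up to polylog corrections). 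This is the heuristic that should produce the exponent $1 - \gamma^2$.

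For the \emph{upper bound} $\Dimh(\cP_{X_t}(\gamma)) \le 1-\gamma^2$, I would use a first-moment / covering argument in the Barlow--Taylor framework. Partition $\R_+$ into the annuli $S_n = [2^n, 2^{n+1})$ and cover $\cP_{X_t}(\gamma) \cap S_n$ by unit intervals; the expected number of unit intervals $I \subset S_n$ on which $\sup_{x\in I} X_t(x) > \gamma g(x)$ is at most $2^n \cdot \P\{\sup_{x \in [0,1]} X_t(x) > \gamma (n\log 2)^{1/2}(1-o(1))\}$, which by the Borell--TIS inequality (or a Dudley-type bound, since the process has a.s.\ continuous, locally Hölder paths and bounded local sup-moments) is $\le C 2^n \exp(-\gamma^2 n \log 2 (1-o(1))) = C \, 2^{n(1-\gamma^2)(1+o(1))}$. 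Summing the Barlow--Taylor $\rho$-dimensional contents over $n$ then converges for $\rho > 1-\gamma^2$, giving the upper bound. The polylog slack from the $\sup$ over $I$ versus pointwise values is harmless because macroscopic Hausdorff dimension ignores it. This step is essentially routine once the tail estimate and path regularity are in hand.

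The \emph{lower bound} $\Dimh(\cP_{X_t}(\gamma)) \ge 1-\gamma^2$ is the main obstacle and requires a second-moment (energy) argument adapted to the macroscopic setting — this is where I expect to lean most heavily on the Barlow--Taylor machinery, presumably via a large-scale mass-distribution principle of the type the authors say they develop. The strategy is: in each annulus $S_n$, place a random measure $\mu_n$ on $\cP_{X_t}(\gamma) \cap S_n$ (e.g.\ normalized Lebesgue measure restricted to the event $\{X_t(x) > \gamma g(x)\}$, or a discretized version over a net of well-separated points), show $\E[\mu_n(S_n)] \gtrsim 2^{n(1-\gamma^2)}$ via the Gaussian tail, show a matching second-moment bound $\E[\mu_n(S_n)^2] \lesssim (\E[\mu_n(S_n)])^2$, and control the interaction energy $\int\!\!\int |x-y|^{-\rho}\,\mu_n(\d x)\mu_n(\d y)$ for $\rho < 1-\gamma^2$. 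The delicate point is the decorrelation of the field: one needs that $\Corr(X_t(x), X_t(y))$ decays fast enough in $|x-y|$ that well-separated points are close to independent — this follows because the covariance of $Z(t,\cdot)$ is given by an explicit heat-kernel expression $\E[Z(t,x)Z(t,y)] = \int_0^t p_{2s}(x-y)\,\d s$ which decays (Gaussian-like in $|x-y|^2/t$, hence super-polynomially) — so a Slepian-type comparison or direct Gaussian conditioning makes the near-independence quantitative. With decorrelation established, the Paley--Zygmund inequality applied annulus-by-annulus, combined with a Borel--Cantelli argument over the scales $n$, upgrades the in-expectation lower bound to an almost-sure statement, and then the Barlow--Taylor lower-bound criterion yields $\Dimh(\cP_{X_t}(\gamma)) \ge 1 - \gamma^2$ almost surely. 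Finally, intersecting over a countable dense set of $\gamma$ and using monotonicity of $\gamma \mapsto \cP_{X_t}(\gamma)$ gives the result simultaneously for all $\gamma > 0$ on a single event of full probability; taking a countable dense set of $t$ and exploiting continuity in $t$ handles all $t > 0$ at once. The multifractality conclusion is then immediate: $\gamma \mapsto 1 - \gamma^2$ is strictly decreasing, so any sequence $\gamma_1 > \gamma_2 > \cdots$ in $(0,1)$ witnesses Definition \ref{def:multifractal}.
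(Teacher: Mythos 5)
Your upper bound is fine and is essentially the paper's route: the paper also covers each shell by unit boxes and sums expected contents (Theorem \ref{th:Gen:LIL:UB}), verifying the needed maximal-tail estimate \eqref{cond:UB} via Pickands' lemma rather than Borell--TIS, but only the exponential order matters, so your version works. One bookkeeping slip: with $X_t(x)=(\pi/t)^{1/4}Z_t(x)$ and $\Var Z_t(x)=(t/\pi)^{1/2}$ you get $\Var X_t(x)=1$, not $\tfrac12$; the exponent $1-\gamma^2$ corresponds to measuring exceedances against $(2\log_+x)^{1/2}$, which is exactly how the paper phrases the equivalent Theorem \ref{th:LIL}, so you should fix the normalization rather than the variance.

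The genuine gap is in your lower bound, which is also where the paper's argument is deliberately different. First, Paley--Zygmund applied shell by shell only gives events of probability bounded away from zero, and ``Borel--Cantelli over the scales'' does not upgrade this to an almost sure statement without (quasi-)independence across shells or a zero--one law --- neither of which you establish; note that for Brownian motion the paper does run exactly such a second-moment argument (Proposition \ref{pr:LIL}) but must finish with correlation inequalities and the Hewitt--Savage law, machinery you have not supplied here. Second, there is no off-the-shelf ``energy implies macroscopic dimension'' criterion in the Barlow--Taylor theory: the only mass-distribution tool is Lemma \ref{lem:Frostman}, which requires a \emph{uniform} bound on $\mu_n(Q)/[{\rm side}(Q)]^\rho$ over all boxes of side $\ge 1$ inside $\cS_n$, and for Lebesgue measure restricted to $\cP_{X_t}(\gamma)$ this forces a uniform concentration estimate over exponentially many boxes, not merely a second-moment bound on the total mass; indeed the paper explicitly remarks that the exceedance set supports no canonical measure and ``a different sort of argument'' is needed. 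The paper's substitute is the $\theta$-thickness scheme: using the windowed Wiener integral $Z^{(B)}$ of \eqref{eq:ZB}, values at points separated by more than $2(Bt)^{1/2}$ are \emph{exactly} independent (Observation \ref{obs1}) and $Z\approx Z^{(B)}$ with super-exponentially small error \eqref{claim:Linear}; hence each skeleton box of side $\e^{\theta n}$ contains $\asymp\e^{(\theta-\delta)n}$ effectively independent trials with success probability $\asymp\e^{-\gamma^2 n}$, so the probability that a box misses $\cP_{X_t}(\gamma)$ is doubly exponentially small, Borel--Cantelli applies to the failure events directly, and Proposition \ref{pr:thick} (counting measure on the skeleton fed into Lemma \ref{lem:Frostman}) yields $\Dimh\ge1-\gamma^2$; this is packaged as the coupling condition \eqref{cond:LB} plus Theorem \ref{th:Gen:LIL:LB}. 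Your covariance-decay observation is the right raw input, but as written the passage from it to the almost-sure dimension lower bound is missing the decisive steps.
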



Theorems \ref{th:multifractal:pam} and \ref{th:multifractal:he}
are both consequences of two more general theorems
about multifractal random fields [Theorems \ref{th:Gen:LIL:UB}
and \ref{th:Gen:LIL:LB} below]. Those general theorems have
other interesting consequences as well. Let us mention one such
result.

\begin{theorem}\label{B:OU}
	Let $B$ denote a one-dimensional Brownian motion and
	$U(t)$ $:= \exp(-t/2)B(\e^t)$ an Ornstein--Uhlenbeck process
	on $\R$. Then, the tall peaks of $B$ are a monofractal,
	whereas those of $U$ are multifractal in the following sense:
	For every $\gamma>0$, 
	\begin{equation}\label{B:B}
		\Dimh\left\{ s\ge \e^\e:\ \frac{B(s)}{(2 s\log\log s)^{1/2}}
		\ge \gamma\right\} = \begin{cases}
			1&\text{if $\gamma\le1$},\\
			0&\text{if $\gamma>1$},
		\end{cases}
	\end{equation}
	almost surely, whereas
	\begin{equation}\label{OU:OU}
		\Dimh\left\{ s\ge \e^\e:\ \frac{U(s)}{(2 \log s)^{1/2}}
		\ge \gamma\right\} =1-\gamma^2,
	\end{equation}
	almost surely, where we recall $\Dimh(E)<0$ means
	that $E$ is bounded.
\end{theorem}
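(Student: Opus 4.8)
The unifying observation is a change of variables: substituting $s=\e^v$ and using $B(\e^v)=\e^{v/2}U(v)$, the set appearing in \eqref{B:B} is precisely the image, under $v\mapsto\e^v$, of the set appearing in \eqref{OU:OU} (up to a bounded set, which does not affect $\Dimh$). So it suffices to (a) prove \eqref{OU:OU}, and (b) determine how the exponential map transforms the macroscopic Hausdorff dimension of the relevant exceedance set. Part (a) will be obtained by feeding $U$ into the general multifractality theorems \ref{th:Gen:LIL:UB} and \ref{th:Gen:LIL:LB}, and part (b) is a soft geometric argument together with the law of the iterated logarithm.

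\textbf{The Ornstein--Uhlenbeck part.}
The process $U$ is stationary, centered and unit-variance Gaussian with $\E[U(s)U(t)]=\exp(-|s-t|/2)$, and the classical extreme-value theory of Pickands \cite{Pickands} (see also \cite{QW,CollelaLanford}) gives $\limsup_{v\to\infty}U(v)/(2\log v)^{1/2}=1$ a.s., so $(2\log_+v)^{1/2}$ is a legitimate gauge in the sense of \eqref{LIL:X}. The two inputs required for Theorems \ref{th:Gen:LIL:UB} and \ref{th:Gen:LIL:LB} are then: first, the Gaussian tail estimate $\P\{U(v)>\gamma(2\log v)^{1/2}\}=v^{-\gamma^2+o(1)}$ as $v\to\infty$, which through Theorem \ref{th:Gen:LIL:UB} yields $\Dimh(\cP_U(\gamma))\le 1-\gamma^2$; and second, the exponential decay of $\E[U(s)U(t)]$, which supplies the quantitative asymptotic independence demanded by Theorem \ref{th:Gen:LIL:LB} and hence the matching lower bound $\Dimh(\cP_U(\gamma))\ge 1-\gamma^2$. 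The positive correlation of $U(s)$ and $U(t)$ when $|s-t|=O(1)$ makes the exceedance set come in clumps of unit order, but this is harmless: macroscopic covers use cubes of side at least $1$, so clumping on unit scales does not change the value of the set functions $\nu^k_\rho$.

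\textbf{The Brownian part, via the exponential map.}
Write the set in \eqref{B:B} as $\e^{A}$ with $A:=\{v\ge\e:\ U(v)\ge\gamma(2\log v)^{1/2}\}$. When $\gamma>1$ the law of the iterated logarithm for $U$ forces $A$ to be bounded a.s., hence $\e^A$ is bounded and $\Dimh(\e^A)<0$. When $\gamma\le1$ the key point is that $v\mapsto\e^v$ carries an interval $[v_0,v_0+\delta]$ onto an interval of length comparable to its right endpoint $\e^{v_0}$, i.e.\ onto a fixed proportion of the dyadic shell containing $\e^{v_0}$. Since $\limsup_{v}U(v)/(2\log v)^{1/2}=1\ge\gamma$, the set $A$ contains, for infinitely many $v_0\to\infty$, an interval $[v_0,v_0+\delta]$ with $\delta=\delta(\gamma)>0$ fixed: near a time $v_0$ with $U(v_0)\ge\gamma'(2\log v_0)^{1/2}$ for some $\gamma'\in(\gamma,1)$ the oscillation $\sup_{0\le u\le\delta}|U(v_0+u)-U(v_0)|$ is $O(1)$, hence $o((\log v_0)^{1/2})$ along a suitable subsequence, so $U$ stays above $\gamma(2\log v)^{1/2}$ throughout $[v_0,v_0+\delta]$ once $v_0$ is large. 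Consequently $\e^A$ meets a fixed proportion of infinitely many dyadic shells, so for every $\rho<1$ the quantity $\nu^k_\rho(\e^A)$ stays bounded below along infinitely many $k$; together with the trivial bound $\Dimh\le n=1$ this gives $\Dimh(\e^A)=1$, which is \eqref{B:B}. (Equivalently, one may argue directly on $B$: the Brownian law of the iterated logarithm and the estimate $\sup_{0\le u\le\delta s}|B(s+u)-B(s)|=O((\delta s)^{1/2})=o((s\log\log s)^{1/2})$ show $\cP_B(\gamma)$ contains a fixed proportion of infinitely many dyadic shells.)

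\textbf{Main obstacle.}
In both parts the lower bound is the substantive step, and the delicate issue is the interaction between the random ``exceptional scales'' furnished by the law of the iterated logarithm and the sample-path oscillation control that must be valid along them. For $U$ this is absorbed into the hypotheses of Theorem \ref{th:Gen:LIL:LB}, so the real work there is checking that the exponential covariance decay $\exp(-|s-t|/2)$ actually certifies whichever asymptotic-independence condition that theorem requires, uniformly in the sub-cubes that enter the definition of $\nu^k_\rho$. For $B$ the obstacle is of a different nature: $B$ is a martingale and therefore not mixing, so Theorem \ref{th:Gen:LIL:LB} does not apply verbatim; one must either route through the change of variables above or run a dedicated Borel--Cantelli argument along the independent dyadic increments $B(2^{k+1})-B(2^k)$, which is where the sparseness versus density of the exceptional scales has to be handled with care.
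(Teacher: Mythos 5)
Your treatment of the Ornstein--Uhlenbeck half follows the paper's route: feed $U$ into Theorems \ref{th:Gen:LIL:UB} and \ref{th:Gen:LIL:LB}. Two caveats: condition \eqref{cond:UB} requires a bound on $\sup_{t\in[w,w+1)}U(t)$, not merely the pointwise Gaussian tail, and the paper gets this from Pickands's lemma (Lemma \ref{lem:Pickands}); and the coupling condition \eqref{cond:LB}, which you flag as ``the real work'' but do not carry out, is verified in the paper not abstractly from covariance decay but by the explicit construction $Y_i:=\e^{-t_i/2}\left[B(\e^{t_i})-B(\e^{t_{i-1}})\right]$, which makes the $Y_i$ independent with $\E\left(|U(t_i)-Y_i|^2\right)\le\exp\left(-\e^{\delta n}\right)$. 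These are routine completions, so the OU part is essentially fine.

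The genuine gap is in the Brownian part \eqref{B:B} at the critical value $\gamma=1$, which is precisely the substantive case (for $\gamma<1$ the paper simply invokes Strassen's theorem that $\cP_B(\gamma)$ has positive Lebesgue upper density together with Lemma \ref{lem:Den:Dim}). Your mechanism --- that $A=\{v:\ U(v)\ge\gamma(2\log v)^{1/2}\}$ contains intervals $[v_0,v_0+\delta]$ of fixed length in infinitely many shells, so that $\e^A$ occupies a fixed proportion of infinitely many shells --- explicitly requires an auxiliary level $\gamma'\in(\gamma,1)$ and so says nothing when $\gamma=1$. Worse, the mechanism is not merely unproven but false at criticality: by Proposition \ref{pr:Strassen} the critical set has zero density, and a first-moment count shows that remaining above the critical curve on a window of fixed relative length $\delta$ costs roughly $(\log s)^{-(1+\delta/2)^2}$, whose sum over shells converges, so a.s.\ only finitely many shells contain such a window. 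Likewise, a Borel--Cantelli argument along dyadic increments (your alternative) can only produce isolated exceedance points in infinitely many shells, which yields dimension lower bound $0$, not $1$. The paper's proof of the critical case is of a different nature: it works with the random measure $\mu(G)=|\cP_B(1)\cap G|$, proves $\sum_n\e^{-n}\mu(\cS_n)=\infty$ a.s.\ (Proposition \ref{pr:LIL}) via the Orey--Pruitt conditional-probability estimates (Lemmas \ref{lem:P(EcapE):far} and \ref{lem:P(EcapE):close}), a second-moment bound, the Paley--Zygmund inequality and the Hewitt--Savage zero--one law, and then converts the divergence of that series into $\Dimh\cP_B(1)\ge1$ through the Frostman-type Lemma \ref{lem:Frostman}. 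Some argument of this quantitative, measure-theoretic kind is needed; the ``full intervals in a positive fraction of shells'' picture cannot deliver the critical case.
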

Theorem \ref{B:OU} will be proved in two parts: \eqref{B:B} is proved
below in Theorem \ref{th:LIL:BM}; and \eqref{OU:OU} is proved in
Theorem \ref{th:U}.

We have included a final \S\ref{sec:pamcolor} wherein we state and prove
a suitable variation of  Theorem \ref{th:multifractal:pam}, in which the stochastic
partial differential equation (PAM) is replaced by a similar-looking object with
$u''$ replaced by $\Delta u$, the Laplacian, the latter acting on
a space variable $x\in\R^d$ in place of $\R$. And space-time white noise is
replaced by a centered Gaussian noise that is white in time and suitably
correlated in space  to ensure the existence and uniqueness of a well-tempered
solution. Among other things,
such equations are well-known
models that play a role in the large-scale structure of the universe;
see \S\ref{sec:pamcolor} for more details.

Let us conclude the Introduction with a word on notation. From now on
we will write $\phi_t(x)$ and $\phi(t\,,x)$ interchangeably, depending
on which is more convenient, for any space-time function [or even generalized
function/noise] $\phi$. This  choice ought
not inconvenience the reader, since $\phi_t$ is {\it never} used to denote
the time derivative of $\phi$; rather, it is the standard probabilistic notation
for describing the time evolution of $\phi$.

\section{Dimension and density}
Let us begin by recalling the Barlow--Taylor definition of the
macroscopic dimension \cite{BarlowTaylor,BarlowTaylor1}
$\Dimh E$ of a set $E\subseteq\R^d$.

Define, for all integers $n\ge 0$,
\begin{equation}
	\cV_n :=\left[-\e^n\,, \e^n\right)^d, \quad
	\cS_0 :=\cV_0,  \quad\text{and}\quad
	\cS_{n+1}  :=\cV_{n+1}\setminus\cV_n.
\end{equation}
We might sometimes refer to $\cS_n$ as the \emph{$n$th shell} in $\R^d$. 

An important idea of Barlow--Taylor \cite{BarlowTaylor,BarlowTaylor1}---see
also Naudts \cite{Naudts} for a precursor to this idea---%
is to construct a family of Hausdorff-type contents on each shell, 
and then use the totality of those contents in order to define a family of Hausdorff-type
contents on all of $\R^d$. Once this is done, a notion of macroscopic Hausdorff dimension
presents itself quite naturally. 

\begin{definition}\label{def:B}
	Let $\mathcal{B}$ denote the collection of all sets of the form
	\begin{equation}\label{box}
		Q(x\,,r) := \left[ x_1\,,x_1+r\right)\times\cdots\times
		\left[ x_d\,,x_d+r\right),
	\end{equation}
	as $x :=(x_1\,,\ldots,x_d)$ ranges in $\R^d$ and $r$  in 
	$(0\,,\infty)$. If $Q:=Q(x\,,r)$ is an element of $\mathcal{B}$,
	then we may refer to $Q$ as an \emph{upright box} with \emph{southwest corner}
	$x $ and \emph{sidelength} ${\rm side}(Q):=r$.
\end{definition}

Choose and fix some number $c_0>0$,
and define for  every set $E\subseteq \R^d$,
all real numbers $\rho>0$, and each integer $n\ge 0$,
\begin{equation}\label{nu:n:rho}
	\nu^n_\rho(E) := \inf  \sum_{i=1}^m\left(\frac{{\rm side}(Q_i)}{\e^n}\right)^\rho,
\end{equation}
where the infimum is taken over all upright boxes $Q_1,\ldots,Q_m$
of side $\ge c_0$ that cover $E\cap\cS_n$. 
We may think of $\nu^n_\rho(E)$ as the restriction to the $n$th shell $\cS_n$ of the
scaled $\rho$-dimensional \emph{Hausdorff content} of $E$.

\begin{definition}
	The Barlow--Taylor \emph{macroscopic Hausdorff dimension} 
	of $E\subseteq\R^d$ is 
	\begin{equation} \label{Def:Hdim}
		\Dimh E :=  \Dimh (E):=
		\inf\left\{\rho>0:\ \sum_{n=1}^\infty\nu^n_\rho(E)
		<\infty\right\}.
	\end{equation}
\end{definition}

By (\ref{Def:Hdim}), any bounded set $E \subseteq \R^d$ has $\Dimh E = 0.$
We will leave the following simple fact as exercise for the interested
reader.

\begin{lemma}
	The numerical value of $\Dimh E$ does not depend on $c_0>0$.
\end{lemma}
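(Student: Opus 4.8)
The plan is to show that changing the constant $c_0$ in the definition of $\nu^n_\rho$ does not change the convergence or divergence of $\sum_n \nu^n_\rho(E)$, hence does not change the infimum in \eqref{Def:Hdim}. Write $\nu^n_{\rho,c}(E)$ for the content computed with lower cutoff $c$ on the sidelengths, and let $\Dimh^{(c)}E$ denote the corresponding dimension. Fix $0<c_0<c_1$; by symmetry it suffices to compare these two. The only effect of the cutoff is which covers of $E\cap\cS_n$ by upright boxes are admissible: enlarging the cutoff from $c_0$ to $c_1$ shrinks the family of admissible covers, so trivially $\nu^n_{\rho,c_0}(E)\le \nu^n_{\rho,c_1}(E)$, giving $\Dimh^{(c_0)}E\le \Dimh^{(c_1)}E$ immediately.

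For the reverse inequality, the key step is to take an arbitrary admissible cover $Q_1,\dots,Q_m$ for the $c_0$-content of $E\cap\cS_n$ and replace each box $Q_i$ whose side lies in $[c_0,c_1)$ by a single box of side $c_1$ containing it (e.g.\ with the same southwest corner); boxes with ${\rm side}(Q_i)\ge c_1$ are left untouched. The new collection is an admissible cover for the $c_1$-content, and the cost of each replaced box increases from $({\rm side}(Q_i)/\e^n)^\rho$ to at most $(c_1/\e^n)^\rho\le (c_1/c_0)^\rho ({\rm side}(Q_i)/\e^n)^\rho$. Since $(c_1/c_0)^\rho\ge 1$ is a constant depending only on $\rho$ and the ratio $c_1/c_0$, summing over $i$ and taking the infimum over $c_0$-covers yields
\begin{equation}
	\nu^n_{\rho,c_1}(E)\le \left(\frac{c_1}{c_0}\right)^\rho \nu^n_{\rho,c_0}(E)
	\qquad\text{for every }n\ge 0.
\end{equation}
Summing over $n$, the two series $\sum_n\nu^n_{\rho,c_0}(E)$ and $\sum_n\nu^n_{\rho,c_1}(E)$ converge together, so $\Dimh^{(c_1)}E\le \Dimh^{(c_0)}E$, and combined with the previous paragraph the two dimensions coincide.

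There is essentially no hard part here; the only thing that requires a word of care is the enlargement step when $\cS_n$ is a thin shell near the origin and $c_1$ is large compared with $\e^n$ — but macroscopic Hausdorff dimension only concerns the behavior of the tail $\sum_{n\ge N}$, and for all $n$ with $\e^n\ge c_1$ the enlarged boxes still make sense as elements of $\mathcal{B}$; the finitely many small shells contribute a finite amount to either series and are irrelevant to convergence. (Alternatively one may simply note that the infimum over an empty family of covers is $+\infty$ only when $E\cap\cS_n\neq\emptyset$ yet no admissible cover exists, which cannot happen once $\e^n\ge c_1$ since $\cV_n$ itself is coverable by boxes of side $c_1$.) Thus the inequality display above holds for all large $n$, which is all that is needed.
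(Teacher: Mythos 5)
Your proof is correct: the paper leaves this lemma as an exercise, and your comparison argument --- monotonicity of the content in the cutoff in one direction, and box-enlargement giving $\nu^n_{\rho,c_1}(E)\le (c_1/c_0)^\rho\,\nu^n_{\rho,c_0}(E)$ in the other, so that the two series converge or diverge together --- is exactly the intended standard argument. The closing caveat is not actually needed: the covering boxes in \eqref{nu:n:rho} are only required to cover $E\cap\cS_n$, not to lie inside $\cS_n$, so the enlarged boxes are admissible elements of $\mathcal{B}$ for every $n$ and the displayed inequality holds for all $n\ge 0$.
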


Thus, we could choose $c_0=2$ or $c_0=\e/\sqrt 2$, in our definition
of $\nu^n_\rho$, without affecting the value of $\Dimh E$. It
is important to point out though that $c_0=0$ can lead to a different
value of $\Dimh E$. 

In order to see why
we are ruling out the possibility of $c_0=0$,
let us define for all $E\subseteq\R^d$ and $r>0$
a new set $E^z(r)\subseteq\Z^d$
as follows:
\begin{equation}
	E^z(r) := \left\{ x\in\Z^d:\, E\cap Q(x \,,r)\neq\varnothing\right\}.
\end{equation}
The notation is basically due to Barlow and Taylor,
who observed the following \cite[Lemma 6.1]{BarlowTaylor1}
but stated it using slightly different language: For all $E\subseteq\R^d$
	and $r>0$,
\begin{equation} \label{Eq:BT92}
	\Dimh E = \Dimh(E^z(r)),
\end{equation}
where $\Dimh$ on the right-hand side of  (\ref{Eq:BT92}) is the discrete 
Hausdorff dimension on $\Z^d$ of Barlow and Taylor \cite{BarlowTaylor,BarlowTaylor1}.
In other words, because we chose only covers of $E$ that have
side $\ge c_0>0$, the local structure of $E$ does not affect the value of
its macroscopic Hausdorff dimension. Put yet in another way, this shows
that the Barlow--Taylor definition of
\emph{$\Dimh E$ quantifies the large-scale geometry of $E$}, without
obstruction by the microscopic structure of the set $E$.

Since the particular value of $c_0>0$ does not matter, from now on 
we follow the choice of Barlow and Taylor \cite{BarlowTaylor,BarlowTaylor1},
and set
\begin{equation}
	c_0=1.
\end{equation}

The following is a macroscopic counterpart of a familiar
result about microscopic Hausdorff dimension.

\begin{lemma}\label{lem:Lipschitz}
	Suppose $f:\ E\to\R^p$ is 	a Lipschitz function on  $E\subseteq\R^d$ and satisfies the 
growth condition $\liminf\limits_{x \in E, \, |x|\to \infty} |f(x)|/|x| >0$.  Then,
	\begin{equation}\label{Eq:LipUpp}
		\Dimh f(E) \le \Dimh E.
	\end{equation}
In particular, if $f:\R^d\to\R^d$ is bi-Lipshitz on $E \subseteq \R^d$; that is, there exists a  
positive constant $L \ge 1$ such that 
\[
L^{-1} |x-y| \le |f(x) - f(y) | \le L |x-y|\ \ \hbox{ for all }\, x, y \in E.
\]
Then
\begin{equation} \label{Eq:LipEq}
		\Dimh f(E)=\Dimh E. 
	\end{equation}
\end{lemma}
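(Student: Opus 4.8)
The plan is to reduce the macroscopic statement to the discrete statement on $\Z^d$ via the identity \eqref{Eq:BT92}, and to prove the Lipschitz bound \eqref{Eq:LipUpp} directly at the level of shells and covers. First I would establish \eqref{Eq:LipUpp}. Fix $\rho > \Dimh E$, so $\sum_n \nu^n_\rho(E) < \infty$. The growth condition $\liminf_{x\in E,\,|x|\to\infty}|f(x)|/|x| > 0$ gives constants $a>0$ and $R_0$ with $|f(x)| \ge a|x|$ for $x\in E$, $|x| \ge R_0$; combined with the Lipschitz bound $|f(x)-f(y)| \le L|x-y|$, this ensures that a point $x\in E$ lying in shell $\cS_n$ (for $n$ large) is mapped into $f(E)$ at Euclidean distance comparable to $\e^n$ from the origin — so $f(E)\cap\cS_n$ is covered by images of pieces of $E$ drawn from boundedly many shells $\cS_{n-c},\dots,\cS_{n+c}$ for an absolute constant $c$ depending only on $a$, $L$, $d$. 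Concretely, given a near-optimal cover of $E\cap\cS_m$ by boxes $Q_i$ of side $r_i \ge 1$, the images $f(Q_i\cap E)$ have diameter $\le L\sqrt d\, r_i$, hence each sits inside an upright box of side at most $\lceil L\sqrt d\rceil r_i$; these boxes have side $\ge 1$ and cover the relevant portion of $f(E)$. Summing the $\rho$-content contributions and using $((\lceil L\sqrt d\rceil r_i)/\e^n)^\rho \le C (r_i/\e^m)^\rho$ (absorbing the bounded shift $|n-m|\le c$ into the constant), I get $\sum_n \nu^n_\rho(f(E)) \le C'\sum_m \nu^m_\rho(E) < \infty$, whence $\Dimh f(E) \le \rho$. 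Letting $\rho \downarrow \Dimh E$ finishes \eqref{Eq:LipUpp}.

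The one genuine subtlety — and the step I expect to be the main obstacle — is the bookkeeping that a box of side $r\ge 1$ meeting a far-away shell $\cS_m$ cannot, under $f$, spread its image across more than $O(1)$ shells, and cannot be mapped to a shell much closer to the origin than $\cS_m$. The upper direction is easy (Lipschitz continuity controls diameters), but the lower bound on $|f(x)|$ is needed precisely so that the image stays out in shell $\approx m$ rather than collapsing toward the origin, where the scaling factor $\e^{-n}$ in \eqref{nu:n:rho} would be much larger and the estimate would fail. This is exactly why the hypothesis $\liminf |f(x)|/|x| > 0$ appears; without it the lemma is false (e.g. a bounded Lipschitz map). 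Handling the finitely many "small" shells (where $|x| < R_0$ and the growth condition has no content) is harmless because bounded sets contribute finitely many terms that do not affect convergence of $\sum_n \nu^n_\rho$.

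For the bi-Lipschitz case \eqref{Eq:LipEq}, I would note first that a bi-Lipschitz $f:\R^d\to\R^d$ automatically satisfies the growth condition on any $E$: from $|f(x)-f(y)|\ge L^{-1}|x-y|$ applied with a fixed reference point $y_0\in E$ we get $|f(x)| \ge L^{-1}|x| - |f(y_0)| - L^{-1}|y_0|$, so $|f(x)|/|x| \to$ a positive limit infimum. Hence \eqref{Eq:LipUpp} gives $\Dimh f(E) \le \Dimh E$. For the reverse, apply the already-proved inequality to the inverse map $f^{-1}: f(E) \to \R^d$, which is Lipschitz with constant $L$ on $f(E)$ and, by the same computation with the roles reversed, satisfies the required growth condition on $f(E)$; this yields $\Dimh E = \Dimh f^{-1}(f(E)) \le \Dimh f(E)$. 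Combining the two inequalities gives \eqref{Eq:LipEq}. Alternatively, and perhaps more cleanly, one can invoke \eqref{Eq:BT92} to transfer everything to the discrete sets $E^z(1)$ and $(f(E))^z(1) \subseteq \Z^d$ and cite the corresponding discrete Lipschitz-invariance property of Barlow--Taylor's $\Z^d$-dimension; I would present the direct shell-covering argument as the main line and mention the discretization route as a remark, since the direct argument is self-contained given the definitions already recorded.
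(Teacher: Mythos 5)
Your argument is correct and follows essentially the same route as the paper's proof: push a near-optimal shell-by-shell cover of $E$ forward under $f$, use the Lipschitz bound to control the sidelengths of the image boxes and the growth condition to keep the image from collapsing toward the origin (where the normalization $\e^{-n}$ in \eqref{nu:n:rho} would ruin the estimate), and then deduce \eqref{Eq:LipEq} by checking the growth condition for both $f$ and $f^{-1}$ and applying \eqref{Eq:LipUpp} twice. Your explicit verification that a bi-Lipschitz map automatically satisfies the growth condition is a detail the paper leaves implicit, but it is the same argument.
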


\begin{proof}
	The proof is similar to that of the same assertion for
	ordinary Hausdorff dimension:
	For every $\rho>\Dimh E$, we can find upright boxes
	$Q_{j,n}\in\mathcal{B}$---%
	indexed by $1\le j\le m_n$, $n\ge 1$---%
	with ${\rm side}(Q_{j,n})\ge1$
	such that: 
	\begin{itemize}
		\item[(i)] $E\cap\cS_n \subseteq\cup_{j=1}^{m_n}Q_{j,n}$
			for all $n\ge 1$; and 
		\item[(ii)] $\sum_{n=1}^\infty
			\sum_{j=1}^{m_n}({\rm side}(Q_{j,n})/\e^n)^\rho<\infty$.
	\end{itemize}
	
	It is clear that  $f(E)\subset \cup_{n=1}^\infty \cup_{j=1}^{m_n} f(Q_{j,n})$. 
	Since $f$ is Lipschitz on $E$ we can find a finite and positive constant $q$
	such that
	\begin{equation}
		|f(x)-f(y)|\le q|x-y|
		\qquad\text{for all $x,y\in E$.}
	\end{equation}
	Without loss of generality, we may assume that $q\ge1$ is an integer;
	otherwise, we replace $q$ by $1+\lfloor q\rfloor$ everywhere.
	In particular, it follows that
	every $f(Q_{j,n})$ can be covered with an upright box
	whose sidelength is between $1$
	and $q\,{\rm side}(Q_{j,n}).$
	In addition, the growth condition on $f$ implies that there is a constant $\varepsilon > 0$ such that 
	$f(x)\ge \varepsilon \e^n$ for every $x \in E\cap \cS_n$
	This implies that
	\begin{equation}
		\sum_{n=0}^\infty \nu^n_\rho(f(E)) \le \sum_{n=0}^\infty q^\rho
		\sum_{j=1}^{m_n}\left( \frac{{\rm side}(Q_{j,n})}{\varepsilon  \e^n}
		\right)^\rho\qquad\text{for all $n\ge 1$},
	\end{equation}
	whence $\sum_{n=0}^\infty \nu^n_\rho(f(E))<\infty$ by (ii).
	This proves that $\Dimh f(E)\le \rho$ for all $\rho>\Dimh E$ and
	implies \eqref{Eq:LipUpp}.
	
	Finally, the bi-Lipshitz condition implies that both $f$ and its inverse $f^{-1}$ on $f(E)$ satisfy 
	the growth condition.  Hence, we make two appeals to the first part of Lemma \ref{lem:Lipschitz},
	once for $f$ and once for $f^{-1}$, to see that  \eqref{Eq:LipEq} holds.
\end{proof}


Lemma \ref{lem:Lipschitz} is the large-scale/macroscopic analogue of the following 
well-known fact: If $f:E\to\R$ is locally Lipschitz continuous, then
\begin{equation}
	\dimh f(E)\le\dimh E,
\end{equation} 
where ``$\dimh$'' denotes the usual [microscopic] Hausdorff dimension in $\R^d$.
Let us, however, observe that  \eqref{Eq:LipUpp} does not hold when
$f$ is only Lipschitz. For example, set $f(x):=\ln (x)$ for $x \ge 1$ and $E = \exp(\N)$ to see that
\begin{equation}
	\Dimh f(E) = \Dimh \N=1 > 0 = \Dimh( \exp(\N)).
\end{equation}
In the above, $\N:=\{1\,,2\,,\ldots\}$ denotes as usual the set of all natural
numbers. We will present an interesting example of $E$ in Remark \ref{rem:compare:B:U} below
which shows that \eqref{Eq:LipUpp} does hold for $f(x):=\ln (x)$ even though $f$ does not satisfies 
the growth condition in Lemma  \ref{lem:Lipschitz}.

Next, let us mention a technical estimate, which is
a ``density theorem.'' The following is
a large-scale analogue of the classical Frostman lemma,
and basically rephrases Theorem
4.1(a) of Barlow and Taylor \cite{BarlowTaylor1}
in a slightly different form that is more convenient for us.

\begin{lemma}[A Frostman-type lemma]\label{lem:Frostman}
	Choose and fix an integer $n\ge 1$,
	and suppose $E\subset\cS_n$ is a Borel set in $\R^d$. Let
	$\mu$ denote a finite non-zero Borel measure on $E$, and define for all $\rho\ge 0$,
	\begin{equation}\label{eq:K:rho}
		K_\rho := \sup\left\{ \frac{\mu(Q)}{[{\rm side}(Q)]^\rho}:\,
		Q\in\mathcal{B},\ Q\subset\cS_n,\ {\rm side}(Q)\ge 1\right\}.
	\end{equation}
	Then, $\nu_\rho^n(E) \ge  K_\rho^{-1}\e^{-n\rho}\mu(E)$.
\end{lemma}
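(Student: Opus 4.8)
The statement to be proved is Lemma~\ref{lem:Frostman}: for a Borel set $E\subset\cS_n$ carrying a finite non-zero Borel measure $\mu$, one has $\nu^n_\rho(E)\ge K_\rho^{-1}\e^{-n\rho}\mu(E)$, where $K_\rho$ is the supremum of $\mu(Q)/[{\rm side}(Q)]^\rho$ over upright boxes $Q\subset\cS_n$ with ${\rm side}(Q)\ge 1$. The plan is the standard mass-distribution argument, adapted to the shell-restricted content $\nu^n_\rho$. First I would take an arbitrary cover of $E\cap\cS_n = E$ by upright boxes $Q_1,\dots,Q_m\in\mathcal{B}$ with ${\rm side}(Q_i)\ge c_0 = 1$, which is exactly the type of cover competing in the infimum defining $\nu^n_\rho(E)$ in \eqref{nu:n:rho}. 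The only mild subtlety is that the boxes $Q_i$ are required to have side $\ge 1$ but need not \emph{a priori} lie inside $\cS_n$; however, the definition of $K_\rho$ restricts to boxes contained in $\cS_n$. I would handle this by replacing each $Q_i$ with $Q_i\cap\cV_{n+1}$ if necessary (or simply noting that since $E\subset\cS_n$, only the portion of $Q_i$ meeting $\cS_n$ matters, and one can shrink/translate to an upright box still of side $\ge 1$ inside $\cS_n$ that covers $E\cap Q_i$; for $n\ge 1$ the shell $\cS_n$ is wide enough — of width $\e^{n+1}-\e^n \ge \e^2-\e > 1$ — to accommodate this). After this reduction one has, for each $i$, the bound $\mu(Q_i)\le K_\rho\,[{\rm side}(Q_i)]^\rho$ directly from the definition of $K_\rho$.

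With that in hand the computation is immediate: by countable subadditivity of $\mu$ and the covering property,
\begin{equation}
	\mu(E) \le \sum_{i=1}^m \mu(Q_i) \le K_\rho \sum_{i=1}^m [{\rm side}(Q_i)]^\rho
	= K_\rho\,\e^{n\rho}\sum_{i=1}^m\left(\frac{{\rm side}(Q_i)}{\e^n}\right)^\rho.
\end{equation}
Rearranging gives $\sum_{i=1}^m\bigl({\rm side}(Q_i)/\e^n\bigr)^\rho \ge K_\rho^{-1}\e^{-n\rho}\mu(E)$, and taking the infimum over all admissible covers on the left-hand side yields $\nu^n_\rho(E)\ge K_\rho^{-1}\e^{-n\rho}\mu(E)$, which is the claim. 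If $K_\rho=\infty$ the inequality is trivial, and $K_\rho>0$ automatically since $\mu$ is non-zero and $\cS_n$ itself (for $n\ge1$) can be covered by finitely many boxes of side $\ge 1$ contained in $\cS_n$, at least one of which has positive $\mu$-mass.

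The only step that requires genuine care — and hence the main obstacle — is the box-clipping reduction: making sure that when a covering box $Q_i$ sticks out of $\cS_n$, it can be legitimately replaced by a box of side still $\ge 1$ sitting inside $\cS_n$ without losing coverage of $E\cap Q_i\cap\cS_n$. This is where the hypothesis $n\ge 1$ is used essentially (for $n=0$ the shell $\cS_0=\cV_0=[-1,1)^d$ has width exactly $2$, still fine, but Barlow--Taylor conventionally start the sum at $n=1$), together with the fact that $c_0$ may be taken to be $1$ while the shell width exceeds $1$. Everything else is the textbook mass-distribution principle, and I would present it tersely, citing Theorem~4.1(a) of \cite{BarlowTaylor1} for the provenance of the estimate.
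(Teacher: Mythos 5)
Your overall strategy---the mass-distribution argument---is the natural one (the paper itself gives no argument for this lemma, deferring to Theorem 4.1(a) of Barlow and Taylor), but the step you yourself single out as the only delicate one, the box-clipping reduction, is a genuine gap, and it cannot be carried out in the form you describe. The obstruction is not the width of the shell (that only takes care of the outer boundary) but its lack of convexity: for $n\ge 1$, $\cS_n=\cV_n\setminus\cV_{n-1}$ is a box with a concentric box removed, and a covering box $Q_i$ of side $\ge 1$ may straddle the inner boundary, so that $E\cap Q_i$ wraps around a corner of the removed box; then no single upright box contained in $\cS_n$ covers $E\cap Q_i$. Concretely, in $d=2$ set $a:=\e^{n-1}$ and let $E=\{P_1,P_2\}$ with $P_1=(a+\delta,\,a-\delta)$, $P_2=(a-\delta,\,a+\delta)$, $0<\delta<\tfrac12$, and $\mu(\{P_1\})=\mu(\{P_2\})=\tfrac12$. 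Both points lie in $\cS_n$ and are covered by a single box of side $1$, but any upright box containing both of them contains $(a-\delta,\,a-\delta)\in\cV_{n-1}$ and hence is not a subset of $\cS_n$; so your replacement box does not exist, and the per-box bound $\mu(Q_i)\le K_\rho\,[{\rm side}(Q_i)]^\rho$ on which your displayed computation rests fails outright here: $K_\rho=\tfrac12$ by \eqref{eq:K:rho}, while $\mu(Q_i)=1$ and ${\rm side}(Q_i)=1$. The same phenomenon occurs already in $d=1$, with one point on each of the two components of $\cS_n$ on either side of $\cV_{n-1}$.

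What can be salvaged is the inequality up to a dimensional constant. Since the complement of $\cV_{n-1}$ is the union of the $2d$ half-spaces $\{x:\ \pm x_j\ge \e^{n-1}\}$, the set $Q_i\cap\cS_n$ can always be covered by at most $C_d$ upright boxes lying in $\cS_n$ with sides in $[1,\ {\rm side}(Q_i)\wedge(\e^n-\e^{n-1})]$ (covering boxes whose side exceeds the shell width are handled by covering all of $\cS_n$); feeding this into your computation gives $\nu^n_\rho(E)\ge C_d^{-1}K_\rho^{-1}\e^{-n\rho}\mu(E)$, which is all that any application in the paper requires. Alternatively, if $K_\rho$ is defined as the supremum of $\mu(Q)/[{\rm side}(Q)]^\rho$ over all upright boxes of side at least $1$, without the restriction $Q\subset\cS_n$, then your displayed computation is complete as written and yields the constant-free bound. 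But with the restricted supremum \eqref{eq:K:rho} no patch of your argument can produce the constant-free inequality: in the two-point example above one has $\nu^n_\rho(E)\le\e^{-n\rho}$ while $K_\rho^{-1}\e^{-n\rho}\mu(E)=2\e^{-n\rho}$, so either the enlarged supremum or a factor depending on $d$ is genuinely needed, and your write-up should choose one of these two fixes explicitly rather than rely on the clipping claim.
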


\begin{remark}
	The constant $K_\rho$ of Lemma \ref{lem:Frostman} typically depends on
	$n$ as well, and is always finite and positive. 
\end{remark}

Let $\nu$ be a Borel measure on $\R^d$.
For all $E\subseteq\R^d$ let $\Den E$ denotes the \emph{upper density}
of $E$ \emph{with respect to $\nu$}. That is,
\begin{equation}
	\Den E := \Den(E) := \limsup_{t\to\infty} \frac{%
	\nu\left( E\cap [-t\,,t]^d\right)}{(2t)^d}.
\end{equation}

The following describes
an easy-to-verify sufficient condition for a set $E$ in $\R^d$ to have full macroscopic 
Hausdorff dimension. 

\begin{lemma}\label{lem:Den:Dim}
	Let $\nu$ denote either the Lebesgue measure on $\R^d$
	or counting measure on a sublattice $\varepsilon\Z^d$
	of $\R^d$ for some $\varepsilon>0$. Then, for all Borel sets
	$E\subseteq\R^d$, if \ $\Den E>0$, then $\Dimh E=d$.
\end{lemma}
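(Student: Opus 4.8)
The plan is to show that a positive upper $\nu$-density forces $\nu_\rho^n(E)$ to be bounded below, along a subsequence of shells, by a positive constant whenever $\rho < d$, so that $\sum_n \nu_\rho^n(E) = \infty$ and hence $\Dimh E \ge \rho$ for every $\rho<d$; since trivially $\Dimh E \le d$, this gives $\Dimh E = d$. First I would unwind the definition of $\Den E > 0$: there is a constant $c>0$ and a sequence $t_j \to \infty$ with $\nu(E \cap [-t_j,t_j]^d) \ge c (2t_j)^d$. By choosing the $t_j$ to be (comparable to) $\e^{n_j}$ for a strictly increasing sequence of integers $n_j$, one gets, after discarding the contribution of the inner ball $\cV_{n_j-1}$ (whose $\nu$-measure is a fixed fraction $\e^{-d}$ of that of $\cV_{n_j}$, hence cannot swallow the whole positive-density mass once $c$ is fixed — shrink $c$ if needed), a bound of the form $\nu(E \cap \cS_{n_j}) \ge c' \e^{n_j d}$ for some $c'>0$ and all large $j$.

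Next I would apply the Frostman-type Lemma \ref{lem:Frostman} on the shell $\cS_{n_j}$ with $\mu := \nu|_{E \cap \cS_{n_j}}$. The key estimate is to control $K_\rho$ from above: for any upright box $Q \subset \cS_{n_j}$ with $\mathrm{side}(Q) = r \ge 1$, we have $\mu(Q) \le \nu(Q)$, and $\nu(Q) \le C r^d$ — for Lebesgue measure this is exact with $C=1$, and for counting measure on $\varepsilon\Z^d$ one has $\nu(Q) \le (r/\varepsilon + 1)^d \le (1+1/\varepsilon)^d r^d$ since $r \ge 1$. Therefore $\mu(Q)/r^\rho \le C r^{d-\rho} \le C \e^{n_j(d-\rho)}$ because $Q \subset \cS_{n_j}$ forces $r \le 2\e^{n_j}$ (absorb the factor $2^{d-\rho}$ into $C$). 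This yields $K_\rho \le C \e^{n_j(d-\rho)}$, and Lemma \ref{lem:Frostman} then gives
\begin{equation}
	\nu_\rho^{n_j}(E) \ge K_\rho^{-1} \e^{-n_j\rho} \mu(E)
	\ge \frac{1}{C}\,\e^{-n_j(d-\rho)}\,\e^{-n_j\rho}\cdot c'\,\e^{n_j d}
	= \frac{c'}{C}>0,
\end{equation}
a lower bound that does not depend on $j$.

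Summing over the (infinitely many) indices $n_j$ shows $\sum_{n=1}^\infty \nu_\rho^n(E) = \infty$, so $\rho$ is not in the set defining the infimum in \eqref{Def:Hdim}; since this holds for every $\rho < d$, we conclude $\Dimh E \ge d$, and combined with the trivial upper bound $\Dimh E \le d$ we get $\Dimh E = d$. The only mildly delicate point — the "main obstacle," such as it is — is the bookkeeping that extracts a shell-wise positive lower bound $\nu(E \cap \cS_{n_j}) \ge c'\e^{n_j d}$ from the ball-wise density statement; this is where one must be careful that the geometric factor $\e^{-d}$ coming from passing between $\cV_{n-1}$ and $\cV_n$ (and the constant relating $[-t,t]^d$ to $\cV_n$ when $t$ is rounded to $\e^n$) does not consume all of the density, which is handled simply by shrinking $c$ and choosing $t_j$ close to powers of $\e$. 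Everything else is the routine volume estimate $\nu(Q) \le Cr^d$ and a direct application of the Frostman-type lemma.
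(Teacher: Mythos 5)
The skeleton of your argument --- extract a shell-wise bound $\nu(E\cap\cS_{n_j})\ge c'\e^{n_j d}$ along infinitely many shells, then apply the Frostman-type Lemma \ref{lem:Frostman} to $\mu:=\nu|_{E\cap\cS_{n_j}}$ with the volume estimate $K_\rho\le C\e^{n_j(d-\rho)}$ to get $\nu^{n_j}_\rho(E)\ge c'/C$ for each $\rho<d$ --- is sound, and the second half does match the paper's Frostman-based route. But the step you yourself flag as the delicate one is exactly where your argument breaks. The inner box $\cV_{n-1}$ has $\nu$-measure equal to an $\e^{-d}$ fraction of that of $\cV_n$, so the subtraction you propose gives at best $\nu(E\cap\cS_{n_j})\ge\bigl(c-\e^{-d}\bigr)2^d\e^{n_j d}$ (in fact worse, since rounding $t_j$ to a power of $\e$ costs up to another factor $\e^{-d}$ in the lower bound on the mass). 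This is useful only when $c>\e^{-d}$; but $c$ must be taken below $\Den E$, and nothing prevents $\Den E\le\e^{-d}$, in which case the inner box really can swallow all of the mass guaranteed by the density. ``Shrink $c$ if needed'' goes in the wrong direction: you would need to enlarge $c$, which you cannot. So the claimed bound $\nu(E\cap\cS_{n_j})\ge c'\e^{n_j d}$ does not follow from the argument as written.

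That intermediate claim is true, but establishing it is precisely the content of the paper's proof, which handles it by changing the shell base: it works with base-$a$ shells $\cS_n(a)$, uses the Barlow--Taylor fact that the macroscopic (lower) Hausdorff dimension is independent of the base, and chooses $a>\max\{1,(\Den E)^{-1/d}\}$ so that the inner box, now only an $a^{-d}$-fraction of the outer one, cannot absorb the density mass. If you want to keep the base fixed at $\e$, you can instead argue by pigeonhole over boundedly many consecutive shells: choose $K$ with $\e^{-Kd}$ much smaller than $\Den E$; then for the radii $t_j$ witnessing the density, the mass of $E$ in $\cV_{n_j}\setminus\cV_{n_j-K}$ is at least a positive constant (depending only on $\Den E$, $K$, $d$) times $\e^{n_j d}$, and one of the $K$ shells $\cS_m$ with $n_j-K<m\le n_j$ carries at least a $1/K$ share of it, giving $\nu(E\cap\cS_m)\ge c'\e^{md}$ for infinitely many $m$. (Equivalently: if $\nu(E\cap\cS_n)/\e^{nd}\to0$ along all shells, a geometric-series estimate forces $\Den E=0$.) With that repair, the rest of your proposal --- the bound on $K_\rho$, including the counting-measure case, and the application of Lemma \ref{lem:Frostman} at every $\rho<d$ (the paper works at $\rho=d$ via $\lDimh$, which is equivalent for this purpose) --- goes through.
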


\begin{proof}
	We will prove the lemma only in the case that $\nu$ 
	denotes the Lebesgue measure. When
	$\nu$ is the counting measure on $\varepsilon\Z^d$, the
	result is proved in almost exactly the same way. 
	
	Barlow and Taylor \cite{BarlowTaylor,BarlowTaylor1} have introduced
	another large-scale notion of Hausdorff dimension, which we write 
	as follows:
	\begin{equation}\label{lDimh}
		\lDimh E :=  \lDimh (E):=
		\inf\left\{\rho>0:\ \lim_{n\to\infty}\nu^n_\rho(E)=0\right\}.
	\end{equation}
	It is easy to verify that $\lDimh E\le \Dimh E$ for all $E\subseteq\R^d$; therefore we might think
	of $\lDimh$ as the \emph{lower Hausdorff dimension}, in the macroscopic sense.
	Our goal is to prove the following somewhat stronger statement:
	\begin{equation}\label{eq:DenDim}
		\Den E>0\quad\Rightarrow\quad \lDimh E\ge d.
	\end{equation}
	It follows from \eqref{eq:DenDim} that
	$\Dimh E\ge d$.
	Since $\Dimh E\le \Dimh(\R^d)=d$ (see Barlow and Taylor
	\cite[Example 4.1]{BarlowTaylor}), \eqref{eq:DenDim}
	completes the proof.
	
	For every real number $a>1$ and integers $n\ge 0$ define
	\begin{equation}\begin{split}	
		\cV_n(a)&:=\left[-a^n\,, a^n\right)^d, \\
		\ \cS_0(a)&:=\cV_0(a),  \\
		\cS_{n+1}(a) & :=\cV_{n+1}(a)\setminus\cV_n(a).
	\end{split}\end{equation}
	Note, in particular, that $\cS_n=\cS_n(\e)$
	and $\cV_n=\cV_n(\e)$. Define
	for  every set $E\subset\R^d$,
	all real numbers $\rho>0$, and each integer $n\ge 0$,
	\begin{equation}
		\nu^n_\rho(E;a) := \min  \sum_{i=1}^m
		\left(\frac{{\rm side}(Q_i)}{a^n}\right)^\rho,
	\end{equation}
	where the minimum is taken over all upright boxes $Q_1,\ldots,Q_m$
	of side $\ge 1$ that cover $E\cap\cS_n(a)$. 
	
	Barlow and Taylor \cite[p.\ 127]{BarlowTaylor}
	have remarked that their
	macroscopic and lower Hausdorff dimensions do not depend on the
	choice of $a$; in particular,
	\begin{equation}\label{Dim:a}
		\lDimh E = 
		\inf\left\{\rho>0:\ \lim_{n\to\infty}\nu^n_\rho(E;a)
		=0\right\}\qquad\text{for all $a>1$}.
	\end{equation}
	In fact, the original
	construction of Barlow and Taylor  \cite{BarlowTaylor,BarlowTaylor1} 
	is similar to ours, but with $a=2$ and not $a=\e$, as is the case here.
	
	We may define a Borel measure $\mu$ on $E$ by setting
	\begin{equation}
		\mu(G) := |E\cap G|\qquad\text{for all $G$,}
	\end{equation}
	where $|\cdots|$ denotes the Lebesgue measure.
	Clearly,
	\begin{equation}
		K_d :=\sup\left\{ \frac{\mu(Q)}{r^d}:
		Q\in\mathcal{B},\ Q\subseteq\cS_n,\ {\rm side}(Q)\in[1\,,r],\ r\ge 1\right\}
		\le 1.
	\end{equation}
	Furthermore, 
	\begin{equation}
		\mu(\cV_n(a))\le |\cV_n(a)|=2^d a^{nd}
		\qquad\text{for all $n\ge 0$,} 
	\end{equation}
	and
	\begin{equation}
		\mu(\cV_{n+1}(a))\ge 2^d(\Den E - \delta_n)a^{(n+1)d},
	\end{equation} 
	for infinitely-many integers $n\ge 1$,
	where $\{\delta_n\}_{n=1}^\infty$ is a sequence that
	satisfies $\lim_{n\to\infty} \delta_n= 0$. Therefore,
	\begin{equation}
		\mu(\cS_{n+1}(a)) = \mu(\cV_{n+1}(a)) - \mu(\cV_n(a))
		\ge (a^d\Den E - \delta_n - 1) 2^d a^{nd},
	\end{equation}
	for infinitely-many integers $n\ge 0$. It is easy to adapt
	Lemma \ref{lem:Frostman} to an analogous statement about
	$\nu^n_\rho(E;\, a)$ for all choices of $a$ [not only $a=\e$]. That
	endeavor shows us that
	$\nu^n_d(E;a) \ge a^d\Den E - \delta_n - 1$
	for infinitely-many integers $n\ge 0$.
	Therefore, 
	\begin{equation}
		\limsup_{n\to\infty} \nu^n_d(E;a)>0,
	\end{equation}
	provided that we choose 
	$a>\max\{1\,,(\Den E)^{-1/d}\}$.
	This and \eqref{Dim:a} together imply \eqref{eq:DenDim} and hence the
	lemma.
\end{proof}

\section{Peaks of Brownian motion}

Consider the set of times, after time $t=\exp(\e)$ [say], at which the Brownian motion
has LIL-type behavior. That is, let us fix some parameter $\gamma>0$,
and consider the random set
\begin{equation}\label{eq:L_B}
	\cP_B(\gamma) := 
	\left\{ s\ge \e^\e:\ \frac{B(s)}{(2 s\log\log s)^{1/2}}
	\ge \gamma\right\}.
\end{equation}
We are using a notation that is consistent with that in
\eqref{LIL:X} and \eqref{L_X}, where
$g(x) := (2x\log_+\log_+ x)^{1/2}$ is the gauge function that comes
to us naturally from the standard law of the iterated logarithm for
Brownian motion. That is, the following:

\begin{proposition}[The law of the iterated logarithm]\label{pr:LIL1}
	With probability one: 
	\begin{enumerate}
		\item $\cP_B(\gamma)$ is unbounded a.s.\ when $\gamma\le 1$;
		\item $\cP_B(\gamma)$ is bounded a.s.\ when $\gamma> 1$.
	\end{enumerate}
\end{proposition}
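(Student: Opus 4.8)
\emph{Proof strategy.} This proposition is a reformulation of the law of the iterated logarithm for Brownian motion, and the only delicate point is the borderline value $\gamma=1$, for which one needs the sharp, integral-test form of the LIL rather than its bare $\limsup$ statement. First I would recall the Kolmogorov--Erd\H{o}s integral test for the upper functions of $B$: if $\psi:(0\,,\infty)\to(0\,,\infty)$ is nondecreasing, then $B(s)\ge\psi(s)$ for a sequence of times $s\to\infty$ with probability one or zero according as
\[
	I(\psi):=\int^\infty \frac{\psi(t)}{t^{3/2}}\,\e^{-\psi(t)^2/(2t)}\,\d t
\]
diverges or converges. This is classical and may be taken as known (it appears, for instance, in the books of It\^o--McKean and of Revuz--Yor).

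For part (2): fix $\gamma>1$ and apply the test with $\psi(s):=\gamma\,g(s)=\gamma(2s\log\log s)^{1/2}$. Then $\psi(t)^2/(2t)=\gamma^2\log\log t$, so the change of variables $u=\log t$ shows $I(\psi)\asymp\int^\infty u^{-\gamma^2}(\log u)^{1/2}\,\d u<\infty$ since $\gamma^2>1$. Hence, outside a null event, there is a finite random time $s_\star$ with $B(s)<\gamma\,g(s)$ for every $s\ge s_\star$, so every element of $\cP_B(\gamma)$ lies in $[\e^\e\,,s_\star\vee\e^\e]$; thus $\cP_B(\gamma)$ is bounded almost surely.

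For part (1): fix $\gamma\le1$ and apply the test with the same $\psi(s):=\gamma\,g(s)$. Now $I(\psi)\asymp\int^\infty u^{-\gamma^2}(\log u)^{1/2}\,\d u=\infty$ because $\gamma^2\le1$, so with probability one $B(s)\ge\gamma\,g(s)$ for arbitrarily large $s$, i.e.\ $\cP_B(\gamma)$ is unbounded almost surely. (When $\gamma<1$ one may instead simply invoke the $\limsup\ge1$ half of the ordinary LIL, since $\limsup_{s\to\infty}B(s)/g(s)=1>\gamma$ already forces $\cP_B(\gamma)$ to be unbounded; it is only the endpoint $\gamma=1$ that genuinely needs the integral test.)

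The main, and essentially only, obstacle is precisely this endpoint $\gamma=1$: the plain statement $\limsup_{s\to\infty}B(s)/g(s)=1$ a.s.\ does not by itself decide whether the ratio $B(s)/g(s)$ reaches the level $1$ for arbitrarily large $s$, so one is forced to use the refined form of the LIL; everything else reduces to the elementary fact that $\int^\infty u^{-a}(\log u)^{1/2}\,\d u$ converges exactly when $a>1$. Finally, since $\gamma\mapsto\cP_B(\gamma)$ is nonincreasing, intersecting the countably many exceptional null sets over a dense set of parameters $\gamma$ produces a single almost sure event on which the stated dichotomy holds simultaneously for all $\gamma>0$.
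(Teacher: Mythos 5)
Your proposal is correct and follows essentially the same route as the paper: the paper disposes of $\gamma<1$ and $\gamma>1$ by the textbook LIL and settles the critical case $\gamma=1$ by citing the integral test from Motoo's work, which is exactly the Kolmogorov--Erd\H{o}s criterion you invoke and whose integral you compute explicitly. Your verification that $I(\psi)\asymp\int^\infty u^{-\gamma^2}(\log u)^{1/2}\,\d u$ diverges precisely when $\gamma\le1$ is the detail the paper leaves to the cited reference, so there is nothing to add.
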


Actually, the standard textbook form of the LIL refers only to the case that
$\gamma<1$ and $\gamma>1$. The critical case $\gamma=1$ follows from
Motoo's work \cite[Example 2]{Motoo}. The following shows that when
$\cP_B(\gamma)$ is unbounded, that is when $\gamma \le1$,
it  is a macroscopic fractal of dimension one.
[Of course, $\cP_B(\gamma)$ does not have a 
remarkable macroscopic structure
when $\gamma>1$.]

\begin{theorem}\label{th:LIL:BM}
	Assertion \eqref{B:B} of Theorem \ref{B:OU} holds; that is,
	\begin{equation}
		\Dimh \cP_B(\gamma)=1
		\quad\text{a.s.\ for all $\gamma\le1$.}
	\end{equation}
\end{theorem}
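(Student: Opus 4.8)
The plan is to prove $\Dimh\cP_B(\gamma)=1$ for all $\gamma\le 1$ by establishing the lower bound $\Dimh\cP_B(\gamma)\ge 1$; the matching upper bound is free since $\Dimh E\le\Dimh\R=1$ for any $E\subseteq\R$ (as noted in the excerpt). Since $\cP_B(\gamma_1)\subseteq\cP_B(\gamma_2)$ when $\gamma_1\ge\gamma_2$, it suffices to treat the hardest case $\gamma=1$. By Lemma~\ref{lem:Den:Dim}, it is enough to produce a random subset $E\subseteq\cP_B(1)$ (or a discretized version $E\subseteq\Z$) whose upper density with respect to counting measure is strictly positive almost surely; in fact, because of \eqref{Eq:BT92} it is cleanest to work on the integer lattice and show that the set of integers $k$ for which $B$ has an LIL-sized peak somewhere in $[k,k+1)$ has positive upper density.

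First I would set up the discretization: for each integer $k\ge K_0$ (with $K_0$ chosen so that $\e^\e\le K_0$ and $\log\log$ is well-defined and slowly varying on $[K_0,\infty)$), let $A_k$ be the event that there exists $s\in[2k,2k+1)$, say, with $B(s)\ge(2s\log\log s)^{1/2}$. The key probabilistic input is a quantitative, non-asymptotic version of one half of the LIL: on a suitable subinterval, $\P(A_k)$ is bounded below by a constant $c>0$ independent of $k$ (for $\gamma=1$ this is delicate and amounts to a small-deviation/barrier estimate, since the peak has to reach \emph{exactly} the LIL level). To get a genuine positive-density statement rather than just infinitely many good $k$, I would then run a second-moment / Borel--Cantelli-type argument for the \emph{frequency}: partition $[K_0,\infty)$ into blocks, use the near-independence of Brownian increments on well-separated intervals to show that in each long block a positive proportion of the integers $k$ satisfy $A_k$ with high probability, and conclude by a law-of-large-numbers argument (e.g.\ a fourth-moment bound on $\sum_{k\in\text{block}}(\1_{A_k}-\P(A_k))$, exploiting that $A_k$ and $A_\ell$ are independent once $|k-\ell|$ is large since they depend on Brownian increments over disjoint far-apart time windows). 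This yields $\limsup_{N\to\infty} N^{-1}\#\{k\le N: A_k\}\ge c'>0$ a.s., hence $\Den E>0$ for the lattice set $E:=\{k:A_k\ holds\}$, and Lemma~\ref{lem:Den:Dim} gives $\Dimh E=1$; since (after translating from $[2k,2k+1)$ back) $E$ is contained in $(\cP_B(1))^z(r)$ for a fixed $r$, \eqref{Eq:BT92} and monotonicity of $\Dimh$ finish the proof.

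The main obstacle I anticipate is the lower bound $\P(A_k)\ge c>0$ at the critical level $\gamma=1$. For $\gamma<1$ this is routine — one just needs $B$ to exceed $\gamma(2s\log\log s)^{1/2}$ at a single well-chosen time, and a Gaussian tail estimate $\P(B(s)\ge\gamma(2s\log\log s)^{1/2})\asymp(\log\log s)^{-1/2}\cdot\exp(-\gamma^2\log\log s)\cdot(\dots)$ combined with a spatial-concentration argument over a window of length $\Theta(s)$ (so that there are $\Theta(\log\log s)$ essentially independent tries) pushes the probability up to a constant; but at $\gamma=1$ the exponent $\exp(-\log\log s)=(\log s)^{-1}$ balances the number of independent tries only marginally, so one must be careful with the polynomial corrections. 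A clean way around this is to invoke Motoo's integral test directly: Motoo's work \cite{Motoo} already tells us $\cP_B(1)$ is unbounded, and more precisely the set of ``record-level" times has a controlled frequency; alternatively one can use the time-inversion $B(s)\mapsto sB(1/s)$ to reduce the upper-tail LIL at infinity to a lower-tail (small-ball) statement near $0$, where the relevant estimates are standard. Either route gives the required uniform lower bound on the block frequencies. The remaining steps — the fourth-moment concentration, the passage to the lattice via \eqref{Eq:BT92}, and the appeal to Lemma~\ref{lem:Den:Dim} — are routine once the density lower bound is in hand.
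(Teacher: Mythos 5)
There is a genuine gap: the key probabilistic input of your plan is false, and the density route cannot work at the critical level. Over a unit window near time $k$ the normalized process $s\mapsto B(s)/(2s\log\log s)^{1/2}$ is essentially constant, so $\P(A_k)\asymp\P\{B(k)\ge(2k\log\log k)^{1/2}\}\asymp \bigl((\log k)(\log\log k)^{1/2}\bigr)^{-1}\to 0$; there is no uniform constant lower bound, and even allowing windows of length $\Theta(s)$ the exceedance probability still tends to $0$ (the number of effectively independent tries at level $(2\log\log s)^{1/2}$ in such a window is only $O(\log\log s)$, which cannot compensate the factor $(\log s)^{-\gamma^2}$). Consequently the expected number of unit boxes in $[0,N]$ meeting $\cP_B(1)$ is $o(N)$, and no block law-of-large-numbers or fourth-moment argument can produce positive counting density---such arguments can only ever give a density of order $\P(A_k)$. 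In fact the paper's Proposition \ref{pr:Strassen} (Strassen) records that $\Den\cP_B(1)=0$ a.s.: at $\gamma=1$ the peak set genuinely has zero density, so Lemma \ref{lem:Den:Dim} cannot be applied to it or to any subset of it, and passing to the lattice via \eqref{Eq:BT92} does not change this. (Even for $\gamma<1$, where the upper density is positive, that positivity comes from rare exceptional scales captured by the limsup in the definition of $\Den$---this is Strassen's theorem---not from a per-block positive probability, so your within-block concentration step would fail there as well.) Your fallbacks do not close the gap: Motoo's integral test yields only unboundedness of $\cP_B(1)$, and time inversion does not give a density bound.

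The paper's proof avoids density altogether at criticality. It works with the occupation measure $\mu(G):=|\cP_B(1)\cap G|$ and proves (Proposition \ref{pr:LIL}) that $S_\infty:=\sum_{n\ge 4}\e^{-n}\mu(\cS_n)=\infty$ a.s., via a second-moment argument: with $S_N:=\sum_{n=4}^N\e^{-n}\mu(\cS_n)$, the tail estimate \eqref{eq:P(E)} gives $\E S_N\gtrsim(\log N)^{1/2}$, the Orey--Pruitt correlation bounds (Lemmas \ref{lem:P(EcapE):far} and \ref{lem:P(EcapE):close}) give $\E(S_N^2)=O\bigl((\E S_N)^2\bigr)$, and then Paley--Zygmund plus the Hewitt--Savage $0$--$1$ law finish. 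The Frostman-type Lemma \ref{lem:Frostman} (with $K_1=1$) converts this divergence into $\sum_n\nu^n_1(\cP_B(1))=\infty$, i.e.\ $\Dimh\cP_B(1)\ge 1$, and monotonicity handles $\gamma<1$ exactly as you intended. Note that the divergence is only of order $(\log N)^{1/2}$: full macroscopic dimension requires only this logarithmically slow divergence of the weighted occupation sums, which is precisely why it can coexist with zero density. If you wish to salvage your outline, replace the positive-density target by this weighted occupation-sum target.
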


In the subcritical case where $\gamma<1$, 
Theorem \ref{th:LIL:BM} follows immediately from
Lemma \ref{lem:Den:Dim} and the next statement.

\begin{proposition}[Strassen \cite{Strassen}]\label{pr:Strassen}
	Let $\nu$ denote the Lebesgue measure on $\R^d$.
	Then the following assertions are valid a.s.:
	\begin{enumerate}
		\item $\Den\cP_B(\gamma)>0$  when $\gamma<1$; and
		\item $\Den\cP_B(1)=0$.
	\end{enumerate}
\end{proposition}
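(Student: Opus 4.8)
The plan is to derive both assertions from Strassen's functional law of the iterated logarithm \cite{Strassen}. Write $\eta_t(r):=B(tr)/(2t\log\log t)^{1/2}$ for $r\in[0\,,1]$ and $t$ large; then, with probability one, the family $\{\eta_t\}$ is relatively compact in $(C[0\,,1]\,,\|\cdot\|_\infty)$ and its set of limit points, as $t\to\infty$, equals the Strassen ball
\[
	\mathcal{K}:=\left\{h\in C[0\,,1]:\ h(0)=0,\ h\text{ absolutely continuous},\ \int_0^1|\dot h(r)|^2\,\d r\le 1\right\}.
\]
I will use two consequences: (a) $\operatorname{dist}_{\|\cdot\|_\infty}(\eta_t\,,\mathcal{K})\to0$ a.s.\ as $t\to\infty$; and (b) a.s.\ every fixed $h\in\mathcal{K}$ is a subsequential limit of $\{\eta_t\}$. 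I will also use that $\log\log t/\log\log(tr)\to1$ uniformly for $r$ in compact subsets of $(0\,,1]$, and the Cauchy--Schwarz bound $h(r)\le r^{1/2}(\int_0^r|\dot h(u)|^2\,\d u)^{1/2}\le\sqrt r$ valid for all $h\in\mathcal{K}$.

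\emph{Part (1).} Fix $\gamma<1$, choose $\gamma''\in(\gamma\,,1)$ and then $c\in(\gamma''\,,1)$. First I would check that the linear map $h(r):=cr$ lies in $\mathcal{K}$ and that $h(r)\ge\gamma''\sqrt r+\delta_0$ for all $r\in[a\,,1]$, for a suitable $a\in(0\,,1)$ and $\delta_0>0$ (this holds as soon as $a>(\gamma''/c)^2$). By (b) there is a.s.\ a sequence $t_k\uparrow\infty$ with $\|\eta_{t_k}-h\|_\infty<\delta_0/2$; then, since $\log\log(t_kr)\le\log\log t_k$ for $r\le1$,
\[
	\frac{B(t_kr)}{(2t_kr\log\log(t_kr))^{1/2}}=\frac{\eta_{t_k}(r)}{\sqrt r}\left(\frac{\log\log t_k}{\log\log(t_kr)}\right)^{1/2}\ge\frac{h(r)-\delta_0/2}{\sqrt r}\ge\gamma''+\frac{\delta_0}{2\sqrt r}>\gamma
\]
for all $r\in[a\,,1]$, so that $[a t_k\,,t_k]\subseteq\cP_B(\gamma)$ once $a t_k\ge\e^\e$. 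Hence $\Den\cP_B(\gamma)\ge\limsup_k(1-a)t_k/(2t_k)=(1-a)/2>0$, which is the first assertion.

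\emph{Part (2).} This is the harder direction and, I expect, the main obstacle; it will hinge on a deterministic rigidity property of $\mathcal{K}$: there is a constant $C$ so that, for all $\delta\in(0\,,1/2)$ and all $h\in\mathcal{K}$,
\[
	\bigl|\{r\in[0\,,1]:\ h(r)\ge(1-\delta)\sqrt r\}\bigr|\le C\sqrt\delta .
\]
To prove this I would argue that if $r_1<r_2$ both lie in this set then $\int_0^{r_1}|\dot h|^2\ge h(r_1)^2/r_1\ge(1-\delta)^2$, hence $\int_{r_1}^{r_2}|\dot h|^2\le2\delta$, so Cauchy--Schwarz gives $h(r_2)\le h(r_1)+\sqrt{2\delta}\le\sqrt{r_1}+\sqrt{2\delta}$, while also $h(r_2)\ge(1-\delta)\sqrt{r_2}$; solving for $r_2-r_1$ then yields $r_2-r_1=O(\sqrt\delta)$, so the whole set has diameter $O(\sqrt\delta)$. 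Granting this, fix $\epsilon>0$ and $\theta\in(0\,,1)$. By (a), for all large $t$ there is $h\in\mathcal{K}$ with $\|\eta_t-h\|_\infty<\epsilon$; for $s=tr$ with $r\in[\theta\,,1]$, the inequality $B(s)\ge(2s\log\log s)^{1/2}$ forces $B(s)\ge0$, and then, using $\log\log t/\log\log(tr)\to1$ on $[\theta\,,1]$, forces $h(r)\ge(1-\delta_t)\sqrt r$ with $\delta_t\to\epsilon/\sqrt\theta$ as $t\to\infty$. The displayed property then gives $\bigl|\cP_B(1)\cap[\theta t\,,t]\bigr|\le t\,C\sqrt{\delta_t}$. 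Finally I would cover $[\e^\e\,,t]$ by the geometric blocks $[\theta^{k+1}t\,,\theta^k t]$, apply this bound on each, sum, let $t\to\infty$ and then $\epsilon\downarrow0$ to conclude $\Den\cP_B(1)=0$ a.s. (Alternatively, split $[\e^\e\,,t]$ at $\sqrt t$: the piece $[\e^\e\,,\sqrt t]$ has length $o(t)$, and on $[\sqrt t\,,t]$ the argument above applies directly.)

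The remaining steps are essentially bookkeeping: checking $h\in\mathcal{K}$ in Part (1), controlling the slowly-varying factor $\log\log t/\log\log(tr)$, and organizing the block decomposition in Part (2). I want to stress that the rigidity estimate for $\mathcal{K}$ really is the heart of Part (2): a first- or second-moment computation does not suffice, because $\E|\cP_B(1)\cap[0\,,t]|$ is of order $t/\log t$, which is too large to push a Borel--Cantelli/Chebyshev argument through along a geometric sequence of scales; one genuinely needs that any $h\in\mathcal{K}$ can stay near the barrier $\sqrt r$ on only a very short interval.
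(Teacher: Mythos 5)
Your argument is correct, but it takes a genuinely different route from the paper. The paper's proof is a one-line citation: it adapts equation (11) of Strassen's paper (a random-walk statement about the relative frequency of times the path exceeds the LIL envelope at level $\gamma$) to Brownian motion, obtaining the exact formula $\Den\cP_B(\gamma)=1-\exp\{-4(\gamma^{-2}-1)\}$ for $\gamma\le1$, from which both assertions are immediate. You instead work from Strassen's \emph{functional} LIL: part (1) comes from hitting the explicit element $h(r)=cr$ of the Strassen ball $\mathcal{K}$, and part (2) from your rigidity estimate that any $h\in\mathcal{K}$ can satisfy $h(r)\ge(1-\delta)\sqrt r$ only on a set of measure $O(\sqrt\delta)$, combined with a geometric block decomposition. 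What the paper's route buys is brevity and the exact density spectrum in $\gamma$; what yours buys is a self-contained qualitative proof (given the functional LIL) that isolates the deterministic mechanism --- near-extremal paths can hug the boundary $\sqrt r$ only briefly --- and your closing remark that a moment/Borel--Cantelli computation cannot reach $\Den\cP_B(1)=0$ is apt, since $\E\left|\cP_B(1)\cap[0,t]\right|$ is of order $t/\log t$. Two small caveats: in the rigidity lemma the point $r=0$ always belongs to the exceptional set, so the diameter argument should be run on $(0,1]$ (harmless, measure zero); and the parenthetical alternative of splitting at $\sqrt t$ does \emph{not} apply ``directly,'' because on $[\sqrt t\,,t]$ the ratio $r=s/t$ is not bounded away from $0$ and the additive error $\epsilon$ then swamps $\sqrt r$ near $r\approx t^{-1/2}$ --- one would have to excise $r<\epsilon$ as well, or simply keep your main block argument, which is sound as written.
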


\begin{proof}
	One can easily adapt a result of Strassen \cite[eq.\ (11)]{Strassen} about random
	walks to a statement about linear Brownian motion 
	in order to see that with probability one,
	\begin{equation}
		\Den\cP_B(\gamma) = 1-\exp\left\{ 
		-4\left(\frac{1}{\gamma^2}-1 \right)\right\},
	\end{equation}
	as long as $\gamma\le 1$. This does the job.
\end{proof}

Proposition \ref{pr:Strassen} shows that
Theorem \ref{th:LIL:BM} is interesting mostly in the critical case.
In the critical case, the random set
$\cP_B(1)$  is comprised of tall peaks of maximum possible 
asymptotic height. And Theorem \ref{th:LIL:BM} shows
that the set of tall peaks of critical height
has full dimension although it has zero density [Proposition
\ref{pr:Strassen}].

With the preceding remarks in mind, let us consider the following
random Borel measure that is supported in $\cP_B(1)$:
\begin{equation}\label{eq:Jn}
	\mu(G)
	:=\left\vert \cP_B(1) \cap G\right\vert
	\qquad\text{for all Borel sets $G\subseteq[4\,,\infty)$},
\end{equation}
where $\vert\,\cdots\vert$ denotes the
1-dimensional Lebesgue measure. 
The following is the key step in the proof of Theorem \ref{th:LIL:BM}.

\begin{proposition}\label{pr:LIL}
	$\sum_{n=4}^\infty\e^{-n} \mu(\cS_n) =\infty$ a.s.
\end{proposition}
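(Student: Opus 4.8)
Write $g(s):=(2s\log\log s)^{1/2}$ for the gauge function and, for integers $n\ge4$,
\[
	X_n:=\e^{-n}\mu(\cS_n)=\e^{-n}\bigl|\bigl\{s\in[\e^{n-1},\e^n):\ B(s)\ge g(s)\bigr\}\bigr|,\qquad S_N:=\sum_{n=4}^{N}X_n ,
\]
so that $0\le X_n\le1-\e^{-1}$ and the claim is that $S_\infty:=\lim_{N\to\infty}S_N=\infty$ almost surely. The plan is a second--moment (variance) argument. Because $(S_N)$ is nondecreasing, it is enough to prove
\[
	\E[S_N]\to\infty\qquad\text{and}\qquad \E[S_N^2]\le(1+o(1))\bigl(\E[S_N]\bigr)^2 ;
\]
Chebyshev's inequality then gives $\P\{S_N\le M\}\to0$ for each fixed $M$, whence $\P\{S_\infty\le M\}=\lim_N\P\{S_N\le M\}=0$ and $S_\infty=\infty$ a.s. (Alternatively one may settle for $\E[S_N^2]\le C(\E[S_N])^2$, deduce $\P\{S_\infty=\infty\}>0$ from the Paley--Zygmund inequality, and then invoke the Blumenthal $0$--$1$ law for the time--reversed Brownian motion, since $\{S_\infty=\infty\}\in\bigcap_{T>0}\sigma\{B(r):r\ge T\}$.)

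\emph{First moment and the routine cross terms.} By Tonelli and Brownian scaling, $\E[X_n]=\e^{-n}\int_{\e^{n-1}}^{\e^n}\bar\Phi\bigl((2\log\log s)^{1/2}\bigr)\,\d s$, where $\bar\Phi(x):=\P\{B(1)\ge x\}$; the classical bounds $\tfrac{x}{x^2+1}\le\sqrt{2\pi}\,\e^{x^2/2}\,\bar\Phi(x)\le\tfrac1x$ give $\bar\Phi\bigl((2\log\log s)^{1/2}\bigr)\asymp(\log s)^{-1}(\log\log s)^{-1/2}$, and an elementary computation yields $\E[X_n]\asymp n^{-1}(\log n)^{-1/2}$ and hence $\E[S_N]\asymp(\log N)^{1/2}\to\infty$. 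Writing $\E[S_N^2]=\sum_n\E[X_n^2]+2\sum_{4\le m<n\le N}\E[X_mX_n]$, the diagonal is controlled by $\E[X_n^2]\le(1-\e^{-1})\E[X_n]$, so $\sum_n\E[X_n^2]\le(1-\e^{-1})\E[S_N]=o\bigl((\E[S_N])^2\bigr)$ since $\E[S_N]\to\infty$. Fix a slowly growing separation $K_N:=\lceil3\log\log N\rceil$ and call a pair $m<n$ \emph{close} if $n\le m+K_N$, \emph{far} otherwise. For close pairs, $\E[X_mX_n]\le\tfrac12(\E[X_m^2]+\E[X_n^2])\le\tfrac{1-\e^{-1}}2(\E[X_m]+\E[X_n])$; since each index lies in $\le K_N$ close pairs, $\sum_{\rm close}\E[X_mX_n]\le(1-\e^{-1})K_N\E[S_N]\asymp(\log\log N)(\log N)^{1/2}=o\bigl((\E[S_N])^2\bigr)$.

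\emph{Far pairs: asymptotic decoupling (the crux).} It remains to prove, uniformly over far pairs $m<n\le N$, that $\E[X_mX_n]\le(1+\eta_N)\E[X_m]\E[X_n]$ with $\eta_N\to0$; granting this, $2\sum_{\rm far}\E[X_mX_n]\le(1+\eta_N)(\E[S_N])^2$ and the three estimates combine to $\E[S_N^2]\le(1+o(1))(\E[S_N])^2$, completing the proof. By Tonelli this reduces to
\[
	\P\{B(s)\ge g(s),\ B(t)\ge g(t)\}\le(1+\eta_N)\,\P\{B(s)\ge g(s)\}\,\P\{B(t)\ge g(t)\}
\]
for $s\in[\e^{m-1},\e^m)$, $t\in[\e^{n-1},\e^n)$ with $n-m>K_N$; here the only relevant feature is that $t/s>\e^{K_N-1}\ge(\log N)^3/\e$. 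Fix such $s<t$. By the Markov property, conditionally on $\F_s:=\sigma\{B(r):r\le s\}$ we may write $B(t)=B(s)+W$ with $W\sim\mathcal N(0,t-s)$ independent of $\F_s$ and $t-s=(1-o(1))t$, and we truncate $B(s)$ at level $c_N g(s)$, $c_N:=(2\log\log N)^{1/2}$. On $\{g(s)\le B(s)<c_N g(s)\}$, the quantitative bounds $c_Ng(s)=o\bigl(g(t)\bigr)$ and $\sqrt{t-s}=(1-o(1))\sqrt t$ (both uniform, thanks to $t/s\ge(\log N)^3/\e$) give $\bigl(g(t)-B(s)\bigr)/\sqrt{t-s}\ge(2\log\log t)^{1/2}-\delta_N$ with $\delta_N\,(2\log\log t)^{1/2}\to0$; since $\bar\Phi(x-\delta)\le(1+o(1))\bar\Phi(x)$ as $x\to\infty$ with $\delta x\to0$, this forces $\P\{B(t)\ge g(t)\mid\F_s\}\le(1+o(1))\bar\Phi\bigl((2\log\log t)^{1/2}\bigr)=(1+o(1))\P\{B(t)\ge g(t)\}$, uniformly, so integrating over $B(s)\in[g(s),c_Ng(s))$ bounds this part of the joint probability by $(1+o(1))\P\{B(s)\ge g(s)\}\P\{B(t)\ge g(t)\}$. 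On the exceptional event $\{B(s)\ge c_N g(s)\}$, a routine conditioning argument (bound $\P\{B(t)\ge g(t)\mid B(s)=a\}\le\tfrac12\e^{-(g(t)-a)^2/(2(t-s))}$ for $a<g(t)$ and complete the square in $a$: the resulting Gaussian in $a$ has mode $sg(t)/t=o(c_Ng(s))$, so truncation at $c_Ng(s)$ costs a factor at most $\tfrac12(\log s)^{-c_N^2/4}$) shows this part of the joint probability is $\le\tfrac14(\log t)^{-1}(\log s)^{-c_N^2/4}$ plus a doubly--exponentially small remainder from $\{B(s)\ge g(t)\}$; because $c_N^2=2\log\log N$, the gain $(\log s)^{-c_N^2/4}=(\log s)^{-\frac12\log\log N}$ overwhelms the at most logarithmic--in--$N$ correction factors, so this part is also $o\bigl(\P\{B(s)\ge g(s)\}\P\{B(t)\ge g(t)\}\bigr)$, uniformly. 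Adding the two contributions proves the displayed inequality, hence the decoupling.

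\emph{Main obstacle.} The delicate step is the far--pair decoupling just sketched. Each individual inequality is elementary, but one must verify that the error $\eta_N$ tends to $0$ \emph{uniformly} over the enormous range $m<n\le N$, and this is precisely what pins down the three auxiliary scales and forces them to be tuned together: $K_N\asymp c_N^2\asymp\log\log N$, which must in turn be $o\bigl(\E[S_N]\bigr)=o\bigl((\log N)^{1/2}\bigr)$ so that the diagonal and close--pair sums stay negligible. Conceptually the decoupling expresses the fact that once $t/s$ exceeds a power of $\log N$, the variance $t-s$ of the fresh increment $W$ dominates both $g(s)^2$ and the typical overshoot of $B(s)$ above $g(s)$, so the value of $B(s)$ carries essentially no information about whether $B(t)$ will clear its iterated--logarithm barrier; the remainder of the argument is Gaussian--tail bookkeeping.
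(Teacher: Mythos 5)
Your argument is correct, but it takes a genuinely different route from the paper's. The paper controls the cross terms of the second moment with the Orey--Pruitt bivariate-normal inequalities: for pairs of shells separated by order $\alpha\log n$ it gets $\P(\mathcal{E}_t\mid\mathcal{E}_s)\le c\,\P(\mathcal{E}_t)$ with an unspecified constant $c$, and for closer pairs it uses the decay $\P(\mathcal{E}_t\mid\mathcal{E}_s)\le c(\log s)^{-(t-s)/(4t)}$; this only yields $\E(S_N^2)=O\bigl((\E S_N)^2\bigr)$, so the paper must finish with Paley--Zygmund plus the Hewitt--Savage $0$--$1$ law. You instead treat close pairs trivially via $X_n\le 1-\e^{-1}$ (possible because your separation threshold is only $K_N\asymp\log\log N$ shells rather than $\asymp\log n$), and for far pairs you prove the sharper asymptotic decoupling $\E[X_mX_n]\le(1+o(1))\E[X_m]\E[X_n]$ by conditioning on $\F_s$ and truncating $B(s)$ at $c_Ng(s)$. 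That upgrade from $O(1)$ to $1+o(1)$ buys a cleaner endgame: $\Var S_N=o\bigl((\E S_N)^2\bigr)$ and Chebyshev give $S_\infty=\infty$ a.s.\ directly, with no zero--one law (the paper's appeal to Hewitt--Savage is itself slightly informal for Brownian motion, so this is a real simplification); the price is a more delicate uniformity analysis over the whole range $4\le m<n\le N$. I checked the quantitative claims and they hold, but one sentence overstates the margin: with your stated truncation cost $(\log s)^{-c_N^2/4}=(\log s)^{-\frac12\log\log N}$, the ``gain'' at the smallest shells ($\log s\approx 3$) is only about $(\log N)^{-(\log 3)/2}\approx(\log N)^{-0.55}$, which does \emph{not} overwhelm arbitrary logarithmic-in-$N$ corrections; after cancelling $(\log t)^{-1}$ the relevant correction is $(\log s)(\log\log s)^{1/2}(\log\log t)^{1/2}=O\bigl((\log N)^{1/2}\bigr)$ there, so the exceptional term is $o(\,\cdot\,)$ only by the razor-thin factor $(\log N)^{(1-\log 3)/2}$. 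The step is sound as stated, but it becomes comfortable if you record what completing the square actually gives, namely a truncation cost $(\log s)^{-(1-o(1))c_N^2}$, since the conditional law of $B(s)$ given $B(t)=g(t)$ has mean $sg(t)/t=o(c_Ng(s))$ and variance $s(t-s)/t\le s$.
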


We will begin our proof of Proposition \ref{pr:LIL} shortly. But first,
let us apply this proposition in order to establish Theorem \ref{th:LIL:BM}.

\begin{proof}[Proof of Theorem \ref{th:LIL:BM}]
	The preceding remarks tell us that we need to only consider the
	critical case, $\gamma=1$. Because $\mu[x\,,x+r) \le r,$ it follows that $K_1=1$,
	where $K_\rho$ was defined in \eqref{eq:K:rho}.
	Lemma \ref{lem:Frostman} and Proposition \ref{pr:LIL}
	together imply that
	\begin{equation}
		\sum_{n=4}^\infty\nu^n_1(\cP_B(1)\cap\cS_n)
		\ge \sum_{n=4}^\infty\e^{-n}\mu(\cS_n) =\infty
		\quad\text{a.s.}
	\end{equation}
	In particular, it follows that $\Dimh \cP_B(1)\ge 1$ a.s.,
	which is the desired result.
\end{proof}
 
 In order to derive Proposition \ref{pr:LIL}, let us consider the events
\begin{equation}
	\mathcal{E}_t := \left\{ \omega\in\Omega:\ B(t)(\omega) >
	(2 t\log\log t)^{1/2}\right\}\qquad\text{for all $t\ge 4$}.
\end{equation}

It is easy to see from l'H\^opital's rule that if $X$ has the standard
normal distribution, then $\P\{X> z\}$ is to within a multiplicative 
constant of $z^{-1}\exp(-z^2/2)$ uniformly for all $z\ge 1$.
The following is a consequence of this fact
and the strict positivity and the continuity of the Gaussian density function:
There exists a finite constant $c>1$ such that
\begin{equation}\label{eq:P(E)}
	\frac{1}{c\log t (\log\log t)^{1/2}}
	\le \P\left(\mathcal{E}_t\right) \le
	\frac{c}{\log t(\log\log t)^{1/2}}
	\qquad\text{for all $t\ge 4$}.
\end{equation}

Next we estimate $\P(\mathcal{E}_t\,|\,\mathcal{E}_s)$ for
various choices of $s<t$. The first quantifies the well-known
qualitative fact that $\mathcal{E}_s$ and $\mathcal{E}_t$
are approximately independent when $t\gg s$.

\begin{lemma}\label{lem:P(EcapE):far}
	For all real numbers $t>s>4$,
	\begin{equation}
		t>4s(\log\log s)(\log\log t)\quad\Rightarrow\quad
		\P(\mathcal{E}_t\,|\,\mathcal{E}_s) \le c \P(\mathcal{E}_t),
	\end{equation}
	where $c\in(0\,,\infty)$ does not depend on $(s\,,t)$.
\end{lemma}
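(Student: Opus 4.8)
The plan is to reduce everything to a two-dimensional Gaussian computation. Write $B(s)$ and $B(t)$ as a jointly Gaussian pair: $B(s)\sim N(0,s)$, and $B(t)=B(s)+(B(t)-B(s))$ where the increment $B(t)-B(s)\sim N(0,t-s)$ is independent of $B(s)$. Abbreviate $a(s):=(2s\log\log s)^{1/2}$, so that $\mathcal{E}_s=\{B(s)>a(s)\}$ and similarly for $t$. We want to bound $\P(\mathcal{E}_t\mid\mathcal{E}_s)=\P(\mathcal{E}_t\cap\mathcal{E}_s)/\P(\mathcal{E}_s)$. The heuristic is that on $\mathcal{E}_s$ the value $B(s)$ is of order $a(s)$, which is $o(a(t))$ when $t$ is much larger than $s$; so $B(t)>a(t)$ is essentially forced to come from the independent increment, and $\P(B(t)-B(s)>a(t)-B(s)\mid B(s))\approx\P(B(t)>a(t))$ up to constants, uniformly over the relevant range of $B(s)$.

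To make this precise I would condition on $B(s)$ and split the event $\mathcal{E}_s$ according to the size of $B(s)$: fix a (large, absolute) constant $\Lambda$ and write $\mathcal{E}_s=\{a(s)<B(s)\le \Lambda a(s)\}\cup\{B(s)>\Lambda a(s)\}$. On the first piece, for each value $v\in(a(s),\Lambda a(s)]$ of $B(s)$ we need
$$
\P\bigl(B(t)-B(s)>a(t)-v\bigr)=\P\bigl(N(0,t-s)>a(t)-v\bigr),
$$
and since $0\le v\le\Lambda a(s)$ and the hypothesis $t>4s(\log\log s)(\log\log t)$ gives $a(s)=o(a(t))$ (indeed $a(s)^2/a(t)^2=(s\log\log s)/(t\log\log t)<1/4$), we have $a(t)-v\ge a(t)(1-\varepsilon_{s,t})$ with $\varepsilon_{s,t}\to 0$; also $t-s$ and $t$ are comparable since $s\le t/4$. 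Using the two-sided Mills-ratio-type estimate behind \eqref{eq:P(E)} — namely that $\P\{N(0,\sigma^2)>z\}$ is comparable to $(\sigma/z)\exp(-z^2/(2\sigma^2))$ for $z/\sigma\ge 1$ — one checks that $\P(N(0,t-s)>a(t)-v)\le C\,\P(\mathcal{E}_t)$ with $C$ absolute (the exponential ratio is $\exp\{-[(a(t)-v)^2-a(t)^2]/(2(t-s))\}\cdot(\text{poly})$, and $(a(t)-v)^2-a(t)^2\ge -2v\,a(t)\ge -2\Lambda a(s)a(t)$, with $a(s)a(t)/(t-s)=o(1)$ by the same estimate $a(s)^2/a(t)^2<1/4$ together with $t-s\asymp t$). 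Integrating over $v$ against the conditional law and dividing by $\P(\mathcal{E}_s)$ bounds the contribution of the first piece by $C\,\P(\mathcal{E}_t)$. For the second piece, $\{B(s)>\Lambda a(s)\}$, one simply bounds $\P(\mathcal{E}_t\cap\{B(s)>\Lambda a(s)\})\le\P(B(s)>\Lambda a(s))$ and uses that $\P(B(s)>\Lambda a(s))/\P(\mathcal{E}_s)\le C\,\exp(-(\Lambda^2-1)\log\log s)=C(\log s)^{-(\Lambda^2-1)}$ by \eqref{eq:P(E)}; choosing $\Lambda$ large enough (e.g. $\Lambda^2\ge 3$) this is bounded, and in fact one checks it is $\le C\,\P(\mathcal{E}_t)$ since $\P(\mathcal{E}_t)\gtrsim(\log t\,(\log\log t)^{1/2})^{-1}$ and $(\log s)^{-(\Lambda^2-1)}$ can be absorbed when $\Lambda$ is large because $t$ need not be bounded below in terms of a fixed power of $s$ — here one may have to argue slightly more carefully, peeling off instead at level $\Lambda=\Lambda_{s,t}$ growing slowly, or noting $\log t\ge \log s$, so $(\log s)^{-(\Lambda^2-1)}\log t\,(\log\log t)^{1/2}$ is controlled once $\Lambda^2-1\ge 2$ and $t\le$ any fixed polynomial in $s$, and treating $t$ super-polynomial in $s$ by the even cruder bound $\P(\mathcal{E}_t\mid\mathcal{E}_s)\le\P(\mathcal{E}_t)/\P(\mathcal{E}_s)\cdot\P(\mathcal{E}_s)$ is not quite it — so the cleanest route is to keep $\Lambda$ absolute and absorb both pieces into $C\,\P(\mathcal{E}_t)$ using only $t\ge t$, $\log t\ge\log s$, and $a(s)^2\le a(t)^2/4$.

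The main obstacle I anticipate is exactly this bookkeeping near the tail of $B(s)$ on $\mathcal{E}_s$: the conditional probability $\P(\mathcal{E}_t\mid B(s)=v)$ genuinely grows in $v$, so one cannot ignore the dependence of the bound on $v$, and one must show that the growth is killed by the Gaussian decay of $\P(B(s)=v\mid\mathcal{E}_s)$ — i.e. the integral $\int_{a(s)}^\infty \P(N(0,t-s)>a(t)-v)\,\P(B(s)\in dv\mid\mathcal{E}_s)$ is dominated by its value near $v\approx a(s)$. This is a Laplace-type estimate: the integrand behaves like $\exp\{-(a(t)-v)^2/(2(t-s)) - (v^2-a(s)^2)/(2s)\}$ up to polynomial factors, and since $t-s\gg s$ (because $t>4s\log\log s\log\log t\ge 4s$ and in fact $t/s\to\infty$ is not needed, $t\ge 4s$ suffices for $t-s\ge 3s$), the second exponent dominates, pinning the mass at $v$ of order $a(s)$ and reducing the whole expression to $\P(N(0,t-s)>a(t)(1+o(1)))\asymp\P(\mathcal{E}_t)$, uniformly in $(s,t)$ in the stated range. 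Once that Laplace estimate is in hand, assembling the constant $c$ not depending on $(s,t)$ is routine, using \eqref{eq:P(E)} to convert the Gaussian tail estimates into statements about $\P(\mathcal{E}_s)$ and $\P(\mathcal{E}_t)$.
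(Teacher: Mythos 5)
Your route is genuinely different from the paper's. The paper disposes of this lemma in two lines by quoting Lemma 1.5 of Orey and Pruitt \cite{OP}: for a bivariate standard normal pair $(U,V)$ with correlation $\rho$ one has $\P(U>a\mid V>b)\le c\,\P\{U>a\}$ whenever $|\rho|<(ab)^{-1}$; taking $U:=t^{-1/2}B(t)$, $V:=s^{-1/2}B(s)$, $a:=(2\log\log t)^{1/2}$, $b:=(2\log\log s)^{1/2}$ gives $\rho=(s/t)^{1/2}$, and the hypothesis $t>4s(\log\log s)(\log\log t)$ is exactly the condition $\rho<(ab)^{-1}$. You instead re-prove this correlation inequality in the case at hand by conditioning on $B(s)=v$ and using the independent increment $B(t)-B(s)$, which buys self-containedness at the price of the Laplace-type bookkeeping you describe. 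That bookkeeping does close: with $\phi(v):=\frac{(a(t)-v)^2}{2(t-s)}+\frac{v^2}{2s}$, the unconstrained minimizer is $v^*=a(t)s/t$, and the hypothesis forces $v^*<a(s)$ (indeed $v^*/a(s)\le (2\log\log s)^{-1}$ once $\log\log s\ge1$), so on $[a(s),\infty)$ the function $\phi$ is increasing with $\phi'(a(s))\gtrsim a(s)/s$ and $\phi(a(s))\ge \log\log t+\log\log s-\frac{a(s)a(t)}{t-s}$, while $\frac{a(s)a(t)}{t-s}=O(1)$ under the hypothesis; integrating the resulting exponential bound in $v$ reproduces the factor $\sqrt{s}/a(s)$ hidden in $\P(\mathcal{E}_s)$, and keeping the Mills prefactor $\sqrt{t-s}/(a(t)-v)\lesssim \sqrt{t}/a(t)$ on the range $v\le a(t)/2$ (bounding the conditional probability by one and using the bare Gaussian tail of $B(s)$ when $v>a(t)/2$) recovers the prefactor of $\P(\mathcal{E}_t)$ as well, so no $(\log\log t)^{1/2}$ factor is lost.

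Two points in your write-up need repair, though neither is fatal. First, the fixed-$\Lambda$ splitting of your middle paragraph does fail when $t$ is super-polynomially large in $s$ --- as you yourself noticed --- and the patch offered there (``using only $t\ge t$\,\dots'') is not coherent; the correct resolution is precisely the Laplace estimate of your final paragraph, so the argument should be organized around $\phi$ from the outset rather than around a threshold $\Lambda a(s)$. Second, several of your uniform comparisons ($a(s)^2\le a(t)^2/4$, $t\ge 4s$, $t-s\ge 3s$, $t-s\asymp t$) follow from the hypothesis only when $(\log\log s)(\log\log t)\ge 1$, e.g.\ when $s\ge \e^{\e}$; the complementary set of admissible pairs $(s,t)$ is bounded, and there $\P(\mathcal{E}_t)$ is bounded below while $\P(\mathcal{E}_t\mid\mathcal{E}_s)\le 1$, so it is absorbed into the constant $c$ --- but this caveat should be stated, since the lemma is asserted for all $t>s>4$.
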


\begin{proof}
	We recall the following well-known bound, which is essentially
	Lemma 1.5 of Orey and Pruitt \cite{OP}:
	If $U$ and $V$ are jointly distributed as a bivariate
	normal with common mean zero, common variance one,
	and covariance $\rho$, then there exists a finite
	constant $c$ such that
	\begin{equation}\label{eq:OP}
		\P(U>a\,|\,V>b)\le c\P\{U>a\}
		\qquad\text{whenever $|\rho|<(ab)^{-1}.$}
	\end{equation}
	Next we apply the preceding by setting $U:=t^{-1/2}B(t)$,
	$V:=s^{-1/2}B(s)$, $a:=(2\log\log t)^{1/2}$,  and
	$b:=(2\log\log s)^{1/2}$. Note that 
	$\rho=(s/t)^{1/2}$
	satisfies $0<\rho<(ab)^{-1}$ because $t>4s(\log\log s)(\log\log t)$.
	The lemma follows from \eqref{eq:OP}.
\end{proof}

When $t$ and $s$ are not too far apart, we do not
expect $\P(\mathcal{E}_t\,|\, \mathcal{E}_s)$ to have
the same order of magnitude as $\P(\mathcal{E}_t)$.
The following provides us with a quantitive estimate of 
$\P(\mathcal{E}_t\,|\, \mathcal{E}_s)$ in this case.

\begin{lemma}\label{lem:P(EcapE):close}
	There exists a finite constant $c$ such that for every $t>s>4$,
	\begin{equation}
		\P(\mathcal{E}_t\,|\, \mathcal{E}_s) \le 
		\frac{c}{(\log s)^{(t-s)/(4t)}}.
	\end{equation}
\end{lemma}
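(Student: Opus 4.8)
\textbf{Proof proposal for Lemma~\ref{lem:P(EcapE):close}.}

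The plan is to condition on the history $\mathcal F_s$ of $B$ up to time $s$ and then decompose the event $\mathcal E_s$ according to the size of $B(s)$. Write $a_u:=(2u\log\log u)^{1/2}$, so that $\mathcal E_u=\{B(u)>a_u\}$ and, since $u\mapsto a_u$ is increasing on $(\e\,,\infty)$, one has $a_s<a_t$. Let $\bar\Phi$ denote the standard normal tail and $\varphi$ the standard normal density, so that $\P(\mathcal E_s)=\bar\Phi(a_s/\sqrt s)$; since $B(t)$ given $\mathcal F_s$ is $N(B(s)\,,t-s)$,
\[
\P(\mathcal E_t\mid\mathcal E_s)
=\frac{\E\big[\1_{\mathcal E_s}\,\bar\Phi\big((a_t-B(s))/\sqrt{t-s}\big)\big]}{\P(\mathcal E_s)}
\le \mathrm{I}+\mathrm{II},\qquad
\mathrm{II}:=\frac{\bar\Phi(a_t/\sqrt s)}{\bar\Phi(a_s/\sqrt s)},
\]
where $\mathrm I:=\P(\mathcal E_s)^{-1}\int_{a_s}^{a_t}\bar\Phi\big((a_t-y)/\sqrt{t-s}\big)\,(2\pi s)^{-1/2}e^{-y^2/(2s)}\,\d y$ is the contribution of the event $\{a_s<B(s)\le a_t\}$, and $\mathrm{II}$ bounds that of $\{B(s)>a_t\}$ (on which $\bar\Phi(\cdot)\le 1$).

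For $\mathrm{II}$ I would use the elementary two-sided bound $\tfrac{x}{1+x^2}\varphi(x)\le\bar\Phi(x)\le\tfrac1x\varphi(x)$ together with $\tfrac ts\log\log t-\log\log s\ge\tfrac{t-s}{s}\log\log s$; the powers of $\log\log$ cancel in the ratio because $a_s/a_t\le1$, leaving $\mathrm{II}\le C(\log s)^{-(t-s)/s}\le C(\log s)^{-(t-s)/(4t)}$. For $\mathrm I$ I would bound the conditional tail via $\bar\Phi(z)\le\tfrac12 e^{-z^2/2}$, complete the square in $y$ in the resulting Gaussian integral, and observe that the mode $y^{*}:=sa_t/t$ of the completed Gaussian satisfies $y^{*}\le a_s$ — equivalently $\tfrac st\log\log t\le\log\log s$ — for all large $s$, because $x\mapsto x^{-1}\log\log x$ is eventually decreasing. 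This reduces $\mathrm I$ to a constant times $\tfrac{\sqrt{(t-s)/t}}{\P(\mathcal E_s)\log t}\,\bar\Phi(z_1)$ with $z_1:=\big(\tfrac{2t}{t-s}\big)^{1/2}\big(\sqrt{\log\log s}-\sqrt{\tfrac st\log\log t}\big)\ge0$.

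If $z_1\ge1$ I would then use $\bar\Phi(z_1)\le(z_1\sqrt{2\pi})^{-1}e^{-z_1^2/2}$; the key point is that $z_1^{-1}\asymp\big(\tfrac{t-s}{t}\log\log s\big)^{-1/2}$ exactly cancels the prefactor $\tfrac{\sqrt{(t-s)/t}}{\P(\mathcal E_s)\log t}\asymp\big(\tfrac{t-s}{t}\log\log s\big)^{1/2}\tfrac{\log s}{\log t}\le\big(\tfrac{t-s}{t}\log\log s\big)^{1/2}$ coming from completing the square, so that $\mathrm I\le C\,e^{-z_1^2/2}$. The remaining task is the lower bound $z_1^2/2\ge\tfrac{t-s}{4t}\log\log s-C$, which follows from (i) $\log\log s-\tfrac st\log\log t\ge\tfrac{t-s}{t}\big(\log\log s-\tfrac1{\log s}\big)$ (monotonicity of $x\mapsto\log\log x$ and of $x\mapsto-1/\log x$ on the relevant range), (ii) $\sqrt{\log\log s}+\sqrt{\tfrac st\log\log t}\le2\sqrt{\log\log s}$, and (iii) $(\log\log s-\tfrac1{\log s})^2\ge(\log\log s)^2-\tfrac{2\log\log s}{\log s}$; hence $\mathrm I\le C(\log s)^{-(t-s)/(4t)}$. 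If instead $z_1<1$, the same lower bound forces $\tfrac{t-s}{4t}\log\log s=O(1)$, so $(\log s)^{-(t-s)/(4t)}$ is bounded below and the trivial bound $\P(\mathcal E_t\mid\mathcal E_s)\le1$ suffices; and a bounded range of $s$ is disposed of by \eqref{eq:P(E)}, which gives $\P(\mathcal E_t\mid\mathcal E_s)\le\P(\mathcal E_t)/\P(\mathcal E_s)\le C$ there.

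I expect the main obstacle to be the near-critical regime $t\downarrow s$. There the exponent $(t-s)/(4t)$ degenerates to $0$, the asserted bound degenerates to a bare constant, and in fact $\P(\mathcal E_t\mid\mathcal E_s)\to1$, so the inequality is essentially tight and tolerates no loss whatsoever. Handling it requires simultaneously \emph{retaining} the polynomial factor $z_1^{-1}$ in the Gaussian upper tail (rather than discarding it in favour of $\bar\Phi\le\tfrac12 e^{-z^2/2}$), the sharp slow-variation estimate~(i) above, and the switch to the trivial bound once $t$ is within a factor $1+O(1/\log\log s)$ of $s$. Recognizing that the factor $z_1^{-1}$ is precisely what absorbs the prefactor produced by completing the square is the crux; the remaining estimates are routine Gaussian tail computations.
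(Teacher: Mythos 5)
Your proposal is correct in substance, but it takes a genuinely different route from the paper. The paper's proof is essentially two lines: it first observes that $\mathcal{E}_t\subseteq\{B(t)>(2t\log\log s)^{1/2}\}$ because $\log\log t\ge\log\log s$, and then applies Lemma 1.6 of Orey and Pruitt \cite{OP} (quoted as \eqref{eq:OP1}) to $U:=t^{-1/2}B(t)$, $V:=s^{-1/2}B(s)$ with the common level $a:=(2\log\log s)^{1/2}$ and correlation $\rho=(s/t)^{1/2}$, so that $1-\rho^2=(t-s)/t$ and the bound $c\exp\bigl(-\tfrac18(1-\rho^2)a^2\bigr)=c(\log s)^{-(t-s)/(4t)}$ drops out immediately. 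You instead re-derive the needed special case of that bivariate-normal estimate from scratch: conditioning on $\mathcal{F}_s$, splitting $\mathcal{E}_s$ at the level $a_t$, completing the square, and exploiting the cancellation between the Gaussian-tail factor $z_1^{-1}$ and the prefactor $\P(\mathcal{E}_s)^{-1}\sqrt{(t-s)/t}/\log t$ via \eqref{eq:P(E)}; your inequality (i) is correct (it follows from $\log\log t-\log\log s\le (t-s)/(s\log s)$), your exponent $z_1^2/2\approx\tfrac{t-s}{4t}\log\log s$ matches the Orey--Pruitt constant, and your handling of the degenerate regimes (small $s$, where $y^*\le a_s$ may fail, and $t$ close to $s$, where $z_1<1$) by falling back on the trivial bound is exactly what is needed, since the right-hand side is then of constant order. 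What each approach buys: the paper's argument is short but leans on an external lemma and on the small trick of lowering the threshold for $\mathcal{E}_t$ so that both variables are compared at the same level; yours is self-contained and makes visible where the exponent $(t-s)/(4t)$ and the constant prefactor come from, at the cost of a longer computation and some constant-chasing that you have sketched but would need to write out.
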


\begin{proof}
	According to Lemma 1.6 of Orey and Pruitt \cite{OP},
	if $U$ and $V$ are jointly distributed as a bivariate
	normal with common mean zero, common variance one,
	and covariance $\rho$, then there exists a finite
	constant $c$ such that
	\begin{equation}\label{eq:OP1}		
		\P(U>a\,|\,V>a) \le c\exp\left( -\frac18(1-\rho^2)a^2\right)
		\qquad\text{for all $a\ge 0$}.
	\end{equation}
	We apply this inequality with $U:=t^{-1/2}B(t)$,
	$V:=s^{-1/2}B(s)$, and $a:=(2\log\log s)^{1/2}$ to find that
	\begin{equation}
		\P(\mathcal{E}_t\,|\,\mathcal{E}_s) \le \P\left(B(t)
		> (2t\log\log s)^{1/2}\,\big|\, \mathcal{E}_s\right)
		=\P(U>a\,|\,V>a).
	\end{equation}
	Thus, the lemma follows from \eqref{eq:OP1}.
\end{proof}

We are  prepared to verify Proposition \ref{pr:LIL}.

\begin{proof}[Proof of Proposition \ref{pr:LIL}]
	For all $N\ge 4$ define
	\begin{equation}
		S_N := \sum_{n=4}^N \e^{-n} \mu(\cS_n).
	\end{equation}
	We intend to prove that $S_\infty:=\lim_{N\to\infty}S_N$ is
	infinite a.s. Because 
	$\E S_N = \sum_{n=4}^N \e^{-n}\int_{\e^{n-1}}^{\e^n}
	\P(\mathcal{E}_s )\,\d s$,
	we may apply \eqref{eq:P(E)} in order to see that
	\begin{equation}
		\E S_N \ge c_0(\log N)^{1/2}\qquad\text{for all $N\ge 4$,}
		\label{eq:LIL:ES_N}
	\end{equation}
	where $c_0$ is a positive constant that
	does not depend on $N$. Next, we estimate the variance of $S_N$.
	Clearly,
	\begin{equation}
		\E(S_N^2) \le Q_1+Q_2,
	\end{equation}
	where
	\begin{equation}\begin{split}
		Q_1&:=2\sum_{n=4}^N \e^{-2n}\iint_{\e^{n-1}<s<t<\e^n}
			\P(\mathcal{E}_s \cap \mathcal{E}_t )\,\d s\,\d t,\\
		Q_2&:= 2\mathop{\sum\sum}_{4\le n< m\le N} \e^{-n-m}
			\int_{\e^{n-1}}^{\e^n}\d s\int_{\e^{m-1}}^{\e^m}\d t\
			\P(\mathcal{E}_s \cap \mathcal{E}_t ).
		\label{eq:Q1Q2}
	\end{split}\end{equation}
	The elementary bound $\P(\mathcal{E}_s\cap\mathcal{E}_t)\le
	\P(\mathcal{E}_s)$ yields
	\begin{equation}\label{eq:LIL:Q1}
		Q_1\le  2\E S_N.
	\end{equation}
	
	We estimate $Q_2$ by splitting the double sum according to how much
	the summation variable $m$ is greater than the summation variable $n$.
	Before we hash out the details, let us first note that,
	according to Lemma \ref{lem:P(EcapE):close},
	whenever $m>n\ge 4$,
	\begin{equation}\begin{split}
		&\int_{\e^{n-1}}^{\e^n}\d s\int_{\e^{m-1}}^{\e^m}\d t\
			\P(\mathcal{E}_s \cap \mathcal{E}_t )\\
		&\hskip.5in\le c_1\int_{\e^{n-1}}^{\e^n}\d s\int_{\e^{m-1}}^{\e^m}\d t\
			\P(\mathcal{E}_s)\exp\left(-
			\frac{t-s}{4t}\log\log s\right)\\
		&\hskip.5in\le c_1\int_{\e^{n-1}}^{\e^n}\d s\int_{\e^{m-1}}^{\e^m}\d t\
			\P(\mathcal{E}_s)\exp\left(-c_2
			\frac{t-\e^{m-1}}{\e^m}\log n\right)\\
		&\hskip.5in\le c_1 \e^m\int_{\e^{n-1}}^{\e^n}\d s\int_0^\infty\d t\
			\P(\mathcal{E}_s)\e^{-c_2 t\log n}
			= \frac{c_3 \e^m}{\log n}\int_{\e^{n-1}}^{\e^n}\P(\mathcal{E}_s)\,\d s,
	\end{split}\end{equation}
	where $c_1,c_2,c_3$ are finite and positive constants that do not
	depend on $(n\,,m)$. Consequently,
	\begin{equation}\begin{split}
		\mathop{\sum\sum}_{4\le n< m\le (n+\alpha\log n)\wedge N} &\e^{-n-m}
			\int_{\e^{n-1}}^{\e^n}\d s\int_{\e^{m-1}}^{\e^m}\d t\
			\P(\mathcal{E}_s \cap \mathcal{E}_t )\\
		&\hskip.5in
			\le c_3\alpha \sum_{n=4}^N \e^{-n}\int_{\e^{n-1}}^{\e^n}\P(\mathcal{E}_s)\,\d s
			= c_3\alpha\E S_N,
		\label{eq:Q21}
	\end{split}\end{equation}
	for all $\alpha>0$. We emphasize that $c_1$, $c_2$,
	and $c_3$ are finite constants that do not depend on $(N\,,\alpha)$,
	and $\alpha$ is, so far, an arbitrary parameter.
	
	Next we observe that there exists $\alpha>0$ large enough such that 
	for all integers $m,n\ge 4$,
	\begin{equation}
		m>n+\alpha\log n\qquad\Rightarrow
		\qquad\e^{m-1} > 4  \e^n (\log n)(\log m).
	\end{equation}
	We will choose $\alpha$ to be this particular value, both in the preceding
	and in what follows.
	In this case, it is then easy to see that
	$t>4s(\log\log s)(\log\log t)$
	for every $s\in\cS_n$ and $t\in\cS_m$,
	as long as $n,m\ge 4$ are arbitrary integers that
	satisfy $m>n+\alpha\log n$. Thus,
	Lemma \ref{lem:P(EcapE):far} ensures that, for this particular choice of
	$\alpha$,
	\begin{equation}\begin{split}
		&\mathop{\sum\sum}_{4\le n<  n+ \alpha \log n < m\le N} \e^{-n-m}
			\int_{\e^{n-1}}^{\e^n}\d s\int_{\e^{m-1}}^{\e^m}\d t\
			\P(\mathcal{E}_s \cap \mathcal{E}_t )\\
		&\hskip1.5in
			\le c_4\left( \sum_{n=4}^N \e^{-n}\int_{\e^{n-1}}^{\e^n}\P(\mathcal{E}_s)\,\d s
			\right)^2
			= c_4\left|\E S_N\right|^2,
		\label{eq:Q22}
	\end{split}\end{equation}
	where $c_4$ is a finite constant that does not depend on $n$.
	Thanks to \eqref{eq:Q1Q2}, \eqref{eq:Q21}, and \eqref{eq:Q22}, it follows that 
	$Q_2\le c_3\alpha\E S_N+c_4|\E S_N|^2$, uniformly in all $N\ge 4$.
	Thus, it follows from \eqref{eq:LIL:ES_N} and \eqref{eq:LIL:Q1}
	that 
	\begin{equation}\label{eq:BM:UB}
		\E\left(S_N^2\right) =O\left( \left|\E S_N\right|^2\right)
		\qquad\text{as $N\to\infty$}.
	\end{equation}
	Since $\E S_N\uparrow\infty$ as $N\to\infty$ [see \eqref{eq:LIL:ES_N}]
	and $S_\infty\ge S_N$ for all $N$,
	\begin{equation}
		\P\left\{S_\infty=\infty\right\}
		\ge\liminf_{N\to\infty}\P\left\{ S_N\ge\tfrac12\E S_N\right\}
		\ge\frac{1}{4}\liminf_{N\to\infty}\frac{|\E S_N|^2}{\E(S_N^2)}
		>0,
	\end{equation}
	thanks to the Paley--Zygmund inequality \cite[Lemma $\gamma$]{PZ},
	which states that if $Z$ is in $L^2(\P)$ and $\|Z\|_\infty>0$, then
	\begin{equation}
		\P\left\{Z> \tfrac12\E Z\right\}\ge \frac 1 4 \frac{(\E Z)^2}{\E(Z^2)}.
	\end{equation} 
	An appeal to the Hewitt--Savage 0--1 law  completes the proof.
\end{proof}

\section{General bounds}\label{sec:Gen}

Let $X:=\{X_t\}_{t\in T}$ be a real-valued random field
with continuous trajectories, where $T\subseteq\R^d$
is either one of the $2^d$ standard closed orthants of $\R^d$,
or $T$ is $\R^d$ itself.

For all real numbers $b\in(0\,,\infty)$
we can define
\begin{equation}\label{eq:gen:tail:LB}
	\bm{c}(b) := -\limsup_{z\to\infty} z^{-b}
	\sup_{t\in T}\log\P\left\{ X_t > z\right\},
\end{equation}
and
\begin{equation}\label{eq:gen:tail:UB}
	\bm{C}(b):=
	-\liminf_{z\to\infty} z^{-b}\inf_{t\in T}\log\P\left\{ X_t> z\right\}.
\end{equation}
Of course, $0\le \bm{c}(b)\le \bm{C}(b)\le\infty$ for all $b>0$.

Define $|t|$ to be the $\ell^\infty$-norm of $t\in\R^d$; that is,
\begin{equation}
	|t| := \max_{1\le i\le d} |t_i|\qquad\text{for
	all $t:=(t_1\,,\ldots,t_d)\in\R^d$}.
\end{equation}
Then it should be intuitively clear that, under mild conditions on large-scale
smoothness and asymptotic pairwise independence of $X$,
one ought to be able to prove that
\begin{equation}\label{eq:LIL}
	0<\limsup_{|t|\to\infty} \frac{X_t}{(\log |t|)^{1/b}}
	<\infty\quad\text{a.s.,}
\end{equation}
provided additionally that $0<\bm{c}(b)\le \bm{C}(b)<\infty$ 
for a certain special value of $b\in(0\,,\infty)$.
In other words, we might expect that
if $0<\bm{c}(b)\le \bm{C}(b)<\infty$, then the tall peaks of the process
$X$ are typically gauged, to within a constant, by the function 
$t\mapsto(\log |t|)^{1/b}$.

The main results of this section are two general macroscopic
Hausdorff dimension estimates; see Theorems
\ref{th:Gen:LIL:UB} and \ref{th:Gen:LIL:LB} below. The first
theorem describes conditions, similar to those outlined earlier,
which ensure  the upper bound in \eqref{eq:LIL}, and also
bound from above the macroscopic Hausdorff dimension of the set of
times that $X_t$ exceeds a [correct] constant multiple of $(\log |t|)^{1/b}$.
The second theorem turns out to be a much more subtle result that
produces matching lower bounds for the dimension of these exceedance
times. Thus, we begin with a general upper bound for the Hausdorff dimension
of the tall peaks of a stochastic process.

\begin{theorem}[A general upper bound]\label{th:Gen:LIL:UB}
	Suppose that there exists $b\in(0\,,\infty)$
	such that $\bm{c}(b)>0$ and for all $\gamma\in(0\,,d)$,
	\begin{equation}\label{cond:UB}
		\sup_{w\in T}\P\left\{\sup_{t\in[w,w+1)}X_t > 
		\left( \frac{\gamma}{\bm{c}(b)}\,\log s\right)^{1/b}\right\}
		\le s^{-\gamma + o(1)}\text{ as $s\to\infty$}.
	\end{equation}
	Then,
	\begin{equation}\label{gen:LIL:UB}
		\limsup_{|t|\to\infty} \frac{X_t}{(\log |t|)^{1/b}}
		\le \left(\frac{d}{\bm{c}(b)}\right)^{1/b} \qquad\text{a.s.}
	\end{equation}
	Furthermore, 
	\begin{equation}\label{dim:gen:UB}
		\Dimh
		\left\{t\in T:\ |t|>\exp(\e),\
		X_t \ge \left(\frac{\gamma}{\bm{c}(b)}\log |t|\right)^{1/b}\right\}\le
		d-\gamma,
	\end{equation}
	for all $\gamma\in(0\,,d)$.
\end{theorem}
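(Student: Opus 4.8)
The plan is to prove the two conclusions separately, since the almost-sure upper bound \eqref{gen:LIL:UB} is really a special (and softer) consequence of the dimension bound \eqref{dim:gen:UB}. I would in fact dispatch \eqref{gen:LIL:UB} first by a direct Borel--Cantelli argument, and then bootstrap the same counting scheme into the macroscopic Hausdorff estimate. For \eqref{gen:LIL:UB}: fix $\gamma\in(d,\infty)$ arbitrarily close to $d$ and tile $T$ by the unit upright boxes $w+[0,1)^d$ with $w\in\Z^d\cap T$. On the $n$th shell $\cS_n$ there are at most a constant times $\e^{nd}$ such boxes, and for each the hypothesis \eqref{cond:UB} (applied with $s\asymp\e^n$, so $\log s\asymp n$) bounds $\P\{\sup_{t\in w+[0,1)}X_t>(\gamma\bm c(b)^{-1}\log|t|)^{1/b}\}$ by $\e^{-n\gamma+o(n)}$. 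Summing over the $\asymp\e^{nd}$ boxes in shell $n$ gives $\e^{-n(\gamma-d)+o(n)}$, which is summable in $n$ because $\gamma>d$; Borel--Cantelli then shows that only finitely many boxes contain a point $t$ with $X_t>(\gamma\bm c(b)^{-1}\log|t|)^{1/b}$, and letting $\gamma\downarrow d$ yields \eqref{gen:LIL:UB}.

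For the dimension bound \eqref{dim:gen:UB}, fix $\gamma\in(0,d)$ and set
\[
E:=\left\{t\in T:\ |t|>\exp(\e),\ X_t\ge\left(\tfrac{\gamma}{\bm c(b)}\log|t|\right)^{1/b}\right\}.
\]
The idea is to build, for each $\rho>d-\gamma$, an economical cover of $E\cap\cS_n$ using the \emph{unit} boxes $Q(w,1)$, $w\in\Z^d$, that meet $E\cap\cS_n$; this is legitimate because Lemma \ref{lem:Lipschitz}-style reasoning is not even needed here, the unit boxes already have side $\ge1=c_0$. Writing $N_n$ for the number of such boxes, one has the deterministic bound $\nu^n_\rho(E)\le N_n\e^{-n\rho}$, hence $\E\nu^n_\rho(E)\le\e^{-n\rho}\,\E N_n$. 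Now $\E N_n=\sum_{w\in\Z^d,\ w+[0,1)\subset\cV_{n+1}\setminus\cV_{n-1}}\P\{\text{box }Q(w,1)\text{ meets }E\}$, and for a box in shell $n$ the event ``$Q(w,1)$ meets $E$'' is contained in $\{\sup_{t\in w+[0,1)}X_t\ge(\gamma\bm c(b)^{-1}\log\e^{n-1})^{1/b}\}$, since every $t$ in such a box has $|t|\ge\e^{n-1}$. Applying \eqref{cond:UB} with $s=\e^{n-1}$ gives $\P\{Q(w,1)\text{ meets }E\}\le\e^{-(n-1)\gamma+o(n)}$, and there are $\le C\e^{nd}$ boxes per shell, so $\E N_n\le C\e^{nd-n\gamma+o(n)}$ and therefore
\[
\E\nu^n_\rho(E)\ \le\ C\,\e^{-n\rho+nd-n\gamma+o(n)}\ =\ C\,\e^{-n(\rho-(d-\gamma))+o(n)}.
\]
Since $\rho>d-\gamma$, the exponent is negative for all large $n$, so $\sum_n\E\nu^n_\rho(E)<\infty$; by Tonelli, $\sum_n\nu^n_\rho(E)<\infty$ almost surely, and thus $\Dimh E\le\rho$ a.s.\ for every $\rho>d-\gamma$. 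Taking a countable sequence $\rho_k\downarrow d-\gamma$ gives $\Dimh E\le d-\gamma$ a.s., which is \eqref{dim:gen:UB}.

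The only genuinely delicate point is the uniform handling of the $o(1)$ in \eqref{cond:UB}: I must make sure that the ``$o(1)$ as $s\to\infty$'' is locating a \emph{single} rate that is uniform over $w\in T$ (which is exactly how the hypothesis is stated, via $\sup_{w\in T}$), so that when I sum over the $\asymp\e^{nd}$ boxes in shell $n$ the error $\e^{o(n)}$ does not depend on the box. Granting that, the exponent comparison $\rho>d-\gamma$ absorbs any subexponential slack, and everything goes through. The continuity of $X$ is used only to ensure $\sup_{t\in w+[0,1)}X_t$ is a genuine (measurable) random variable and that $E$ is a well-defined Borel set, so no further regularity is required; the argument needs no independence hypotheses whatsoever, which is the reason the upper bound is so much easier than the matching lower bound promised in Theorem \ref{th:Gen:LIL:LB}.
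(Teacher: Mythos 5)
Your proposal is correct and follows essentially the same route as the paper: cover each shell $\cS_n$ by unit upright boxes, bound the probability that a box meets the exceedance set by applying \eqref{cond:UB} at scale $s\asymp\e^{n}$, count the $\asymp\e^{nd}$ boxes per shell, and sum expectations (Tonelli plus a first-moment/Borel--Cantelli argument) to conclude $\Dimh\le\rho$ for every $\rho>d-\gamma$. The only cosmetic difference is that you prove \eqref{gen:LIL:UB} by a separate Borel--Cantelli argument with $\gamma>d$ (which, as in the paper's own proof, uses \eqref{cond:UB} outside $(0\,,d)$ --- harmless, since the hypothesis for a single $\gamma'\in(0\,,d)$ yields the same bound for every $\gamma>0$ after reparametrizing $s\mapsto s^{\gamma/\gamma'}$), whereas the paper folds the lim sup statement into the dimension bound by allowing all $\gamma>0$ together with the convention that negative dimension means bounded.
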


\begin{proof}
	It suffices to prove the result in the case that $T=\R^d_+$. The
	other cases, including $T=\R^d$, follow from this after making small
	adjustments.
	
	The stated $\limsup$ result is a more-or-less standard 
	exercise in the Borel--Cantelli lemma, and the upper bound
	on the dimension follows from a routine covering argument,
	the likes of which are familiar for bounding the microscopic Hausdorff
	of a random set. We include the proof for the sake of completeness.
	
	Let us write $\Dimh \Sigma<0$ when $\Sigma$ is a bounded set.
	We plan to prove that \eqref{dim:gen:UB}
	holds for all $\gamma>0$; \eqref{gen:LIL:UB} follows immediately
	from this formulation of \eqref{dim:gen:UB}. From now on we choose and
	fix an arbitrary $\gamma>0$.
	
	Our goal is to prove that
	\begin{equation}\label{gen:DimL:UB}
		\Dimh \Lambda_u \le d-\gamma\qquad\text{a.s.,}
	\end{equation}
	where 
	\begin{equation}
		\Lambda_u :=\left\{t\in \R^d_+:\ |t|>\exp(\e),\
		X_t \ge \left(\frac{\gamma}{\bm{c}(b)}\log |t|\right)^{1/b}\right\}.
	\end{equation}
	Condition \eqref{cond:UB} ensures that for all $\varepsilon\in(0\,,\gamma)$
	there exists a finite constant $K_\varepsilon$
	such that for all $m:=(m_1\,,\ldots,m_d)\in\R^d_+$ 
	that satisfy $\|m\|>\exp(\e)$,
	\begin{equation}\begin{split}
		\P\{\Lambda_u\cap Q(m\,,1)\neq\varnothing\}
			&\le \P\left\{ \sup_{t\in Q(m,1)} X_t \ge
			\left(\frac{\gamma}{\bm{c}(b)}\log |m|\right)^{1/b}\right\}\\
		&\le \frac{K_\varepsilon}{|m|^{\gamma-\varepsilon}}.
	\end{split}\end{equation}
	Therefore, we can cover $\Lambda_u\cap\cS_n$ by upright boxes of sidelength
	$r\equiv 1$ in order to see that for all $\rho>0$ and $n\ge 0$ large,
	\begin{equation}\begin{split}
		\E\left[\nu^n_\rho\left(\Lambda_u\right) \right]
			&\le \sum_{\substack{m\in\Z^d_+:\\
			Q(m,1)\subset\cS_n}}\left(\frac{1}{\e^n}\right)^\rho
			\P\left\{\Lambda_u\cap Q(m\,,1) \neq \varnothing\right\}\\
		&\le K_\varepsilon \e^{-n\rho} \sum_{\substack{m\in\Z^d_+:\\
			Q(m,1)\subset\cS_n}}|m|^{-\gamma+\varepsilon}.
	\end{split}\end{equation}
	Whenever $n\in\mathbf{N}$ and $m\in\Z^d_+$ are such that
	$Q(m\,,1)$ lies entirely in $\cS_n$, then it must be
	that $|m| \ge \exp(n-1)$.
	Since $\cS_n$ contains at most $\text{const}\cdot \exp(nd)$ upright boxes
	of sidelength one, it follows that
	\begin{equation}
		\E\left[\nu^n_\rho(\Lambda_u)\right] \le \text{const}\cdot\e^{
		-n[\rho+\gamma-\varepsilon-d]},
	\end{equation}
	for all $n\ge 1$ sufficiently large. In particular,
	\begin{equation}
		\E\left[\sum_{n=0}^\infty\nu^n_\rho(\Lambda_u)\right]<\infty
		\qquad\text{if $\rho>d-\gamma+\varepsilon$}.
	\end{equation}
	This proves that $\Dimh\Lambda_u\le \rho$ a.s.\
	for all $\rho>d-\gamma+\varepsilon$. Send 
	$\rho\downarrow d-\gamma+\varepsilon$
	and then $\varepsilon\downarrow0$, in this order,
	to deduce \eqref{gen:DimL:UB} and hence the theorem.
\end{proof}

We now move on to a perhaps more interesting
study of lower bounds for $\Dimh$ of high peaks of $X$;
more specifically,
our next result shows that one can sometimes obtain good lower bounds
on the macroscopic dimension in the statement of Theorem \ref{th:Gen:LIL:UB}.

Consider, for every $\gamma\in(0\,,d)$, the random set
\begin{equation}\label{eq:L_X}
	\Lambda_\ell:= 
	\left\{t\in T:\ |t|>\exp(\e),\
	X_t \ge \left(\frac{\gamma}{\bm{C}(b)}\log t\right)^{1/b}\right\}.
\end{equation}
We plan to show that, under some conditions on the process $X$,
\begin{equation}\label{gen:DimL:LB}
	\Dimh\Lambda_\ell\ge d-\gamma\qquad\text{a.s.,}
\end{equation}
thus obtaining a complimentary bound to that of Theorem \ref{th:Gen:LIL:UB}.

The standard way to obtain lower bounds on the ordinary Hausdorff dimension
of a set is to find a smooth measure on that set;
see Lemma \ref{lem:Frostman} for example. The said smooth measure is typically
``uniquely canonical,'' and readily guessed when the set in question has good local
structure. The remaining work is in determining the exact order of the smoothness of the
canonical measure.

In the present setting of macroscopic dimension, 
one can in fact prove that the random set $\Lambda_\ell$ does \emph{not} support a 
``uniquely canonical'' measure. Instead we
have to use a different sort of argument.

Before we proceed
we need to develop a few general facts about macroscopic Hausdorff dimension.
We will also introduce some notation that will be used throughout the remainder of this
section.

\begin{definition}\label{def:skeleton}
	Let $\Pi_n$ be a finite collection of points in $\R^d$
	for every integer $n\ge 0$. Given a real number $\theta\in(0\,,1)$,
	we say that $\{\Pi_n\}_{n=0}^\infty$ is a \emph{$\theta$-skeleton}
	of $\R^d$ if there exists an integer $N=N(\theta)$ such that:
	\begin{enumerate}
		\item For every $n\ge N$,
			\begin{equation}
				\bigcup_{x\in\Pi_n} Q( x\,,\e^{\theta n})\subset\cS_n;
			\end{equation}
		\item If $x$ and $y$ are distinct points in $\Pi_n$ for
			some $n\ge N$, then
			\begin{equation}
				Q(x\,,\e^{\theta n})\cap Q(y\,,\e^{\theta n})=\varnothing;
				\quad\text{and}
			\end{equation}
		\item There exists a constant $a=a(d\,,\theta)\in(0\,,1)$ such that
			\begin{equation}\label{eq:Ln}
				a\e^{nd(1-\theta)} \le |\Pi_n| \le a^{-1} \e^{nd(1-\theta)},
			\end{equation}
			where ``$|\,\cdots|$'' denotes cardinality.
	\end{enumerate}
\end{definition}

Given some $\theta\in(0\,,1)$, 
$\R^d$ has uncountably-many $\theta$-skeletons. 
From now on, we choose and fix one such choice,
and denote it by $\Pi(\theta):=\{\Pi_n(\theta)\}_{n=0}^\infty$. 
For instance, we might wish to opt for the following construction, to be explicit:
\begin{equation}
	\Pi_n(\theta) := A_n(\theta)\times\cdots\times A_n(\theta)
	\qquad[d\text{ times}];
\end{equation}
where
\begin{equation}
	A_n(\theta) := \bigcup_{\substack{%
	0\le j\le \e^{n(1-\theta)+1}-\e^n:\\ j\in\Z}}
	\left\{ \e^n + j \e^{\theta n} \right\}
\end{equation}
Other constructions are also possible.
The property of $\Pi(\theta)$ that we are after this: $\Pi_n(\theta)$ is basically
a ``near-optimal $\e^{\theta n}$-packing'' of $\cS_n$
for all large $n$. Part 2 of Definition
\ref{def:skeleton} codifies the precise notion of ``packing''
and Part 3 makes precise our ``near-optimal'' sense. 

\begin{definition}\label{def:thick}
	Let $E\subseteq\R^d$ be a set and choose and fix
	some real number $\theta\in(0\,,1)$. We say that
	$E$ is \emph{$\theta$-thick} if there exists 
	an integer $M=M(\theta)$ such that 
	\begin{equation}
		E\cap Q(x\,,\e^{\theta n})\neq\varnothing,
	\end{equation}
	for all $x\in\Pi_n(\theta)$ and $n\ge M$.
\end{definition}

In words, $\theta$-thick sets are ``approximately self-similar
sets.''

We are ready to present one of the novel technical statements of
this section.

\begin{proposition}\label{pr:thick}
	If $E\subset\R^d$ is $\theta$-thick for some 
	$\theta\in(0\,,1)$, then 
	\begin{equation}
		\Dimh E\ge d(1-\theta).
	\end{equation}
\end{proposition}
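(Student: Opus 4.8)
The plan is to bound from below the macroscopic Hausdorff dimension of a $\theta$-thick set $E$ by producing, on each shell $\cS_n$, a well-chosen measure supported on $E\cap\cS_n$ and then invoking the Frostman-type Lemma~\ref{lem:Frostman} to get a lower bound on $\nu^n_\rho(E)$ that is summable in $n$ precisely when $\rho<d(1-\theta)$. The natural measure to use is the one that puts mass proportional to $\e^{\theta n d}$ uniformly on each of the (roughly) $\e^{nd(1-\theta)}$ boxes $Q(x,\e^{\theta n})$, $x\in\Pi_n(\theta)$; since $E$ is $\theta$-thick, $E$ meets every such box, so this measure is indeed carried by $E\cap\cS_n$ up to fattening, and its total mass on $\cS_n$ is of order $\e^{nd}$, i.e.\ comparable to $|\cS_n|$.

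Concretely, first I would fix $\theta\in(0,1)$, let $M=M(\theta)$ be as in Definition~\ref{def:thick} and $N=N(\theta)$ as in Definition~\ref{def:skeleton}, and work with $n\ge\max(M,N)$. For each such $n$ pick a point $y_x\in E\cap Q(x,\e^{\theta n})$ for every $x\in\Pi_n(\theta)$, and set $\mu_n:=\e^{n\theta d}\sum_{x\in\Pi_n(\theta)}\delta_{y_x}$. Then $\mu_n$ is a finite Borel measure supported on $E\cap\cS_n$ with $\mu_n(E)=\mu_n(\cS_n)\asymp\e^{nd}$ by part~3 of Definition~\ref{def:skeleton}. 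Next I would estimate $K_\rho=K_\rho(n)$ from \eqref{eq:K:rho}: for an upright box $Q\subset\cS_n$ with ${\rm side}(Q)=r\ge 1$, the number of skeleton points $x\in\Pi_n(\theta)$ with $y_x\in Q$ is at most a constant times $(1+r\e^{-\theta n})^d$ because the $Q(x,\e^{\theta n})$ are pairwise disjoint and each $y_x$ lies in its own such box; hence $\mu_n(Q)\le C\e^{n\theta d}(1+r\e^{-\theta n})^d\le C'(\e^{n\theta d}+r^d)\le 2C' r^d$ for $r\ge 1$ (using $r\le\e^n$, $\e^{n\theta d}\le\e^{nd}$ — actually the cleaner bound is $\e^{n\theta d}\le r^d$ when $r\ge\e^{\theta n}$ and a direct count when $1\le r<\e^{\theta n}$, giving $\mu_n(Q)\le C r^d$ in all cases). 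So $K_d(n)\le C$, a constant independent of $n$. Applying Lemma~\ref{lem:Frostman} with $\rho=d$ gives $\nu^n_d(E)\ge K_d(n)^{-1}\e^{-nd}\mu_n(E)\ge c>0$ for all large $n$, which already yields $\lDimh E\ge d$ — too strong and wrong — so the point is that for $\rho<d(1-\theta)$ one must instead use the finer content: cover restrictions force boxes of side $\ge\e^{\theta n}$ to be ``useful,'' and a more careful application of Lemma~\ref{lem:Frostman} with exponent $\rho$ together with the per-box bound $\mu_n(Q)\le C\,{\rm side}(Q)^d$ only for ${\rm side}(Q)\ge\e^{\theta n}$, combined with the mass/diameter bookkeeping $\sum\mu_n(\cS_n)\e^{-n\rho}\asymp\sum\e^{n(d-\rho)}$, shows $\sum_n\nu^n_\rho(E)=\infty$ as soon as we account for the $\e^{\theta n}$ scale. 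I would therefore rescale: within $\cS_n$ pass to the renormalized picture where $\e^{\theta n}$ plays the role of the unit, so that $\Pi_n(\theta)$ looks like a full-dimensional lattice packing of a box of sidelength $\e^{n(1-\theta)}$, and apply the Frostman lemma there with exponent $\rho/(1-\theta)$-type scaling; this is the clean route and it is exactly how Barlow--Taylor handle such ``thick'' sets.

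The main obstacle, then, is bookkeeping the two scales correctly: the shell $\cS_n$ has macroscopic size $\e^n$ but the self-similar structure guaranteed by $\theta$-thickness lives at scale $\e^{\theta n}$, and the estimate $K_\rho(n)$ in Lemma~\ref{lem:Frostman} must be computed against covers by boxes of side $\ge 1$ — one has to check that boxes of side in $[1,\e^{\theta n})$ contribute negligibly to covering $E\cap\cS_n$ efficiently, so that effectively only boxes of side $\ge\e^{\theta n}$ matter, and for those the uniform measure $\mu_n$ has the right Frostman exponent $d$ at that scale, translating into exponent $d(1-\theta)$ at the macroscopic scale. Once that is pinned down, the conclusion follows: for every $\rho<d(1-\theta)$ one gets $\nu^n_\rho(E)\ge c\,\e^{n(d(1-\theta)-\rho)}\to\infty$, a fortiori $\sum_n\nu^n_\rho(E)=\infty$, whence $\Dimh E\ge\rho$; letting $\rho\uparrow d(1-\theta)$ gives $\Dimh E\ge d(1-\theta)$.
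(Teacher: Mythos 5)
Your overall strategy --- pick one representative point $y_x\in E\cap Q(x,\e^{\theta n})$ per skeleton box, build a measure out of these atoms, and feed it into the Frostman-type Lemma \ref{lem:Frostman} --- is exactly the paper's route, but two of your quantitative claims are wrong, and the vague ``rescaling'' you fall back on does not repair them. First, with your weighting $\mu_n=\e^{n\theta d}\sum_x\delta_{y_x}$, the bound $\mu_n(Q)\le C\,r^d$ fails for boxes of side $r\in[1,\e^{\theta n})$: a unit box around a single atom already carries mass $\e^{n\theta d}\gg 1$, so $K_d(n)\asymp\e^{n\theta d}$, not $O(1)$. (You notice that your computation would give the absurd conclusion $\lDimh E\ge d$, but you do not locate the error.) Moreover, Lemma \ref{lem:Frostman} takes the supremum over \emph{all} upright boxes of side $\ge1$ inside $\cS_n$, so you are not free to declare that boxes of side in $[1,\e^{\theta n})$ ``contribute negligibly'' and discard them; for your weighted measure they are precisely what inflates $K_\rho$. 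Second, your concluding estimate $\nu^n_\rho(E)\ge c\,\e^{n(d(1-\theta)-\rho)}\to\infty$ for $\rho<d(1-\theta)$ cannot hold for any set: $\cS_n\subset[-\e^n,\e^n)^d$ is covered by $2^d$ upright boxes of side $\e^n$, so $\nu^n_\rho(E)\le 2^d$ always. What is both true and sufficient is a \emph{constant} lower bound per shell at the critical exponent.

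The repair, which is what the paper does, is to drop the weights and use the counting measure $\mu:=\sum_x\delta_{y_x}$ (unit mass per representative point; the paper uses the skeleton points themselves). Since the boxes $Q(x,\e^{\theta n})$ are pairwise disjoint, a box of side $r\in[1,\e^{\theta n}]$ contains at most $2^d$ atoms, and a box of side $r\in[\e^{\theta n},\e^n]$ at most $(1+r\e^{-\theta n})^d\le 2^d r^d\e^{-\theta nd}$ atoms; in either case $\mu(Q)/[{\rm side}(Q)]^{d(1-\theta)}\le 2^d$, i.e.\ $K_{d(1-\theta)}\le 2^d$ uniformly in $n$. Lemma \ref{lem:Frostman} then gives
\begin{equation*}
	\nu^n_{d(1-\theta)}(E)\ \ge\ 2^{-d}\,\e^{-nd(1-\theta)}\mu(\cS_n)\ \ge\ a\,2^{-d}>0
\end{equation*}
for all large $n$, by part 3 of Definition \ref{def:skeleton}. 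Hence $\limsup_n\nu^n_{d(1-\theta)}(E)>0$, so $\lDimh E\ge d(1-\theta)$ and in particular $\sum_n\nu^n_{d(1-\theta)}(E)=\infty$, which yields $\Dimh E\ge d(1-\theta)$. No renormalization of scales or modified content is needed.
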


Proposition \ref{pr:thick} presents us with a strategy
for obtaining a lower bound for $\Dimh F$ that can be
different from a Frostman-type method [Lemma \ref{lem:Frostman}]. The following 
is an immediate consequence of Proposition \ref{pr:thick}.

\begin{corollary}\label{co:thick}
	If $F\subset\R^d$ has a $\theta$-thick subset $E$ for some $\theta\in(0\,,1)$,
	then $\Dimh F\ge d(1-\theta)$.
\end{corollary}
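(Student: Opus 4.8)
The plan is to obtain Corollary \ref{co:thick} as a one-line deduction from Proposition \ref{pr:thick} once we record the monotonicity of the Barlow--Taylor macroscopic Hausdorff dimension under set inclusion. Nothing beyond Proposition \ref{pr:thick} is genuinely needed; the only supporting fact is that $A\subseteq B\subseteq\R^d$ implies $\Dimh A\le\Dimh B$, and then the corollary follows immediately.

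First I would verify the monotonicity statement. Fix $A\subseteq B\subseteq\R^d$, an integer $n\ge 0$, and a real number $\rho>0$. Since $A\cap\cS_n\subseteq B\cap\cS_n$, any finite family of upright boxes of side $\ge c_0=1$ that covers $B\cap\cS_n$ also covers $A\cap\cS_n$. Hence every admissible cover appearing in the infimum defining $\nu^n_\rho(B)$ in \eqref{nu:n:rho} is also admissible for $A\cap\cS_n$, so the infimum for $A$ is over a collection at least as large, giving $\nu^n_\rho(A)\le\nu^n_\rho(B)$ for every $n$ and every $\rho>0$. Summing over $n$, we see that $\sum_{n=1}^\infty\nu^n_\rho(B)<\infty$ forces $\sum_{n=1}^\infty\nu^n_\rho(A)<\infty$; therefore the set of exponents $\rho>0$ that are admissible in the definition \eqref{Def:Hdim} of $\Dimh B$ is contained in the corresponding set for $\Dimh A$, and taking infima yields $\Dimh A\le\Dimh B$.

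Then I would conclude. Let $E\subseteq F$ be the $\theta$-thick subset whose existence is hypothesized. By Proposition \ref{pr:thick}, $\Dimh E\ge d(1-\theta)$. Applying the monotonicity just established with $A=E$ and $B=F$ gives $\Dimh F\ge\Dimh E\ge d(1-\theta)$, which is precisely the assertion of the corollary. (Note that a $\theta$-thick set is automatically unbounded, so $F$ is unbounded and this is a genuine positive lower bound, consistent with the convention that $\Dimh(E)<0$ means $E$ is bounded.)

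Since this is a direct deduction, there is no substantial obstacle. The single point worth double-checking is that the monotonicity argument is not disturbed by the constraint "side $\ge c_0$" built into \eqref{nu:n:rho}: this is clear, because restricting to covers with side lengths bounded below by the common constant $c_0$ does not change the fact that an admissible cover of $B\cap\cS_n$ is an admissible cover of $A\cap\cS_n$, so the inclusion of admissible-cover families holds verbatim at each shell and each $\rho$.
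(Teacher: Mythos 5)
Your argument is correct and matches the paper, which states the corollary as an immediate consequence of Proposition \ref{pr:thick} without proof; the only ingredient is the monotonicity $\Dimh E\le\Dimh F$ for $E\subseteq F$, which you verify correctly from the cover-based definition \eqref{nu:n:rho}--\eqref{Def:Hdim}. Nothing further is needed.
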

 
 
 
In principle, our definition of $\theta$-thickness depends on
our {\it a priori} choice of a $\theta$-skeleton $\Pi(\theta)$. We are not aware
of any example where the choice matters very much.  But even if the
choice did matter, we can interpret Corollary \ref{co:thick} as saying that 
we can always obtain a lower bound
on $\Dimh F$ by finding a $\theta$-skeleton $\Pi(\theta)$ of $\R^d$
and a set $E\subset F$ that is $\theta$-thick with respect to our choice of skeleton.
In that case, $\Dimh F\ge d(1-\theta)$.

 It remains to prove Proposition \ref{pr:thick}.
 
\begin{proof}[Proof of Proposition \ref{pr:thick}]
	Using the notation of Definitions \ref{def:skeleton} and \ref{def:thick} we can find
	a finite number of points $x_{1,n},\ldots,x_{L_n,n}\in\cS_n$ such that
	\begin{equation}
		|x_{i,n}-x_{j,n}| \ge \e^{\theta n}
		\qquad\text{for all $1\le i\neq j\le L_n$,}
	\end{equation}
	where in fact $L_n := L_n(\theta) := |\Pi_n(\theta)|$. In particular,
	\eqref{eq:Ln} ensures that
	\begin{equation}
		a\e^{nd(1-\theta)}\le L_n\le a^{-1} \e^{nd(1-\theta)} \qquad
		\text{for all $n$}.
	\end{equation}
	It might help to recall that
	$a\in(0\,,1)$ is independent of the parameters $(i\,,j\,,n)$ of this discussion.
	
	Even though $F$ might not support a particularly-natural
	measure, the following defines a very natural
	locally finite measure $\mu$ on $E$:
	\begin{equation}
		\mu(F) := \sum_{n=M}^\infty\sum_{j=1}^{L_n}
		\1_F(x_{j,n}),
	\end{equation}
	for all $F\subseteq\R^d$. 
	
	Because $\mu(\cS_n)  = L_n$, the total $\mu$-mass of the $n$th shell satisfies
	\begin{equation}\label{eq:mu:0}
		a\e^{nd(1-\theta)} \le \mu(\cS_n) \le a^{-1}\e^{nd(1-\theta)}
		\qquad\text{for all $n\ge M$}.
	\end{equation}
	Since $\mu$ is a measure on $E$, we plan to use the measure
	$\mu$ in Lemma \ref{lem:Frostman} in order to find a lower bound
	for $\Dimh(E)$. With this aim in mind,
	we next establish an upper bound for $\mu(Q)$ for 
	every uprightbox $Q$ whose sidelength is at least one, with the 
	sole provision that $Q\subset\cS_n$ for some $n\ge M$ that is sufficiently
	large.
	
	Let us consider an arbitrary upright box $Q(z\,,r)$ of sidelength $r\ge 1$
	such that $Q(z\,,r)$ lies entirely in $\cS_n$ for some large enough integer $n\ge M$.
	Elementary properties of the Euclidean space $\R^d$ imply that
	there exists a positive integer $k\le 2^d$ together with $k$ points
	$z_1,\ldots,z_k$ from the collection $\{x_{1,n},\ldots,x_{L_n,n}\}$
	such that 
	\begin{equation}
		Q(z\,,r) \subseteq \bigcup_{j=1}^k Q(z_j\,,r).
	\end{equation}
	Therefore,
	\begin{equation}
		\mu(Q(z\,,r)) \le \sum_{j=1}^k \mu(Q(z_j\,,r)) = k \le 2^d.
	\end{equation}
	This shows that for all $\rho>0$, 
	\begin{equation}\label{gen:K_rho:1}
		K_\rho := \sup\left\{ \frac{\mu(Q)}{r^\rho}:\,
		Q\in\mathcal{B},\ Q\subset\cS_n,\ {\rm side}(Q)\in[1\,,r],\ r\ge 1\right\}
		\le\ 2^d.
	\end{equation}
	On the other hand, if $r\ge\e^{\theta n}$, then an upright box
	$Q(z\,,r)$ can contain at most $(1+r\e^{-\theta n})^d$-many points
	in $\Pi_n(\theta)$. Now, 
	\begin{equation}
		(1+r\e^{-\theta n})^d\le 2^d r^d \e^{-d\theta n},
	\end{equation}
	because $r\ge\e^{\theta n}$, and ${\rm side}(Q(z\,,r))\le\e^n$ because
	$Q(z\,,r)\subset\cS_n$. Therefore,
	it follows from the construction of the measure $\mu$ that
	\begin{equation}\begin{split}
		&\sup\left\{ \frac{\mu(Q)}{[{\rm side}(Q)]^\rho}:\,
			Q\in\mathcal{B},\ Q\subset\cS_n,\ 
			{\rm side}(Q)\ge \e^{\theta n}\right\}\\
		&\hskip1.5in\le 2^d \e^{-d\theta n}\sup_{\e^n\ge r\ge \e^{\theta n}} r^{d-\rho}
			= 2^d \e^{-n[d\theta-d+\rho]}\label{gen:K_rho:2}
	\end{split}\end{equation}
	as long as $0<\rho<d$. Now we compare \eqref{gen:K_rho:1}
	and \eqref{gen:K_rho:2}, and set $\rho:=d(1-\theta)$
	in order to see that $K_{d(1-\theta)}\le 2^d$, where  $K_\rho$
	was defined in \eqref{gen:K_rho:1}; see also
	\eqref{eq:K:rho}. This bound
	and \eqref{eq:mu:0} together yield the following: For all $n\ge M$ sufficiently large,
	\begin{equation}
		\nu^n_{d(1-\theta)} (E) \ge 2^{-d} \e^{-nd(1-\theta)}\mu(\cS_n) 
		\ge  a2^{-d},
	\end{equation}
	thanks to Lemma \ref{lem:Frostman}. It follows that 
	$\limsup_{n\to\infty}\nu^n_{d(1-\theta)}(E)\ge a2^{-d}>0, $
	and hence
	\begin{equation}
		\Dimh(E)\ge \lDimh E\ge d(1-\theta),
	\end{equation}
	where $\lDimh$ denotes the
	lower Hausdorff dimension of Barlow and Taylor \cite{BarlowTaylor,BarlowTaylor1},
	as was recalled in \eqref{lDimh}.
\end{proof}

Let us introduce a last piece of notation before we state
and prove the general lower bound of this section [Theorem
\ref{th:Gen:LIL:LB} below]. That lower bound
will be a counterpart to Theorem \ref{th:Gen:LIL:UB}.
\begin{definition}
	Let $\mathcal{I}$ denote the collection of all independent
	finite sequences of independent random variables.
\end{definition}

Then we have the following general lower bound statement.

\begin{theorem}\label{th:Gen:LIL:LB}
	Suppose there exists $b\in(0\,,\infty)$ such that $\bm{C}(b)<\infty$.
	Suppose in addition that there exist $\delta\in(0\,,1)$ and an increasing nonrandom 
	measurable function $S:\R\to\R$ such that
	\begin{equation}\label{cond:LB}
		n^{-1} \max_{\{t_i\}_{i=1}^m\in\Pi_n(\delta)}
		\max_{1\le j\le m}
		\inf_{\{Y_i\}_{i=1}^m\in\mathcal{I}}\log\P \{ |S(X_{t_j})
		- S(Y_j)|> 1\} \to -\infty,
	\end{equation}
	as $n\to\infty$.
	Then, 
	\begin{equation}\label{eq:Gen:LIL:LB}
		\limsup_{t\to\infty} \frac{X_t}{(\log t)^{1/b}}\ge \left( \frac{d}{\bm{C}(b)}
		\right)^{1/b},
	\end{equation}
	a.s. Moreover, if $\gamma\in(0\,,d)$ then
	\begin{equation}\label{eq:Gen:Dim:LB}
		\Dimh \left\{t\in T:\ |t|>\exp(\e),\
		\frac{X_t}{(\log t)^{1/b}} \ge 
		\left(\frac{\gamma}{\bm{C}(b)}\right)^{1/b}\right\}\ge
		d-\gamma\ \text{a.s.}
	\end{equation}
	
\end{theorem}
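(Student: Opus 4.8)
The plan is to establish the lower bound \eqref{eq:Gen:Dim:LB} by showing that the random set in question contains a $\theta$-thick subset for a suitable $\theta$, and then invoke Corollary~\ref{co:thick}; the $\limsup$ statement \eqref{eq:Gen:LIL:LB} follows as the special case $\gamma \uparrow d$. First I would fix $\gamma \in (0,d)$, pick $\theta := \gamma/d$ (so that $d(1-\theta) = d - \gamma$), and work with the skeleton points $\Pi_n(\theta)$. The key idea is that on the shell $\cS_n$, the value $(\log|t|)^{1/b}$ is comparable to $(n)^{1/b}$, so the event $\{X_t \ge (\gamma/\bm{C}(b) \cdot \log t)^{1/b}\}$ has probability roughly $\exp(-\bm{C}(b) \cdot (\gamma/\bm{C}(b)) n \cdot (1+o(1))) = \e^{-\gamma n(1+o(1))}$ by definition of $\bm C(b)$ in \eqref{eq:gen:tail:UB}. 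Thus, examining one skeleton point per box $Q(x,\e^{\theta n})$ with $x \in \Pi_n(\theta)$, the expected number of ``successes'' among the $|\Pi_n(\theta)| \approx \e^{nd(1-\theta)} = \e^{n(d-\gamma)}$ skeleton points is $\e^{n(d-\gamma)} \cdot \e^{-\gamma n(1+o(1))}$; since $\theta = \gamma/d$ we need $d - \gamma - \gamma > 0$... wait — this is exactly where the quantization of scales matters, and I would instead recall that the construction of $\theta$-thick sets in such arguments uses a \emph{second} finer skeleton at scale $\e^{\theta' n}$ with $\theta' $ slightly below $\theta$, or equivalently spreads many independent trials inside each box $Q(x,\e^{\theta n})$. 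Let me restructure.

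The cleaner approach: fix $\theta \in (0,1)$ to be chosen, and inside each box $Q(x,\e^{\theta n})$ for $x \in \Pi_n(\theta)$ place a sub-grid of $\asymp \e^{\theta n \cdot d} / (\text{unit spacing})^d = \e^{d\theta n}$ points spaced by $O(1)$; these can be arranged to be ``asymptotically independent'' via condition \eqref{cond:LB}. For the set $\Lambda_\ell$ to meet every box $Q(x,\e^{\theta n})$, it suffices that at least one of these $\e^{d\theta n}$ interior points $t$ satisfies $X_t \ge (\gamma/\bm C(b) \cdot \log t)^{1/b}$; by \eqref{eq:gen:tail:UB} each such event has probability $\ge \e^{-\gamma n (1+o(1))}$ (using $\log t \sim n$ on $\cS_n$). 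Hence, \emph{if these trials were exactly independent}, the probability that box $Q(x,\e^{\theta n})$ contains no point of $\Lambda_\ell$ would be at most $(1 - \e^{-\gamma n(1+o(1))})^{\e^{d\theta n}} \le \exp(-\e^{(d\theta - \gamma)n(1+o(1))})$, which is super-exponentially small provided $d\theta > \gamma$, i.e. $\theta > \gamma/d$. Then by a union bound over the $\e^{nd(1-\theta)}$ boxes in $\cS_n$ and over $n$, with probability one $\Lambda_\ell$ is $\theta$-thick for every $\theta > \gamma/d$ (eventually). Corollary~\ref{co:thick} then gives $\Dimh \Lambda_\ell \ge d(1-\theta)$ a.s. for every $\theta > \gamma/d$, and letting $\theta \downarrow \gamma/d$ yields $\Dimh \Lambda_\ell \ge d - \gamma$ a.s.

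The main obstacle — and the reason condition \eqref{cond:LB} is phrased the way it is — is precisely that the interior trials are \emph{not} independent: $X$ is a continuous random field, and nearby values $X_{t_i}$, $X_{t_j}$ are correlated. The hypothesis \eqref{cond:LB} says that after applying the increasing transformation $S$, each $S(X_{t_j})$ can be coupled to within $1$ (with super-polynomially-in-$n$ small error probability, since $n^{-1}\log\P\{\cdots\} \to -\infty$) to an \emph{independent} sequence $\{Y_i\} \in \mathcal I$. So the actual argument is: replace the event $\{X_{t_j} \ge (\gamma/\bm C(b)\log t_j)^{1/b}\}$ by the event $\{S(X_{t_j}) \ge S((\gamma/\bm C(b)\log t_j)^{1/b})\}$ (valid since $S$ is increasing), couple $S(X_{t_j})$ to $S(Y_j)$ up to additive error $1$ off an event $G_{j,n}$ with $\P(G_{j,n}) \le \e^{-c_n n}$, $c_n \to \infty$; run the independent Borel–Cantelli / second-moment computation on the $Y_j$'s (whose marginal tails agree with those of $X_{t_j}$ up to the harmless shift by $1$ in $S$-units, which perturbs the exponent $\gamma$ by $o(1)$); and absorb the total coupling-error probability $\sum_n \e^{nd} \e^{-c_n n} < \infty$ via Borel–Cantelli so that it does not destroy $\theta$-thickness. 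I would need to be careful that the shift-by-$1$ in the $S$-scale, when pulled back through $S^{-1}$ and through the definition of $\bm C(b)$, only changes the effective threshold by a factor $1 + o(1)$ as $n \to \infty$ — this is where the precise form $(\gamma/\bm C(b)\log t)^{1/b}$ and the $\liminf$ in \eqref{eq:gen:tail:UB} are used, together with the freedom to take $\theta$ strictly larger than $\gamma/d$ so there is room for the $o(1)$ losses. Once that bookkeeping is in place, Corollary~\ref{co:thick} and a Hewitt–Savage or Kolmogorov $0$–$1$ law (to upgrade ``positive probability'' to ``probability one'', if the coupling argument only gives the former) finish the proof, and \eqref{eq:Gen:LIL:LB} is recovered by noting that $t \in \Lambda_\ell$ for the threshold with parameter $\gamma$ forces $X_t/(\log t)^{1/b} \ge (\gamma/\bm C(b))^{1/b}$ on an unbounded set, then sending $\gamma \uparrow d$.
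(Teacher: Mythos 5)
Your overall strategy coincides with the paper's own proof: reduce everything to showing that the exceedance set is $\theta$-thick for every $\theta>\gamma/d$, by planting many well-separated test points inside each skeleton box $Q(x,\e^{\theta n})$, $x\in\Pi_n(\theta)$, coupling their values to an independent sequence via \eqref{cond:LB}, bounding the per-box failure probability by a double-exponentially small term plus a coupling error of order $\e^{-Kn}$ with $K>d$, and finishing with a union bound over the at most $\e^{nd}$ boxes, Borel--Cantelli, Proposition \ref{pr:thick} (equivalently Corollary \ref{co:thick}), then $\theta\downarrow\gamma/d$, and finally $\gamma\uparrow d$ for \eqref{eq:Gen:LIL:LB}. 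Your treatment of the $S$-shift---the additive error $1$ (resp.\ $2$) in the $S$-scale being absorbed into a $\gamma+\varepsilon$ in the exponent using $\bm{C}(b)<\infty$ and the slack $\theta>\gamma/d$---is also how the paper proceeds; and no Hewitt--Savage or Kolmogorov argument is needed, since Borel--Cantelli already delivers an almost sure statement.

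The one step that would fail as written is your restructured choice of interior trials: a sub-grid of $\asymp\e^{d\theta n}$ points ``spaced by $O(1)$'' in each box, which you claim can be made asymptotically independent ``via condition \eqref{cond:LB}.'' Condition \eqref{cond:LB} only concerns configurations drawn from the skeleton $\Pi_n(\delta)$, i.e.\ points separated by roughly $\e^{\delta n}$; it says nothing about unit-separated points, and in the intended applications it could not: for the Ornstein--Uhlenbeck process, for example, values at unit distance have correlation $\e^{-1/2}$, so they cannot be coupled within $1$ to independent variables with an error that is super-exponentially small in $n$. The repair is exactly your own first instinct (the ``finer skeleton''): take the interior points $\e^{\delta n}$-separated with $0<\delta<\theta-\gamma/d$, which gives $\asymp\e^{d(\theta-\delta)n}$ trials per box; since $d(\theta-\delta)>\gamma+2\varepsilon$ for $\varepsilon$ small, the per-box failure probability is still at most $K\e^{-Kn}+\exp\{-\e^{(d\theta-d\delta-\gamma-2\varepsilon)n}\}$, and the rest of your bookkeeping goes through verbatim---this is precisely the paper's argument. (Strictly speaking one then needs \eqref{cond:LB} at arbitrarily small $\delta$ as $\theta\downarrow\gamma/d$, which is how the condition is in fact verified in all of the paper's applications.)
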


\begin{remark} 
	Condition \eqref{cond:LB} is a coupling assumption, and states that if
	$t_1,\ldots,t_m\in T$ have large norms [say, are in $\cS_n$ for a large $n$]
	and sufficiently far apart [say at least $\exp(\delta n)$ apart],
	then $X_{t_1},\ldots,X_{t_m}$
	are close---say within one unit---to an independent sequence with very
	high probability.
	At first glance this might seem to be a technical and complicated condition.
	We will see in the next few sections that \eqref{cond:LB}
	is in fact frequently easy to use, particularly in the context of stochastic
	PDEs.
	Condition \eqref{cond:LB} can be recast in terms of the ``correlation
	length'' of the process $X$; see Conus et al \cite{CJK-islands} for details.
\end{remark}

If $\Dimh G>0$ then in particular $G$ is unbounded. In this way we see that
\eqref{eq:Gen:Dim:LB} implies that 
\begin{equation}
	\limsup_{t\to\infty}\frac{X_t}{(\log t)^{1/b}}
	\ge \left(\frac{\gamma}{\bm{C}(b)}\right)^{1/b},
\end{equation}
a.s.\ for all $\gamma\in(0\,,d)$.
Let $\gamma\uparrow d$ to deduce \eqref{eq:Gen:LIL:LB} from \eqref{eq:Gen:Dim:LB}. 
Thus, we need to  derive only \eqref{eq:Gen:Dim:LB}.

\begin{proof}[Proof of Theorem \ref{th:Gen:LIL:LB}]
	Since $X_t\ge \alpha$ if and only if $S(X_t)\ge S(\alpha)$,
	we can replace the random field $\{X_t\}_{t\in T}$
	by the random field $\{S(X_t)\}_{t\in T}$ throughout
	the entire statement of the theorem in order to see that
	for the remainder of the proof
	we can---and will---assume
	without incurring any loss in generality that 
	\begin{equation}
		S(x):=x\qquad\text{for all $x\in\R$.}
	\end{equation}
	In other words, the function $S$ merely plays the role of a change
	of ``scale.''

	We plan to prove that the random set 
	$\Lambda_\ell$---defined earlier
	in \eqref{eq:L_X}---a.s.\ is $\theta$-thick  for every 
	$\theta\in(\gamma/d\,,1)$ and $\gamma\in(0\,,d)$.
	Owing to Proposition \ref{pr:thick}, this proves that 
	\begin{equation}
		\Dimh \Lambda_\ell\ge d(1-\theta)\qquad\text{%
		a.s.\ for all $\theta\in\left(\frac{\gamma}{d}\,,1\right)$ and $\gamma\in(0\,,d)$,}
	\end{equation}
	and \eqref{eq:Gen:Dim:LB} follows. In light of the paragraph that followed the
	statement of Theorem \ref{th:Gen:LIL:LB}, this endeavor completes the proof
	of Theorem \ref{th:Gen:LIL:LB}. Henceforth, we choose and fix two arbitrary
	numbers $\gamma\in(0\,,d)$ and
	$\theta\in(\gamma/d\,,1)$. We also hold fixed an arbitrary [small]
	\begin{equation}\label{eq:delta}
		0<\delta<\theta-\frac{\gamma}{d}.
	\end{equation}
	
	Now we carry out a multi-scale argument. Recall the definition of
	$\Pi_n(\theta)$. Because of that definition,
	for all sufficiently large integers $n\gg1$, we can find points
	$x_{1,n},\ldots,x_{L_n,n}$ in $\cS_n$ such that
	\begin{equation}
		Q(x_{i,n}\,,\e^{\theta n})
		\cap Q(x_{j,n}\,,\e^{\theta n})=\varnothing
		\qquad\text{when $1\le i\ne j\le L_n$},
	\end{equation}
	and
	\begin{equation}
		a\e^{nd(1-\theta)}\le L_n\le a^{-1}\e^{nd(1-\theta)},
	\end{equation}
	where $a\in(0\,,1)$ depends neither on $n$ nor on the pair $(i\,,j)$.
	For future purposes, we would like to emphasize that as part of the construction
	of these points we have also the following:
	\begin{equation}\label{nlogL}
		\lim_{n\to\infty}\frac{\log L_n}{n}=d(1-\theta).
	\end{equation}
	
	For all $1\le i\le L_n$
	we can find points $z_{1,n}(i),\ldots,z_{\ell_n(i),n}(i)$ in
	$Q(x_{i,n}\,,\e^{\theta n})$ such that 
	\begin{equation}
		|z_{k,n}(i)-z_{l,n}(i)|>\e^{\delta n},
	\end{equation}
	whenever $1\le k\neq l\le \ell_n(i)$, and
	\begin{equation}\label{l_n(i)}
		b\e^{nd(\theta-\delta)}\le \ell_n(i) \le b^{-1}\e^{nd(\theta-\delta)},
	\end{equation}
	where $b\in(0\,,1)$ depends neither on $n$ nor on the triple $(i\,,k\,,l)$.
	In fact, $a$ and $b$ depend only on $\theta$ and $\delta$, which
	are held fixed throughout this discussion.
	
	According to \eqref{cond:LB}
	for all $K>d$ and  $1\le i\le L_n$
	we can find an independent sequence $Y_1,\ldots,Y_m$ of random variables
	such that
	\begin{equation}\label{babove}
		\max_{1\le j\le \ell_n(i)}
		\P\{|X_{z_{j,n}(i)}-Y_j|>1\}\le K\e^{-Kn}\qquad\text{for all $n\ge K$}.
	\end{equation}
	The particular construction of $Y_1,\ldots,Y_m$ might---or might not---depend
	on $K$ and $i$; it does not matter. The upshot is the following: Since
	\begin{equation}\begin{split}
		&\P\left\{\sup_{t\in Q(x_{i,n}\, ,\e^{\theta n})} \frac{X_t}{(\log t)^{1/b}} \le
			\left(\frac{\gamma}{\bm{C}(b)}\right)^{1/b}\right\}\\
		&\hskip1in\le \P\left\{\max_{1\le j\le \ell_n(i)} X_{z_{j,n}(i)} \le
			\left(\frac{(n+1)\gamma}{\bm{C}(b)}\right)^{1/b}\right\},
	\end{split}\end{equation}
	two back-to-back applications of \eqref{babove} show us that
	\begin{align}\notag
		&\P\left\{\sup_{t\in Q(x_{i,n}\,\e^{\theta n})} \frac{X_t}{(\log t)^{1/b}} \le
			\left(\frac{\gamma}{\bm{C}(b)}\right)^{1/b}\right\}\\
		&\le K\e^{-Kn} + \prod_{j=1}^{\ell_n(i)}\P\left\{ Y_j \le 1+
			\left(\frac{(n+1)\gamma}{\bm{C}(b)}\right)^{1/b}\right\}\\
		&\le K\e^{-Kn} + \prod_{j=1}^{\ell_n(i)}\left( \P\left\{ X_{z_{j,n}(i)}\le 2+
			\left(\frac{(n+1)\gamma}{\bm{C}(b)}\right)^{1/b}\right\} + K\e^{-Kn} \right),
	\end{align}
	uniformly for all $0\le i\le L_n$ and $n\ge K$. Given an arbitrary
	$\varepsilon\in(0\,,1)$, we can find $K_0>K$ such that
	\begin{align}\notag
		\P\left\{ X_{z_j(i)}\le 2+ \left(\frac{(n+1)\gamma}{\bm{C}(b)}\right)^{1/b}\right\} 
			&\le\P\left\{ X_{z_j(i)}\le 
			\left(\frac{n(\gamma+\varepsilon)}{\bm{C}(b)}\right)^{1/b}\right\} \\
		&\le 1 - \e^{-(\gamma+\varepsilon)n},
	\end{align}
	uniformly for all $0\le i\le L_n$, $0\le j\le \ell_n(i)$, and $n\ge K_0$.
	This follows because $\bm{C}(b)<\infty$. 
	
	Because of \eqref{eq:delta},
	we can---and will---in fact choose $\varepsilon$
	small enough so that 
	\begin{equation}\label{eq:eps}
		0<\varepsilon<\frac{ d(\theta-\delta)-\gamma}{2}.
	\end{equation}
	Therefore, there exists $K_1>K_0$ such that
	\begin{align}
		&\P\left\{\sup_{t\in Q(x_{i,n},\e^{\theta n})} \frac{X_t}{(\log t)^{1/b}} \le
			\left(\frac{\gamma}{\bm{C}(b)}\right)^{1/b}\right\}\\\notag
		&\hskip2in
			\le K\e^{-Kn} + \left( 1-\e^{-(\gamma+\varepsilon)n} 
			+ K\e^{-Kn} \right)^{\ell_n(i)}\\\notag
		&\hskip2in
			\le K \e^{-Kn} + \exp\left\{ -\ell_n(i)\e^{-(\gamma+2\varepsilon)n}\right\},
	\end{align}
	uniformly for all $0\le i\le L_n$ and $n\ge K_1$.  [The preceding hinges
	on the fact that $K>d>\gamma+\varepsilon$.]
	We may deduce from \eqref{l_n(i)} that
	\begin{align}
		&\P\left\{\sup_{t\in Q(x_{i,n},\e^{\theta n})} \frac{X_t}{(\log t)^{1/b}} \le
			\left(\frac{\gamma}{\bm{C}(b)}\right)^{1/b}\right\}\\\notag
		&\hskip2in
			\le K \e^{-Kn} + \exp\left\{ -b\e^{(d\theta 
			-d\delta -\gamma-2\varepsilon)n}\right\},
	\end{align}
	uniformly for all $0\le i\le L_n$ and $n\ge K_1$.
	Thanks to \eqref{eq:eps} and the fact that 
	$K>d>n^{-1}\log L_n = d(1-\theta)+o(1)$---see \eqref{nlogL}---it
	follows that
	\begin{equation}
		\sum_{n=1}^\infty\sum_{i=0}^{L_n}
		\P\left\{\sup_{t\in Q(x_{i,n},\e^{\theta n})} \frac{X_t}{(\log t)^{1/b}} \le
		\left(\frac{\gamma}{\bm{C}(b)}\right)^{1/b}\right\}<\infty.
	\end{equation}
	Therefore, the Borel--Cantelli lemma ensures that the following holds
	for all $\omega$ in the probability space that lie
	outside of a single set of $\P$-measure zero: For all but a finite number
	of integers $n\ge 1$,
	\begin{equation}
		\sup_{t\in Q(x_{i,n},\e^{\theta n})} \frac{X_t(\omega)}{(\log t)^{1/b}} >
		\left(\frac{\gamma}{\bm{C}(b)}\right)^{1/b}
		\qquad\text{for all $0\le i\le L_n$}.
	\end{equation}
	Recall the random set $\Lambda_\ell$ that was defined earlier in
	\eqref{eq:L_X}. We can state the preceding display in another way;
	namely, that for all but a finite number of integers $n\ge 1$,
	\begin{equation}
		\Lambda_\ell\cap Q(x_{i,n},\e^{\theta n})\neq\varnothing
		\qquad\text{for all $0\le i\le L_n$}.
	\end{equation}
	This proves that $\Lambda_\ell$ is $\theta$-thick a.s. Proposition \ref{pr:thick}
	then shows that 
	\begin{equation}
		\Dimh\Lambda_\ell\ge d(1-\theta)\qquad\text{a.s.}
	\end{equation}
	Since $\theta\in(\gamma/d\,,1)$ were arbitrary, 
	we  let $\theta\downarrow \gamma/d$
	to complete the proof.
\end{proof}

Let us close this section by recalling a well-known general asymptotic evaluation
of the tail of the distribution of the
supremum of a stationary Gaussian process with a nice covariance
function. The result is originally due to Pickands \cite[Lemma 2.5]{Pickands},
with extra conditions that were removed subsequently by 
Qualls and Watanabe \cite[Theorem 2.1]{QW}.
Albin and Choi \cite{AlbinChoi} contain a novel elementary proof,
together with an indepth discussion of the literature of the subject.

\begin{lemma}[Pickands]\label{lem:Pickands}
	Let $\{\eta(t)\}_{t\ge0}$ denote a continuous stationary Gaussian process
	with $\E\,\eta(t)=0$ and $\Var\eta(t)=1$ for all $t\ge 0$.
	Suppose that there exist constants
	$\kappa\in(0\,,\infty)$ and $\alpha\in(0\,,2]$ such that
	\begin{equation}
		\Cov[\eta(t)\,,\eta(0)] =1- \kappa t^\alpha + o(t^\alpha)
		\qquad\text{as $t\to0^+$.}
	\end{equation}
	Then,
	\begin{equation}
		\P\bigg\{ \sup_{s\in[0,1]}\eta(s)>x\bigg\} =
		\frac{H_\alpha \kappa^{1/\alpha}+o(1)}{(2\pi)^{1/2}}
		x^{(2-\alpha)/\alpha} \e^{-x^2/2}\quad\text{as $x\to\infty$},	
	\end{equation}
	where $H_\alpha\in(0\,,\infty)$ is a numerical constant that depends only on $\alpha$.
\end{lemma}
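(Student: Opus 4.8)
The plan is to prove this by the classical Pickands double-sum method (see Pickands \cite{Pickands}, Qualls--Watanabe \cite{QW}, and Albin--Choi \cite{AlbinChoi}). Write $\overline{\Phi}(x):=\P\{N(0,1)>x\}$, so that $\overline{\Phi}(x)=(1+o(1))(2\pi)^{-1/2}x^{-1}\e^{-x^2/2}$ and, since $(2-\alpha)/\alpha+1=2/\alpha$, the asserted formula is equivalent to
\[
	\P\Big\{\sup_{s\in[0,1]}\eta(s)>x\Big\}=\big(H_\alpha\kappa^{1/\alpha}+o(1)\big)\,x^{2/\alpha}\,\overline{\Phi}(x)\qquad(x\to\infty).
\]
The hypothesis on $\Cov[\eta(t),\eta(0)]$ forces $\E[(\eta(t)-\eta(s))^2]=2\kappa|t-s|^\alpha(1+o(1))$ as $|t-s|\to0$, so after rescaling time by the factor $\kappa^{-1/\alpha}x^{-2/\alpha}$ the process looks, near any fixed point, like the drifted fractional field $\chi_\alpha(v):=W_\alpha(v)-|v|^\alpha$, where $W_\alpha$ is a centered Gaussian process with stationary increments, $W_\alpha(0)=0$, and $\Var W_\alpha(v)=2|v|^\alpha$. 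To this one attaches the Pickands function $\mathcal H_\alpha(T):=\E\exp\!\big(\sup_{v\in[0,T]}\chi_\alpha(v)\big)\in[1,\infty)$ and the Pickands constant $H_\alpha:=\lim_{T\to\infty}\mathcal H_\alpha(T)/T$; the existence of this limit in $(0,\infty)$ is a standard preliminary (a subadditivity argument for $\mathcal H_\alpha$, with $\mathcal H_\alpha(T)\ge1$ giving positivity).

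The first main ingredient is the \emph{local lemma}: for each fixed $T>0$,
\[
	\P\Big\{\sup_{v\in[0,T]}\eta\big(\kappa^{-1/\alpha}x^{-2/\alpha}v\big)>x\Big\}=\big(\mathcal H_\alpha(T)+o(1)\big)\,\overline{\Phi}(x)\qquad(x\to\infty).
\]
This is obtained by conditioning on $\eta(0)$ and setting $u:=x(\eta(0)-x)$: the Gaussian regression $\eta(v)=r(v)\,\eta(0)+(\text{independent remainder})$ together with the local expansion $r(v)=1-\kappa|v|^\alpha+o(|v|^\alpha)$ shows that, after multiplying by $x$, the rescaled field on $[0,T]$ converges weakly (conditionally on $u$) to $u+\chi_\alpha(\cdot)$, while $\P\{x(\eta(0)-x)\in\d u\}=(1+o(1))\,\overline{\Phi}(x)\,\e^{-u}\,\d u$ locally uniformly in $u$. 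Integrating in $u$ and using the identity $\int_{\R}\P\{\sup_{[0,T]}\chi_\alpha>-u\}\,\e^{-u}\,\d u=\E\exp(\sup_{[0,T]}\chi_\alpha)=\mathcal H_\alpha(T)$ gives the claim. The one genuine technical point here is the dominated-convergence step: one needs a concentration (Borell--TIS) tail bound to dominate, uniformly in $x$, the contribution of $u\to-\infty$, i.e.\ the event that the supremum exceeds $x$ although $\eta(0)$ lies well below $x$.

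With the local lemma in hand, partition $[0,1]$ into congruent intervals of rescaled length $T$ (i.e.\ real length $\kappa^{-1/\alpha}x^{-2/\alpha}T$), separated by gaps of fixed rescaled length; there are $N=(1+o_T(1))\kappa^{1/\alpha}x^{2/\alpha}/T$ of them. The upper bound is a union bound: $\P\{\sup_{[0,1]}\eta>x\}\le N(\mathcal H_\alpha(T)+o(1))\overline{\Phi}(x)$, so $\limsup_{x\to\infty}x^{-2/\alpha}\overline{\Phi}(x)^{-1}\P\{\sup_{[0,1]}\eta>x\}\le\kappa^{1/\alpha}\mathcal H_\alpha(T)/T$, and letting $T\to\infty$ gives $\le H_\alpha\kappa^{1/\alpha}$. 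For the matching lower bound one applies Bonferroni's inequality,
\[
	\P\Big\{\sup_{[0,1]}\eta>x\Big\}\ \ge\ \sum_i\P\Big\{\sup_{I_i}\eta>x\Big\}\ -\ \sum_{i<j}\P\Big\{\sup_{I_i}\eta>x,\ \sup_{I_j}\eta>x\Big\},
\]
where by the local lemma the first sum equals $(1+o(1))\,\kappa^{1/\alpha}x^{2/\alpha}\big(\mathcal H_\alpha(T)/T\big)\overline{\Phi}(x)$ up to a correction that vanishes as $T\to\infty$.

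I expect the \emph{double-sum estimate}---showing the second sum is $o(x^{2/\alpha}\overline{\Phi}(x))$---to be the main obstacle, as always in this method. One splits the pairs $(i,j)$ by separation. Pairs of nearby intervals are handled by a two-interval refinement of the local lemma, whose per-pair contribution, summed, is of order $\varepsilon(T)\cdot x^{2/\alpha}\overline{\Phi}(x)$ with $\varepsilon(T)\to0$; this is precisely the step that uses that $H_\alpha$ is the limit (not merely the supremum or infimum) of $\mathcal H_\alpha(T)/T$, together with the gaps making neighbouring near-excursions asymptotically uncorrelated. Pairs of intervals that are macroscopically separated (i.e.\ at distance bounded below in real time) are handled by the fact that $r(t)$ stays bounded away from $1$ there, so that after a discretization the joint exceedance probability at a pair of points in $I_i$ and $I_j$ is at most $\overline{\Phi}\big(x\sqrt{2/(2-c)}\big)$ for some $c>0$; this is $o(\overline{\Phi}(x)^{1+\varepsilon_0})$ for some $\varepsilon_0>0$ and, summed over the $O(x^{4/\alpha})$ pairs, is $o(x^{2/\alpha}\overline{\Phi}(x))$ because $\overline{\Phi}(x)$ decays faster than every power of $x$. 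Combining the two bounds and sending $x\to\infty$ and then $T\to\infty$ identifies the limiting constant as $H_\alpha\kappa^{1/\alpha}$, which is the lemma. (If $r(t)=1$ for some $t\in(0,1]$ the process is degenerate and the distant-pair step must be modified; the removal of this and of Pickands's original extra regularity hypotheses is exactly what the refinement of Qualls--Watanabe \cite{QW} provides, and one may alternatively quote their theorem directly.)
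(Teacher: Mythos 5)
The paper offers no proof of Lemma \ref{lem:Pickands}: it is quoted from the literature, the result being Pickands' \cite[Lemma 2.5]{Pickands} with extra conditions removed by Qualls and Watanabe \cite[Theorem 2.1]{QW} (see also Albin and Choi \cite{AlbinChoi} for an elementary proof). So there is nothing in the paper to compare your argument against except those references, and what you have written is precisely their classical double-sum scheme: the reduction to $x^{2/\alpha}\overline{\Phi}(x)$, the time rescaling by $\kappa^{-1/\alpha}x^{-2/\alpha}$, the conditional ``local lemma'' with the identity $\int_{\R}\P\{\sup_{[0,T]}\chi_\alpha>-u\}\,\e^{-u}\,\d u=\mathcal{H}_\alpha(T)$, and the Bonferroni step with the double sum split into nearby and well-separated pairs. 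The limit field you describe (stationary increments, $\Var W_\alpha(v)=2|v|^\alpha$) has covariance $|v_1|^\alpha+|v_2|^\alpha-|v_1-v_2|^\alpha$, so your constant coincides with the $H_\alpha$ of the remark following the lemma; that is all consistent. Two caveats. First, as you yourself flag, the two analytic crux points---the uniform domination needed to integrate the local lemma over $u\to-\infty$, and the nearby-pair double-sum estimate with error $\varepsilon(T)\to0$---are described rather than carried out, so what you have is an accurate roadmap of \cite{Pickands,QW} rather than a self-contained argument; also, for the upper bound the blocks must actually cover $[0,1]$ (the gaps belong only to the lower bound), a harmless slip. Second, the parenthetical claim that $\mathcal{H}_\alpha(T)\ge1$ gives positivity of $H_\alpha$ is wrong as stated: it only yields $\mathcal{H}_\alpha(T)/T\ge 1/T$, and the strict positivity (and finiteness) of the Pickands constant is a genuinely nontrivial point which, in the cited sources, is extracted from the double-sum bounds themselves rather than from such a trivial estimate. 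Your closing remark that one may simply quote \cite{QW} is exactly what the paper does, and your degeneracy caveat (ruling out $r(t)=1$ for some $t\in(0\,,1]$, e.g.\ periodic processes) points at a non-degeneracy condition the statement indeed uses tacitly; it is satisfied in every application of the lemma made in the paper, where the correlations decay strictly away from the origin.
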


\begin{remark}
	The cited literature also contains the assumption
	that there exists $h\in(0\,,\infty)$ such that
	$\inf_{t\in[0,h]}\Cov[\eta(t)\,,\eta(0)]>0$. We have omitted it
	as it is subsumed by the assumed behavior of
	$\Cov[\eta(t)\,,\eta(0)]$ near $t=0$.
\end{remark}

\begin{remark}
	The Pickands constant $H_\alpha$ is itself rather interesting. We follow
	Pickands \cite{Pickands} and let
	$\Phi:=\{\Phi(t)\}_{t\ge0}$ denote a centered Gaussian process
	with
	\begin{equation}
		\Cov[\Phi(s)\,,\Phi(t)]=s^\alpha+t^\alpha-|t-s|^\alpha.
	\end{equation}
	The process
	$\Phi$ is a fractional Brownian motion with parameter $\alpha/2$
	when $\alpha\in(0\,,2)$, and $\Phi(t)=t\zeta$ for a standard normal
	random variable $\zeta$ when $\alpha=2$. 
	Then, $H_\alpha$ is the following nontrivial
	limit [which is known to exist as well]:
	\begin{equation}
		H_\alpha 
		= \lim_{T\to\infty} \frac1T\,\E\bigg[ \sup_{t\in[0,T]}
		\e^{\Phi(t) - t^\alpha}\bigg].
	\end{equation}
	It is known that $H_1=1$ and $H_2=\pi^{-1/2}$. Other values of
	$H_\alpha$ are not known. See Harper \cite{Harper} and the references 
	therein for recent estimates.
\end{remark}

\section{Peaks of the Ornstein-Uhlenbeck process}

For a first, and perhaps simplest, example of the general theory 
of \S\ref{sec:Gen} let us continue to write $B$ for a standard Brownian motion on $\R$,
and define
\begin{equation}\label{U}
	U(t) := \e^{-t/2}B(\e^t)\qquad\text{for all $t\ge 0$}.
\end{equation} 
Then $U:=\{U(t)\}_{t\ge 0}$ is an Ornstein--Uhlenbeck process;
that is, $U$ is a centered Gaussian diffusion with
$\E[U(t)U(s)]=\exp(-|t-s|/2)$ for all $s,t\ge 0$.
Thanks to the law of the iterated logarithm for Brownian motion,
\begin{equation}
	\limsup_{t\to\infty}\frac{U(t)}{(2\log t)^{1/2}}=1\qquad\text{a.s.}
\end{equation}
Here we consider the exceedance times of $U$, defined
as follows:
\begin{equation}
	\cP_U(\gamma) := 
	\left\{ t\ge \e:\ \frac{U(t)}{(2\log t)^{1/2}}\ge\gamma\right\}
	\quad(\gamma>0).
\end{equation}
This notation is consistent with the notation in \eqref{LIL:X} and
\eqref{L_X} with $g(x):=(2\log_+ x)^{1/2}$.

Because $\cP_U(\gamma)=\log\cP_B(\gamma)$---%
where $\cP_B(\gamma)$ was defined in \eqref{eq:L_B}---%
and the natural logarithm is strictly monotone, we see that
the random sets $\cP_U(\gamma)$ and $\cP_B(\gamma)$
are bounded and unbounded together. In particular, Proposition
\ref{pr:LIL} implies that $\cP_U(\gamma)$
is unbounded [a.s.] if and only if $\gamma\le 1$; this fact follows
also from the integral test of Motoo \cite{Motoo}. Therefore, the
following theorem computes the macroscopic Hausdorff dimension of 
$\cP_U(\gamma)$ in all cases of interest.

\begin{theorem}\label{th:U}
	Part \eqref{OU:OU} of Theorem \ref{B:OU} holds.
	Namely,
	\begin{equation}
		\Dimh \cP_U(\gamma) = 1- \gamma^2
		\qquad\text{a.s.\ for all $\gamma\in(0\,,1]$}.
	\end{equation}
\end{theorem}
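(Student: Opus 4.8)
The plan is to obtain Theorem \ref{th:U} as a direct application of the general machinery of \S\ref{sec:Gen} to the stationary centered Gaussian field $X_t:=U(t)$ on $T=\R_+$, so that $d=1$. First I would pin down the tail parameters: since $U(t)$ is standard normal for every $t$, the elementary Gaussian estimate gives $\log\P\{U(t)>z\}=-z^2/2+O(\log z)$ uniformly in $t\ge0$, so $\bm{c}(b)=\bm{C}(b)$ equals $0$, $1/2$ or $+\infty$ according as $b<2$, $b=2$ or $b>2$; the relevant exponent is therefore $b=2$ with $\bm{c}(2)=\bm{C}(2)=1/2$. Setting $\gamma':=\gamma^2$ one has $\gamma\in(0,1)$ if and only if $\gamma'\in(0,1)=(0,d)$, and the inequality $U(t)\ge\gamma(2\log t)^{1/2}$ is exactly $U(t)\ge(\gamma'/\bm{c}(2)\cdot\log t)^{1/2}=(\gamma'/\bm{C}(2)\cdot\log t)^{1/2}$. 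Hence, up to the bounded initial piece $[\e,\e^\e)$ (which is invisible to $\Dimh$), $\cP_U(\gamma)$ is precisely the random set occurring in both \eqref{dim:gen:UB} and \eqref{eq:Gen:Dim:LB} for this choice of $\gamma',b,d$. So it remains only to check the hypotheses \eqref{cond:UB} and \eqref{cond:LB}, after which Theorem \ref{th:Gen:LIL:UB} yields $\Dimh\cP_U(\gamma)\le 1-\gamma^2$ and Theorem \ref{th:Gen:LIL:LB} yields $\Dimh\cP_U(\gamma)\ge 1-\gamma^2$.

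To verify \eqref{cond:UB} I would invoke Pickands' Lemma \ref{lem:Pickands}: $U$ is stationary with $\Cov[U(t),U(0)]=\e^{-|t|/2}=1-\tfrac12|t|+o(|t|)$ as $t\to0^+$, so the lemma applies with $\alpha=1$ and $\kappa=1/2$ (recall $H_1=1$), giving, by stationarity, $\sup_{w\in T}\P\{\sup_{t\in[w,w+1)}U(t)>x\}=\P\{\sup_{s\in[0,1]}U(s)>x\}=(c+o(1))\,x\,\e^{-x^2/2}$ as $x\to\infty$ for a positive constant $c$. Plugging in $x=(\gamma'/\bm{c}(2)\cdot\log s)^{1/2}=(2\gamma'\log s)^{1/2}$ turns the right-hand side into $s^{-\gamma'+o(1)}$, which is exactly \eqref{cond:UB}.

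The step requiring the most care is \eqref{cond:LB}, and there the point is that $U$ decorrelates doubly-exponentially fast over the length scale $\e^{\delta n}$ built into the skeleton, so the coupling is essentially free. Concretely, I would take $S(x):=x$ and an arbitrary $\delta\in(0,1)$. Fix a large $n$ and any points $t_1,\dots,t_m$ of $\Pi_n(\delta)$; by Definition \ref{def:skeleton} they lie in $\cS_n$, are pairwise at $\ell^\infty$-distance $\ge\e^{\delta n}$, and obey $m\le a^{-1}\e^{(1-\delta)n}$. The covariance matrix $\Sigma$ of the Gaussian vector $(U(t_1),\dots,U(t_m))$ has unit diagonal and off-diagonal entries $\Sigma_{jk}=\e^{-|t_j-t_k|/2}\le\e^{-\e^{\delta n}/2}$, so $\|\Sigma-I\|_{\mathrm{op}}\le m\,\e^{-\e^{\delta n}/2}$, a doubly-exponentially small quantity. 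Writing $(U(t_1),\dots,U(t_m))=\Sigma^{1/2}Z$ with $Z$ standard Gaussian in $\R^m$ and taking $(Y_1,\dots,Y_m):=Z\in\mathcal{I}$, each difference $U(t_j)-Y_j$ is centered Gaussian with variance $((\Sigma^{1/2}-I)^2)_{jj}\le\|\Sigma^{1/2}-I\|_{\mathrm{op}}^2\le\|\Sigma-I\|_{\mathrm{op}}^2$, so $\P\{|U(t_j)-Y_j|>1\}\le 2\exp(-\tfrac12\|\Sigma-I\|_{\mathrm{op}}^{-2})$; since $\|\Sigma-I\|_{\mathrm{op}}^{-2}\ge a^2\e^{-2(1-\delta)n}\e^{\e^{\delta n}}$ grows faster than any power of $n$, the quantity $n^{-1}\log\P\{|U(t_j)-Y_j|>1\}$ tends to $-\infty$ uniformly in $j$ and in the choice of the $t_i$'s. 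This gives \eqref{cond:LB}.

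Putting the two bounds together proves $\Dimh\cP_U(\gamma)=1-\gamma^2$ for every $\gamma\in(0,1)$. The endpoint $\gamma=1$ I would handle separately by monotonicity: $\cP_U(1)\subseteq\cP_U(\gamma)$ for each $\gamma<1$, so $\Dimh\cP_U(1)\le\inf_{\gamma<1}(1-\gamma^2)=0$, while $\cP_U(1)$ is a.s.\ unbounded by Motoo's integral test (recalled just before the statement of the theorem), whence $\Dimh\cP_U(1)\ge0$; thus $\Dimh\cP_U(1)=0=1-1^2$. In short, there is no real obstacle: the theorem is a clean test case for Theorems \ref{th:Gen:LIL:UB} and \ref{th:Gen:LIL:LB}, with Pickands' lemma supplying the sharp tail of the one-unit supremum and the fast Gaussian decorrelation supplying the coupling.
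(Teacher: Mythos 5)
Your proposal is correct and follows essentially the same route as the paper: both reduce the theorem to Theorems \ref{th:Gen:LIL:UB} and \ref{th:Gen:LIL:LB} with $b=2$ and the substitution $\gamma\mapsto\gamma^2$, and both verify \eqref{cond:UB} via Pickands' Lemma \ref{lem:Pickands} applied to the stationary covariance $\e^{-|t|/2}=1-\tfrac12|t|+o(|t|)$. The one genuinely different ingredient is your verification of the coupling condition \eqref{cond:LB}: the paper exploits the representation $U(t)=\e^{-t/2}B(\e^t)$ and takes $Y_i:=\e^{-t_i/2}\left[B(\e^{t_i})-B(\e^{t_{i-1}})\right]$, so independence is immediate from disjoint Brownian increments and $\E(|U(t_i)-Y_i|^2)\le\exp(-\e^{\delta n})$; you instead use the abstract Gaussian coupling $(U(t_1),\dots,U(t_m))=\Sigma^{1/2}Z$, $Y:=Z$, and bound $\|\Sigma-I\|_{\mathrm{op}}\le m\,\e^{-\e^{\delta n}/2}$, which is equally valid (the eigenvalue inequality $\|\Sigma^{1/2}-I\|_{\mathrm{op}}\le\|\Sigma-I\|_{\mathrm{op}}$ holds for positive semidefinite $\Sigma$) and has the advantage of using only stationary-covariance decay rather than the special Brownian structure, hence generalizes to other Gaussian fields. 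Two further remarks: your value $\bm{c}(2)=\bm{C}(2)=\tfrac12$ is the one consistent with the definitions \eqref{eq:gen:tail:LB}--\eqref{eq:gen:tail:UB} and with the matching $\gamma'=\gamma^2$ (the paper's stated value ``$2$'' at this point appears to be a slip and plays no role); and your explicit treatment of the endpoint $\gamma=1$ by monotonicity plus Motoo's test is slightly more careful than the paper, which handles only $\gamma\in(0,1)$ explicitly.
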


\begin{remark}\label{rem:compare:B:U}
	We can compare Theorems \ref{th:LIL:BM} and
	\ref{th:U} in order to see that 
	$\Dimh\log\cP_B(1)=0$
	a.s.\ whereas $\Dimh\cP_B(1)=1$ a.s. Equivalently,
	\begin{equation}
		\Dimh\exp(\cP_U(1))=1\neq 0=\Dimh\cP_U(1)
		\qquad\text{a.s.}
	\end{equation}
\end{remark}

The elegant theory of Weber \cite[Theorem 6.1]{Weber} implies the following 
closely-related result: With probability one,
\begin{equation}
	\lim_{n\to\infty} \frac1n \log\left|\left\{
	0\le j\le \e^n:\
	\cP_U(\gamma)\cap \left[ j,\,j+1\right)
	\neq\varnothing\right\} \right| = 1-\gamma^2,
\end{equation}
where $|\,\cdots|$ denotes cardinality here. In other words,
\begin{equation}
	\Dimm\cP_U(\gamma) =1-\gamma^2\quad\text{a.s.},
\end{equation}
where $\Dimm$ denotes \emph{macroscopic Minkowski dimension}.
In the notation of Barlow and Taylor \cite{BarlowTaylor,BarlowTaylor1}, 
$\Dimm E$ is the common value of $\dim_{\rm UM}E$
and $\dim_{\rm LM}E$, when the two are equal. Since
$\Dimh E\le\Dimm E$ for all $E\subseteq\R^d$ \cite[Lemma 3.1(i)]{BarlowTaylor1},
Weber's theorem implies half of Theorem \ref{th:U}; i.e.,
$\Dimh\cP_U(\gamma)\le 1-\gamma^2$
a.s. As part of proving the converse inequality, we plan to
use Theorems \ref{th:Gen:LIL:UB} and \ref{th:Gen:LIL:LB} in order to obtain
both inequalities at the same time.

\begin{proof}[Proof of Theorem \ref{th:U}]
	We apply Theorems \ref{th:Gen:LIL:UB} and \ref{th:Gen:LIL:LB}
	with $b=2$, $X_t:=U(t)$ for all $t\ge 0$,
	and $S(x):=x$ for all $x\in\R$. Since $\{U(t)\}_{t\ge 0}$ is stationary,
	we can see from an elementary bound on the tails of the Gaussian law
	that $\bm{c}(2)=\bm{C}(2)=2$. 
	
	Because
	\begin{equation}
		\Cov(U(t)\,,U(0))=\e^{-t/2}=1-\frac{t}{2}+o(t)\qquad\text{as $s\to t$},
	\end{equation}
	Pickands's lemma [Lemma \ref{lem:Pickands}]
	implies the maximal inequality \eqref{cond:UB},
	and our Theorem \ref{th:Gen:LIL:UB} then shows that $\Dimh \cP_U(\gamma)
	\le 1-\gamma^2$ a.s.\ for all $\gamma\in(0\,,1)$. 
	
	In order to prove the reverse
	inequality, let us note that if $t_1<\cdots<t_m$ are in $\cS_n$ and
	$t_{i+1}-t_i\ge\exp\{\delta n\}$ for all $1\le i\le m$, then
	we may set 
	\begin{equation}
		Y_i := \e^{-t_i/2}\left[ B(\e^{t_i})- B(\e^{t_{i-1}})\right]
		\qquad(1\le i\le m),
	\end{equation}
	with $t_0:=\e^n$. It is  easy to see that the $Y_i$'s are independent and
	\begin{equation}
		\max_{1\le i\le m}\E\left( \left| U(t_i) - Y_i\right|^2\right)
		=\e^{-(t_i-t_{i-1})}\le\exp\left\{ -\e^{\delta n}\right\}.
	\end{equation}
	Consequently, a standard bound on the tails of Gaussian laws implies
	that
	\begin{equation}
		\max_{1\le i\le m}\P\left\{ |U(t_i)-Y_i| >1 \right\} \le 2
		\exp\left( -\tfrac12\exp\left\{ \e^{\delta n}\right\}\right).
	\end{equation}
	Of course, this proves that
	\begin{equation}
		\lim_{n\to\infty} \frac1n
		\max_{1\le i\le m}\log\P\{ |U(t_i)-Y_i| >1 \} =-\infty,
	\end{equation}
	with room to spare. Hence, Condition \eqref{cond:LB} is  verified
	since the $Y_i$'s are independent. We can apply
	Theorem \ref{th:Gen:LIL:LB}---with
	$\gamma$ replaced by $\gamma^2$---in order to
	deduce that $\Dimh\cP_U(\gamma)
	\ge1-\gamma^2$ a.s.\ for all $\gamma\in(0\,,1)$. This  completes the proof.
\end{proof}

\section{Peaks of the linear heat equation}\label{sec:linearSHE}

Now we move on to examples that are perhaps more interesting.
Consider the linear stochastic heat equation,
\begin{equation}\label{SHE:Linear:WN}
	\dot{Z}_t(x) = \tfrac12 Z_t''(x) 
	+\xi_t(x)\qquad(x\in\R,\,t>0),
\end{equation}
subject to $Z_0(x):=0$ [say], where $\xi$ denotes space-time white noise.
That, $\xi$ is a totally-scattered centered Gaussian noise with
\begin{equation}
	\Cov(\xi_t(x)\,,\xi_s(y))=\delta_0(s-t)\delta_0(x-y)
	\qquad\text{for $s,t\ge 0$ and $x,y\in\R$.}
\end{equation}
It is well known---see Walsh \cite[Chapter 3]{Walsh} for example---that
there exists a unique integral solution to the stochastic PDE \eqref{SHE:Linear:WN},
and that solution has the following representation in terms of a Wiener
integral:
\begin{equation}
	Z_t(x) = \int_{(0,t)\times\R} p_{t-s}(y-x)\, \xi(\d s\,\d y)\qquad(t>0,\,,x\in\R),
\end{equation}
where the function $(s\,,t\,,x\,,y)\mapsto p_{t-s}(x-y)\1_{(0,\infty)}(t-s)$ denotes the fundamental solution
to the heat operator, 
\begin{equation}
	{\rm L}:=\frac{\partial}{\partial t} - \frac12\frac{\partial^2}{\partial x^2}
	\qquad\text{on $(0\,,\infty)\times\R$}.
\end{equation}
That is,
\begin{equation}\label{p}
	p_t(x) := \frac{\e^{-x^2/(2t)}}{\sqrt{2\pi t}},
\end{equation}
for every $t>0$ and $x\in\R$.

It is also well known---see Walsh [{\it ibid.}]---%
that the random field $Z$ has a modification 
that is continuous in
$(t\,,x)$; from now on we always use that version of the random field $Z$ 
in order to avoid measurability issues.

We are interested in the structure of the tall peaks of the random field
$x\mapsto Z_t(x)$, where $t>0$ is held fixed. With this aim in mind,
consider the random set
\begin{equation}
	\cP_{Z_t}(\gamma) := \left\{ x\ge \exp(\e):\ 
	\frac{Z_t(x)}{(2\log x)^{1/2}}\ge 
	\gamma\left(\frac t\pi\right)^{1/4}\right\},
\end{equation}
where $t,\gamma>0$ are fixed.

\begin{theorem}\label{th:LIL}
	Every $\cP_{Z_t}(\gamma)$ is almost surely unbounded
	if $\gamma\le 1$; else, if $\gamma>1$ then $\cP_{Z_t}(\gamma)$
	is almost surely bounded.
	Furthermore, 
	\begin{equation}
		\Dimh\cP_{Z_t}(\gamma)=1-\gamma^2\qquad\text{a.s.},
	\end{equation}
	for all $t>0$ and $\gamma\in(0\,,1]$.
\end{theorem}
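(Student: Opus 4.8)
The plan is to realise the spatial section $x\mapsto Z_t(x)$ of the solution, with $t>0$ held fixed, as a normalised stationary Gaussian field and then to feed it into Theorems~\ref{th:Gen:LIL:UB} and~\ref{th:Gen:LIL:LB} with $b=2$, exactly as in the proof of Theorem~\ref{th:U}. First I would set $X_x:=(\pi/t)^{1/4}Z_t(x)$ for $x\in\R$ and compute, from the Wiener-integral representation and the semigroup property of the Gaussian kernel, that $\E[Z_t(x)Z_t(x')]=\int_0^t p_{2(t-s)}(x-x')\,\d s$; in particular $\Var Z_t(x)=(t/\pi)^{1/2}$, so $\{X_x\}_{x\in\R}$ is a centred, stationary, unit-variance Gaussian process and $\cP_{Z_t}(\gamma)=\{x\ge\e^\e:\ X_x\ge\gamma(2\log x)^{1/2}\}$. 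Since each $X_x$ is standard normal, the Gaussian tail gives $\bm{c}(2)=\bm{C}(2)=1/2$ (and $\bm{c}(b),\bm{C}(b)\in\{0,\infty\}$ for $b\neq2$). A short Laplace-type expansion of $\int_0^t p_{2u}(x)\,\d u$ as $x\to0$ (substitute $u=x^2v/4$) shows that $\Cov(X_x,X_0)=1-\kappa|x|+o(|x|)$ for a constant $\kappa\in(0\,,\infty)$, so the Pickands exponent is $\alpha=1$; Lemma~\ref{lem:Pickands} then yields $\P\{\sup_{x\in[0,1]}X_x>z\}=(c+o(1))\,z\e^{-z^2/2}$, from which the maximal hypothesis~\eqref{cond:UB} is immediate. (A cruder bound, via the Borell--TIS inequality and continuity of paths, would also suffice here.)

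With~\eqref{cond:UB} verified, Theorem~\ref{th:Gen:LIL:UB} gives $\limsup_{x\to\infty}X_x/(2\log x)^{1/2}\le1$ a.s.---so $\cP_{Z_t}(\gamma)$ is a.s.\ bounded for $\gamma>1$---and, after the elementary reparametrisation $\gamma(2\log x)^{1/2}=(2\gamma^2\log x)^{1/2}$, also $\Dimh\cP_{Z_t}(\gamma)\le1-\gamma^2$ a.s.\ for every $\gamma\in(0\,,1)$. For the matching lower bound I would verify the coupling hypothesis~\eqref{cond:LB} of Theorem~\ref{th:Gen:LIL:LB} with $S$ the identity: given points $z_1,\dots,z_m$ inside one box $Q(x_{i,n}\,,\e^{\theta n})$ that are pairwise at distance at least $\e^{\delta n}$, put $Y_j:=(\pi/t)^{1/4}\int_{(0,t)\times\R}p_{t-s}(y-z_j)\1_{\{|y-z_j|<\e^{\delta n}/2\}}\,\xi(\d s\,\d y)$. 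The integration domains are pairwise disjoint, so $Y_1,\dots,Y_m$ are independent, and a heat-kernel tail estimate gives $\E|X_{z_j}-Y_j|^2\le C_t\exp(-\e^{2\delta n}/(4t))$, whence $\P\{|X_{z_j}-Y_j|>1\}\le2\exp(-c_t\exp(\e^{2\delta n}/(4t)))$ and $n^{-1}\log(\cdots)\to-\infty$ with enormous room to spare. Theorem~\ref{th:Gen:LIL:LB} then delivers $\Dimh\cP_{Z_t}(\gamma)\ge1-\gamma^2$ a.s.\ for $\gamma\in(0\,,1)$, together with $\limsup_{x\to\infty}X_x/(2\log x)^{1/2}\ge1$, so that $\cP_{Z_t}(\gamma)$ is a.s.\ unbounded for every $\gamma<1$.

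The remaining, and I expect most delicate, case is the critical one $\gamma=1$. Its upper bound is cheap: by monotonicity of $\gamma\mapsto\cP_{Z_t}(\gamma)$ and of $\Dimh$ one has $\Dimh\cP_{Z_t}(1)\le\inf_{\gamma<1}(1-\gamma^2)=0$ (alternatively, the proof of Theorem~\ref{th:Gen:LIL:UB} runs verbatim with $\gamma=d=1$, since~\eqref{cond:UB} still holds in that endpoint case). What genuinely remains is to show that $\cP_{Z_t}(1)$ is a.s.\ unbounded---i.e.\ that the law of the iterated logarithm is attained along arbitrarily large scales, which is \emph{not} a formal consequence of $\limsup_{x\to\infty}X_x/(2\log x)^{1/2}=1$. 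I would establish it by a second-moment/Paley--Zygmund argument in the spirit of Proposition~\ref{pr:LIL}: with $\mathcal E_m:=\{X_m>(2\log m)^{1/2}\}$ for integers $m$, one has $\sum_m\P(\mathcal E_m)\asymp\sum_m m^{-1}(\log m)^{-1/2}=\infty$, and the off-diagonal terms are controlled using the super-Gaussian decay $\Cov(X_m,X_{m'})\lesssim\exp(-|m-m'|^2/(4t))$ of the covariance together with the Orey--Pruitt-type bound~\eqref{eq:OP}; because the correlations decay so fast, the pairs within $O(\sqrt{\log\log m})$ of the diagonal contribute only $o\big((\E\sum_{m\le N}\1_{\mathcal E_m})^2\big)$, so this step is in fact lighter than its Brownian counterpart. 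Each $\mathcal E_m$ forces $m\in\cP_{Z_t}(1)$, and a $0$--$1$ law for the spatial tail of $\xi$ upgrades ``$\mathcal E_m$ infinitely often with positive probability'' to ``a.s.''. (Alternatively, unboundedness at the critical level can be quoted from the integral-test refinement of the Pickands--Qualls--Watanabe theory already invoked for the $\limsup$ in the Introduction.) Thus the main obstacle is the sharp constant in this critical step; everything else amounts to checking the two hypotheses and invoking the general dimension theorems.
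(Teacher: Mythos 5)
Your proposal is correct and follows essentially the same route as the paper: compute the covariance of $x\mapsto Z_t(x)$ to identify $b=2$ and the local behaviour $1-\kappa|x|+o(|x|)$, invoke Pickands (Lemma~\ref{lem:Pickands}) to verify \eqref{cond:UB} and Theorem~\ref{th:Gen:LIL:UB} for the upper bound, and verify the coupling condition \eqref{cond:LB} by replacing $Z_t(x_j)$ with Wiener integrals over disjoint spatial windows (your radius $\e^{\delta n}/2$ versus the paper's $(nt)^{1/2}$ is immaterial) before applying Theorem~\ref{th:Gen:LIL:LB}. For the boundedness/unboundedness dichotomy, including the critical case $\gamma=1$, the paper simply quotes Theorem~1.1 of Qualls and Watanabe---exactly the alternative you mention---so your extra Paley--Zygmund sketch is unnecessary but harmless.
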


A moment's thought shows that Theorem \ref{th:LIL} is an equivalent reformulation
of Theorem \ref{th:multifractal:he} of Introduction. From a technical
point of view, this particular formulation is more natural to state as well as prove.

As we will see very soon, the fact that $\gamma=1$ is critical for the unboundedness
of $\cP_{Z_t}(\gamma)$ is a fairly routine consequence
of well-known theorems about the growth of a Gaussian process
\cite{QW}. The main
assertion of Theorem \ref{th:LIL} is the one about the
macroscopic Hausdorff dimension of $\cP_{Z_t}(\gamma)$.
Still, let us mention also the following immediate consequence of
the first [more or less routine] portion of Theorem \ref{th:LIL}: 
\begin{equation}\label{LIL:Z}
	\limsup_{x\to\infty} \frac{Z_t(x)}{(2\log x)^{1/2}} = 
	\left(\frac{t}{\pi}\right)^{1/4}\qquad\text{a.s.,}
\end{equation}
for every nonrandom $t>0$. A ``steady state'' version of this fact
appears earlier, for example, in  Collela and Lanford \cite[Theorem 1.1(c)]{CollelaLanford}.
The following lemma puts things in the general framework of Gaussian analysis.

\begin{lemma}\label{lem:linear:Cov}
	Fix some $t>0$. Then,
	$\{Z_t(x)\}_{x\in\R}$ is a stationary Gaussian process
	with $\E Z_t(0)=0$, $\Var Z_t(0)=(t/\pi)^{1/2}$,
	and $\Corr[Z_t(x)\,,Z_t(0)]=O(|x|^{-a})$ as $|x|\to\infty$ for every $a>0$.
	Finally, 
	\begin{equation}
		\Corr[ Z_t(x)\,,Z_t(0)]= 1 - \frac12\left(\frac{\pi}{t}\right)^{1/2}|x| + o(|x|)
		\qquad\text{as $|x|\to 0$.}
	\end{equation}
\end{lemma}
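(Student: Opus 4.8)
The plan is to realize $Z_t(x)$ as a Wiener integral and read off every claim from the Wiener isometry together with the Chapman--Kolmogorov identity for the Gaussian kernel $p$.

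\textbf{Step 1 (stationarity, mean, variance).} Since $Z_t(x)=\int_{(0,t)\times\R}p_{t-s}(y-x)\,\xi(\d s\,\d y)$ is a Wiener integral of a deterministic kernel, $\{Z_t(x)\}_{x\in\R}$ is automatically a centered Gaussian process, so $\E Z_t(0)=0$. The isometry gives
\[
C(x-x'):=\E[Z_t(x)Z_t(x')]=\int_0^t\d s\int_\R p_{t-s}(y-x)\,p_{t-s}(y-x')\,\d y,
\]
and the semigroup property $\int_\R p_r(y-x)p_r(y-x')\,\d y=p_{2r}(x-x')$ collapses this to $C(x-x')=\int_0^t p_{2u}(x-x')\,\d u$ after the substitution $u=t-s$. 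This expression depends on $x,x'$ only through $x-x'$, which is precisely stationarity. Taking $x=x'$ and using $p_{2u}(0)=(4\pi u)^{-1/2}$ yields $\Var Z_t(0)=C(0)=\int_0^t(4\pi u)^{-1/2}\,\d u=(t/\pi)^{1/2}$ by an elementary integration.

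\textbf{Step 2 (rapid decay of the correlation).} For $0<u\le t$ we have $p_{2u}(x)=e^{-x^2/(4u)}p_{2u}(0)\le e^{-x^2/(4t)}p_{2u}(0)$, hence $C(x)\le e^{-x^2/(4t)}C(0)$ and therefore $\Corr[Z_t(x),Z_t(0)]=C(x)/C(0)\le e^{-x^2/(4t)}$. Since $e^{-x^2/(4t)}=O(|x|^{-a})$ as $|x|\to\infty$ for every $a>0$, this establishes the stated decay.

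\textbf{Step 3 (behaviour as $|x|\to0$).} Write
\[
C(0)-C(x)=\int_0^t\frac{1-e^{-x^2/(4u)}}{\sqrt{4\pi u}}\,\d u,
\]
and substitute $v=x^2/(4u)$ to obtain $C(0)-C(x)=\frac{|x|}{4\sqrt\pi}\int_{x^2/(4t)}^\infty(1-e^{-v})v^{-3/2}\,\d v$. As $|x|\to0$ the lower limit tends to $0$, and since $(1-e^{-v})v^{-3/2}=O(v^{-1/2})$ near $0$ and $=O(v^{-3/2})$ near $\infty$, dominated convergence lets the integral tend to $\int_0^\infty(1-e^{-v})v^{-3/2}\,\d v=2\sqrt\pi$; the value of this constant comes from one integration by parts, which reduces it to $2\Gamma(1/2)$. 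Hence $C(0)-C(x)=\tfrac12|x|+o(|x|)$, and dividing by $C(0)=(t/\pi)^{1/2}$ gives $\Corr[Z_t(x),Z_t(0)]=1-\tfrac12(\pi/t)^{1/2}|x|+o(|x|)$, as claimed. The only genuinely computational point in the whole argument is this last step: pinning down the exact coefficient $\tfrac12$ in front of $|x|$ requires the change of variables and the evaluation of $\int_0^\infty(1-e^{-v})v^{-3/2}\,\d v$, together with the routine justification for passing the limit inside the integral. I expect no real difficulty here—everything else is immediate from the Wiener isometry and the Chapman--Kolmogorov identity.
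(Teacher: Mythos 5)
Your proposal is correct and follows essentially the same route as the paper: the Wiener isometry plus the semigroup identity give $\Cov[Z_t(x),Z_t(0)]=\int_0^t p_{2s}(x)\,\d s$, the variance and Gaussian-tail decay follow as you say, and your substitution $v=x^2/(4u)$ is just the reciprocal of the paper's change of variables for the small-$|x|$ asymptotics. The only cosmetic difference is that you evaluate $\int_0^\infty(1-\e^{-v})v^{-3/2}\,\d v=2\sqrt{\pi}$ by integration by parts, whereas the paper uses Tonelli's theorem; both are fine.
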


\begin{proof}
	Clearly, $x\mapsto Z_t(x)$ is
	a mean-zero Gaussian process with 
	\begin{equation}
		\Cov\left[ Z_t(x)\,,Z_t(x')\right] = \int_0^t p_{2s}(x-x')\,\d s
		\quad\text{for all $x,x'\in\R$}.
	\end{equation}
	It follows from this formula that $Z_t(\bullet)$ is stationary as well,
	and has variance
	\begin{equation}
		\Var Z_t(0) = \int_0^t p_{2s}(0)\,\d s=\int_0^t(4\pi s)^{-1/2}\,\d s
		=\left(\frac t\pi\right)^{1/2}.
	\end{equation}
	Furthermore, the preceding display shows also that
	\begin{equation}
		\Cov[ Z_t(x)\,, Z_t(0)] = \int_0^t p_{2s}(x)\,\d s
	\end{equation}
	is bounded above by a finite constant $C(t)$ times $\e^{-x^2/(4t)}$,
	and hence goes to zero faster than any negative
	power of $|x|$, as $|x|\to\infty$. Finally, we note that
	if $x>0$, then
	\begin{align}\label{eq:for:inf:claim}
		\Var(Z_t(0))-\Cov\left[ Z_t(x)\,, Z_t(0)\right] &=\int_0^t
			\left[ p_{2s}(0)-p_{2s}(x)\right]\d s\\\notag
		&= \frac{x}{4\sqrt{\pi}}\int_0^{4t/x^2}r^{-1/2}
			\left(1-\e^{-1/r}\right)\d r\\\notag
		&=\frac{x}{4\sqrt{\pi}}\int_0^\infty 
			r^{-1/2} \left(1-\e^{-1/r}\right)\d r
			+ O(x^2),
	\end{align}
	as $x\downarrow 0$. It follows readily from this and symmetry that
	\begin{equation}
		\Corr[Z_t(x)\,,Z_t(0)] =1-c|x|+O(x^2)\qquad
		\text{as $x\to\ 0$,}
	\end{equation}
	with 
	\begin{equation}
		c := \frac{1}{4\sqrt{t}}\int_0^\infty r^{-1/2}
		\left(1-\e^{-1/r}\right)\d r.
	\end{equation}
	A change of variables shows that
	\begin{equation}
		\int_0^\infty r^{-1/2}(1-\e^{-1/r}) \,\d r=
		\int_0^\infty s^{-3/2}(1-\e^{-s}) \,\d s.
	\end{equation}
	Write
	$1-\e^{-s} =\int_0^s \e^{-y}\,\d y$ and apply the Tonelli theorem in order to see that
	$c= \sqrt{\pi/4t}.$ 
\end{proof}

We are ready to establish Theorem \ref{th:LIL}.

\begin{proof}[Proof of Theorem \ref{th:LIL}]
	Throughout the proof, we hold fixed an arbitrary  $t>0$.
	
	Lemma \ref{lem:linear:Cov} verifies 
	all of the conditions of Theorem 1.1 of Qualls and Watanabe \cite{QW},
	and hence it follows from that result that $\cP_{Z_t}(\gamma)$
	is a.s.\ bounded  if $\gamma>1$ and a.s.\ unbounded if
	$\gamma\le 1$. In particular, we obtain \eqref{LIL:Z} immediately. 
	Furthermore, we can see---using the notation of \S\ref{sec:Gen}---that
	\begin{equation}
		\text{$b=2$ and 
		$\bm{c}(b)=\bm{C}(b)=\left(\frac{\pi}{4t}\right)^{1/2}$.}
	\end{equation}
	
	Thanks to Lemma \ref{lem:linear:Cov}, the assumptions of Lemma
	2.5 of Pickands \cite{Pickands} are met. Lemma \ref{lem:Pickands}
	[Pickands' theorem] implies 
	the maximal inequality \eqref{cond:UB}; therefore, we may apply
	Theorem \ref{th:Gen:LIL:UB}---with $\gamma$ replaced by
	$\gamma^2$---in order to conclude that
	\begin{equation}\label{DimZ:LB}
		\Dimh\cP_{Z_t}(\gamma)\le1-\gamma^2,
	\end{equation}
	a.s.\ for all $\gamma\in(0\,,1]$.
	We plan to prove a matching lower bound by appealing to 
	Theorem \ref{th:Gen:LIL:LB} with $S(x):=x$ for all $x\in\R$. 
	Therefore, it remains to verify the coupling 
	assumption \eqref{cond:LB}, which we do next.
	
	For every $B>0$ we may define a
	space-time Gaussian random field $Z^{(B)}$ as follows: For all $x\in \R$,
	\begin{equation}\label{eq:ZB}
		Z^{(B)}_t(x) := \int_{(0,t)\times[x-(Bt)^{1/2},\ x+(Bt)^{1/2}]} 
		p_{t-s}(y-x)\,\xi(\d s\,\d y).
	\end{equation}
	It is intuitively clear that that $Z\approx Z^{(B)}$ when $B\gg1$.
	Next we  claim the following quantitative improvement of this remark:
	For all $t,B,\lambda>0$,
	\begin{equation}\label{claim:Linear}
		\sup_{x\in\R}
		\P\left\{ \left| Z_t(x) - Z^{(B)}_t(x)\right|>\lambda\right\}
		\le 2\exp\left( -\frac{\lambda^2}{2}\sqrt{\frac{\pi}{8t}}\,
		\e^{B/2}\right).
	\end{equation}
	Indeed, because $p_s(z)\le p_s(0)= (2\pi s)^{-1/2}$ for all $s>0$ and $z\in\R$,
	the Wiener isometry yields
	\begin{equation}\begin{split}
		\E\left(\left| Z_t(x) - Z^{(B)}_t(x)\right|^2\right) &=
			\int_0^t\d s\int_{|z|>(Bt)^{1/2}} \d z\ [p_s(z)]^2\\
		&\le \int_0^t\frac{\d s}{\sqrt{2\pi s}} \P\left\{ |X|>(Bt/s)^{1/2}\right\},
	\end{split}\end{equation}
	where $X$ has a standard normal distribution. If $s\in(0\,,t)$, then 
	we combine the elementary bound,
	\begin{equation}
		\P\left\{|X|>\left(\frac{Bt}{s}\right)^{1/2}\right\}
		\le\P\left\{|X|>\sqrt B\right\},
	\end{equation}
	with a standard bound 
	on the tails of the standard normal distribution in order to see
	\begin{equation}\label{eq:VarZ-ZB}
		\Var\left(Z_t(x)-Z^{(B)}_t(x)\right)\le
		\left(\frac{8t}{\pi}\right)^{1/2}\e^{-B/2}.
	\end{equation}
	The claim \eqref{claim:Linear}  
	follows readily from this and another appeal to the tails
	of the Gaussian laws. 
	
	We use \eqref{claim:Linear} in order to prove
	\eqref{cond:LB} using the following.
	\begin{obs}\label{obs1}
		If $x_1,x_2,\ldots,x_m\in\R$ satisfy $|x_i-x_j|>2(Bt)^{1/2}$
		when $1\le i\ne j\le m$, then the random variables $Z^{(B)}_t(x_1),\ldots,Z^{(B)}_t(x_m)$ 
		are independent.
	\end{obs}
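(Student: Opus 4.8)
The plan is to exploit the fact that $Z^{(B)}_t(x)$ is manufactured solely out of the restriction of the white noise $\xi$ to the space-time strip $(0\,,t)\times I_x$, where we abbreviate $I_x := [x-(Bt)^{1/2}\,,x+(Bt)^{1/2}]$, together with the elementary fact that $\xi$ assigns independent Gaussian variables to regions that do not overlap. So there is essentially nothing deep here; the proof is a short bookkeeping argument.

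First I would record the trivial geometric observation: the interval $I_x$ has length $2(Bt)^{1/2}$, so if $|x_i-x_j|>2(Bt)^{1/2}$ for all $i\ne j$, then the intervals $I_{x_1},\ldots,I_{x_m}$ are pairwise disjoint as subsets of $\R$, and hence so are the space-time rectangles $R_i := (0\,,t)\times I_{x_i}$, $1\le i\le m$, as subsets of $(0\,,\infty)\times\R$. Next, recall that since $\xi$ is a centered Gaussian (orthogonal) random measure, the Wiener-integral map $f\mapsto\int f\,\d\xi$ is a linear isometry of $L^2((0\,,\infty)\times\R)$ into a Gaussian Hilbert space, with $\E[(\int f\,\d\xi)(\int h\,\d\xi)]=\int f h$. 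Applying this with the kernels $f(s\,,y):=p_{t-s}(y-x_i)\1_{R_i}(s\,,y)$ and $h(s\,,y):=p_{t-s}(y-x_j)\1_{R_j}(s\,,y)$, which have disjoint supports when $i\ne j$, shows that $\E[Z^{(B)}_t(x_i)\,Z^{(B)}_t(x_j)]=0$ for $i\ne j$. Thus the covariance matrix of the random vector $(Z^{(B)}_t(x_1),\ldots,Z^{(B)}_t(x_m))$ is diagonal.

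Finally, because $(Z^{(B)}_t(x_1),\ldots,Z^{(B)}_t(x_m))$ is a genuinely jointly Gaussian vector---being a finite family of Wiener integrals against the one Gaussian noise $\xi$---uncorrelatedness of its coordinates is the same as their mutual independence, and the observation follows. Equivalently, one can dispense with this last remark altogether by noting that $Z^{(B)}_t(x_i)$ is measurable with respect to the $\sigma$-algebra generated by $\xi$ restricted to $R_i$, and that $\{\sigma(\xi|_{R_i})\}_{i=1}^m$ are independent because the $R_i$ are disjoint and $\xi$ is a white noise (see Walsh \cite{Walsh}). The only point requiring any care is precisely this passage from pairwise zero covariance to joint independence, which is why I would phrase it through joint Gaussianity (or, even more cleanly, through disjointness of the supporting regions) rather than through a pairwise argument.
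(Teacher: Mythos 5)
Your argument is correct and is exactly the reasoning the paper leaves implicit: the observation is stated without proof there, and the same mechanism is invoked verbatim in the proof of Lemma \ref{lem:pam:ind} (``if $\psi_1,\ldots,\psi_m\in L^2(\R_+\times\R)$ are nonrandom with disjoint support then the Wiener integrals $\int\psi_j\,\d\xi$ are independent [compute covariances]''). Your care in passing from a diagonal covariance matrix to mutual independence via joint Gaussianity (or, alternatively, via independence of the $\sigma$-algebras generated by $\xi$ on the disjoint rectangles) is precisely the right way to close that small gap.
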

	
	Choose and fix some $\delta\in(0\,,1)$.
	If $\e^n\le x_1<\cdots<x_m<\e^{n+1}$ are $m$ arbitrary points in $\cS_n$ 
	such that $x_{i+1}-x_i\ge\exp\{\delta n\}$, then we set
	$Y_j:=Z^{(n)}_t(x_j)$ for all $1\le j\le m$. Thanks to Observation \ref{obs1},
	$Y_1,\ldots,Y_m$ are independent random variables
	as long as $n$ is large enough to ensure
	that $2\sqrt{nt}<\exp\{\delta n\}$. And \eqref{claim:Linear}
	ensures that
	\begin{equation}
		\max_{1\le i\le m}\P\left\{ \left| Z_t(x_i) - Y_i\right|> 1\right\}
		\le 2\exp\left(-\frac12\sqrt{\frac{\pi}{8t}}\, \e^{n/2}\right).
	\end{equation} 
	In particular, 
	\begin{equation}
		\lim_{n\to\infty} \frac1n \max_{1\le i\le m}\log
		\P\{ | Z_t(x_i) - Y_i|> 1\}=-\infty.
	\end{equation}
	This implies \eqref{cond:LB} readily, and the lower bound
	that complements \eqref{DimZ:LB} follows
	from the conclusion of Theorem \ref{th:Gen:LIL:LB}.
\end{proof}

\section{Peaks of a nonlinear stochastic heat equation}\label{sec:she}

Let us now consider the following nonlinear stochastic partial differential equation,
\begin{align}\label{eq:pam}
	\dot{u}_t(x) &= 
		\tfrac12u_t''(x) +\sigma(u_t(x)) \xi_t(x)\qquad(x\in\R,\, t>0),\\
	u_0(x)&=1,
\end{align}
where $\xi$ denotes space-time white noise, as before, and
$\sigma: \R\rightarrow \R$ is a Lipschitz continuous 
and non-random function with $\sigma(0)=0$.

It is well-known that the stochastic heat equation \eqref{eq:pam} 
has a unique solution; see Dalang \cite{Dalang}, for instance. And 
that solution solves the following stochastic integral equation,
interpretted in the sense of Walsh \cite{Walsh}:
\begin{equation}
	u_t(x)=1+\int_{(0,t)\times\R} p_{t-s}(y-x) \sigma(u_s(y)) \xi(\d s\, \d y),
\end{equation}
where $p_t(x)$ is the standard heat kernel on $(0\,,\infty)\times\R$; 
see \eqref{p}. 

It is known also that the solution to \eqref{eq:pam}
is strictly positive for all $t>0$; see Mueller \cite{Mueller}
for a closely-related statement.  The precise positivity assertion 
that is required here follows from the work of 
Mueller and Nualart \cite{MuellerNualart}. 
Therefore, the tall peaks of $x\mapsto u_t(x)$ 
and $x\mapsto h_t(x)$ match, where
\begin{equation}\label{h=log(u)}
	h_t(x) := \log u_t(x).
\end{equation}

The random field $h$ is particularly well studied when $\sigma(z)= z$
for all $z\in\R$. In that case, a formal change of variables suggests that
\begin{equation}\label{eq:KPZ}
	\dot{h}_t(x) = \tfrac12 h''_t(x) + \tfrac12(h'_t(x))^2 - \xi_t(x),
\end{equation}
subject to $h_0(x)\equiv 0$. This purely-formal ``computation'' is
analogous to the classical Hopf--Cole solution to Burgers' equation,
and is due to Kardar et al \cite{KPZ}. The resulting ill-posed stochastic PDE
\eqref{eq:KPZ} is the so-called ``KPZ equation'' of statistical mechanics.
The recent solution theory of Hairer \cite{Hairer} gives a meaning to the analogous
version of \eqref{eq:KPZ} where the $x$ variable lives in $[0\,,1]$
[together with suitable boundary conditions]. As far as we know, the original
problem on $\R$ has not yet been given a rigorous meaning.

In this section, we plan to study the set of points $x>\exp(\e)$ 
at which the solution $u_t(x)$ exceeds certain high peaks. 
For the parabolic Anderson model---that is when 
$\sigma(z)=z$ for all $z\in\R$---Conus et al \cite{CJK} have demonstrated 
that, for every $t>0$, the tall peaks of $x\mapsto h_t(x)$ are of rough height 
$(\log|x|)^{2/3}$ as $|x|\to\infty$. Specifically, they have proved that
\begin{equation}\label{NL:heat:LIL}
	0<\limsup_{x\to\infty} \frac{h_t(x)}{(\log x)^{2/3}}<\infty\qquad\text{a.s.},
\end{equation}
for all $t>0$. 
Conus et al [{\it ibid.}] have also proved that the function
$(\log x)^{2/3}$ fails to correctly gauge the height of the tall peaks of $h_t(x)$
for general nonlinearities $\sigma$. 

We will prove among other things
that \eqref{NL:heat:LIL} holds when $|\sigma(z)/z|$ is bounded uniformly from below by 
a positive constant. The mentioned boundedness condition is known to
be an \emph{intermittency condition} for the system \eqref{eq:pam} \cite{FA:whitenoise}.

In order to describe our results in greater details let us define
\begin{equation}\label{ell}
	\ell_\sigma:=\inf_{z\in\R\setminus\{0\}} | \sigma(z)/z|,\quad
	L_\sigma:=\sup_{z\in\R\setminus\{0\}}|\sigma(z)/z|.
\end{equation}
Because $\sigma$ is Lipschitz continuous we always have
$0\le \ell_\sigma\le L_\sigma\le\infty$. We will be assuming that
\begin{equation}\label{assump:lip}
	0<\ell_\sigma\leq L_\sigma <\infty.
\end{equation}
We call \eqref{assump:lip} an ``intermittency condition'' because
it is the only known condition under which the solution to \eqref{eq:pam}
is known to be intermittent in the sense that
\begin{equation}
	k\mapsto\frac{\lambda(k)}{k}\text{ is strictly increasing on
	$[2\,,\infty)$,}
\end{equation}
where either
\begin{equation}
	\lambda(k) := \limsup_{t\to\infty} t^{-1}\sup_{x\in\R}
	\E\left(|u_t(x)|^k\right),
\end{equation}
or
\begin{equation}
	\lambda(k) := \liminf_{t\to\infty} t^{-1}\inf_{x\in\R}
	\E\left(|u_t(x)|^k\right),
\end{equation}
describe respectively the top and bottom $k$th moment Lyapunov
exponents of the solution;
see  Foondun and Khoshnevisan \cite[Theorem 2.7]{FA:whitenoise}.\footnote{In
general, the Lyapunov exponents, as were describe in the introduction, do
not exists.}

Now define for all $t,\gamma>0$,
\begin{equation}\label{L_h:WN}
	\cP_{h_t}(\gamma) :=
	\left\{ x\ge \exp(\e): \frac{h_t(x)}{(\log x)^{2/3}}\ge \gamma t^{1/3}\right\}.
\end{equation}
We will  use the general theory of
\S\ref{sec:Gen} in order to prove the following, which is the main result of 
this section. It might help to recall yet again our earlier convention that when
we state that $\Dimh E<0$ we mean that
$E$ is bounded.

\begin{theorem}\label{th:pam:LIL}
	Under \eqref{assump:lip}, the following holds with probability one:
	\begin{equation}\label{eq:pam:LIL}
		\left(\frac{9}{32}\right)^{1/3}\ell_\sigma^{4/3}\le
		\limsup_{x\to\infty} \frac{h_t(x)}{t^{1/3}(\log x)^{2/3}}\le
		\left(\frac{9}{32}\right)^{1/3}L_\sigma^{4/3};
	\end{equation}
	for every $t,\gamma>0$. Moreover,
	\begin{equation}
		1-\bm{\alpha}\gamma^{3/2}\le\Dimh\cP_{h_t}(\gamma)
		\le 1-\bm{\beta}\gamma^{3/2}
		\qquad\text{a.s.},
	\end{equation}
	where $\cP_{h_t}(\gamma)$ was defined in \eqref{L_h:WN}, 
	\begin{equation}\label{alpha:beta}
		\bm{\alpha}:=\frac{4\sqrt{2}}{3\ell^2_\sigma},
		\quad\text{and}\quad
		\bm{\beta}:=\frac{4\sqrt{2}}{3L^2_\sigma}.
	\end{equation}
\end{theorem}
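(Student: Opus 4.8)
The plan is to deduce Theorem~\ref{th:pam:LIL} from the two general results of \S\ref{sec:Gen}, namely Theorems~\ref{th:Gen:LIL:UB} and~\ref{th:Gen:LIL:LB}, applied to the random field $\{X_x\}_{x\in\R}$ defined by $X_x:=h_t(x)=\log u_t(x)$ for a fixed $t>0$, with the choices $b:=3/2$ and $S(y):=y$. Since the solution to \eqref{eq:pam} is strictly positive (Mueller~\cite{Mueller}, Mueller--Nualart~\cite{MuellerNualart}), the field $X$ is well defined and its tall peaks coincide with those of $u_t$, so $\cP_{h_t}(\gamma)$ of \eqref{L_h:WN} is exactly an exceedance set of the type treated in \S\ref{sec:Gen}. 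The first task is to evaluate the tail exponents $\bm{c}(b)$ and $\bm{C}(b)$ of \eqref{eq:gen:tail:LB}--\eqref{eq:gen:tail:UB}; I expect
\[
	\bm{c}(3/2)=\frac{4\sqrt2}{3L_\sigma^2\sqrt t}\qquad\text{and}\qquad
	\bm{C}(3/2)=\frac{4\sqrt2}{3\ell_\sigma^2\sqrt t}.
\]
These are precisely the numbers that turn \eqref{gen:LIL:UB}, \eqref{dim:gen:UB}, \eqref{eq:Gen:LIL:LB} and \eqref{eq:Gen:Dim:LB} into the two displays of Theorem~\ref{th:pam:LIL} with $\bm\alpha,\bm\beta$ as in \eqref{alpha:beta}: indeed $\cP_{h_t}(\gamma)$ is the general exceedance set with parameter $\gamma'$ satisfying $\gamma'/\bm{c}(3/2)=\gamma^{3/2}\sqrt t$ (resp.\ $\gamma'/\bm{C}(3/2)=\gamma^{3/2}\sqrt t$), so $1-\gamma'=1-\bm\beta\gamma^{3/2}$ (resp.\ $1-\bm\alpha\gamma^{3/2}$), while $(d/\bm{c}(3/2))^{1/b}=(9/32)^{1/3}L_\sigma^{4/3}t^{1/3}$ and $(d/\bm{C}(3/2))^{1/b}=(9/32)^{1/3}\ell_\sigma^{4/3}t^{1/3}$, using $\sqrt{32}=4\sqrt2$.

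For the upper half of Theorem~\ref{th:pam:LIL} I would invoke Theorem~\ref{th:Gen:LIL:UB}. The value of $\bm{c}(3/2)$ comes from the moment upper bound for \eqref{eq:pam}: under \eqref{assump:lip}, a Picard/Gronwall moment iteration --- which amounts to comparing $u_t(x)$ with the solution of the linear multiplicative equation $\dot v=\tfrac12v''+L_\sigma v\xi$ (Foondun--Khoshnevisan~\cite{FA:whitenoise}, Khoshnevisan~\cite{cbms}; see also~\cite{CJK}) --- gives $\E(|u_t(x)|^k)\le\exp\{(\tfrac{1}{24}L_\sigma^4+o(1))k^3t\}$ as $k\to\infty$, uniformly in $x$, and optimizing the Markov bound $\P\{h_t(x)>z\}\le\inf_k\e^{-kz}\E(|u_t(x)|^k)$ over $k$ produces $\P\{h_t(x)>z\}\le\exp\{-(\tfrac{4\sqrt2}{3L_\sigma^2\sqrt t}+o(1))z^{3/2}\}$. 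To verify the maximal inequality \eqref{cond:UB} I would upgrade this to a bound on $\sup_{x\in[w,w+1)}h_t(x)$: the standard increment estimates $\E(|u_t(x)-u_t(y)|^k)\le C^k|x-y|^{k/2}\exp(Ck^3t)$ and a chaining argument show that the oscillation of $h_t$ over a unit interval has tails decaying faster than every power, uniformly in $w$, so $\P\{\sup_{x\in[w,w+1)}h_t(x)>z\}\le\P\{h_t(w)>z-1\}+o(z^{-N})$ for each $N$. Theorem~\ref{th:Gen:LIL:UB} then yields the upper bound in \eqref{eq:pam:LIL} and $\Dimh\cP_{h_t}(\gamma)\le1-\bm\beta\gamma^{3/2}$ a.s.

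For the lower half I would invoke Theorem~\ref{th:Gen:LIL:LB} with $S(y):=y$, which needs two inputs. First, the coupling condition \eqref{cond:LB}, which I would check by the spatial localization already used in \S\ref{sec:linearSHE}: for $B>0$ let $u^{(B)}_t(x)$ solve the same Walsh integral equation as $u_t(x)$ but with $p_{t-s}(y-x)$ replaced by $p_{t-s}(y-x)\1\{|y-x|\le(Bt)^{1/2}\}$; then $u^{(B)}_t(x_1),\ldots,u^{(B)}_t(x_m)$ are independent once the $x_i$ are pairwise more than $2(Bt)^{1/2}$ apart (they are functionals of disjoint parts of the noise), whereas a moment bound for $u_t-u^{(B)}_t$ --- whose driving kernel carries the exponentially small weight $\int_0^t\!\!\int_{|y|>(Bt)^{1/2}}p_s(y)^2\,\d y\,\d s\le(8t/\pi)^{1/2}\e^{-B/2}$, cf.\ \eqref{eq:VarZ-ZB} --- together with the Markov bound and the strict positivity of $u$ (to pass to the logarithm) gives $\P\{|h_t(x)-\log u^{(B)}_t(x)|>1\}\le\exp(-cB^{3/2}/\sqrt t)$, uniformly in $x$. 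Taking $B=B_n:=n$ and working along the $\delta$-skeleton $\Pi_n(\delta)$, the separation $\e^{\delta n}$ eventually exceeds $2(B_nt)^{1/2}$, so $Y_j:=\log u^{(n)}_t(x_j)$ are independent and $n^{-1}\max_j\log\P\{|h_t(x_j)-Y_j|>1\}\to-\infty$, i.e.\ \eqref{cond:LB}. Second, one needs $\bm{C}(3/2)\le\tfrac{4\sqrt2}{3\ell_\sigma^2\sqrt t}$, i.e.\ the lower tail estimate $\P\{h_t(x)>z\}\ge\exp\{-(\tfrac{4\sqrt2}{3\ell_\sigma^2\sqrt t}+o(1))z^{3/2}\}$; granting this, Theorem~\ref{th:Gen:LIL:LB} delivers the lower bound in \eqref{eq:pam:LIL} and $\Dimh\cP_{h_t}(\gamma)\ge1-\bm\alpha\gamma^{3/2}$ a.s.

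The main obstacle is exactly this last lower tail estimate, with the constant governed by $\ell_\sigma$ alone. A direct Paley--Zygmund argument applied to $u_t(x)$ --- or even to the linear comparison field with coefficient $\ell_\sigma$ --- is insufficient, since $\E(|u_t(x)|^{2k})$ contributes a spurious factor $(2k)^3=8k^3$ and the resulting constant is far larger than $\tfrac{4\sqrt2}{3\ell_\sigma^2}$, and moreover moment domination by the linear-$\ell_\sigma$ equation does not transfer to tails. The remedy is to follow the localization-plus-second-moment scheme of Conus et al~\cite{CJK}, sharpened by Chen~\cite{Chen} in the parabolic Anderson case: restrict the noise to a short spatial window around $x$, use the strict positivity of $u$ (Mueller--Nualart~\cite{MuellerNualart}) together with \eqref{assump:lip} in the form $\sigma(u_s(y))^2\ge\ell_\sigma^2u_s(y)^2$ to reduce, on an event of overwhelming probability, to the linear multiplicative equation with noise coefficient $\ell_\sigma$, and then import the precise lower-deviation estimate for the logarithm of that scaled parabolic Anderson model. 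Making this reduction quantitative --- so the $o(1)$ error does not spoil the coefficient $\tfrac{4\sqrt2}{3\ell_\sigma^2\sqrt t}$, and so it holds uniformly in $x$ and survives the restriction to the skeleton --- is the technical heart of the argument; everything else reduces to the machinery of \S\ref{sec:Gen} and standard moment and continuity estimates for \eqref{eq:pam}.
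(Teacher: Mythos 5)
Your overall strategy is the paper's: apply Theorems \ref{th:Gen:LIL:UB} and \ref{th:Gen:LIL:LB} with $b=3/2$, and your computation of the constants (Chernoff optimization of the $\exp(\mathrm{const}\cdot k^3t)$ moment growth giving $(9/32)^{1/3}L_\sigma^{4/3}$, etc.) is right. The upper-bound half is essentially the paper's argument, although one quantitative point in your chaining is off: for \eqref{cond:UB} the oscillation error must be $s^{-\gamma+o(1)}$, i.e.\ stretched-exponentially small in $z\sim(\log s)^{2/3}$, so ``tails decaying faster than every power of $z$'' is not enough; the paper chains at the level of $u_t$ (not $h_t$, whose increments would require lower bounds on $u$) with $\varepsilon=\varepsilon(s)$ and $k=k(s)$ chosen so the oscillation term is $O(s^{-2\gamma})$. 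The serious gaps are in the lower-bound half. First, your coupling rests on the one-step localized field $u^{(B)}$ of \eqref{eq:uB} and the claim that $u^{(B)}_t(x_1),\dots,u^{(B)}_t(x_m)$ are independent once the $x_i$ are $2(Bt)^{1/2}$-separated. That is false for the nonlinear equation: \eqref{eq:uB} is a fixed-point equation, the integrand $\sigma(u^{(B)}_s(y))$ involves the field at points whose own windows overlap neighbouring windows, and iterating the fixed point spreads the dependence over all of $\R$; the covariance of $u^{(B)}_t(x_1)$ and $u^{(B)}_t(x_2)$ is strictly positive however far apart $x_1,x_2$ are. (The argument for $Z^{(B)}$ in \S\ref{sec:linearSHE} works only because there the integrand is deterministic.) The paper repairs this with the iterated localization $u^{(B,m)}$ started from $u^{(B,0)}\equiv1$, using $u^{(B,B)}$, whose dependence range is $B^{3/2}\sqrt t$ (Lemma \ref{lem:pam:ind}), coupled to $u$ by Lemma \ref{lem:u-uBm}. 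Relatedly, your choice $S(y)=y$ forces you to make $\P\{|h_t(x)-\log u^{(B)}_t(x)|>1\}$ super-exponentially small, and passing from $|u_t-u^{(B)}_t|$ small to the logarithms needs quantitative negative-moment/small-value bounds on $u_t$; ``strict positivity'' alone gives nothing quantitative. The paper sidesteps this entirely by applying Theorem \ref{th:Gen:LIL:LB} with $S=\exp$, so that only $|u_t(x)-u^{(B,B)}_t(x)|\le1$ is required, which the moment estimate of \cite{CJK} delivers directly.

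Second, the tail lower bound with constant $4\sqrt2/(3\ell_\sigma^2\sqrt t\,)$, which you correctly single out as the crux, is not actually obtained in your proposal. The proposed pathwise reduction --- using $\sigma(u)^2\ge\ell_\sigma^2u^2$ to replace the equation, ``on an event of overwhelming probability,'' by the linear one with coefficient $\ell_\sigma$ --- is not an available argument: there is no pathwise comparison between solutions corresponding to different multiplicative nonlinearities, and the second-moment/Paley--Zygmund schemes you cite give lower tails with non-matching constants, as you yourself observe. The paper's route is different and is the missing ingredient: the moment comparison principle of Joseph et al.\ \cite{JKM}, which under \eqref{assump:lip} sandwiches every moment of $u_t(x)$ between the moments of the linear equations with coefficients $\ell_\sigma$ and $L_\sigma$, combined with Chen's precise moment asymptotics and the G\"artner--Ellis theorem to convert exact $k^3$-moment growth into matching upper \emph{and} lower tail asymptotics (Proposition \ref{pr:pam:tail}). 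Without that step, both the lower bound in \eqref{eq:pam:LIL} and the bound $\Dimh\cP_{h_t}(\gamma)\ge1-\bm{\alpha}\gamma^{3/2}$ remain unproven in your outline.
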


When $\sigma(z)=z$ for all $z\in\R$,
the stochastic PDE \eqref{eq:pam} simplifies to the
following, which is known as a \emph{parabolic Anderson model}
and/or \emph{diffusion in random white-noise potential}:
\begin{equation}\label{eq:pampam}\left[\begin{split}
	\dot{u}_t(x) &= \tfrac12 u''_t(x) + u_t(x)\xi_t(x)\qquad(x\in\R,\, t>0);\\
	u_0(x) &= 1.
\end{split}\right.\end{equation}
In this case, Theorem \ref{th:pam:LIL} yields the following exact formula,
which is an equivalent but perhaps more explicit 
way to state Theorem \ref{th:multifractal:pam}.

\begin{corollary}\label{co:PAM:WN}
	The solution $u$ to \eqref{eq:pampam} satisfies
	the following: For all $\gamma,t>0$, 
	\begin{equation}
		\Dimh\left\{ x\ge \exp(\e): u_t(x)\ge 
		\e^{\gamma t^{1/3}(\log x)^{2/3}}\right\}
		=1-\frac{4\sqrt{2}}{3}\,\gamma^{3/2}
		\qquad\text{a.s.}
	\end{equation}
\end{corollary}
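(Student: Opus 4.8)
The plan is to obtain Corollary~\ref{co:PAM:WN} at once from Theorem~\ref{th:pam:LIL}, and then---since that theorem is where the actual work lies---to outline how Theorem~\ref{th:pam:LIL} itself would be proved through the general machinery of \S\ref{sec:Gen}. For the corollary proper, specialize to $\sigma(z):=z$, so that \eqref{eq:pam} becomes the parabolic Anderson model \eqref{eq:pampam}. Then $|\sigma(z)/z|\equiv 1$ on $\R\setminus\{0\}$, hence $\ell_\sigma=L_\sigma=1$ in \eqref{ell}, and the two constants in \eqref{alpha:beta} collapse to $\bm\alpha=\bm\beta=\tfrac{4\sqrt2}{3}$. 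Theorem~\ref{th:pam:LIL} therefore squeezes $\Dimh\cP_{h_t}(\gamma)$ between two equal quantities and yields $\Dimh\cP_{h_t}(\gamma)=1-\tfrac{4\sqrt2}{3}\gamma^{3/2}$ almost surely, for every $t,\gamma>0$. It remains only to identify the set: by the strict positivity of the solution (Mueller and Nualart \cite{MuellerNualart}), $u_t(x)>0$ for all $x$, so $h_t(x)=\log u_t(x)$ is well defined, and the inequality $h_t(x)/(\log x)^{2/3}\ge\gamma t^{1/3}$ is equivalent to $u_t(x)\ge\e^{\gamma t^{1/3}(\log x)^{2/3}}$. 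Hence $\cP_{h_t}(\gamma)$, defined in \eqref{L_h:WN}, equals the set appearing in the corollary, and the proof of the corollary is complete.

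For Theorem~\ref{th:pam:LIL} itself, the route I would take is to apply Theorems~\ref{th:Gen:LIL:UB} and~\ref{th:Gen:LIL:LB} to the one-parameter random field $x\mapsto X_x:=h_t(x)$ with exponent $b=3/2$ and with the identity scaling $S(x)=x$. Since the noise is spatially homogeneous and the initial datum is constant, $x\mapsto u_t(x)$ is stationary, so $\bm c(3/2)$ and $\bm C(3/2)$ are governed by the single tail $\P\{h_t(0)>z\}$ as $z\to\infty$. These tail constants would be extracted from the moment Lyapunov bounds \eqref{moments:intro}: a Chernoff bound $\P\{h_t(0)>z\}=\P\{u_t(0)^k>\e^{kz}\}\le\e^{-kz}\,\E(u_t(0)^k)$, optimized over $k$, gives the upper tail (of order $\exp(-c\,z^{3/2}/\sqrt t)$), while a second-moment/Paley--Zygmund argument gives a matching lower tail. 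Under \eqref{assump:lip} the ratio of the exponents involved is exactly what produces the factors $\ell_\sigma^{4/3}$, $L_\sigma^{4/3}$ in \eqref{eq:pam:LIL} and the constants $\bm\alpha$, $\bm\beta$; for $\sigma(z)=z$ one has $\bm c(3/2)=\bm C(3/2)=\tfrac{4\sqrt2}{3\sqrt t}$, whence the clean spectrum of the corollary.

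It then remains to verify the two hypotheses of the general theorems. Condition \eqref{cond:UB} is a maximal inequality over a unit spatial interval; it follows from the pointwise upper tail just obtained together with an $L^p(\P)$ modulus-of-continuity bound for $x\mapsto h_t(x)$ on bounded intervals (standard for the stochastic heat equation), inserted into a chaining argument. Condition \eqref{cond:LB} is the coupling hypothesis, and the natural route copies the linear case of \S\ref{sec:linearSHE}: replace $u_t(x)$ by $u^{(B)}_t(x)$, the solution driven only by the portion of $\xi$ lying in a spatial window of width of order $B^{1/2}$ about $x$; such localized solutions at points that are $\gtrsim B^{1/2}$ apart are independent, and with $B=B(n)$ growing suitably one needs $\P\{|h_t(x)-h^{(B)}_t(x)|>1\}$ to be super-polynomially small in $\e^n$. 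I expect this last step to be the main obstacle: unlike the Gaussian computation behind \eqref{claim:Linear}, for a genuinely nonlinear $\sigma$ one must (i) bound $u_t-u^{(B)}_t$ in $L^p(\P)$ by something of order $\exp(-cB)$, via a Gronwall/Picard iteration on the Walsh integral equation using the Gaussian spatial decay of $p_t$, and (ii) convert this multiplicative error on $u$ into an additive error on $h=\log u$, which requires a uniform lower-tail (negative-moment) estimate for $u_t(x)$ so that dividing by $u_t(x)$ does not spoil the bound---precisely the quantitative form of Mueller--Nualart positivity. Once \eqref{cond:UB} and \eqref{cond:LB} are in hand, Theorems~\ref{th:Gen:LIL:UB} and~\ref{th:Gen:LIL:LB} deliver Theorem~\ref{th:pam:LIL}, and hence Corollary~\ref{co:PAM:WN}.
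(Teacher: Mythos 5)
Your derivation of the corollary itself is exactly the paper's: with $\sigma(z)=z$ one has $\ell_\sigma=L_\sigma=1$, so $\bm\alpha=\bm\beta=\tfrac{4\sqrt2}{3}$ in \eqref{alpha:beta}, the two bounds of Theorem \ref{th:pam:LIL} coincide, and the set in the corollary is $\cP_{h_t}(\gamma)$ by positivity of $u$ and monotonicity of the logarithm. That part is correct and needs nothing more.

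Since you also sketched how Theorem \ref{th:pam:LIL} would be proved, three points of comparison with the paper, the second of which is a genuine gap in your route. First, the tail estimates: the paper does not use a Chernoff/Paley--Zygmund argument; it sandwiches the moments of $u$ between those of the linear equations with $\sigma(z)=\ell_\sigma z$ and $\sigma(z)=L_\sigma z$ via the moment comparison principle of Joseph et al.\ \cite{JKM}, and then invokes Chen's precise moment asymptotics together with the G\"artner--Ellis theorem (Proposition \ref{pr:pam:tail}). A Paley--Zygmund bound on $u_t(0)^k$ produces a lower tail of the right order $\exp(-c\,z^{3/2}/\sqrt t)$ but with a constant far from $\bm\alpha/\sqrt t$ (the loss $2\lambda(k)-\lambda(2k)$ is of order $-k^3/4$ rather than the sharp exponent), so your route would not recover the exact spectrum $1-\tfrac{4\sqrt2}{3}\gamma^{3/2}$ that the corollary asserts. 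Second, the coupling: you claim that the windowed solutions $u^{(B)}_t(x_i)$ at points of mutual distance $\gtrsim B^{1/2}$ are independent; the paper states explicitly that this is \emph{manifestly false}---the covariance of $u^{(B)}_t(x_1)$ and $u^{(B)}_t(x_2)$ is strictly positive no matter how far apart $x_1,x_2$ are, because the localized equation is still driven by a solution that has already felt the whole noise. The paper remedies this with the finitely iterated localization $u^{(B,B)}$ (a $B$-step Picard scheme with truncated spatial windows), for which independence does hold once $|x_i-x_j|>2B^{3/2}\sqrt t$ (Lemma \ref{lem:pam:ind}), and with the coupling error controlled by Lemma \ref{lem:u-uBm}. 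Third, your worry about negative moments of $u$ when converting a multiplicative error on $u$ into an additive error on $h=\log u$ does not arise in the paper: Theorem \ref{th:Gen:LIL:LB} is formulated with a scale function $S$, and the paper takes $S(x)=\e^x$, so Condition \eqref{cond:LB} is checked for $|u_t(x_j)-u^{(B,B)}_t(x_j)|$ directly and no lower-tail estimate for $u$ is needed.
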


Let us mention a rather general corollary  of Theorem \ref{th:pam:LIL} as
well.

\begin{corollary}\label{co:LIL:pam}
	Under \eqref{assump:lip}, the tall peaks of the
	solution to the SPDE \eqref{eq:pam}
	are almost surely multifractal.
\end{corollary}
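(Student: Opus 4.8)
The plan is to obtain Corollary \ref{co:LIL:pam} as a soft consequence of the two–sided dimension bound of Theorem \ref{th:pam:LIL}, which (via the general machinery of Theorems \ref{th:Gen:LIL:UB} and \ref{th:Gen:LIL:LB}) already does all of the real work. Under the intermittency condition \eqref{assump:lip} we have $0<\bm{\beta}\le\bm{\alpha}<\infty$ with ratio $\bm{\beta}/\bm{\alpha}=\ell_\sigma^2/L_\sigma^2\in(0\,,1]$, and Theorem \ref{th:pam:LIL} provides, for every $t>0$ and every fixed $\gamma>0$,
\[
1-\bm{\alpha}\gamma^{3/2}\ \le\ \Dimh\cP_{h_t}(\gamma)\ \le\ 1-\bm{\beta}\gamma^{3/2}\qquad\text{a.s.}
\]
I would first note that both bounding curves $\gamma\mapsto1-\bm{\alpha}\gamma^{3/2}$ and $\gamma\mapsto1-\bm{\beta}\gamma^{3/2}$ are continuous, strictly decreasing, and tend to $1$ as $\gamma\downarrow0$, while $\gamma\mapsto\Dimh\cP_{h_t}(\gamma)$ is itself nonincreasing (since $\cP_{h_t}(\gamma)\subseteq\cP_{h_t}(\gamma')$ when $\gamma'<\gamma$).

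The key point is that the upper bound at a scale $\gamma$ lies strictly below the lower bound at a sufficiently smaller scale: $1-\bm{\alpha}(\gamma')^{3/2}>1-\bm{\beta}\gamma^{3/2}$ precisely when $\gamma'<\gamma\,(\bm{\beta}/\bm{\alpha})^{2/3}$. So I would fix $r:=\tfrac12\min\{1\,,(\bm{\beta}/\bm{\alpha})^{2/3}\}\in(0\,,1)$, set $\gamma_k:=r^{\,k-1}$ for $k\ge1$ (so that $\gamma_1>\gamma_2>\cdots\downarrow0$), and conclude that, for every $k\ge1$,
\[
\Dimh\cP_{h_t}(\gamma_{k+1})\ \ge\ 1-\bm{\alpha}\gamma_{k+1}^{3/2}\ >\ 1-\bm{\beta}\gamma_k^{3/2}\ \ge\ \Dimh\cP_{h_t}(\gamma_k)\qquad\text{a.s.,}
\]
the strict middle inequality holding because $\gamma_{k+1}=r\gamma_k<\gamma_k(\bm{\beta}/\bm{\alpha})^{2/3}$. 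Each of these countably many inequalities holds outside a null set, hence all of them hold simultaneously outside a single null set; along the decreasing scales $\gamma_1>\gamma_2>\cdots$ the macroscopic Hausdorff dimensions $\Dimh\cP_{h_t}(\gamma_k)$ then form a strictly varying sequence of values, so the peak set of $h_t$ (equivalently of $u_t$, since $h_t=\log u_t$ and $u_t>0$) exhibits infinitely many distinct macroscopic dimensions, which is the multifractality of Definition \ref{def:multifractal}. In the borderline case $\ell_\sigma=L_\sigma$ one has $\bm{\alpha}=\bm{\beta}$ and Theorem \ref{th:pam:LIL} gives the exact value $\Dimh\cP_{h_t}(\gamma)=1-\bm{\alpha}\gamma^{3/2}$, so any strictly decreasing $\gamma_k\downarrow0$ already works; the factor $\tfrac12$ in the definition of $r$ is present only to keep the sandwich strict in that borderline case.

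I do not anticipate a genuine obstacle here: the substantive content is entirely contained in Theorem \ref{th:pam:LIL}, and what remains is the short extraction above. The only two points deserving a line of care are (i) choosing the geometric ratio $r$ strictly below $\min\{1\,,(\bm{\beta}/\bm{\alpha})^{2/3}\}$ so that the pinching argument yields a \emph{strict} inequality even when $\bm{\beta}=\bm{\alpha}$, and (ii) upgrading the ``for each fixed $\gamma$, a.s.'' form of Theorem \ref{th:pam:LIL} to a statement valid simultaneously along the countable family $\{\gamma_k\}_{k\ge1}$, which is immediate by a union bound over null events; with that in hand, \eqref{DimDim:H} holds with probability one.
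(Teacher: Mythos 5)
Your argument is correct and is essentially the paper's own proof: the paper likewise takes a geometric sequence of scales, $\gamma_1=(2\bm{\alpha})^{-2/3}$ and $\gamma_{i+1}=(\bm{\beta}/(2\bm{\alpha}))^{2/3}\gamma_i$, precisely so that the upper bound $1-\bm{\beta}\gamma_i^{3/2}$ at one scale sits strictly below the lower bound $1-\bm{\alpha}\gamma_{i+1}^{3/2}$ at the next, and then invokes Theorem \ref{th:pam:LIL} and Definition \ref{def:multifractal} exactly as you do. The only cosmetic difference is your starting value $\gamma_1=1$: when $1-\bm{\beta}\gamma_k^{3/2}<0$ the set $\cP_{h_t}(\gamma_k)$ is bounded and has macroscopic dimension $0$, so the last inequality in your chain can fail for finitely many initial $k$; start (as the paper does) at a scale with $\bm{\alpha}\gamma_1^{3/2}\le\tfrac12$, or simply discard those finitely many indices, since only infinitely many scales are needed.
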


\begin{proof} 
	Recall \eqref{alpha:beta},
	let $\gamma_1:=(2\bm{\alpha})^{-2/3}$, and then define
	\begin{equation}
		\gamma_{i+1} := \left( \frac{\bm{\beta}}{2\bm{\alpha}}\right)^{2/3}
		\gamma_i,
	\end{equation}
	iteratively for all $i\ge 1$. Clearly, 
	\begin{equation}
		1-\bm{\beta}\gamma_i^{3/2}<
		 1 -\frac12\bm{\beta}\gamma_{i}^{3/2} = 1 -\bm{\alpha}\gamma_{i+1}^{3/2}
		\qquad\text{for all $i\ge 1$}.
	\end{equation}
	In addition, $0< 1 -\bm{\alpha}\gamma_{i}^{3/2} <1-\bm{\beta}\gamma_i^{3/2}<1$ for all $i\ge 1$.

	Consider the collection of tall peaks $\cP_{h_t}(\gamma)$ 
	of order $\gamma\in(0\,,1)$ that was defined in 
	\eqref{L_h:WN}.  Theorem \ref{th:pam:LIL} implies that
	\begin{equation}
		\Dimh \cP_{h_t}(\gamma_{i}) < \Dimh\cP_{h_t}(\gamma_{i+1})
		\qquad\text{for all $i\ge 1$, a.s.}
	\end{equation}
	Definition \ref{def:multifractal} then shows that the tall peaks of
	$h_t$---whence also $u_t$---are a.s.\ multifractal.
\end{proof}

We begin the proof of Theorem \ref{th:pam:LIL}
with a basic tail probability estimate.

\begin{proposition}\label{pr:pam:tail}
	For any $t>0$, we have
	\begin{equation}\label{eq:pam:tail}\begin{split}
		& \liminf_{z\rightarrow \infty}  z^{-3/2}\inf_{x\in\R}
			\log \P\left\{ h_t(x) \geq z\right\}\geq - \frac{\bm{\alpha}}{\sqrt{t}},\\
		& \limsup_{z\rightarrow \infty}  z^{-3/2}\sup_{x\in\R}
			\log \P\left\{ h_t(x) \geq z\right\}\leq - \frac{\bm{\beta}}{\sqrt{t}},
	\end{split}\end{equation} 
	where the constants $\bm{\alpha}$ and $\bm{\beta}$ 
	were defined in \eqref{alpha:beta}.
\end{proposition}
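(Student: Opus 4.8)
The plan is as follows. Both displays in \eqref{eq:pam:tail} concern the one–point upper tail of $h_t(x)=\log u_t(x)$, and since $u_0\equiv 1$ while $\xi$ is stationary in space, the law of $u_t(x)$ does not depend on $x$; so the suprema and infima over $x\in\R$ are inert and I may work at $x=0$. I would deduce both bounds from sharp two–sided moment asymptotics,
\begin{equation}\label{prop:mom}
	\limsup_{k\to\infty}\frac{\log\E(u_t(0)^k)}{k^3}\le \frac{L_\sigma^4\,t}{24},
	\qquad
	\liminf_{k\to\infty}\frac{\log\E(u_t(0)^k)}{k^3}\ge \frac{\ell_\sigma^4\,t}{24}.
\end{equation}
To obtain \eqref{prop:mom}: first compare the $k$–point moments of \eqref{eq:pam} with those of the \emph{linear} stochastic heat equations $\dot v=\tfrac12 v''+L_\sigma v\xi$ and $\dot w=\tfrac12 w''+\ell_\sigma w\xi$ (both with flat initial data), using $\ell_\sigma^2z^2\le\sigma(z)^2\le L_\sigma^2z^2$ --- this is the only place \eqref{assump:lip} enters, and the comparison is the one used by Foondun--Khoshnevisan \cite{FA:whitenoise} and Conus et al \cite{CJK}, giving $\E(w_t(0)^k)\le\E(u_t(0)^k)\le\E(v_t(0)^k)$. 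Next, a Brownian rescaling of space--time white noise converts the linear equation with noise coefficient $c>0$ into the one with coefficient $1$ at the cost of replacing $t$ by $c^4t$; hence $\E(v_t(0)^k)=\E(\bar v_{L_\sigma^4t}(0)^k)$ and $\E(w_t(0)^k)=\E(\bar v_{\ell_\sigma^4t}(0)^k)$, where $\bar v$ is the coefficient–one solution. Finally, the sharp Kardar asymptotics $s^{-1}k^{-3}\log\E(\bar v_s(0)^k)\to\tfrac1{24}$ as $k\to\infty$, which is rigorous by \cite[Ch.~6]{cbms}, \cite{BorCor} and \cite{Chen}, yields \eqref{prop:mom}.

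Given \eqref{prop:mom}, the second line of \eqref{eq:pam:tail} is a one–line Chernoff bound: for every real $k>0$,
\begin{equation}
	\P\{h_t(0)\ge z\}=\P\{u_t(0)^k\ge\e^{kz}\}\le\e^{-kz}\E(u_t(0)^k)
	\le\exp\!\left(-kz+\tfrac{L_\sigma^4t}{24}k^3(1+o(1))\right),
\end{equation}
and minimizing the exponent over $k>0$ at $k_z:=(8z/(L_\sigma^4t))^{1/2}$ --- which tends to $\infty$, so the error term is genuinely negligible --- produces the minimal value $-\tfrac23 k_z z=-\bm{\beta}\,z^{3/2}t^{-1/2}(1+o(1))$; since $k_z$ need not be an integer one rounds it up. This is the asserted bound $\limsup_z z^{-3/2}\sup_x\log\P\{h_t(x)\ge z\}\le-\bm{\beta}/\sqrt t$.

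The first line of \eqref{eq:pam:tail} --- a matching \emph{lower} bound on the tail --- is where the work is, and it is the main obstacle. A naive Paley--Zygmund/second–moment estimate is far too lossy here, because the $L^k$–norms of $u_t(0)$ spread apart as $k$ grows. Instead I would \emph{invert} the moment lower bound by feeding back the upper tail just obtained. Fix $k$ large; with $b:=\bm{\beta}/\sqrt t$, let $\beta_+$ be the larger root of $\beta-b\beta^{3/2}=\ell_\sigma^4t/24$ and pick two levels $z^\ast<z'$, both of the form $\beta_+k^2(1+o(1))$. Using the second line of \eqref{eq:pam:tail}, one checks that $\E(u_t(0)^k\1\{h_t(0)\le z^\ast\})$ and $\E(u_t(0)^k\1\{h_t(0)>z'\})$ are both $o(\E(u_t(0)^k))$, so that $\E(u_t(0)^k)\le(1+o(1))\e^{kz'}\P\{h_t(0)>z^\ast\}$; inserting \eqref{prop:mom} gives
\begin{equation}
	\log\P\{h_t(0)>z^\ast\}\ \ge\ -kz'+\frac{\ell_\sigma^4t}{24}k^3(1+o(1)).
\end{equation}
Re–expressing $k$ through $z:=z^\ast\asymp k^2$, a short computation using $\ell_\sigma\le L_\sigma$ collapses the right–hand side to $-\bm{\alpha}\,z^{3/2}t^{-1/2}(1+o(1))$, which is the claimed $\liminf_z z^{-3/2}\inf_x\log\P\{h_t(x)\ge z\}\ge-\bm{\alpha}/\sqrt t$. (In the parabolic Anderson case $\sigma(z)=z$ the sharp constant can instead be imported, through a coupling, from the precise lower–tail asymptotics of the coefficient–one linear equation in Chen \cite{Chen}.) The delicate point in this moment–to–tail inversion is that one must control both \emph{where} the $L^k$–mass of $u_t(0)$ concentrates and that it does \emph{not} overshoot, which is precisely why the already–established upper tail has to be used as an input.
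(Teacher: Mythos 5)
Your first half --- reducing to the linear equations via the moment comparison principle, using the sharp $k\to\infty$ moment asymptotics $\log\E(u_t(0)^k)\asymp \tfrac{1}{24}c^4tk^3$ for noise coefficient $c$, and Chernoff-optimizing to get the second line of \eqref{eq:pam:tail} --- is essentially the paper's route (the paper cites Joseph et al.\ \cite[Theorem 2.6]{JKM} for the comparison and Chen \cite[Theorem 5.5]{Chen} for the moment asymptotics; note that \cite{BorCor} and \cite[Ch.~6]{cbms} concern Lyapunov exponents, fixed $k$ and $t\to\infty$, so the fixed-$t$, large-$k$ constant $1/24$ really comes from Chen). The gap is in your lower-bound inversion, which is exactly the step the paper delegates to the G\"artner--Ellis theorem rather than doing by hand. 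With $b:=\bm{\beta}/\sqrt t$ and $z=\beta k^2$ one has $kz-bz^{3/2}=k^3\phi(\beta)$, $\phi(\beta):=\beta-b\beta^{3/2}$, whose maximum is $\tfrac{1}{24}L_\sigma^4tk^3$ at $\beta=\tfrac18 L_\sigma^4 t$; your $\beta_+$ solves $\phi(\beta_+)=\tfrac{1}{24}\ell_\sigma^4 t$, so necessarily $\beta_+>\tfrac1{24}\ell_\sigma^4t$, and when $\ell_\sigma<L_\sigma$ also $\beta_+>\tfrac18L_\sigma^4t$. Consequently, with $z^\ast\approx\beta_+k^2$ the claim $\E(u_t(0)^k\1\{h_t(0)\le z^\ast\})=o(\E(u_t(0)^k))$ cannot be checked: the trivial bound $\e^{kz^\ast}$ has exponent $\beta_+k^3$, and integrating the already-proved upper tail over $[0,z^\ast]$ picks up the saddle at $z\approx\tfrac18L_\sigma^4tk^2\le z^\ast$ with exponent $\tfrac1{24}L_\sigma^4tk^3$; both exceed the only available lower bound $\tfrac1{24}\ell_\sigma^4tk^3$ for $\log\E(u_t(0)^k)$. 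Indeed the claim should be false in general: nothing in your inputs prevents the $L^k$-mass of $u_t(0)$ from concentrating near $h\approx\tfrac18\ell_\sigma^4tk^2\ll\beta_+k^2$ (as it does when $u$ behaves like $u^{(\ell)}$), in which case the left-truncated expectation is essentially all of $\E(u_t(0)^k)$.

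If you repair this by lowering $z^\ast$ to just below the \emph{smaller} root $\beta_-k^2$ of $\phi(\beta)=\tfrac1{24}\ell_\sigma^4t$ (and keep $z'$ just above $\beta_+k^2$), both truncation errors do become negligible, but the bound you then get is $\log\P\{h_t\ge z\}\ge -z^{3/2}\,(\beta_+-\tfrac1{24}\ell_\sigma^4t)/\beta_-^{3/2}\,(1+o(1))$, whose constant equals $\bm{\alpha}/\sqrt t$ only when $\ell_\sigma=L_\sigma$ and is strictly worse otherwise (it blows up like $\ell_\sigma^{-6}$ as $\ell_\sigma/L_\sigma\to0$, versus the target $\ell_\sigma^{-2}$). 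The obstruction is structural: the three facts you allow yourself --- $\liminf k^{-3}\log\E(u_t^k)\ge\tfrac1{24}\ell_\sigma^4t$, $\limsup k^{-3}\log\E(u_t^k)\le\tfrac1{24}L_\sigma^4t$, and the Chernoff upper tail --- do not determine the pointwise lower tail with constant $\bm{\alpha}$ when $\ell_\sigma<L_\sigma$; one can write down monotone tail profiles consistent with all three that are lighter than $\exp(-\bm{\alpha}z^{3/2}/\sqrt t)$ at prescribed levels $z$. So your hand-rolled moment-to-tail inversion proves the proposition only in the matched case $\ell_\sigma=L_\sigma$ (the PAM case, which your parenthetical already hedges toward); that matched-case computation is precisely the standard exponential Tauberian/G\"artner--Ellis lower-bound argument, and for general $\sigma$ the paper's proof goes through that machinery (Chen's Theorem 5.5 for the two comparison equations plus the G\"artner--Ellis theorem in the form of \cite{Chen-book}) rather than through an inversion based solely on the sandwiched one-point moments.
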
 

\begin{proof}
	Let $u^{(\ell)}$ and $u^{(L)}$ respectively denote the solutions to 
	\eqref{eq:pam} with $\sigma(z):=\ell_\sigma z$ and $\sigma(z):=L_{\sigma} z$. 
	The \emph{moment comparison principle} of Joseph et al 
	\cite[Theorem 2.6]{JKM} tells us that because of the condition \eqref{assump:lip},
	\begin{equation}
		\E \left(\left[u^{(\ell)}_t(x)\right]^k\right) \le 
		\E \left([ u_t(x)]^k\right) \le
		\E \left(\left[u^{(L)}_t(x)\right]^k\right),
	\end{equation}
	for all real numbers $t>0$, $x\in\R$, and $k\ge 2$.
	We now use the first part of Theorem 5.5 of Chen \cite{Chen} and 
	the G\"{a}rtner--Ellis theorem, for example in the form of Chapter 1 of the recent book by
	Chen \cite{Chen-book}, in order to complete the proof. 
\end{proof}

Armed with Proposition \ref{pr:pam:tail} we can prove half of Theorem \ref{th:pam:LIL}
quickly. The second, harder, half will require work that will be developed
afterward.

\begin{proof}[Proof of Theorem \ref{th:pam:LIL}: Dimension upper bound]
	Our immediate goal is to establish the dimension upper bound; that is, we wish to
	demonstrate the following:
	\begin{equation}\label{BOOBOO}
		\Dimh \cP_{h_t}\left(
		[\gamma/\bm{\beta}]^{2/3}\right)\le 1-\gamma
		\quad\text{a.s.\ for all $\gamma>0$}.
	\end{equation}
	
	We claim that for every $\gamma\in(0\,,1)$,
	\begin{equation}\label{eq:pam:h}
		\sup_{z\in\R}\P\left\{ \sup_{y\in[z,z+1]} h_t(y)
		> \left(\frac{\gamma}{\bm{\beta}}\log s\right)^{2/3}
		\right\} \le s^{-\gamma+o(1)}\quad\text{as $s\to\infty$}.
	\end{equation}
	If this were so, then it would show that Condition \eqref{cond:UB}
	holds with $b=\nicefrac32$
	and $X_x:= h_t(x)$, and  Theorem \ref{th:Gen:LIL:UB} then implies
	\eqref{BOOBOO}. Thus, the dimension upper bound of Theorem \ref{th:pam:LIL}
	follows once we prove \eqref{eq:pam:h}.
	According to \eqref{h=log(u)}, it remains to prove that
	for every $\gamma\in(0\,,1)$,
	\begin{equation}\label{eq:pam:U}
		\sup_{z\in\R}\P\left\{ \sup_{y\in[z,z+1]} u_t(y)
		> \exp\left[\left( \frac{\gamma}{\bm{\beta}}\log s\right)^{2/3}
		\right]\right\} \le s^{-\gamma+o(1)},
	\end{equation}
	as $s\to\infty$.
	
	Recall that, as a corollary to Proposition \ref{pr:pam:tail}, we have  
	the following slightly weaker variation on the desired estimate \eqref{eq:pam:U}:
	If $g:\R_+\to\R_+$ is a nonrandom function
	that satisfies $\lim_{s\to\infty}g(s)=0$, then
	\begin{equation}\label{eq:pam:U0}
		\sup_{y\in\R}\P\left\{ u_t(y)
		> \exp\left[\left( \frac{\gamma
		-g(s)}{\bm{\beta}}\log s\right)^{2/3}
		\right]\right\} \le s^{-\gamma+o(1)},
	\end{equation} as $s\to\infty$.
		
	In order to derive \eqref{eq:pam:U} from \eqref{eq:pam:U0} we
	apply a chaining argument. With this in mind, let
	us first observe the following, which is a quantitative form of the
	Kolmogorov continuity theorem \cite[Theorem C.6, p.\ 107]{cbms}: 
	There exists a finite constant $\tau=\tau(t)>1$ such that 
	for all real numbers $k\ge 2$
	\begin{equation}
		\sup_{w\in\R} \E\left[
		\sup_{\substack{x,x'\in [w,w+1]\\
		x\neq x'}} \frac{|u_t(x)-u_t(x')|^{2k}}{|x-x'|^{k/2}}
		\right]< \tau \e^{\tau k^3 t}.
	\end{equation}
	This and the Chebyshev inequality together imply that
	uniformly for all real numbers $\eta,\varepsilon\in(0\,,1)$,
	$s\ge\e$, and $k\ge 2$,
	\begin{equation}\label{eq:BD1}\begin{split}
		&\sup_{w\in\R}\P\left\{\sup_{y\in[w,w+\varepsilon]}|u_t(w)-u_t(y)|>
			\exp\left[(\eta\log s)^{2/3}\right]\right\}\\
		&\hskip2in\le \tau \varepsilon^{k/2}
			\exp\left(\tau k^3 t -2 k(\eta\log s)^{2/3}\right).
	\end{split}\end{equation}
	We apply the preceding  bound with the following choices of
	$\varepsilon$ and $k$:
	\begin{equation}\begin{split}
		\varepsilon &= \varepsilon(s) :=\exp\left\{
			- \frac{2\gamma(2\tau t)^{1/2}}{\eta^{1/3}}(\log s)^{2/3}\right\},\\
		k&=k(s) := \left(\frac{2}{\tau t}\right)^{1/2}(\eta\log s)^{1/3}.
	\end{split}\end{equation}
	Let $s_*(\eta\,,t):= \exp\left( \eta^{-1}[2\tau t]^{3/2}\right)$ to see that $k\ge 2$
	if and only if $s\ge s_*(\eta\,,t)$. We apply \eqref{eq:BD1} with these
	choices of $\varepsilon$ and $k$ in order to see that
	\begin{equation}\label{eq:BD2}
		\sup_{w\in\R}\P\left\{\sup_{y\in[w,w+\varepsilon]}|u_t(w)-u_t(y)|>
		\exp\left[(\eta\log s)^{2/3}\right]\right\}
		\le \tau s^{-2\gamma},
	\end{equation}
	uniformly for all $\eta\in(0\,,1)$ and $s\ge \max\{\e\,,s_*(\eta\,,t)\}$.
	In particular, we might note that
	\begin{align}\notag
		&\sup_{w\in\R}\P\left\{ \sup_{y\in[w,w+\varepsilon]} u_t(y)
			> \exp\left[\left( \frac{\gamma}{\bm{\beta}}\log s\right)^{2/3}
			\right]\right\} \\
			\notag
		&\le \tau s^{-2\gamma} + \sup_{w\in\R}
			\P\left\{ u_t(w)
			> \exp\left[\left( \frac{\gamma}{\bm{\beta}}\log s\right)^{2/3}
			\right]-\exp\left[ (\eta\log s)^{2/3}\right]\right\} \\
		&\le s^{-\gamma+o(1)}\qquad\text{as $s\to\infty$},
			\end{align}
	owing to \eqref{eq:pam:U0}. Every interval $[z\,,z+1]$ can be covered by
	at most $\varepsilon^{-1}+1$ intervals of the form
	$[w\,,w+\varepsilon]$. Therefore, the preceding implies that
	\[
		\sup_{z\in\R}\P\left\{ \sup_{y\in[z,z+1]} u_t(y)
		> \exp\left[\left( \frac{\gamma}{\bm{\beta}}\log s\right)^{2/3}
		\right]\right\} 
		\le \left[ \varepsilon^{-1}+1\right] s^{-\gamma+o(1)}
		=s^{-\gamma+o(1)},
	\]
	as $s\to\infty$.
	This proves \eqref{eq:pam:U}, and hence the upper bound on the
	macroscopic Hausdorff dimension in Theorem \ref{th:pam:LIL}.
\end{proof}

Now we begin to work toward establishing the lower bound 
in Theorem \ref{th:pam:LIL}. In order to do that we will attempt
to verify the coupling condition \eqref{cond:LB}. A first attempt might be to follow
the case of linear SPDEs/Gaussian processes. More concretely, we may
try to follow the proof of the lower bound of Theorem \ref{th:LIL} and consider,
for every $B>0$, a space-time random field $u^{(B)}$ as follows: For all $t>0$ and $x\in \R$,
\begin{equation}\label{eq:uB}\begin{split}
	&u^{(B)}_t(x)\\
	&:=1+ \int_{(0,t)\times[x-(Bt)^{1/2},\ x+(Bt)^{1/2}]} 
		p_{t-s}(y-x)\sigma\left(u_s^{(B)}(y)\right)\,\xi(\d s\,\d y).
\end{split}\end{equation} 
It is not hard to apply a fixed-point agrument in order
to prove that the random integral equation \eqref{eq:uB}
has a unique solution $u^{(B)}$. 
Moreover, that $(t\,,x)\mapsto u^{(B)}_t(x)$ has a continuous modification.
[We will not prove any of this here since we will not need to.]

The random field $u^{(B)}$ is the analogue of the random field $Z^{(B)}$,
that was defined earlier in \eqref{eq:ZB},
but we now interpret the stochastic integral in \eqref{eq:uB}
in the sense of Walsh, whereas the one for $Z^{(B)}$
can be understood in the sense of Wiener. 

The random field $Z^{(B)}$ was introduced in the proof of Theorem \ref{th:LIL}
because $Z^{(B)}$ has the following two desireable properties: 
\begin{enumerate}
	\item[(i)] $Z^{(B)}\approx Z$
		if $B$ is large [see \eqref{claim:Linear}]; and 
	\item[(ii)]  $Z^{(B)}_t(x_1),\ldots,Z^{(B)}_t(x_m)$ are independent
		if the $x_i$'s are sufficiently far apart from one another; 
		for example, if $|x_i-x_{i+1}|>2(Bt)^{1/2}$ [see Observation \ref{obs1}.]
\end{enumerate}

By analogy, we might hope that: 
\begin{enumerate}
	\item[(iii)] $u^{(B)}\approx u$ if $B$ is large; and
	\item[(iv)] $u^{(B)}_t(x_1),\ldots,u^{(B)}_t(x_m)$ are independent if the $x_i$'s are
		sufficiently far apart from one another. 
\end{enumerate}
If so, then we could use $u^{(B)}$---in
a similar way as we used $Z^{(B)}$---in order to verify Condition \eqref{cond:LB},
thereby obtain a lower bound on the macroscopic dimension of the set of high peaks
of $u$. 

As it turns out, (iii) continues to hold. However, (iv) is manifestly false; it is possible
for example to show that the covariance of $u^{(B)}_t(x_1)$ and $u^{(B)}_t(x_2)$
is strictly positive, for all $x_1,x_2\in\R$, no matter how far apart $x_1$
and $x_2$ are from one another.

We remedy the situation by defining the following random fields instead:
Choose and fix an integer $B\ge 1$, as before, and define 
\begin{equation}
	u^{(B,0)}_t(x) :=1\qquad\text{for all $t\ge 0$ and $x\in\R$}.
\end{equation}
Then we define random fields $u^{(B,j)}$, for every $j\ge 1$, iteratively, as follows:
\[
	u^{(B,m)}_t(x) 
	:=1+ \int_{(0,t)\times[x-(Bt)^{1/2},\ x+(Bt)^{1/2}]} 
	p_{t-s}(y-x)\sigma\left(u_s^{(B,m-1)}(y)\right)\xi(\d s\,\d y),
\]
for every $m\ge 1$. 
The object of interest to us is the random field $u^{(B,B)}$.
The following estimate  shows that $u\approx u^{(B,B)}$ when $B\gg1$. 

\begin{lemma}[A coupling lemma]\label{lem:u-uBm}
	There exists a finite constant $K>1$ such that  
	for all real numbers $t>0$ and $\lambda>1$, and 
	all integers $B\ge 1$, 
	\begin{equation}
		\sup_{x\in\R}
		\P\left\{ \left| u_t(x) - u^{(B,B)}_t(x)\right|>\lambda\right\}
		\le K\exp\left( -\frac{(B+\log \lambda)^{3/2}}{K\sqrt t}\right).
	\end{equation}
\end{lemma}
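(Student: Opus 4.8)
Up to multiplicative constants the target bound is $\exp(-c\,z^{3/2}/\sqrt t)$ with $z=B+\log\lambda$, which is precisely the shape of the tail estimate for $h_t=\log u_t$ provided by Proposition~\ref{pr:pam:tail}. This suggests proving the lemma through a moment bound for the increment $u_t(x)-u^{(B,B)}_t(x)$, followed by Markov's inequality and an optimisation over the moment order $k$; the ``right'' choice will turn out to be $k\asymp\sqrt{(B+\log\lambda)/t}$. Two routine reductions come first. Since the right-hand side of the claimed inequality exceeds $1$ once $K$ is large whenever $\sqrt t\ge c_\star(B+\log\lambda)^{3/2}$, we may assume $\sqrt t<c_\star(B+\log\lambda)^{3/2}$. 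And since $u_t(x)>0$ while $u^{(B,m)}_t(x)$ obeys the standard moment bound $\sup_x\E(|u^{(B,m)}_t(x)|^k)\le C^k\e^{Ck^3t}$ \emph{uniformly in $B$ and $m$} (the spatial truncation only removes mass from the heat kernel), applying Markov to $u_t(x)$ and to $u^{(B,B)}_t(x)$ separately and optimising over $k$ — using $\P\{u_t(x)>a\}=\P\{h_t(x)>\log a\}$ together with Proposition~\ref{pr:pam:tail} for the former — handles the range $\log\lambda\ge B$, since there $(\log\lambda)^{3/2}\ge2^{-3/2}(B+\log\lambda)^{3/2}$. Hence we may also assume $\log\lambda<B$, so that $(B+\log\lambda)^{3/2}\asymp B^{3/2}$ and it suffices to produce the bound $K\exp(-B^{3/2}/(K\sqrt t))$; the bounded-$B$ corner is also covered by these individual moment estimates.

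The heart of the matter is a recursion in the Picard index. Put $I_B(x):=[x-(Bt)^{1/2},\,x+(Bt)^{1/2}]$ and subtract the defining stochastic integral equations: $u_t(x)-u^{(B,m)}_t(x)$ equals a \emph{far-field part} $W_t(x):=\int_{(0,t)\times I_B(x)^c}p_{t-s}(y-x)\,\sigma(u_s(y))\,\xi(\d s\,\d y)$ plus a \emph{Lipschitz part} $\int_{(0,t)\times I_B(x)}p_{t-s}(y-x)\,[\sigma(u_s(y))-\sigma(u^{(B,m-1)}_s(y))]\,\xi(\d s\,\d y)$. Writing $H_m(t):=\sup_x\|u_t(x)-u^{(B,m)}_t(x)\|_k^2$ with $\|\cdot\|_k$ the $L^k(\P)$ norm, the Burkholder--Davis--Gundy inequality (with constant $O(k)$), the Lipschitz property of $\sigma$, the identities $\int_\R p_{t-s}(z)^2\,\d z=(4\pi(t-s))^{-1/2}$ and $\int_{|z|>(Bt)^{1/2}}p_{t-s}(z)^2\,\d z\le\e^{-B/2}(2\pi(t-s))^{-1/2}$, and the a priori moment bound $\sup_x\|u_s(x)\|_k^2\le C^2\e^{2Ck^2s}$ together give
\[
	H_m(t)\ \le\ C_1k\,\e^{-B/2}\e^{ck^2t}\ +\ C_2k\int_0^t\frac{H_{m-1}(s)}{\sqrt{t-s}}\,\d s,\qquad H_0(t)\le C_0\,\e^{ck^2t}.
\]
Iterating $B$ times, using that the $j$-fold composition of $f\mapsto\int_0^t(t-s)^{-1/2}f(s)\,\d s$ maps a constant to $\pi^{j/2}t^{j/2}/\Gamma(j/2+1)$ times it (Beta-function telescoping), together with $\sum_{j\ge0}x^j/\Gamma(j/2+1)\le C\e^{2x^2}$ and $\Gamma(B/2+1)\ge(B/(2\e))^{B/2}$, yields
\[
	H_B(t)\ \le\ C_3k\,\e^{-B/2}\e^{c_4k^2t}\ +\ \Big(\frac{C_5k^2t}{B}\Big)^{B/2}\e^{c_4k^2t}.
\]

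It remains to optimise. Fix $\varepsilon\in(0,1)$ small enough (depending only on $c_4,C_5$) that $(C_5\varepsilon)^{1/2}\e^{c_4\varepsilon}<1$ and $c_4\varepsilon<\tfrac14$, and take $k:=\sqrt{\varepsilon B/t}$; then both terms above are bounded by $\rho^B$ up to a factor $C_3k+1$, for some $\rho=\rho(\varepsilon)\in(0,1)$, provided $k\ge2$. Markov's inequality and $\lambda>1$ then give
\[
	\P\{|u_t(x)-u^{(B,B)}_t(x)|>\lambda\}\ \le\ \lambda^{-k}H_B(t)^{k/2}\ \le\ (C_3k+1)^{k/2}\rho^{Bk/2},
\]
and since $Bk=\sqrt\varepsilon\,B^{3/2}/\sqrt t$ the dominant factor $\rho^{Bk/2}=\exp(-\tfrac12\sqrt\varepsilon\log(1/\rho)\,B^{3/2}/\sqrt t)$ has exactly the required form, the prefactor $(C_3k+1)^{k/2}=\exp(O(\sqrt{B/t}\,\log(B/t)))$ being of strictly lower order and absorbed into a larger $K$. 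Here one must mind the permissible range of $t$: the reductions leave $t\lesssim B^3$, but if $t$ is much smaller than $B$ one takes $k$ as large as the constraint $k^2t\le\varepsilon B$ allows and re-optimises the displayed Markov bound in $k$, while the corner where $t\asymp\varepsilon B$ and $k\downarrow2$ is settled by the $k=2$ case of the same recursion.

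The one genuinely delicate point is the Picard-truncation term $(C_5k^2t/B)^{B/2}$: with only $B$ iterations the crude bound for the $B$-fold convolution is the geometrically growing $(C_5k\sqrt t)^B$, and it is solely the factorial $\Gamma(B/2+1)$ gained from convolving the heat-kernel-square singularity $(t-s)^{-1/2}$ a total of $B$ times that controls it — and then only under the constraint $k^2t\lesssim B$. Squaring this constraint against the moment order $k\asymp\sqrt{(B+\log\lambda)/t}$ forced by the Markov step is exactly why the range $\log\lambda\ge B$ had to be peeled off at the start and handled by the cruder individual tail bounds; the remaining bookkeeping over the joint range of $(t,B,\lambda)$ is the only laborious part.
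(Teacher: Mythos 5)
The paper proves this lemma in one stroke by citing the moment bound $\sup_{x\in\R}\E(|u_t(x)-u_t^{(B,B)}(x)|^k)\le c\,\e^{ck^3t-Bk}$ (Lemma 4.3 of Conus et al.\ \cite{CJK}; see also \cite[Lemma 10.10]{cbms}), then applying Chebyshev and minimizing $\exp(ck^3t-(B+\log\lambda)k)$ over all $k\ge2$; so the whole content of your proposal is an attempted re-derivation of that moment bound via the truncated Picard scheme, followed by the same Chebyshev-plus-optimization step. The re-derivation, however, is too lossy to yield the lemma in the stated generality. Your iteration gives $\E\left(|u_t(x)-u^{(B,B)}_t(x)|^k\right)\le (C_3k+1)^{k/2}\rho^{Bk/2}$ under the constraint $k^2t\le\varepsilon B$, i.e.\ an exponent $\tfrac k2\log(C_3k+1)-\tfrac{Bk}2\log(1/\rho)$ whose error term is \emph{superlinear} in $k$, whereas the cited bound has a $k$-independent prefactor --- and that is precisely what permits the unconstrained choice $k\asymp\sqrt{(B+\log\lambda)/t}$ for \emph{every} $t>0$. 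Concretely, take $B=10$, $\lambda=2$, $t=\e^{-100}$: then $\log\lambda<B$, so none of your peeled-off cases apply, the optimal $k\asymp\sqrt{B/t}$ has $\log k\approx 50\gg B$, and since your construction forces $\rho\ge\e^{-1/2+c_4\varepsilon}$ (the far-field term caps $\log(1/\rho)$ by $\tfrac12$ no matter how small $\varepsilon$ is), the bracket $\tfrac12\log(C_3k+1)-\tfrac B2\log(1/\rho)$ is positive and your bound is vacuous; capping $k$ at $\e^{O(B)}$, as your ``re-optimise'' remark suggests, yields at best $\exp(-cB\e^{cB})$, which is nowhere near the required $\exp(-(B+\log\lambda)^{3/2}/(K\sqrt t))\approx\exp(-c\,\e^{50})$ in this regime.

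The auxiliary reductions do not rescue these corners either: splitting $\{|u_t-u^{(B,B)}_t|>\lambda\}$ into $\{u_t>\lambda/2\}\cup\{|u^{(B,B)}_t|>\lambda/2\}$ and using $\E(|u^{(B,m)}_t|^k)\le C^k\e^{Ck^3t}$ gives nothing via Markov unless $\lambda/2>C$, and Proposition \ref{pr:pam:tail} is a fixed-$t$ asymptotic as $z\to\infty$ with no uniformity in $t$, while the range your first reduction leaves over is exactly small $t$, where the target bound is extraordinarily strong even for bounded $B$ and $\lambda$. The repair is the standard weighted-norm bookkeeping used in \S7 of the paper (and in the proof of \cite[Lemma 4.3]{CJK}): bounding $\int_0^t(t-s)^{-1/2}\e^{2\alpha s}\,\d s\le\sqrt{\pi/(2\alpha)}\,\e^{2\alpha t}$ with $\alpha\asymp k^2$ trades the Burkholder--Davis--Gundy factor $4k$ for an absolute constant at each Picard step, leading to $\|u_t(x)-u_t^{(B,B)}(x)\|_k\le C\e^{ck^2t-B}$ uniformly in $k\ge2$, $t>0$, $x\in\R$; raising this to the $k$-th power recovers the cited bound $C^k\e^{ck^3t-Bk}$ (the $C^k$ is harmless, being linear in $k$ in the exponent), after which Chebyshev and the unconstrained optimization over $k\ge2$ finish the proof for all $t$ with no case analysis. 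As written, your proof has a genuine gap in the small-$t$ (equivalently, very-large-$k$) regime.
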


\begin{proof}
	According to Lemma 4.3 of 
	Conus et al \cite{CJK}---see also
	Lemma 10.10 of Khoshnevisan \cite{cbms}---%
	there exists a finite constant $c$ such that
	\begin{equation}
		\sup_{x\in\R}
		\E\left(\left|u_t(x)-u_t^{(B,B)}(x)\right|^k\right) 
		\leq  c \exp\left( ck^3 t-Bk\right),
	\end{equation}
	uniformly for all real numbers $k\ge 2$, $\lambda>1$,
	and $t>0$. Therefore, Chebyshev's inequality shows that
	\begin{equation}
		\P\left\{ \left|u_t(x)-u_t^{(B,B)}(x)\right|>\lambda \right\}
		\le c\inf_{k\ge 2}\exp\left( ck^3 t-
		(\log\lambda +B)k\right),
	\end{equation}
	uniformly for all real numbers $t>0$ and $x\in\R$, and 
	for all integers $B\ge 1$.
	This readily implies the lemma.
\end{proof}

Our next lemma is essentially due to Conus et al 
\cite{CJK}, and shows that the random process $x\mapsto u^{(B,B)}_t(x)$
decouples fairly rapidly.

\begin{lemma}\label{lem:pam:ind}
	Suppose $t>0$ is a real number,
	$B\ge 1$ is an integer, and $x_1,\ldots,x_m$
	are points in $\R$ such that $|x_i-x_j|>2B^{3/2}\sqrt{t}$
	whenever $i\neq j$. Then, $u^{(B,B)}_t(x_1),\ldots,u^{(B,B)}_t(x_m)$
	are independent random variables.
\end{lemma}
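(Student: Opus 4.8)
The plan is to establish a \emph{finite speed of propagation} property for the Picard iterates, in the spirit of Conus et al \cite{CJK}. Concretely, I would prove the following claim by induction on $m$: for every integer $m\ge0$, every real $t>0$, and every $x\in\R$, the random variable $u^{(B,m)}_t(x)$ is measurable with respect to the $\sigma$-algebra $\F^{x,m}_t$ generated by the restriction of $\xi$ to the space-time rectangle $(0\,,t)\times[x-m(Bt)^{1/2}\,,x+m(Bt)^{1/2}]$. Granting the claim, the lemma follows immediately: specializing to $m=B$ shows that $u^{(B,B)}_t(x_i)$ is measurable with respect to the restriction of $\xi$ to $(0\,,t)\times[x_i-B^{3/2}\sqrt t\,,x_i+B^{3/2}\sqrt t]$, and the separation hypothesis $|x_i-x_j|>2B^{3/2}\sqrt t$ forces the intervals $[x_i-B^{3/2}\sqrt t\,,x_i+B^{3/2}\sqrt t]$---and hence the corresponding rectangles---to be pairwise disjoint. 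Since space-time white noise is independently scattered, the $\sigma$-algebras generated by $\xi$ on disjoint Borel sets are independent, whence $u^{(B,B)}_t(x_1),\ldots,u^{(B,B)}_t(x_m)$ are independent.

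To prove the claim, the base case $m=0$ is immediate because $u^{(B,0)}\equiv1$. For the inductive step, assume the claim at level $m-1$ and fix $t>0$ and $x\in\R$. By construction,
\[
u^{(B,m)}_t(x)=1+\int_{(0,t)\times[x-(Bt)^{1/2},\,x+(Bt)^{1/2}]}p_{t-s}(y-x)\,\sigma\!\left(u^{(B,m-1)}_s(y)\right)\xi(\d s\,\d y).
\]
For any $(s\,,y)$ in the integration region one has $s<t$, hence $(Bs)^{1/2}\le(Bt)^{1/2}$, and $|y-x|\le(Bt)^{1/2}$; the induction hypothesis then gives that $u^{(B,m-1)}_s(y)$ is measurable with respect to the restriction of $\xi$ to $(0\,,s)\times[y-(m-1)(Bs)^{1/2}\,,y+(m-1)(Bs)^{1/2}]$, a rectangle contained in $(0\,,t)\times[x-m(Bt)^{1/2}\,,x+m(Bt)^{1/2}]$. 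Thus the Walsh integrand $(s\,,y)\mapsto p_{t-s}(y-x)\,\sigma(u^{(B,m-1)}_s(y))$ is a predictable random field all of whose values are $\F^{x,m}_t$-measurable, while the increments of $\xi$ entering the integral are increments over subrectangles of $(0\,,t)\times[x-(Bt)^{1/2}\,,x+(Bt)^{1/2}]\subset(0\,,t)\times[x-m(Bt)^{1/2}\,,x+m(Bt)^{1/2}]$. Since the Walsh integral is an $L^2$-limit of Riemann sums formed from products of such values and such increments, $u^{(B,m)}_t(x)$ is $\F^{x,m}_t$-measurable, which closes the induction.

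The only point that genuinely needs care---though it is standard---is the assertion that a Walsh integral $\int_A f\,\d\xi$ over a region $A$ whose time-projection is all of $(0\,,t)$ is measurable with respect to the restriction of $\xi$ to any rectangle $R\supseteq A$, provided every value of the predictable integrand $f$ is $\sigma(\xi|_R)$-measurable. This follows by inspecting the construction of the integral in Walsh \cite{Walsh}: one approximates $f$, in the relevant $L^2$-norm, by simple predictable integrands whose coefficients may be taken $\sigma(\xi|_R)$-measurable, each approximating integral is manifestly $\sigma(\xi|_R)$-measurable, and measurability is preserved under $L^2$-limits. With that observation in hand the induction above goes through verbatim, and no further analytic input is required; everything else is bookkeeping with the radii $m(Bt)^{1/2}$.
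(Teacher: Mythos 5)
Your proof is correct and follows essentially the same route as the paper: an induction on the Picard iterates $u^{(B,m)}$ in which the domain of dependence grows by $(Bt)^{1/2}$ at each step, so that at level $m=B$ the half-width is $B^{3/2}\sqrt t$ and the separation hypothesis yields disjoint noise regions. The only difference is cosmetic: you induct on measurability of $u^{(B,m)}_t(x)$ with respect to the noise restricted to $(0,t)\times[x-m(Bt)^{1/2},\,x+m(Bt)^{1/2}]$, whereas the paper inducts directly on independence at well-separated points via the corresponding property of Walsh integrals; your formulation is a slightly stronger, and arguably cleaner, version of the same argument.
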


\begin{proof}
	We include a proof for the sake of completeness.
	
	First, let us observe that if $|x_i-x_j|>2(Bt)^{1/2}$,
	whenever $i\neq j$,
	then $u^{(B,1)}_t(x_1),\ldots,u^{(B,1)}_t(x_m)$ 
	are independent. This is because: (i)
	\begin{equation}
		u^{(B,1)}_t(x) = 1 + \int_{(0,t)\times [ x-(Bt)^{1/2}\,,x+(Bt)^{1/2}]}
		p_{t-s}(y-x)\sigma(1)\,\xi(\d s\,\d y);
	\end{equation}
	and (ii) If $\psi_1,\ldots,\psi_m\in L^2(\R_+\times\R)$ are nonrandom with
	disjoint support then the Wiener integrals $\int\psi_j\,\d\xi$ $(1\le j\le m)$
	are independent [compute covariances]. Next we apply induction,
	using the following induction hypothesis: Suppose that whenever $|x_i-x_j|>
	2\ell(Bt)^{1/2}$ for $i\neq j$, 
	the random variables $u^{(B,\ell)}_t(x_1),\ldots,u^{(B,\ell)}_t(x_m)$
	are independent. Then we wish to prove that if $|x_i-x_j|>
	2(\ell+1)(Bt)^{1/2}$ [$i\neq j$], then 
	$u^{(B,\ell+1)}_t(x_1),\ldots,u^{(B,\ell+1)}_t(x_m)$
	are independent.  This property
	follows readily from the properties of the Walsh stochastic
	integral; namely, that if $\Phi^1,\ldots,\Phi^m$ are independent
	predictable random fields and $\psi^1,\ldots,\psi^m\in L^2(\R_+\times\R)$
	are nonrandom with disjoint support, then the Walsh integrals
	$\int\psi_j\Phi^j\,\d\xi$ are independent $[1\le j\le m]$. 
	This completes our induction argument, and prove
	the lemma. 
\end{proof}

 We can now verify the dimension lower bound of Theorem \ref{th:pam:LIL}.
 
\begin{proof}[Proof of Theorem \ref{th:pam:LIL}: Dimension lower bound]
	Our proof of Theorem \ref{th:pam:LIL} will be complete once we demonstrate
	that 
	\begin{equation}\label{BOOBOOB}
		\Dimh\cP_{h_t}\left(
		[\gamma/\bm{\alpha}]^{2/3}\right)\ge 1-\gamma
		\quad\text{a.s.\ for all $\gamma>0$}.
	\end{equation}
	In order to establish this fact we will appeal to
	Theorem \ref{th:Gen:LIL:LB}; therefore, it remains to verify Condition \eqref{cond:LB}.
	We will appeal, as we did in the proof of the upper bound, to the general
	theory of Section \ref{sec:Gen}, using the identifications $X_x:=h_t(x)$,
	$b:=3/2$, and $S(x):=\exp(x)$.
	
	Let us choose and fix an integer $n\ge 1$ and a real number $\delta\in(0\,,1)$,
	and consider an arbitrary collection $\{x_i\}_{i=1}^m$ of points such 
	that:
	(a) $\e^n\le x_1<\cdots<x_m<\e^{n+1}$; and (b)
	$x_{i+1}-x_i\ge\exp(\delta n)$. From now on, we set
	$B := n^3$ and 
	\begin{equation}
		Y_j :=  \log u^{(B,B)}_t(x_j)\qquad(1\le j\le m).
	\end{equation}
	According to Lemma \ref{lem:pam:ind}, $Y_1,\cdots,Y_m$ are independent
	as long as $2n^3t<\exp(\delta n)$; and Lemma
	\ref{lem:u-uBm} ensures that
	\begin{equation}
		\max_{1\le j\le m}
		\P\left\{ |S(X_{x_j}) - S(Y_j) | >1 \right\} \le K\exp\left( -
		\frac{n^{3/2}}{K\sqrt t}\right).
	\end{equation}
	[Recall that $S(x):=x$ here.]
	Since the constant $K$ does not depend on the choice of $x_1,\ldots,x_m\in\cS_n$,
	we have shown that Condition \eqref{cond:LB} holds. Theorem
	\ref{th:Gen:LIL:LB} implies \eqref{BOOBOOB}, and completes our
	proof of Theorem \ref{th:pam:LIL}.
\end{proof}

\section{$d$-dimensional diffusion in random potential}\label{sec:pamcolor}

We will conclude this paper by presenting examples of stochastic PDEs
over $\R_+\times\R^d$ where $d\ge 1$ is an arbitrary positive integer. 
In order to keep the ensuing theory at a reasonably-modest technical level, 
we focus only on linear stochastic partial differential equations of
the following type:
\begin{equation}\left[\label{E:SHE:color}\begin{split}
	\dot{u}_t(x)&=\tfrac12(\Delta u_t)(x)+ u_t(x)\eta_t(x) 
		\quad \text{for $t>0, x\in\R^d$};\\
	u_0(x)&=1;
\end{split}\right.\end{equation}
where the Laplace operator $\Delta$ acts on the variable $x\in\R^d$,
and $\eta:=\{\eta_t(x)\}_{t\ge0,x\in\R^d}$ is a centered generalized
Gaussian random field with covariance measure
\begin{equation}
	\Cov[\eta_t(x)\,,\eta_s(y)] = \delta_0(t-s)f (x-y)\qquad[s,t\ge0,\, x,y\in\R^d],
\end{equation}                          
for a positive-definite bounded and continuous function $f:\R^d\to\R_+$. 

By comparison to the classical situation of Brownian heat baths,
we can think of the solution to \eqref{E:SHE:color} as describing
Brownian motion in a random environment.

When $d=3$, a variation
of equations of the form \eqref{E:SHE:color} was introduced in 
cosmology in order to describe the large-scale structure of the universe.
Here are some more details:
It is believed that,\footnote{See Albeverio et al \cite{AMS}
and its detailed references to the physics literature, in particular, to
the pioneering work of Zel'dovich and his collaborators.}
after we make a standard change of
variables to remove mathematically-uninteresting physical constants,
the velocity field $\vec{v}$ of galaxy masses
approximately solves the $3$-dimensional stochastic PDE,\footnote{Here,
$\nabla{\vec{v}}$ denotes the matrix of all first derivatives of the coordinate
functions of $\vec{v}$.}
\begin{equation}
	\frac{\partial}{\partial t} \vec{v}_t(x) = \tfrac12(\Delta \vec{v}_t)(x)
	- \left(\vec{v}_t(x)\right)^{\sf T}\nabla\vec{v}_t(x)+ \nabla\Phi_t(x),
\end{equation}
for $(t\,,x)\in(0\,,\infty)\times\R^3$, subject to the following:
\begin{equation}
	\text{\rm curl}\left( \vec{v}_t(x)\right) =0,\quad
	\vec{v}_0(x) = -\nabla\psi(x);
\end{equation}
for all $t>0$ and $x\in\R^3$. The external field $\Phi$ is a scalar field 
and believed to be random, and the initial field $\psi$ may or may not be random.
Now we apply a formal Hopf--Cole transform and posit that
\begin{equation}
	\vec{v}_t(x) = -\nabla\log \phi_t(x),
\end{equation}
for a scalar field $\phi$. It is then easy to see that, if $\psi$ and $\Phi$
were smooth, then $\phi$ would solve
\begin{equation}\left[\begin{split}
	\dot{\phi}_t(x)&=\tfrac12(\Delta \phi_t)(x)+ \phi_t(x)\Phi_t(x) 
		\quad \text{for $t>0, x\in\R^3$};\\
	\phi_0(x)&=\e^{\psi(x)}.
\end{split}\right.\end{equation} 
As far as we know, 
there is no general agreement on what the external field $\Phi$ should be,
though the simplest form of the ``big-bang theory'' might suggest that $\psi=\delta_0$, after
a suitable relabeling of $\R^3$.
Our stochastic PDE \eqref{E:SHE:color} is this  equation
in the particular case that $\Phi\equiv\eta$ and $\psi\equiv0$.

\subsection{The  main result}
It is a classical fact that our correlation function $f$ is uniformly
continuous and maximized at the origin; i.e.,
\begin{equation}\label{ff}
	f(z) \le f(0)\qquad\text{for all $z\in\R^d$}.
\end{equation}
Therefore, in order to avoid trivialities we will always assume that 
\begin{equation}\label{f(0)>0}
	f(0)>0.
\end{equation}
Indeed, if $f(0)$ were zero, then $f\equiv 0$ and hence $\eta\equiv 0$.
In that case, the solution to \eqref{E:SHE:color} is $u_t(x)\equiv1$,
trivially.

Let $\hat{f}$ denote the distributional Fourier transform of $f$. 
Because $f$ is assumed to be positive definite, $\hat{f}$ is a 
positive distribution. That is, $\hat{f}$ is a tempered Borel measure on $\R^d$
thanks to the Riesz representation theorem. Furthermore, the Parseval identity
shows that
\begin{equation}
	\int_{\R^d}|\hat\varphi(x)|^2\,\hat{f}(\d x)
	= (2\pi)^d\int_{\R^d} (\varphi*\tilde{\varphi})(z)f(z)\,\d z
	\le  (2\pi)^d f(0)\|\varphi\|_{L^1(\R^d)}^2,
\end{equation}
for all rapidly-decreasing test functions $\varphi:\R^d\to\R$,
where $\tilde\varphi$ denotes the \emph{reflection}
of $\varphi$; that is,
\begin{equation}\label{reflection}
	\tilde{\varphi}(x):=\varphi(-x)\qquad\text{for all $x\in\R^d$.}
\end{equation}
We replace $\varphi$ by $\varphi_\epsilon$---where 
$\{\varphi_\epsilon\}_{\epsilon>0}$ is
an approximate identity built from functions in $\cS(\R^d)$---%
and let $\epsilon\downarrow0$ in order to conclude that 
$\hat{f}$ is in fact a finite Borel measure on $\R^d$. In particular,
this shows that
\begin{equation}\label{E:Dalang}
	\int_{\R^d} \frac{\hat{f} (\d z)}{1+\|z\|^2} < \infty.
\end{equation}
Thanks to the theory of Dalang \cite{Dalang}, 
condition \eqref{E:Dalang} implies the existence of
a predictable mild solution to \eqref{E:SHE:color}. Moreover, that
solution is unique among all predictable solutions that satisfy
\begin{equation}
	\sup_{x\in\R^d}\sup_{t\in[0,T]}\E\left(|u_t(x)|^k\right)<\infty
	\quad\text{for all $T>0$ and $k\ge 2$}.
\end{equation}

This solution to \eqref{E:SHE:color} 
can also be written in mild form as the a.s.-unique 
solution to the random integral equation, 
\begin{equation}\label{mild:color}
	u_t(x)=(p_t\ast u_0)(x)+\int_{(0,t)\times\R} p_{t-s}(x-y)u_s(y) \,\eta(\d s \, \d y),
\end{equation} 
where the stochastic integral is understood in the sense of Walsh \cite{Walsh},
and now $p$ denotes the natural $d$-dimensional generalization to \eqref{p}:
\begin{equation}
	p_t(x):=\frac{\e^{-\|x\|^2/2t}}{(2\pi t)^{d/2}}\qquad
	[t>0,x\in\R^d].
\end{equation}

Formally speaking,  we can let $d:=1$, $\sigma(z):=z$, and $f(z):=\delta_0(z)$
to see that \eqref{E:SHE:color} is [in this case] one possible extension of 
\eqref{eq:pam} to higher dimensions. Although $\delta_0$ is not a continuous
and bounded function, it is an appropriate limit of such functions. As such,
one can derive \eqref{eq:pam}---in this case---using a limiting procedure
from the solution to \eqref{E:SHE:color} with a suitable
approximate identity $\{f_\epsilon\}_{\epsilon>0}$ in place of $f$.
See Bertini and Cancrini \cite{BertiniCancrini} for the details.

There is a good way to construct examples
of positive-definite continuous and bounded functions $f:\R^d\to\R_+$ as follows:
\begin{equation}\label{f=h*h}
	f := h*\tilde{h},
\end{equation}
where $h\in L^2(\R^d)$ is a nonnegative fixed function, 
and $\tilde{h}$ denotes the reflection of $h$
[see \eqref{reflection}]. Thanks to \eqref{ff}, we can see that
\begin{equation}
	f(0) = \sup_{z\in\R^d} f(z) = \|h\|_{L^2(\R^d)}^2.
\end{equation}
We will restrict attention to such correlation functions
$f$ only. In fact, we concentrate on a slightly-smaller class of correlation
functions still. Namely, we will consider only correlation functions $f$ that satisfy \eqref{f=h*h}
for a nonnegative $h\in L^2(\R^d)$ that satisfies the following:
\begin{equation}\label{cond:h}
	\limsup_{n\to\infty}\frac{1}{\log n}\log\left(
	\int_{\|z\|>n} [h(z)]^2 \d z\right) <0.
\end{equation}

It is possible to write a Feynman--Kac type representation of
the solution to \eqref{E:SHE:color}. That representation implies
readily that $u_t(x)>0$ a.s.\ for all $t>0$ and $x\in\R^d$. 
With this remark in place, we have the following, which is
the main result of this section.

\begin{theorem}\label{th:pam:L2color}
	Consider the SPDE \eqref{E:SHE:color} where the spatial correlation
	function $f$ of the noise satisfies \eqref{f(0)>0}
	and \eqref{f=h*h} for 
	a nonnegative function $h\in L^2(\R^d)$ that satisfies \eqref{cond:h}.
	Then, with probability one:
	\begin{equation}\label{LIL:color}
		\limsup_{\|x\|\to\infty} \frac{\log u_t(x)}{(\log\|x\|)^{1/2}}
		=\sqrt{2tf(0)d},
	\end{equation}
	and
	\begin{equation}\label{Dim:color}
		\Dimh
		\left\{x\in \R^d: \|x\|>\e^\e, \,
		u_t(x) \ge \e^{\gamma\sqrt{t\log \|x\|}}\right\}
		=d-\frac{\gamma^2}{2f(0)}.
	\end{equation}
\end{theorem}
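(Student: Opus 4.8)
The strategy is to deduce Theorem~\ref{th:pam:L2color} from the general machinery of~\S\ref{sec:Gen}: Theorems~\ref{th:Gen:LIL:UB} and~\ref{th:Gen:LIL:LB} applied to the stationary random field $X_x:=\log u_t(x)$ ($x\in\R^d$), with $t>0$ held fixed, $T=\R^d$, $b=2$, and $S(x):=\exp(x)$. The Feynman--Kac representation of the solution to~\eqref{E:SHE:color} forces $u_t(x)>0$ a.s., so $h_t(x):=\log u_t(x)$ is well defined and the set in~\eqref{Dim:color} equals $\{x:\ \|x\|>\e^\e,\ h_t(x)\ge\gamma\sqrt{t\log\|x\|}\}$; comparability of the Euclidean and $\ell^\infty$ norms makes the passage to the $\ell^\infty$ norm of~\S\ref{sec:Gen} harmless. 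The first substantial point is the one-point tail estimate
\[
	\log\P\{h_t(0)>z\}=-\frac{z^2}{2f(0)t}\,(1+o(1))\qquad\text{as }z\to\infty,
\]
which, by stationarity in $x$, gives $\bm c(2)=\bm C(2)=(2f(0)t)^{-1}$, finite and strictly positive by~\eqref{f(0)>0}. The tail upper bound is Chebyshev applied to the moment bound $\E(|u_t(x)|^k)\le\exp\!\big(\binom k2 f(0)t\big)$, which follows from the Feynman--Kac moment formula $\E(|u_t(x)|^k)=\E_B\exp\!\big(\sum_{1\le i<j\le k}\int_0^t f(B^i_s-B^j_s)\,\d s\big)$ together with $f\le f(0)$ (see~\eqref{ff}), upon optimizing $\inf_k\exp(-kz+\tfrac12k^2f(0)t)$. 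The matching lower bound comes from the complementary asymptotics $\liminf_{k\to\infty}k^{-2}\log\E(|u_t(x)|^k)\ge\tfrac12f(0)t$, obtained by confining the $k$ Brownian particles to a small ball, so that each $f(B^i_s-B^j_s)$ is within $o(1)$ of $f(0)$ by continuity of $f$ at the origin, followed by a G\"artner--Ellis inversion in the style of Chen \cite{Chen,Chen-book}. One then checks directly that $(d/\bm c(2))^{1/2}=\sqrt{2tf(0)d}$ and that the level $(\gamma/\bm c(2)\cdot\log\|x\|)^{1/2}$ equals $\sqrt{2\gamma f(0)}\cdot\sqrt{t\log\|x\|}$, so that the dimension $d-\gamma$ produced by~\S\ref{sec:Gen} becomes exactly $d-\gamma'^2/(2f(0))$ after the substitution $\gamma'=\sqrt{2\gamma f(0)}$, matching~\eqref{Dim:color}; since $\bm c(2)=\bm C(2)$, the two general bounds coincide and give the stated equalities.

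For the upper bounds I would verify the maximal hypothesis~\eqref{cond:UB} and invoke Theorem~\ref{th:Gen:LIL:UB}. This is the chaining argument of the dimension-upper-bound proof of Theorem~\ref{th:pam:LIL} in~\S\ref{sec:she}: a quantitative Kolmogorov-continuity estimate
\[
	\sup_{w\in\R^d}\E\!\left[\sup_{\substack{x,x'\in[w,w+1]^d\\ x\ne x'}}\frac{|u_t(x)-u_t(x')|^{2k}}{|x-x'|^{\beta k}}\right]\le C\,\e^{Ck^2t}
\]
for a suitable $\beta>0$, obtained from the standard $L^k$-estimates for Walsh integrals and the mild formula~\eqref{mild:color} (note the $k^2t$, not $k^3t$, growth, reflecting the fact that colored-in-space noise is ``smoother'' than space--time white noise), is combined with the one-point tail bound above over a grid of mesh $\varepsilon(s)$ inside each unit cube. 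This upgrades the pointwise estimate to $\sup_{y\in[z,z+1)^d}$ and yields~\eqref{cond:UB} for every $\gamma\in(0\,,d)$; Theorem~\ref{th:Gen:LIL:UB} then gives the ``$\le$'' halves of both~\eqref{LIL:color} and~\eqref{Dim:color}.

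The heart of the matter is the coupling condition~\eqref{cond:LB}, required for Theorem~\ref{th:Gen:LIL:LB} (with $S=\exp$, so $S(X_{t_j})=u_t(x_j)$). Following~\S\ref{sec:she}, one truncates the mild equation~\eqref{mild:color} spatially to balls of radius $(Bt)^{1/2}$ and iterates $B$ times, producing fields $u^{(B,B)}_t(\cdot)$; the new difficulty, absent in the space--time white-noise case, is that $f=h*\tilde h$ need not be compactly supported, so localizing the integral alone does \emph{not} decouple distant points. The remedy is to also truncate the noise: realize $\eta$ as $h$ convolved in space with a space--time white noise $W$, and replace $h$ by $h^{(R)}:=h\,\1_{\{\|z\|\le R\}}$, so that the resulting noise has spatial range $\le 2R$ and its restrictions to regions more than $2R$ apart are genuinely independent; condition~\eqref{cond:h} makes $\|h-h^{(R)}\|_{L^2}^2=\int_{\|z\|>R}[h(z)]^2\,\d z$ decay like a negative power of $R$. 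Taking $R:=\e^{\delta n/2}$ (growing with the shell $\cS_n$) and $B:=n^3$, and setting $Y_j:=\log u^{(R,B)}_t(x_j)$ for $x_1,\dots,x_m\in\Pi_n(\delta)$, one proves: (i) an independence lemma of the type of Lemma~\ref{lem:pam:ind}, since each $u^{(R,B)}_t(x_i)$ depends on $W$ only in a ball of radius $O(B^{3/2}\sqrt t+R)=O(\e^{\delta n/2})=o(\e^{\delta n})$ about $x_i$, while the points of $\Pi_n(\delta)$ are $\ge\e^{\delta n}$ apart; and (ii) a coupling lemma of the type of Lemma~\ref{lem:u-uBm}, combining the $\e^{-cB}$-small spatial/Picard truncation error with the $R^{-\beta_0}$-small noise-truncation error, both balanced against the $\e^{Ck^2t}$ moment growth, to the effect that $\sup_x\P\{|u_t(x)-u^{(R,B)}_t(x)|>1\}\le\exp(-c\,n^2)$ for large $n$.

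Since $n^{-1}\log\exp(-cn^2)\to-\infty$, condition~\eqref{cond:LB} holds, and Theorem~\ref{th:Gen:LIL:LB} supplies the matching ``$\ge$'' halves of~\eqref{LIL:color} and~\eqref{Dim:color}. The main obstacle is precisely this three-level approximation---noise-correlation truncation, spatial-integral truncation, and Picard-iteration truncation---together with the bookkeeping needed to balance the three error terms against the moment growth so that the decay in~\eqref{cond:LB} beats every exponential $\e^{-Cn}$; the sharp identification $\bm c(2)=\bm C(2)=(2f(0)t)^{-1}$, via the large-$k$ moment asymptotics and G\"artner--Ellis, is the secondary difficulty. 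Everything else---continuity estimates, the chaining in~\eqref{cond:UB}, and the existence/uniqueness of the truncated fields---is routine in the spirit of~\S\ref{sec:linearSHE}--\ref{sec:she}.
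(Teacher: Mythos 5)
Your proposal is correct and follows essentially the same route as the paper: identify $\bm c(2)=\bm C(2)=(2f(0)t)^{-1}$ through the Feynman--Kac moment asymptotics and a Chen-style inversion, verify \eqref{cond:UB} for the local suprema to apply Theorem \ref{th:Gen:LIL:UB}, and verify the coupling condition \eqref{cond:LB} by truncating the correlation kernel $h$ to compact support (using \eqref{cond:h}), localizing the mild equation, and running finitely many Picard iterations so that well-separated values become exactly independent, then invoking Theorem \ref{th:Gen:LIL:LB}. The only differences are cosmetic: the paper delegates the sup-tail estimate to Chen's Theorem 5.2 and the coupling/independence lemmas to Conus--Joseph--Khoshnevisan--Shiu (with a triangular cutoff $h_\beta$, $\beta=\e^{n^{2/3}}$, and $\lfloor\log\beta\rfloor+1$ iterations), whereas you chain explicitly and use a sharp cutoff at radius $\e^{\delta n/2}$ with $n^3$ iterations---both parameter choices give the required super-exponential decay in \eqref{cond:LB}.
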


Recall that for all sets $E$, $\Dimh E<0$
means that $E$ is bounded. It follows easily from this convention that 
the lim sup law \eqref{LIL:color} is a consequence of \eqref{Dim:color}.
Conus et al \cite{CJK} proved that the lim sup in \eqref{LIL:color} is strictly
positive and finite a.s., and the evaluation of the lim sup is contained in the recent
work of Chen \cite{Chen}. We have included \eqref{LIL:color} merely to highlight
the fact that the tall peaks of $u_t(x)$ are of order $\exp\{\gamma\sqrt{t\log\|x\|}\}$
for a constant $\gamma>0$, and hence that \eqref{Dim:color}
is indeed a multifractal description of the tall peaks of $u_t(x)$. 

The remainder of this section contains the proof of Theorem \ref{th:pam:L2color}.

\subsection{Proof of Theorem \ref{th:pam:L2color}: Upper Bound}

The proof of Theorem \ref{th:pam:L2color} proceeds by verifying the
conditions of the general theory of \S\ref{sec:Gen}. Thus, the proof
is divided naturally into two parts: Proof of the dimension upper bound;
and a separate derivation of the dimension lower bound.

The dimension upper bound will be obtained by verifying Condition 
\eqref{cond:UB}.  Our first lemma is essentially a specialization of the proof
of Proposition 4.4 of Conus et al \cite{CJKS}. The only new observation is
that since here $f$ is continuous, the constants in Proposition 4.4
can be computed explicitly. 

\begin{lemma}\label{L:moments:L2}
	For all $t>0$,
	\begin{equation}\label{E:moment:asymp}
      		\lim_{k\rightarrow \infty} \frac{1}{k^2}
      		\log \E\left( |u_t(x)|^k\right) = \frac{f(0)t}{2},
	\end{equation}
	uniformly for all $x\in\R^d$.
\end{lemma}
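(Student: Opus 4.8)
The plan is to reduce the lemma to an elementary estimate for a Brownian many-body exponential functional. The Feynman--Kac representation mentioned just before the theorem already gives $u_t(x)>0$ a.s., so $\E(|u_t(x)|^k)=\E(u_t(x)^k)$ for every $k\ge 2$, and I would start from the classical moment identity
\[
	\E\big(u_t(x)^k\big)=\E\left[\exp\left(\sum_{1\le i<j\le k}\int_0^t f\big(B^i_s-B^j_s\big)\,\d s\right)\right],
\]
where $B^1,\ldots,B^k$ are independent standard Brownian motions in $\R^d$; this is legitimate here because $f$ is bounded and satisfies Dalang's condition \eqref{E:Dalang}, and it is exactly the identity that underlies the proof of Proposition~4.4 of \cite{CJKS} (alternatively one can simply track the constants in that argument, or invoke the $k\to\infty$ asymptotics in \cite{Chen-book}). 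Since the law of the difference vector $(B^i_s-B^j_s)_{s\le t}$ does not depend on the common starting point of the $B^i$, the right-hand side is independent of $x$, which delivers the uniformity in $x\in\R^d$ for free. Thus the task becomes: show $k^{-2}\log m_k(t)\to f(0)t/2$, where $m_k(t)$ denotes that right-hand side.

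For the upper bound I would use only \eqref{ff}, i.e.\ $f(z)\le f(0)$ for all $z$, to get $m_k(t)\le\exp\big(\binom{k}{2}f(0)t\big)$ and hence $\limsup_k k^{-2}\log m_k(t)\le f(0)t/2$. For the matching lower bound the idea is to pin all $k$ Brownian particles into a small ball, on a scale where $f$ is close to $f(0)$: fix $\varepsilon>0$; continuity of $f$ together with $f(0)>0$ (see \eqref{f(0)>0}) yields $\delta>0$ with $f(z)\ge f(0)-\varepsilon$ whenever $\|z\|\le\delta$; then, restricting the expectation defining $m_k(t)$ to the event $\big\{\max_{1\le i\le k}\sup_{s\le t}\|B^i_s-B^i_0\|\le\delta/2\big\}$ forces $\|B^i_s-B^j_s\|\le\delta$ for all $s\le t$ and all $i\ne j$, so that $m_k(t)\ge\exp\big(\binom{k}{2}(f(0)-\varepsilon)t\big)\,p_{\delta,t}^{k}$, where $p_{\delta,t}\in(0,1)$ is the small-ball probability that a single standard Brownian motion in $\R^d$ started at the origin stays within distance $\delta/2$ of its start up to time $t$. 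Taking $k^{-2}\log$ and letting $k\to\infty$ gives $\liminf_k k^{-2}\log m_k(t)\ge(f(0)-\varepsilon)t/2$, and $\varepsilon\downarrow0$ finishes the proof.

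The only step that calls for any care is the first one: recording the Feynman--Kac moment formula for the colored-noise equation \eqref{E:SHE:color} in a form one is comfortable citing (equivalently, extracting the moment bounds from the proof of Proposition~4.4 of \cite{CJKS}); everything after that is elementary and uses only boundedness, continuity, and strict positivity of $f$ at the origin. Note in particular that the decay hypothesis \eqref{cond:h} on $h$ plays no role in this lemma — it will be needed later, for decoupling $u$ at well-separated spatial points.
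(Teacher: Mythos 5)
Your proof is correct and follows essentially the same route as the paper's: the Feynman--Kac moment representation, the bound $f\le f(0)$ for the upper estimate (the paper instead quotes Proposition~4.4 of \cite{CJKS}, which amounts to the same thing), and confining all $k$ Brownian motions to a small ball where $f\ge f(0)-\varepsilon$ for the lower estimate, with the $x$-independence coming for free from translation invariance. The only step the paper adds that you omit is the routine Jensen/Lyapunov interpolation reducing the limit over real $k\ge 2$ to the limit over integer $k$ (needed because the moment formula involves $k$ Brownian particles and so only makes sense for integer $k$); you should record that one line, but it is harmless.
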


\begin{proof}
	If $m\ge1$ is an integer and $k\in[m\,,m+1)$, then Jensen's inequality 
	assures us that
	\begin{equation}
		\| u_t(x)\|_{L^m(\Omega)}\le
		\| u_t(x)\|_{L^k(\Omega)} \le \| u_t(x)\|_{L^{m+1}(\Omega)}.
	\end{equation}
	Therefore, it suffices to prove that the lemma holds  where the limit
	is taken over all integers $k\to\infty$.
	
	Proposition 4.4 of Conus et al \cite{CJKS} includes the statement that
	\begin{equation}\label{mom:UB}
		\E\left( |u_t(x)|^k\right) \le \e^{k^2 t f(0)/2},
	\end{equation}
	for all real $t\ge0$, $x\in\R^d$, and integers $k\ge 2$.
	
	We develop a corresponding lower bound by following the proof of
	Proposition 4.4 of \cite{CJKS}, but use the additional hypothesis that
	$f$ is continuous.
	
	According to the Feynmen--Kac formula for the moments of the solution
	to \eqref{E:SHE:color}---see Conus \cite{Conus} and Hu and Nualart 
	\cite{HuNualart}---the
	$k$th moment of the solution to \eqref{E:SHE:color} has the following representation:
	\begin{equation}\label{E:moment:formula}
		\E\left( |u_t(x)|^k\right) =
		\E\left[\exp\left( \tfrac12 \mathop{\sum\sum}\limits_{1\leq i\neq j \leq k} 
		\int_0^t f\left(X_i(s)-X_j(s) \right) \d s\right)\right],
	\end{equation}
	where $\{X_i\}_{i=1}^k$ are independent Brownian motions on $\R^d$. 
	
	Choose and fix some $\varepsilon>0$. There exists $\delta(\varepsilon)>0$ 
	such that $f(x)\geq f(0)-\varepsilon$ whenever $\|x\|\leq \delta(\varepsilon)/2$. 
	Now let us consider the event 
	\begin{equation}
		\Omega_\varepsilon:=\left\{\omega\in\Omega:\
		\max_{1\leq j \leq k} \sup_{s\in[0,t]} \|X_j(s) \| (\omega) \leq 
		\delta(\varepsilon) \right\}.
	\end{equation}
	The moment formula \eqref{E:moment:formula} and the continuity of $f$
	together imply that
	\begin{align}
		\E\left(|u_t(x)|^k\right) 
			&\ge\exp\left\{ \binom{k}{2} (f(0)-\varepsilon)t\right\} 
			\P(\Omega_\varepsilon)\\\notag
		&= \exp\left\{ \binom{k}{2} (f(0)-\varepsilon)t \right\}
			\left[\P\left\{\sup_{s\in [0,t/|\delta(\varepsilon)|^2]} 
			\|X_1(s)\|\leq 1\right\}\right]^k,
	\end{align}
	uniformly for all $t\ge0$ and $x\in\R^d$, and all integers $k\ge 2$.
	This implies that
	\begin{equation}
		\liminf_{\substack{k\rightarrow \infty:\,
		k\in\Z}} k^{-2} \log\E \left(|u_t(x)|^k\right) \ge {(f(0)-\varepsilon)t}/2,
	\end{equation}
	and readily yields the desired lower bound since $\varepsilon>0$
	were arbitrary.
\end{proof}

We can now invert moments, exactly as was
done in the proof of  Theorem 5.2 of Chen \cite{Chen},
in order to deduce the following. 
We skip the proof since it really follows the proof of \cite[Theorem 5.2]{Chen}
almost exactly.

\begin{lemma}\label{L:tailprob:L2}
	Fix $t>0$ and a bounded domain $D\subset\R^d$. 
	Then uniformly for all $x\in\R^d$,
	\begin{align}\notag
		\lim_{z\rightarrow \infty}  z^{-2}  
			\log \P\left\{ \log u_t(x) \ge z\right\}&=
			\lim_{z\rightarrow \infty} z^{-2} 
			\log \P \left\{\log\left[\sup_{y\in Q(x,1)} u_t(y) \right] \ge z \right\}\\
		&=-(2f(0)t)^{-1}.
		\label{E:condUB:L2}
	\end{align}
\end{lemma}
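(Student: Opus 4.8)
The plan is to invert the moment asymptotics of Lemma~\ref{L:moments:L2} and then upgrade the resulting pointwise tail bound to the statement about $\sup_{y\in Q(x,1)}u_t(y)$ by a chaining argument, following the strategy of the proof of Theorem~5.2 of Chen~\cite{Chen}. Throughout write $c:=f(0)t/2$.

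\textbf{Step 1 (pointwise tail upper bound).} By \eqref{mom:UB}, together with Jensen interpolation between consecutive integer exponents, $\E(|u_t(x)|^k)\le \e^{c k^2(1+o(1))}$ for all real $k\ge 2$, uniformly in $x\in\R^d$. Markov's inequality then gives, for every $z>0$ and $k\ge 2$,
\begin{equation}
	\P\{\log u_t(x)\ge z\} = \P\{u_t(x)\ge \e^z\}\le \e^{-kz}\,\E(|u_t(x)|^k)\le \exp\left(ck^2(1+o(1)) - kz\right).
\end{equation}
Choosing $k:=z/(2c)$ (legitimate once $z$ is large) yields $\P\{\log u_t(x)\ge z\}\le \exp(-(1+o(1))z^2/(4c))$, so that $\limsup_{z\to\infty} z^{-2}\log\P\{\log u_t(x)\ge z\}\le -1/(4c) = -(2f(0)t)^{-1}$, uniformly in $x$.

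\textbf{Step 2 (pointwise tail lower bound).} Here the full force of Lemma~\ref{L:moments:L2} is used: it asserts $\log\E(|u_t(x)|^k) = (c+o(1))k^2$ as $k\to\infty$, \emph{uniformly} in $x$. Since $k\mapsto ck^2$ is regularly varying of index $2$, an exponential Tauberian theorem of de~Bruijn--Kasahara type---equivalently a G\"artner--Ellis-type theorem applied to the random variables $\log u_t(x)$, exactly as carried out in the proof of \cite[Theorem~5.2]{Chen}; see also \cite[Chapter~1]{Chen-book}---converts the moment asymptotics into the matching tail estimate $-\log\P\{\log u_t(x)\ge z\} = (1+o(1))\,z^2/(4c)$, where $z^2/(4c)$ is the Young conjugate of $ck^2$. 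Because the error term in Lemma~\ref{L:moments:L2} is uniform in $x$, so is this estimate. Together with Step~1 this proves the first displayed equality, $\lim_{z\to\infty} z^{-2}\log\P\{\log u_t(x)\ge z\} = -(2f(0)t)^{-1}$, uniformly in $x$.

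\textbf{Step 3 (supremum over $Q(x,1)$).} The lower bound is immediate from $\sup_{y\in Q(x,1)}u_t(y)\ge u_t(x)$ and Step~2. For the matching upper bound one runs a chaining argument: a quantitative Kolmogorov continuity estimate for \eqref{E:SHE:color}, of the form $\sup_{w\in\R^d}\E(|u_t(w)-u_t(w')|^{2k})\le \tau^k\e^{\tau k^2 t}|w-w'|^{\alpha k}$ for suitable fixed constants $\tau>1$ and $\alpha\in(0,1)$ (such bounds follow from the moment methods of \cite{CJK,CJKS}; the exponent is $k^2$, not $k^3$, since the noise is spatially correlated), permits covering $Q(x,1)$ by $O(\e^{\beta z d})$ upright boxes of side $\e^{-\beta z}$ for a large fixed $\beta$. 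A union bound over these boxes, combined with the Markov estimate of Step~1 applied at each box corner and a Chebyshev bound on the increment of $u_t$ across each small box, yields $\P\{\sup_{y\in Q(x,1)}u_t(y)>\e^z\}\le \exp(-(1+o(1))\,z^2/(4c))$: the $O(\e^{\beta z d})$ box count contributes only an $O(z)$ term to the logarithm, and taking $\beta$ large makes the increment contributions negligible, so the sharp rate $z^2/(4c)$ from the corner terms prevails. This is the same tuning as in the proof of the upper bound of Theorem~\ref{th:pam:LIL}. Combined with the lower bound, this establishes the second displayed equality and completes the proof.

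The genuinely substantive point is Step~2: Markov's inequality only supplies the upper half of the tail estimate, and extracting the sharp lower half from $\log\E(u_t^k)\sim ck^2$ requires a precise exponential Tauberian theorem whose hypothesis---the true asymptotic behaviour of the log-moments, not merely an upper bound---is exactly what Lemma~\ref{L:moments:L2} provides. Steps~1 and~3 are routine.
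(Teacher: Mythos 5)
Your proposal is correct and follows essentially the same route as the paper, which simply defers to the moment-inversion argument of Chen's Theorem 5.2 (Chernoff bound from Lemma \ref{L:moments:L2}, a Kasahara/G\"artner--Ellis-type inversion for the matching lower bound, and a chaining step to pass to the supremum over $Q(x,1)$); your three steps reconstruct exactly that argument, with the correct Legendre-conjugate constant $z^2/(2f(0)t)$.
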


Lemma \ref{L:tailprob:L2} verifies the conditions of Theorem \ref{th:Gen:LIL:UB},
from which we can deduce half of Theorem \ref{th:pam:L2color} readily.

\begin{proof}[Proof of Theorem \ref{th:pam:L2color}: Dimension upper bound]
	
	Lemma \ref{L:tailprob:L2} verifies
	Condition \eqref{cond:UB}  of Theorem \ref{th:Gen:LIL:UB}
	with $X(x):=\log u_t(x)$, $z:=(2\gamma f(0) t \log s)^{1/2}$ and $b=2$.
	It follows from Theorem \ref{th:Gen:LIL:UB}
	that 
	\begin{equation}\label{Dim:L2color:UB}
		\Dimh \left\{ x\in\R^d:\ \|x\|>\e^\e,\
		u_t(x)\ge\e^{\gamma\sqrt{t\log \|x\|}}
		\right\}\le d- \frac{\gamma^2}{2f(0)},
	\end{equation}
	almost surely for every $t,\gamma>0$. This completes the proof
	of the dimension upper bound in Theorem \ref{th:pam:L2color}.
\end{proof}

\begin{remark}
	Let us record also the fact that the preceding proof required only that
	$h\ge 0$ is in $L^2(\R^d)$; the extra regularity condition
	\eqref{cond:h} was not needed, and \eqref{Dim:L2color:UB} holds
	without that extra condition.
\end{remark}

\subsection{A coupling of the noise}
We will have need for a particular construction of $\eta$ that can be found
essentially in the paper by Conus et al
\cite{CJKS}. Let $\xi$ denote a space-time white noise on $\R_+\times\R^d$; that is, $\xi$ is
a centered generalized Gaussian random field with covariance measure
\begin{equation}
	\Cov[\xi_t(x)\,,\xi_s(y)] = \delta_0(t-s)\cdot\delta_0(x-y)
	\qquad\text{[$s,t>0,\, x,y\in\R^d$]}.
\end{equation}
We can construct a cylindrical Brownian motion $B$, using $\xi$, as follows:
For all $\varphi\in L^2(\R^d)$ and $t>0$, define
\begin{equation}
	B_t(\varphi) := \int_{(0,t)\times\R^d} \varphi(y)\,\xi(\d s\,\d y)
	\quad\text{and}\quad
	B_0(\varphi):=0.
\end{equation}
Then $\{B_t(\varphi)\}_{t\ge0,\varphi\in L^2(\R^d)}$ is a centered Gaussian random field
with 
\begin{equation}
	\Cov\left[ B_t(\varphi_1)\,, B_t(\varphi_2)\right] = 
	\min(s\,,t)\cdot (\varphi_1\,,\varphi_2)_{L^2(\R^d)},
\end{equation}
for all $s,t\ge0$ and $\varphi_1,\varphi_2\in L^2(\R^d)$. 
Thus, we see that $\{B_t\}_{t\ge0}$
is a cylindrical Brownian motion on $L^2(\R^d)$ [see Da Prato and Zabczyk
\cite[\S4.3.1]{dPZ}. In particular, we can recover
the space-time white noise $\xi$ from $B$ by noticing that
\begin{equation}
	\xi_t(x) = \frac{\partial^{d+1}}{\partial t\,\partial x_1\,\cdots
	\partial x_d} B_t(x),
\end{equation}
where the derivative is understood in the sense of random linear functionals;
see Chapter 2 of Gel'fand and Vilenkin \cite{GV} for this topic.

We can \emph{subordinate} a large family of Gaussian 
random fields to the cylindrical Brownian motion $\{B_t\}_{t\ge0}$
as follows: For all $\varphi,\psi\in L^2(\R^d)$ we define a new centered 
Gaussian random field $\{B_t^{(\psi)}(\varphi)\}_{t\ge0,\psi,\varphi\in L^2(\R^d)}$
by setting
\begin{equation}
	B^{(\psi)}_t(\varphi) := B_t(\varphi*\tilde{\psi}),
\end{equation}
where we recall $\tilde{\psi}$ denotes the reflection of $\psi$ [see \eqref{reflection}].

The random mapping $(\varphi\,,\psi)\mapsto B^{(\psi)}(\varphi)$
is and linear, and the covariance structure of each Gaussian process $B^{(\psi)}$ is dictated by
\begin{equation}
	\Cov\left[ B^{(\psi)}_t(\varphi_1)\,, B^{(\psi)}_s(\varphi_2)\right]
	= \min(s\,,t)\cdot \left( \varphi_1\,, \varphi_2 * \psi*\tilde{\psi}\right)_{L^2(\R^d)},
\end{equation}
for every $s,t\ge0$ and $\varphi_1,\varphi_2,\psi\in L^2(\R^d)$.
In particular, \eqref{f=h*h} yields
\begin{equation}
	\Cov\left[ B^{(h)}_t(\varphi_1)\,, B^{(h)}_s(\varphi_2)\right]
	= \min(s\,,t)\cdot \left( \varphi_1\,, \varphi_2 * f\right)_{L^2(\R^d)},
\end{equation}
for every $s,t\ge0$ and $\varphi_1,\varphi_2\in L^2(\R^d)$. This is another way
to say that the weak derivative $\partial_t B^{(h)}_t(x)$
is a particular construction of the noise $\eta_t(x)\, \d t$. Since we are interested only in
the law of the solution $u$ to \eqref{E:SHE:color}, and that law is by construction
a function of the law of $\eta$, we may---and will---change probability space if we
have to in order to construct the noise $\eta$ on the new probability space as follows:
\begin{equation}
	\eta_t(x) := \frac{\partial}{\partial t} B_t^{(h)}(x)\qquad(t\ge0,x\in\R^d).
\end{equation}
Thus, we are justified in using the following notation to denote
the Wiener integral $\int_{\R_+\times\R^d}\Phi\,\d\eta$:
\begin{equation}
	\int_{\R_+\times\R^d} \Phi_s(y)\,\eta(\d s\,\d y)
	:= \int_{\R_+\times\R^d}\Phi_s(y)\, \partial_s B^{(h)}_s(y)\,\d y,
\end{equation}
for all nonrandom functions $\Phi\in L^2(\R_+\times\R^d)$.
Thus, in particular, we may---and will---think of the solution to 
the stochastic PDE \eqref{E:SHE:color} as the unique solution to 
the following stochastic integral equation
\begin{equation}\label{mild:color}
	u_t(x) = 1 + \int_{(0,t)\times\R^d} p_{t-s}(y-x)
	u_s(y)\,\partial_s B^{(h)}_s(y)\,\d y,
\end{equation}
for all $t>0$ and $x\in\R^d$, where the stochastic integral is understood
in the sense of Walsh \cite{Walsh} and written using the notation introduced 
earlier.

For us, an advantage of this construction is that, 
in this way,  not only have a construction of $\eta_t(x)$ for our fixed
function $h$, but we have in fact produced a coupling of
$\psi\mapsto \partial_t B^{(\psi)}_t(x)$ which is a linear map and agrees with
$\eta_t(x)$ when $\psi=h$. 

For purposes of comparison, let us mention that
the stochastic differential
$F^{(h)}(\d s\,\d y)$ of Conus et al \cite{CJKS} is the
same thing as our 
mixed random differential $\partial_s B^{(h)}_s(y)\,\d y$.

\subsection{Proof of Theorem \ref{th:pam:L2color}: Lower Bound}

In order to prove the lower bound on the dimension in Theorem \ref{th:pam:L2color}
we plan to verify Condition \eqref{cond:LB} using an approach that has
the same flavor as the proof of the already-developed
lower bound of Theorem \ref{th:pam:LIL}. There are some nontrivial differences in
the proofs, however. Most notably,
since the spatial correlation of the noise of the present section is
not in general  0 even when $x$ and $y$ are very far apart, we need
to do something more. To combat this issue, we first approximate 
the function $h$ by a compactly-supported function $h_\beta$,
and then follow the approach used in  the proof of the lower bound of 
Theorem \ref{th:pam:LIL}. Our proof will follow the ideas of Conus et al
\cite{CJKS} loosely.

Define, for all $x\in\R^d$ and $\beta\geq 1$,
\begin{equation}\label{h:beta}
	h_\beta(x) := h(x)\hat\varrho_\beta(x)
	\quad\text{where}\quad
	\hat\varrho_\beta(x):=\prod_{j=1}^d \left(1-\frac{|x_j|}{\beta}\right)^+.
\end{equation}

Every function $h_\beta$ is in $L^2(\R^d)$ 
and has compact support. In addition,
$h_\beta$ converges to $h$ pointwise as $\beta\to\infty$. 
Therefore, we ought to be able to construct approximations of $u_t(x)$ 
by first approximating the noise $\partial_t B_t^{(h)}(x)\,\d x$ by the
noise $\partial_t B_t^{(h_\beta)}(x)\,\d x$  of the previous subsection.

In order to simplify notation, let us write
for every $x:=(x_1,\dots, x_d)\in \R^d$, $t>0$, and $\beta\ge 0$, 
\begin{equation} 
	\mathcal{I}_t(x;\beta):=
	(0\,,t)\times \left[x_1-\beta\sqrt{t}\,, x_1+\beta\sqrt t\right]\times \cdots \times 
	\left[x_d-\beta\sqrt t\,, x_d+\beta\sqrt t\right].
\end{equation}
 
We can follow the lead of Conus et al \cite{CJKS},
and choose and fix $\beta>0$, and consider the solution $u^{(\beta)}$
to the following stochastic integral equation:
\begin{equation}\label{u:beta:color}
	u_t^{(\beta)}(x):=1+\int_{\mathcal{I}_t(x;\beta)} 
	p_{t-s}(y-x)\, u_s^{(\beta)}(y)\, \partial_s B^{(h_\beta)}_s(y)\,\d y.
\end{equation}

The following was  pointed out in \cite[\S5]{CJKS} without proof.
\begin{proposition}\label{pr:exist:color}
	The stochastic integral equation \eqref{u:beta:color}
	has a predictable solution that is unique among all
	such solutions that satisfy the following for all real numbers
	$k\ge 2$:
	\begin{equation}
		\sup_{x\in\R^d} \E\left( \left| u^{(\beta)}_t(x) \right|^k\right)
		\le 2\e^{8k^2f(0)t}\qquad\text{for all $\beta>0$ and $t\ge0$}.
	\end{equation}
\end{proposition}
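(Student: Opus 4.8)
The plan is to prove Proposition~\ref{pr:exist:color} by the standard Picard iteration scheme adapted to the truncated noise $\partial_s B^{(h_\beta)}_s(y)\,\d y$, with the whole argument carried out uniformly in $\beta>0$. First I would set up the Picard iterates: put $u_t^{(\beta,0)}(x):=1$ and, for $m\ge 1$,
\begin{equation}
	u_t^{(\beta,m)}(x):=1+\int_{\mathcal{I}_t(x;\beta)} p_{t-s}(y-x)\,u_s^{(\beta,m-1)}(y)\,\partial_s B^{(h_\beta)}_s(y)\,\d y,
\end{equation}
and check that each iterate is predictable and $L^k(\Omega)$-bounded. The key computational input is the Walsh--type isometry for the subordinated noise $\partial_s B^{(h_\beta)}$: for a predictable $\Phi$,
\begin{equation}
	\E\left(\left| \int \Phi_s(y)\,\partial_s B^{(h_\beta)}_s(y)\,\d y\right|^2\right)
	=\int_0^t\d s\iint \Phi_s(y)\Phi_s(y')\, f_\beta(y-y')\,\d y\,\d y',
\end{equation}
where $f_\beta:=h_\beta*\tilde h_\beta$. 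Since $h_\beta(x)=h(x)\hat\varrho_\beta(x)$ and $0\le\hat\varrho_\beta\le 1$, one has $0\le f_\beta(0)=\|h_\beta\|_{L^2}^2\le\|h\|_{L^2}^2=f(0)$ and, more importantly, $f_\beta(z)\le\big(f_\beta(0)f_\beta(0)\big)^{1/2}\le f(0)$ pointwise (Cauchy--Schwarz for the convolution), uniformly in $\beta$. This uniform domination of $f_\beta$ by $f(0)$ is exactly what lets every constant below be chosen independently of $\beta$.

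Next I would run the standard moment bound. Using the Burkholder--Davis--Gundy inequality for Walsh integrals together with the isometry above and the bound $f_\beta(z)\le f(0)$, one gets for each even integer $k\ge 2$ a Gronwall-type recursion: writing $H_m(t):=\sup_{x\in\R^d}\E(|u_t^{(\beta,m)}(x)|^k)$,
\begin{equation}
	H_m(t)\le 2^{k-1}+C_k\,f(0)\int_0^t (t-s)^{-1/2} H_{m-1}(s)\,\d s,
\end{equation}
using $\int_{\R^d}p_{t-s}(y-x)p_{t-s}(y'-x)\,\d x=p_{2(t-s)}(y-y')$ and $\int p_{2r}(z)\,\d z=1$ to absorb the spatial integrals; here the extra factor $(t-s)^{-1/2}$ comes from $p_{2r}(0)=(4\pi r)^{-d/2}$ only if $d=1$, so for general $d$ I would instead keep the kernel inside and appeal to the estimate $\int_0^t\!\iint p_{t-s}(y-x)p_{t-s}(y'-x)f(0)\,\d y\,\d y'\,\d s=f(0)t$—a cleaner bound that makes the recursion $H_m(t)\le 2^{k-1}+C_k f(0)\int_0^t H_{m-1}(s)\,\d s$ with no singular kernel at all, the truncation being irrelevant for the upper bound since $0\le f_\beta\le f(0)$ everywhere. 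Iterating this linear recursion and optimizing the combinatorial constants in the BDG step (the $k$-dependence of the BDG constant is $O(k)$, producing the $k^2$ in the exponent, exactly as in the classical PAM moment estimates and in Conus et al \cite{CJKS}) yields $\sup_m H_m(t)\le 2\e^{8k^2 f(0)t}$, uniformly in $\beta>0$ and $t\ge 0$; a Jensen interpolation passes from even integers to all real $k\ge 2$.

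Finally, existence and uniqueness: the same isometry and BDG estimate applied to the differences $u^{(\beta,m)}-u^{(\beta,m-1)}$ give $\sup_x\E(|u_t^{(\beta,m)}(x)-u_t^{(\beta,m-1)}(x)|^k)\le (C_k f(0))^m t^m/m!$, so the iterates form a Cauchy sequence in $L^k(\Omega)$ uniformly on compact time intervals; the limit $u^{(\beta)}$ solves \eqref{u:beta:color}, inherits the moment bound $2\e^{8k^2 f(0)t}$ by Fatou, and is predictable. Uniqueness among predictable solutions with that moment growth follows from a Gronwall argument on the $L^2(\Omega)$-norm of the difference of two such solutions. I do not expect a genuine obstacle here: the only point requiring a little care is making every constant explicitly independent of $\beta$, which is guaranteed by the uniform pointwise bound $f_\beta\le f(0)$ noted above; everything else is the routine Picard/BDG machinery for Walsh-type SPDEs, and I would simply cite the relevant estimates from Dalang \cite{Dalang}, Walsh \cite{Walsh}, and the companion computations in Conus et al \cite{CJKS} rather than reproduce them in full.
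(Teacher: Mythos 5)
Your proposal is correct and follows essentially the same route as the paper: Picard iteration for \eqref{u:beta:color}, BDG plus the Walsh isometry with the uniform pointwise bound $f_\beta\le f\le f(0)$ to get moment bounds independent of $\beta$, convergence of the iterates in $L^k(\Omega)$, and a Gronwall/contraction argument for uniqueness. The only difference is bookkeeping: where you iterate a Gronwall-type inequality in $t$, the paper runs the same estimate through the exponentially weighted norm $\cN_{\alpha,k}$ with $\alpha=8kf(0)$ (Lemmas \ref{mom:u:beta,m} and \ref{mom:u:beta,m+1-m}), which turns the Picard map into a contraction with factor $\tfrac12$ and produces the stated constant directly.
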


We will need some of the ingredients of that proof, and the details
are not included in \cite{CJKS}; therefore, let us hash out
a few of the standard details. 

Keep $\beta>0$ fixed, and define for all $t\ge 0$ and $x\in\R^d$,
\begin{equation}
	u_t^{(\beta,0)}(x) :=1.
\end{equation}
Then we define iteratively the random field $u_t^{(\beta,m)}(x)$
as follows: For all integers $m\ge 0$, reals $t>0$, and $x\in\R^d$,
\begin{equation}\label{eq:coupling:u}
	u_t^{(\beta,m+1)}(x):=1+\int_{\mathcal{I}_t(x;\beta)} 
	p_{t-s}(y-x)\, u_s^{(\beta,m)}(y) \,\partial_s B^{(h_\beta)}_s(y)\,\d y.
\end{equation}

The proof of Proposition \ref{pr:exist:color} requires two \emph{a priori}
bounds. The first controls the moments of the Picard iterates.

\begin{lemma}\label{mom:u:beta,m}
	Uniformly for all $t\ge0$, $x\in\R^d$, $k\ge 2$, and
	integers $m\ge 0$,
	\begin{equation}
		\E\left( \left| u^{(\beta,m)}_t(x)\right|^k\right) \le 2
		\e^{8k^2 f(0) t}.
	\end{equation}
\end{lemma}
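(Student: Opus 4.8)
The plan is to prove Lemma~\ref{mom:u:beta,m} by induction on $m$, following the standard Picard-iteration moment bound for SPDEs driven by a noise with bounded correlation function. The base case $m=0$ is immediate since $u^{(\beta,0)}_t(x)\equiv 1$, so $\E(|u^{(\beta,0)}_t(x)|^k)=1\le 2\e^{8k^2f(0)t}$. For the inductive step, I would apply the Burkholder--Davis--Gundy inequality to the Walsh stochastic integral in \eqref{eq:coupling:u}. Writing $N_k(t):=\sup_{x\in\R^d}\|u^{(\beta,m+1)}_t(x)\|_{L^k(\Omega)}$ (and similarly for the $m$-th iterate), one gets
\begin{equation}
	\left\| u^{(\beta,m+1)}_t(x)\right\|_{L^k(\Omega)}
	\le 1 + z_k\left( \int_0^t \d s\int_{\R^d}\d y\int_{\R^d}\d y'\,
	p_{t-s}(y-x)p_{t-s}(y'-x)\,\left|f_\beta(y-y')\right|\,
	\left\| u^{(\beta,m)}_s(y)\,u^{(\beta,m)}_s(y')\right\|_{L^{k/2}(\Omega)}\right)^{1/2},
\end{equation}
where $z_k$ is the BDG constant (of order $\sqrt k$) and $f_\beta:=h_\beta*\tilde h_\beta$ is the correlation function of $\partial_t B^{(h_\beta)}$. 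The key point is that $f_\beta(z)\le f_\beta(0)=\|h_\beta\|_{L^2}^2\le\|h\|_{L^2}^2=f(0)$ for all $z$, by \eqref{ff} applied to the positive-definite function $f_\beta$; this is precisely why the bound is uniform in $\beta$.

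Next I would bound $|f_\beta(y-y')|\le f(0)$, apply the Cauchy--Schwarz inequality in the form $\|u^{(\beta,m)}_s(y)u^{(\beta,m)}_s(y')\|_{L^{k/2}}\le\|u^{(\beta,m)}_s(y)\|_{L^k}\|u^{(\beta,m)}_s(y')\|_{L^k}\le N_k^{(m)}(s)^2$, and use $\int_{\R^d}p_{t-s}(y-x)\,\d y=1$ to integrate out $y$ and $y'$. This yields the renewal-type inequality
\begin{equation}
	N_k^{(m+1)}(t)^2 \le 2 + 2 z_k^2 f(0)\int_0^t N_k^{(m)}(s)^2\,\d s,
\end{equation}
using $(1+a)^2\le 2+2a^2$. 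Since $z_k^2\le Ck$ for a universal constant (or, being slightly more careful with the optimal BDG constant, $z_k^2$ is at most a constant multiple of $k$), one iterates this Gronwall-type recursion and checks that the ansatz $N_k^{(m)}(t)^2\le 2\e^{16 k^2 f(0)t}$ (i.e.\ the $L^k$-norm is at most $\sqrt2\,\e^{8k^2 f(0)t}$) is stable: plugging it into the right-hand side gives $2 + 2z_k^2 f(0)\cdot 2\int_0^t\e^{16k^2 f(0)s}\,\d s = 2 + \tfrac{z_k^2}{4k^2}(\e^{16k^2f(0)t}-1)\cdot 4 \le 2\e^{16k^2 f(0)t}$ provided the BDG constant satisfies $z_k^2\le 4k^2$, which holds for $k\ge 2$ since $z_k=O(\sqrt k)$. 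Taking $k$-th roots gives exactly the claimed bound, uniformly in $m$, $t$, $x$, and $\beta$.

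The main obstacle is bookkeeping of constants: one must choose the BDG constant carefully (or simply absorb it, noting $z_k\le c\sqrt k$ and $k\ge2$) so that the induction closes with the \emph{same} constant $8$ in the exponent and the \emph{same} prefactor $2$ at every stage of the Picard iteration, rather than a constant that degrades with $m$. This is routine — it is the same computation that underlies the existence-and-uniqueness part of Proposition~\ref{pr:exist:color} and mirrors the moment bounds in Conus et al \cite{CJKS} — but it does require tracking that $f_\beta(0)\le f(0)$ uniformly in $\beta$, which is the only place the specific structure \eqref{f=h*h} and \eqref{h:beta} of the truncated correlation enters. Once the recursion is set up with a stable ansatz, the conclusion follows by a straightforward induction and there is nothing delicate remaining.
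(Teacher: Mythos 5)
Your overall strategy---BDG for the Walsh integral, the uniform bound $f_\beta(z)\le f_\beta(0)=\|h_\beta\|_{L^2(\R^d)}^2\le\|h\|_{L^2(\R^d)}^2=f(0)$, Cauchy--Schwarz, integrating out the heat kernels, and closing a Gronwall-type recursion with an ansatz that is stable in the Picard index $m$---is exactly the estimate the paper runs; the paper merely packages the recursion via the weighted norm $\cN_{\alpha,k}$ of \eqref{M:alpha} with the choice $\alpha=8kf(0)$, which turns the Picard map into a $\tfrac12$-contraction and gives $\cN_{8kf(0),k}(u^{(\beta,m)})\le 2$ by induction.

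There is, however, a genuine quantitative gap: your ansatz has the wrong $k$-dependence, so the induction as written proves a strictly weaker statement than Lemma \ref{mom:u:beta,m}. You posit $N_k^{(m)}(t)\le\sqrt2\,\e^{8k^2f(0)t}$ for the $L^k(\Omega)$-norm; raising this to the $k$-th power yields $\E(|u^{(\beta,m)}_t(x)|^k)\le 2^{k/2}\e^{8k^3f(0)t}$, i.e.\ cubic growth of the moment exponent in $k$, whereas the lemma asserts growth $\e^{8k^2f(0)t}$. The claimed bound corresponds, at the level of norms, to $\|u^{(\beta,m)}_t(x)\|_{L^k(\Omega)}\lesssim \e^{8kf(0)t}$---linear, not quadratic, in $k$---so your closing remark that ``taking $k$-th roots gives exactly the claimed bound'' conflates the $k$-th moment with the $L^k$-norm. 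The repair is immediate, and your recursion in fact closes more comfortably with the correct ansatz: with $z_k^2\le 4k$ (the constant in $\|X\|_k^2\le 4k\|\cdot\|_{k/2}$ used in the paper) and the ansatz $N_k^{(m)}(t)^2\le 2\e^{16kf(0)t}$, one gets $2+2z_k^2f(0)\int_0^t 2\e^{16kf(0)s}\,\d s\le 2+\bigl(\e^{16kf(0)t}-1\bigr)\le 2\e^{16kf(0)t}$, hence $N_k^{(m)}(t)\le\sqrt2\,\e^{8kf(0)t}$ uniformly in $m$, which is precisely the content of the paper's bound $\cN_{8kf(0),k}(u^{(\beta,m)})\le2$ and gives the moment bound with exponent $8k^2f(0)t$. (Do not worry about the prefactor: the paper's own argument produces $2^k$ rather than the stated $2$, which is immaterial for its applications; the substantive item to fix in your write-up is the $k^3$ versus $k^2$ exponent.)
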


\begin{proof}
	Throughout, we hold $\beta>0$ fixed and
	define for all $t>0$, $m\ge 0$, and $x\in\R^d$,
	\begin{equation}\label{Xm}
		X^{(m)}_t(x):=
		\int_{\mathcal{I}_t(x;\beta)} 
		p_{t-s}(y-x)\, u_s^{(\beta,m)}(y) \,\partial_s B^{(h_\beta)}_s(y)\,\d y,
	\end{equation}
	if and when the stochastic integral is defined in the sense of Walsh
	\cite{Walsh}. As an \emph{a priori} part of this proof we will derive
	moment bounds for $X^{(m)}_t(x)$. 
	
	Let 
	\begin{equation}
		f_\beta := h_\beta * \tilde{h}_\beta,
	\end{equation}
	where $h_\beta$ was defined in \eqref{h:beta}.
	The starting point is that a suitable
	form of the Burkholder--Davis--Gundy inequality 
	\cite[Theorem B.1]{cbms} implies
	that for all real numbers $k\ge 2$,
	\begin{equation}\label{T1}\begin{split}
		\left\| X^{(m)}_t(x)\right\|_k^2 &\le  4k \left\| \int_{\mathcal{I}_t(x;\beta)}
			\mathcal{T}(s\,,y\,,z)\, \d s\d z\d y\right\|_{k/2}\\
		&\le 4k \int_0^t\d s\int_{\R^d}\d y\int_{\R^d}\d z\
			\| \mathcal{T}(s\,,y\,,z)\|_{k/2},
	\end{split}\end{equation}
	where we have used Minkowski's inequality in the last line, and
	\begin{equation}\begin{split}
		\mathcal{T}(s\,,y\,,z) :=&\ p_{t-s}(y-z) p_{t-s}(z-x)\left| 
			u^{(\beta,m)}_s(y) u^{(\beta,m)}_s(z) \right|f_\beta(y-z) \\
		\le&\ p_{t-s}(y-z) p_{t-s}(z-x)\left| 
			u^{(\beta,m)}_s(y) u^{(\beta,m)}_s(z) \right|f(0),
	\end{split}\end{equation}
	thanks to \eqref{ff} and the elementary fact that
	$h_\beta\le h$, whence $f_\beta\le f$.
	In particular, by the Cauchy--Schwarz inequality,
	\begin{equation}
		\|\mathcal{T}(s\,,y\,,z)\|_{k/2}
		\le \e^{2\alpha s} \left[\cN_{\alpha,k}\left( u^{(\beta,m)} \right) \right]^2
		p_{t-s}(y-z) p_{t-s}(z-x)f(0),
	\end{equation}
	for every $\alpha>0$, where for all space-time random
	fields $\Psi:=\{\Psi_s(y)\}_{s\ge 0,y\in\R^d}$,
	\begin{equation}\label{M:alpha}
		\cN_{\alpha,k}(\Psi) := \sup_{y\in\R^d}
		\sup_{s\ge 0}\left[ \e^{-\alpha s} \left\| \Psi_s(y)\right\|_k
		\right].
	\end{equation}
	We plug the preceding into \eqref{T1} to see that for all $\alpha>0$,
	\begin{equation}\begin{split}
		\left\| X^{(m)}_t(x)\right\|_k^2
			&\le 4k \left[\cN_{\alpha,k}\left( u^{(\beta,m)} \right) \right]^2f(0)\cdot
			\int_0^t \e^{2\alpha s}\,\d s\\
		&\le \frac{2kf(0)\e^{2\alpha t}}{\alpha}
			\left[\cN_{\alpha,k}\left( u^{(\beta,m)} \right) \right]^2f(0).
	\end{split}\end{equation}
	Multiply both sides by $\exp(-2\alpha t)$ and maximize over $x\in\R^d$
	to see that
	\begin{equation}
		\cN_{\alpha,k}\left( X^{(m)}\right) \le \cN_{\alpha,k}\left( u^{(\beta,m)} \right) 
		\sqrt{\frac{2kf(0)}{\alpha}}.
	\end{equation}
	Since $\Psi\mapsto \cN_{\alpha,k}(\Psi)$ is a bona fide norm on random fields,
	it follows from \eqref{eq:coupling:u} and the triangle inequality that
	\begin{equation}
		\cN_{\alpha,k}\left( u^{(\beta,m+1)}\right) \le 
		1 +  \cN_{\alpha,k}\left( u^{(\beta,m)} \right) 
		\sqrt{\frac{2 k f(0)}{\alpha}}.
	\end{equation}
	The preceding is valid for all $\alpha>0$. We now select $\alpha:=8kf(0)$
	in order to see that, for this special choice,
	\begin{equation}
		\cN_{8kf(0),k}\left( u^{(\beta,m+1)}\right) \le 
		1 +  \tfrac12 \cN_{8kf(0),k}\left( u^{(\beta,m)} \right).
	\end{equation}
	Since $\cN_{\alpha,k}(u^{(\beta,0)})=1$ for all $\alpha>0$, induction implies
	that
	\begin{equation}
		\cN_{8kf(0),k}\left( u^{(\beta,m+1)}\right) \le 2.
	\end{equation}
	This is another way to write the lemma.
\end{proof}

Next we state and prove the second \emph{a priori} bound that is
required for the proof of Proposition \ref{pr:exist:color}.

\begin{lemma}\label{mom:u:beta,m+1-m}
	Uniformly for all $t\ge0$, $x\in\R^d$, $k\ge 2$, and
	integers $m\ge 0$,
	\begin{equation}
		\E\left( \left| u^{(\beta,m+1)}_t(x) - u^{(\beta,m)}_t(x)\right|^k\right)
		\le \left( \frac{3}{2^m}\right)^k \e^{8k^2f(0)t}.
	\end{equation}
\end{lemma}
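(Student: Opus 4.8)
The plan is to re-run, essentially verbatim, the argument already used to prove Lemma \ref{mom:u:beta,m}, but now applied to the difference of consecutive Picard iterates. Set
\begin{equation*}
	D^{(m)}_t(x) := u^{(\beta,m+1)}_t(x) - u^{(\beta,m)}_t(x)\qquad(m\ge 0).
\end{equation*}
Subtracting the two relevant instances of the recursion \eqref{eq:coupling:u} and using that the noise $\partial_s B^{(h_\beta)}_s(y)\,\d y$ enters \emph{linearly}, one gets, for every integer $m\ge1$,
\begin{equation*}
	D^{(m)}_t(x) = \int_{\mathcal{I}_t(x;\beta)} p_{t-s}(y-x)\, D^{(m-1)}_s(y)\,\partial_s B^{(h_\beta)}_s(y)\,\d y,
\end{equation*}
whereas $D^{(0)}_t(x)=u^{(\beta,1)}_t(x)-1$ is exactly the random field $X^{(0)}_t(x)$ of \eqref{Xm} evaluated at the constant iterate $u^{(\beta,0)}\equiv1$. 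Thus $\{D^{(m)}\}_{m\ge0}$ obeys a clean linear stochastic-convolution recursion against the previous difference, with no $\sigma$-type nonlinearity to control.

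Next I would apply the Burkholder--Davis--Gundy bound \cite[Theorem B.1]{cbms} exactly as in the proof of Lemma \ref{mom:u:beta,m}, replacing $X^{(m)}$ by $D^{(m)}$ and $u^{(\beta,m)}$ by $D^{(m-1)}$ throughout. The same chain of steps---BDG, Minkowski's inequality in $L^{k/2}(\Omega)$, the Cauchy--Schwarz inequality in $\Omega$, the bound $f_\beta\le f(0)$ coming from \eqref{ff}, and $\int_{\R^d}p_{t-s}=1$---produces, for every $\alpha>0$ and $k\ge2$,
\begin{equation*}
	\cN_{\alpha,k}\!\left(D^{(m)}\right)\le\sqrt{\frac{2kf(0)}{\alpha}}\;\cN_{\alpha,k}\!\left(D^{(m-1)}\right)\qquad(m\ge1),
	\qquad
	\cN_{\alpha,k}\!\left(D^{(0)}\right)\le\sqrt{\frac{2kf(0)}{\alpha}},
\end{equation*}
where $\cN_{\alpha,k}$ is the norm of \eqref{M:alpha}; the second estimate uses $\cN_{\alpha,k}(u^{(\beta,0)})=1$.

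Then I would specialize to $\alpha:=8kf(0)$, which makes the contraction factor equal to $\tfrac12$, so that induction yields $\cN_{8kf(0),k}(D^{(m)})\le 2^{-(m+1)}$ for all $m\ge0$. Unwinding the definition of $\cN_{8kf(0),k}$ at $s=t$ gives $\|u^{(\beta,m+1)}_t(x)-u^{(\beta,m)}_t(x)\|_k\le 2^{-(m+1)}\e^{8kf(0)t}$, uniformly over $x\in\R^d$; raising to the $k$th power and using the crude bound $2^{-(m+1)}\le 3\cdot2^{-m}$ gives the asserted inequality. I do not anticipate any genuine obstacle here, since everything is a re-run of the established argument for Lemma \ref{mom:u:beta,m}; the only points requiring a little care are (i) the observation that $m\mapsto D^{(m)}$ is itself a linear stochastic convolution, and (ii) the base case $m=0$, where $u^{(\beta,0)}\equiv1$ forces the iteration to start from the exact constant $\cN_{\alpha,k}(u^{(\beta,0)})=1$.
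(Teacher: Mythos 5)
Your proof is correct and follows essentially the same route as the paper: both derive the contraction $\cN_{8kf(0),k}(u^{(\beta,m+1)}-u^{(\beta,m)})\le\tfrac12\,\cN_{8kf(0),k}(u^{(\beta,m)}-u^{(\beta,m-1)})$ by running the BDG/Minkowski/Cauchy--Schwarz argument of Lemma \ref{mom:u:beta,m} on the difference of consecutive Picard iterates, exploiting the linearity of the equation. The only (harmless) deviation is the base case: the paper bounds $\cN_{8kf(0),k}(u^{(\beta,1)}-u^{(\beta,0)})\le 3$ by the triangle inequality and Lemma \ref{mom:u:beta,m}, whereas you bound $D^{(0)}$ directly as a stochastic convolution of the constant field, getting the slightly sharper $2^{-(m+1)}$ and then relaxing it to $3\cdot 2^{-m}$ to match the stated constant.
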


\begin{proof}
	The proof is similar to that of Lemma \ref{mom:u:beta,m}.
	Recall $X^{(m)}$'s from \eqref{Xm}. By the Burkholder--Davis--Gundy
	inequality \cite[Theorem B.1]{cbms},
	\begin{equation}\begin{split}
		\left\| X^{(m+1)}_t(x) -X^{(m)}_t(x) \right\|_k^2
			&\le 4k\left\| \int_{\mathcal{I}_t(x;\beta)}
			\mathcal{Y}(s\,,y\,,z)\, \d s\d z\d y\right\|_{k/2}\\
		&\le 4k\int_0^t\d s\int_{\R^d}\d y\int_{\R^d}\d z\ 
			\|\mathcal{Y}(s\,,y\,,z)\|_{k/2},
	\end{split}\end{equation}
	where
	\begin{equation}\begin{split}
		\mathcal{Y}(s\,,y\,,z) &:= p_{t-s}(y-z) p_{t-s}(z-x)f(0) \\
		&\quad \times
			\left| u^{(\beta,m+1)}_s(y)-u^{(\beta,m)}_s(y) \right|
			\cdot \left|u^{(\beta,m+1)}_s(z)-u^{(\beta,m)}_s(z) \right|.
	\end{split}\end{equation}
	Recall $\cN_{\alpha,k}:=\cN_{\alpha,k}$ from \eqref{M:alpha}. Since
	\begin{equation}
		\cN_{8kf(0),k}\left( u^{(\beta,m+1)} - u^{(\beta,m)}\right)
		= \cN_{8kf(0),k}\left( X^{(m+1)} - X^{(m)}\right),
	\end{equation}
	we can easily adapt the proof of Lemma \ref{mom:u:beta,m} to see that
	\begin{equation}\label{recur:color}
		\cN_{8kf(0),k}\left( u^{(\beta,m+1)} - u^{(\beta,m)}\right)
		\le \tfrac12 
		\cN_{8kf(0),k}\left( u^{(\beta,m)} - u^{(\beta,m-1)}\right).
	\end{equation}
	Because $u^{(\beta,0)}_t(x)=1$, Lemma \ref{mom:u:beta,m} implies that
	\begin{equation}
		\cN_{8kf(0),k}\left( u^{(\beta,1)}-u^{(\beta,0)}\right) \le
		\cN_{8kf(0),k}\left( u^{(\beta,1)}\right) + 
		\cN_{8kf(0),k}\left( u^{(\beta,0)}\right)\le 3.
	\end{equation}
	Therefore,
	$\cN_{8kf(0),k} ( u^{(\beta,m+1)} - u^{(\beta,m)} )
	\le 3\cdot 2^{-m}$, which is another way to state the lemma.
\end{proof}

We are now ready to complete the proof of Proposition \ref{pr:exist:color}.

\begin{proof}[Proof of Proposition \ref{pr:exist:color}]
	Recall $\cN_{\alpha,k}$ from \eqref{M:alpha}.
	Lemmas \ref{mom:u:beta,m} and \ref{mom:u:beta,m+1-m}
	together guarantee the existence of a predictable random field
	$u^{(\beta)}$ such that 
	\begin{equation}
		\cN_{8kf(0),k}\left( u^{(\beta)}\right)\le 2
		\quad\text{and}\quad
		\lim_{m\to\infty}
		\cN_{8kf(0),k}\left( u^{(\beta,m)} - u^{(\beta)}\right)=0.
	\end{equation}
	The proof of Lemma  \ref{mom:u:beta,m+1-m} also implies that
	\begin{equation}
		\lim_{m\to\infty} \cN_{8kf(0),k}\left( X^{(m)} - X\right)=0,
	\end{equation}
	where $X^{(m)}$ was defined in \eqref{Xm}, and
	\begin{equation}
		X_t(x):=
		\int_{\mathcal{I}_t(x;\beta)} 
		p_{t-s}(y-x)\, u_s^{(\beta)}(y) \,\partial_s B^{(h_\beta)}_s(y)\,\d y.
	\end{equation}
	These remarks together show that $u^{(\beta)}$ solves \eqref{u:beta:color}.
	Uniqueness is similar; in fact, the argument that
	led to \eqref{recur:color} can be re-iterated in order to imply that
	if $v$ were any other predictable solution to \eqref{u:beta:color}
	that satisfies $\cN_{8kf(0),k}(v)<\infty$, then
	\begin{equation}
		\cN_{8kf(0),k}\left(v-u^{(\beta)}\right) \le \tfrac12
		\cN_{8kf(0),k}\left(v-u^{(\beta)}\right),
	\end{equation}
	and hence $\cN_{8kf(0),k}(v-u^{(\beta)})=0$.
\end{proof}

Now that we have justified the existence of a good solution
to \eqref{u:beta:color} we can establish that $u\approx u^{(\beta)}$
when $\beta$ is large.

\begin{lemma}[A coupling lemma]\label{lem:u-ubeta:L2color}
	There exists a finite constant $K>1$  such that  
	for all real numbers $t>0$ and $\lambda>1$, and 
	all integers $\beta\ge 1$, 
	\begin{equation}\label{eq:L2:coupling}
		\P\left\{ \left| u_t(x) - u^{(\beta,\lfloor\log\beta\rfloor+1)}_t(x)\right|>\lambda\right\}
		\le K\exp\left( -\frac{(\log\beta+\log \lambda)^{2}}{K t}\right),
	\end{equation}
	uniformly over all $x\in\R^d$.
\end{lemma}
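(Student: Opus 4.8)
The plan is to reduce the statement to a moment estimate and then optimise. Concretely, it suffices to establish a bound of the form
\begin{equation*}
	\sup_{x\in\R^d}\E\left(\left| u_t(x) - u^{(\beta,m)}_t(x)\right|^k\right)
	\le \rho(\beta)^k\,\e^{Ck^2t},
	\qquad m:=\lfloor\log\beta\rfloor+1,
\end{equation*}
valid for all integers $k\ge2$, all $t>0$ and all integers $\beta\ge1$, where $C=8f(0)$ and $\rho(\beta)\le C_0\beta^{-\varepsilon_0}$ for some absolute constants $C_0,\varepsilon_0>0$. Granting this, Chebyshev's inequality gives $\P\{|u_t(x)-u^{(\beta,m)}_t(x)|>\lambda\}\le\exp(Ck^2t-k(\varepsilon_0\log\beta+\log\lambda-\log C_0))$; minimising the exponent over $k>0$ (unconstrained minimiser $k^\star\asymp(\varepsilon_0\log\beta+\log\lambda)/(Ct)$) produces the factor $\exp(-(\log\beta+\log\lambda)^2/(Kt))$ up to constants, and the complementary regime $k^\star<2$ — which forces $\log\beta+\log\lambda$ to stay bounded, hence forces $t$ to be small before the right-hand side carries any content — is dispatched by the elementary inclusion $\{|u_t(x)-u^{(\beta,m)}_t(x)|>\lambda\}\subseteq\{|u_t(x)-1|>\lambda/2\}\cup\{|u^{(\beta,m)}_t(x)-1|>\lambda/2\}$ plus a large enough final choice of $K$.

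The moment bound is obtained from the triangle inequality
\begin{equation*}
	\left\| u_t(x) - u^{(\beta,m)}_t(x)\right\|_k
	\le \left\| u_t(x) - u^{(\beta)}_t(x)\right\|_k
	+ \left\| u^{(\beta)}_t(x) - u^{(\beta,m)}_t(x)\right\|_k ,
\end{equation*}
where $u^{(\beta)}$ is the solution of \eqref{u:beta:color}. The second (Picard truncation) term is immediate from the geometric decay already in hand: summing Lemma \ref{mom:u:beta,m+1-m} over the telescoping series $u^{(\beta)}-u^{(\beta,m)}=\sum_{j\ge m}(u^{(\beta,j+1)}-u^{(\beta,j)})$ gives $\cN_{8kf(0),k}(u^{(\beta)}-u^{(\beta,m)})\le 6\cdot 2^{-m}$, whence $\|u^{(\beta)}_t(x)-u^{(\beta,m)}_t(x)\|_k\le 6\cdot 2^{-m}\e^{8kf(0)t}\le 6\,\beta^{-\log 2}\,\e^{8kf(0)t}$, using $m\ge\log\beta$.

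The real work is the localisation error $\|u_t(x)-u^{(\beta)}_t(x)\|_k$. I would write out the two mild formulations and, by adding and subtracting, decompose $u_t(x)-u^{(\beta)}_t(x)=E_1+E_2+E_3$: here $E_1$ is the Walsh integral of $p_{t-s}(y-x)u_s(y)$ against $\partial_sB^{(h)}_s(y)\,\d y$ over the complement of $\mathcal{I}_t(x;\beta)$; $E_2$ is the integral of $p_{t-s}(y-x)u_s(y)$ over $\mathcal{I}_t(x;\beta)$ against the difference noise $\partial_s(B^{(h)}_s-B^{(h_\beta)}_s)(y)\,\d y$; and $E_3=\int_{\mathcal{I}_t(x;\beta)}p_{t-s}(y-x)(u_s(y)-u^{(\beta)}_s(y))\,\partial_sB^{(h_\beta)}_s(y)\,\d y$ is the feedback term. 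Running the Burkholder--Davis--Gundy estimate of the proof of Lemma \ref{mom:u:beta,m} in the weighted norm $\cN_{8kf(0),k}$, with \eqref{mom:UB} ($\sup_x\|u_t(x)\|_k\le\e^{kf(0)t/2}$) and Proposition \ref{pr:exist:color} supplying the inputs, one finds: $E_3$ contracts, $\cN_{8kf(0),k}(E_3)\le\tfrac12\cN_{8kf(0),k}(u-u^{(\beta)})$, so it is absorbed into the left side; $E_1$ is governed by the Gaussian tail of the heat kernel, $\int_{\|y-x\|_\infty>\beta\sqrt t}p_{t-s}(y-x)\,\d y\le d\,\e^{-\beta^2/2}$, contributing $\cN_{8kf(0),k}(E_1)\le\tfrac{\sqrt d}{2}\e^{-\beta^2/4}$; and $E_2$ is governed by the strength $g(0)=\|h-h_\beta\|_{L^2(\R^d)}^2$ of the difference noise, whose spatial covariance function is $(h-h_\beta)\ast\widetilde{(h-h_\beta)}$, contributing $\cN_{8kf(0),k}(E_2)\le\|h-h_\beta\|_{L^2(\R^d)}/(2\sqrt{f(0)})$. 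Hence $\cN_{8kf(0),k}(u-u^{(\beta)})\le 2(\cN_{8kf(0),k}(E_1)+\cN_{8kf(0),k}(E_2))$, i.e.\ $\|u_t(x)-u^{(\beta)}_t(x)\|_k\le(\sqrt d\,\e^{-\beta^2/4}+\|h-h_\beta\|_{L^2(\R^d)}/\sqrt{f(0)})\e^{8kf(0)t}$. The one place where hypothesis \eqref{cond:h} is needed is the final step: bounding $\|h-h_\beta\|_{L^2(\R^d)}^2=\int_{\R^d}h^2(1-\hat\varrho_\beta)^2$ polynomially in $\beta$, by splitting at $\|z\|_\infty=\beta$, estimating the outer piece by $\int_{\|z\|>\beta}h^2\le C\beta^{-\delta}$ from \eqref{cond:h}, and the inner piece by $\beta^{-2}\int_{\|z\|_\infty<\beta}\|z\|_\infty^2h^2$ via $1-\hat\varrho_\beta(z)\le\sum_j|z_j|/\beta$, the latter integral being again polynomially controlled by \eqref{cond:h}. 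Combining the localisation and truncation bounds yields $\rho(\beta)\le C_0\beta^{-\varepsilon_0}$ with some $\varepsilon_0>0$ (one may take $\varepsilon_0=\min(\delta,\log 2)$).

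The main obstacle is the localisation-error analysis: setting up the $E_1+E_2+E_3$ decomposition and carrying the BDG/contraction bookkeeping through so that every constant is uniform in $k,t,\beta,x$, and — above all — converting the qualitative tail hypothesis \eqref{cond:h} into the quantitative estimate for $\|h-h_\beta\|_{L^2(\R^d)}$ that feeds the $E_2$ term. Everything else (the reduction to moments, the optimisation in $k$ with its boundary cases, the Picard truncation, and the contraction of $E_3$, which repeats the argument of Lemmas \ref{mom:u:beta,m} and \ref{mom:u:beta,m+1-m}) is routine.
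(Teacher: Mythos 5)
Your overall architecture is the same as the paper's: reduce the lemma to a $k$-th moment estimate of the form $\sup_{x}\E\bigl(|u_t(x)-u^{(\beta,\lfloor\log\beta\rfloor+1)}_t(x)|^k\bigr)\le\rho(\beta)^k\e^{Ck^2t}$ with $\rho(\beta)$ polynomially small in $\beta$, then apply Chebyshev and optimise the exponent over $k$. The only real difference is that the paper simply imports that moment bound from Lemma 5.3 of Conus et al \cite{CJKS}, whereas you rebuild it: the telescoping of Lemma \ref{mom:u:beta,m+1-m} for the Picard truncation, the $E_1+E_2+E_3$ splitting of $u-u^{(\beta)}$ (heat-kernel mass outside $\mathcal{I}_t(x;\beta)$, the difference noise $B^{(h)}-B^{(h_\beta)}=B^{(h-h_\beta)}$ of strength $\|h-h_\beta\|_{L^2(\R^d)}^2$, and a contractive feedback term measured in $\cN_{8kf(0),k}$), and the conversion of \eqref{cond:h} into polynomial decay of $\|h-h_\beta\|_{L^2(\R^d)}$ are in substance a reconstruction of that cited lemma, and they mesh correctly with the paper's coupling construction and with Lemmas \ref{mom:u:beta,m}--\ref{mom:u:beta,m+1-m}. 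So your route buys self-containedness, but it does not change the logic of the paper's proof.

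One step is wrong as stated: the dismissal of the regime in which the unconstrained minimiser $k^\star\asymp(\log\beta+\log\lambda)/t$ falls below $2$. That condition forces $\log\beta+\log\lambda\lesssim t$, not that $\log\beta+\log\lambda$ is bounded, so it does not force $t$ to be small. For instance $\beta=\e^{ct}$ with $c$ small, $\lambda$ near $1$, and $t\to\infty$ lies in this regime; there the target right-hand side of \eqref{eq:L2:coupling} is about $K\e^{-c^2t/K}\to0$, while the constrained choice $k=2$ yields an exponent that is positive or only mildly negative, and your fallback union bound through $\{|u_t-1|>\lambda/2\}$ gains nothing because second moments of the coupling error are exponentially large in $t$ there and the $1/\lambda$ factor is of order one. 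To be fair, the paper's own one-line "optimize over $k\ge2$" is exactly as silent on this corner, the corner is irrelevant to the way the lemma is used (in the application $t$ is fixed and $\log\beta=n^{2/3}\to\infty$, so $k^\star\to\infty$), and closing it would need a genuinely extra ingredient (e.g.\ fractional-moment bounds with $k<2$); so this is a shared, inessential lacuna rather than a defect peculiar to your argument---but you should not claim the case is dispatched merely by enlarging $K$.
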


\begin{proof}
	According to Lemma 5.3 of  Conus et al \cite{CJKS}, 
	there exist  finite constants $c>0$ and $b\in (0\,,4)$ such that
	\begin{equation}
		\sup_{x\in\R}
		\E\left(\left|u_t(x)-u_t^{(\beta,\lfloor\log\beta\rfloor+1)}(x)\right|^k\right) 
		\leq  c \e^{ ck^2 t-b k\log\beta},
	\end{equation}
	valid uniformly for all $x\in\R^d$ and
	all real numbers $k,\beta\ge 2$ and $t>0$.
	This bound and Chebyshev's inequality together yield the following: Uniformly
	for all reals $t>0$ and $k,\lambda,\beta\ge 2$,  and $x\in\R^d$, 
	\begin{equation}
		\P\left\{ \left|u_t(x)-u_t^{(\beta,\lfloor\log\beta\rfloor+1)}(x)\right|>\lambda \right\}
		\le c\e^{ ck^2 t-(\log\lambda +b\log\beta)k}.
	\end{equation}
	The preceding readily implies the lemma, after we optimize over $k\ge 2$.
\end{proof}

For every $x,y\in\R^d$, let us define 
\begin{equation}\label{eq:distance}
	D(x\,,y):=\min_{1\leq l\leq d}|x_l-y_l|,
\end{equation}
where we recall $|\,\cdots|$ denotes the $\ell^\infty$-norm
on $\R^d$. The following lemma is due to Conus, Joseph, Khoshnevisan,
and Shiu.

\begin{lemma}[Conus et al
	\protect{\cite[Lemma 5.4]{CJKS}}]\label{lem:L2color:ind}
	Suppose that  $t>0$ is a real number,
	$\beta\ge 1$ is an integer, and $x^{(1)},\ldots,x^{(m)}$
	are points in $\R^d$ such that
	\begin{equation}
		D\left(x^{(i)}\,,x^{(j)}\right)>2\beta
		\left( \lfloor\log\beta\rfloor+1\right)\left(1+\sqrt{t}\right)
		\quad\text{when }1\le i\neq j\le m.
	\end{equation}
	Then, 
	\begin{equation}
		u^{(\beta,\lfloor\log\beta\rfloor+1)}_t\left(x^{(1)}\right),\ldots,
		u^{(\beta,\lfloor\log\beta\rfloor+1)}_t\left(x^{(m)}\right)
	\end{equation}
	are independent random variables.
\end{lemma}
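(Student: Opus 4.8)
The plan is to follow the skeleton of the proof of Lemma \ref{lem:pam:ind}, the only genuinely new ingredient being that one must keep track of the spatial ``correlation length'' of the colored noise $\partial_s B^{(h_\beta)}_s(y)\,\d y$. Using the subordination construction of the previous subsection, a Walsh integral $\int \Phi_s(y)\,\partial_s B^{(h_\beta)}_s(y)\,\d y$ equals the white-noise Walsh integral $\int (\Phi_s\ast\tilde h_\beta)(z)\,\xi(\d s\,\d z)$. Since $h_\beta(z)=h(z)\prod_{j}(1-|z_j|/\beta)^+$ is supported in $[-\beta\,,\beta]^d$, so is its reflection $\tilde h_\beta$; hence convolution by $\tilde h_\beta$ enlarges the spatial support of any integrand by exactly an $\ell^\infty$-radius $\beta$. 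This is the one place where the parameter $\beta$ of the truncation enters the ``domain of dependence'' of $u^{(\beta,m)}$.

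The main step is an induction on $m$ proving the following domain-of-dependence statement: for every $t>0$, $x\in\R^d$, and integer $m\ge 0$, the random variable $u^{(\beta,m)}_t(x)$ is measurable with respect to $\sigma\big(\xi$ restricted to $(0\,,t)\times\{y\in\R^d:\ |y-x|\le m\beta(1+\sqrt t)\}\big)$, where $|\cdot|$ is the $\ell^\infty$-norm. The base case $m=0$ is trivial because $u^{(\beta,0)}\equiv 1$. For the inductive step, recall from \eqref{eq:coupling:u} that $u^{(\beta,m+1)}_t(x)$ is the Walsh integral of $(s\,,y)\mapsto p_{t-s}(y-x)\,u^{(\beta,m)}_s(y)$ over $\mathcal{I}_t(x;\beta)$ against $\partial_s B^{(h_\beta)}_s(y)\,\d y$. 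On $\mathcal{I}_t(x;\beta)$ one has $s<t$ and $|y-x|\le\beta\sqrt t$; by the inductive hypothesis $u^{(\beta,m)}_s(y)$ is measurable with respect to $\xi$ on $(0\,,t)\times\{z:\ |z-x|\le \beta\sqrt t + m\beta(1+\sqrt t)\}$. Rewriting the stochastic integral against $\xi$ via the $\tilde h_\beta$-convolution enlarges this spatial region by a further $\ell^\infty$-radius $\beta$, giving total radius $\beta\sqrt t + m\beta(1+\sqrt t)+\beta=(m+1)\beta(1+\sqrt t)$, which is precisely the claimed bound for $m+1$.

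Granting the domain-of-dependence bound, the lemma follows at once. Put $m:=\lfloor\log\beta\rfloor+1$, so that each $u^{(\beta,m)}_t(x^{(i)})$ is measurable with respect to $\xi$ on $A_i:=(0\,,t)\times\{z:\ |z-x^{(i)}|\le m\beta(1+\sqrt t)\}$. The hypothesis $D(x^{(i)}\,,x^{(j)})>2\beta(\lfloor\log\beta\rfloor+1)(1+\sqrt t)=2m\beta(1+\sqrt t)$ means that for every $i\ne j$ there is a coordinate $l$ in which $x^{(i)}$ and $x^{(j)}$ differ by more than twice the radius $m\beta(1+\sqrt t)$, so the cubes $\{z:\ |z-x^{(i)}|\le m\beta(1+\sqrt t)\}$ are pairwise disjoint, and hence so are the space-time sets $A_1,\ldots,A_m$. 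Because the restrictions of the white noise $\xi$ to pairwise disjoint Borel sets generate mutually independent $\sigma$-algebras, the random variables $u^{(\beta,m)}_t(x^{(1)}),\ldots,u^{(\beta,m)}_t(x^{(m)})$ are mutually independent.

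The only delicate point is the rigorous form of the domain-of-dependence claim: one must verify that the Walsh integral of a predictable integrand whose random values \emph{and} whose spatial support are governed by $\xi$ on a region $A$ is itself measurable with respect to $\xi$ restricted to the $\beta$-enlargement of $A$. This is a routine localization property of the Walsh integral, most cleanly obtained by checking it for simple integrands and passing to the $L^2$-limit. I expect this bookkeeping, rather than any conceptual difficulty, to be the bulk of the work; everything else is the elementary cube-separation count above.
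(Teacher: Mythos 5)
Your argument is correct. Note first that the paper does not prove this lemma at all: it is quoted from Conus, Joseph, Khoshnevisan and Shiu \cite[Lemma 5.4]{CJKS}, and the only proof given in the paper is for the white-noise analogue, Lemma \ref{lem:pam:ind}. That proof (and the argument in \cite{CJKS}) proceeds by induction on the Picard index, establishing \emph{independence} directly at each stage via the fact that Walsh integrals $\int\psi_j\Phi^j\,\d\xi$ of independent predictable fields $\Phi^j$ against deterministic $\psi^j$ with disjoint supports are independent, so that each iteration enlarges the separation needed by one unit of the correlation/propagation length. You instead prove a stronger intermediate statement: a domain-of-dependence bound asserting that $u^{(\beta,m)}_t(x)$ is measurable with respect to $\xi$ restricted to $(0,t)\times\{|z-x|\le m\beta(1+\sqrt t)\}$, the radius growing by $\beta\sqrt t$ (the truncated integration region $\mathcal{I}_t(x;\beta)$) plus $\beta$ (the support of $\tilde h_\beta$, via the identity $\int\Phi_s(y)\,\partial_s B^{(h_\beta)}_s(y)\,\d y=\int(\Phi_s*\tilde h_\beta)(z)\,\xi(\d s\,\d z)$ from the subordination construction) at each step; independence then follows from disjointness of the resulting space-time regions, since $D(x^{(i)},x^{(j)})>2\beta(\lfloor\log\beta\rfloor+1)(1+\sqrt t)$ forces the corresponding $\ell^\infty$-cubes apart (your reading of $D$ as ``some coordinate'' is even weaker than the stated minimum over coordinates, and suffices). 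This route is sound and arguably more transparent: it produces the exact constant in the separation hypothesis by bookkeeping, and the localization property of the Walsh integral that you defer to (simple integrands plus an $L^2$ limit, keeping track that $u^{(\beta,m)}_s(y)$ depends only on the noise up to time $s$ so that predictability with respect to the localized filtration is preserved along the induction) is indeed routine. Two cosmetic points: in your last paragraph the letter $m$ is used simultaneously for the number of points and for the Picard index $\lfloor\log\beta\rfloor+1$, and your radius count $\beta\sqrt t+m\beta(1+\sqrt t)+\beta$ could be replaced by the maximum of the support radius and the measurability radius, though the sum still equals $(m+1)\beta(1+\sqrt t)$, so the stated bound is correct.
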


 We can now verify the dimension lower bound of Theorem \ref{th:pam:L2color}.
 
\begin{proof}[Proof of Theorem \ref{th:pam:L2color}: Dimension lower bound]
	Choose and fix a time variable $t>0$.
	We will appeal to Theorem \ref{th:Gen:LIL:LB},
	specifically to the general
	theory of Section \ref{sec:Gen}, using the identifications $X(x):=\log u_t(x)$,
	$b:=2$, and $S(x):=\exp(x)$.
	
	Let us fix a real number $\delta\in(0\,,1)$ 
	and consider an arbitrary collection $\{x^{(i)}\}_{i=1}^{m^d}$ of points 
	that satisfy the following: 
	\begin{itemize}
		\item[(a)] $x^{(i)}\in \cS_n\subset \R^d$ for all $1\le i\le m^d$; and 
		\item[(b)] $D (x^{(i)}\,,x^{(j)} )\ge\exp(\delta n)$ whenever $1\le i\neq j\le m^d$. 
	\end{itemize}
	From now on, we set
	\begin{equation}
		\beta:=\exp( n^{2/3})
		\quad\text{and}\quad
		Y_j :=  Y_{j,n} :=
		\log\left( u^{(\beta,\lfloor\log\beta\rfloor+1)}_t\left(x^{(j)}\right)\right),
	\end{equation}
	for all $1\le i\le m^d$.
	We might observe that there exists $N:=N_t>0$ such that 
	\begin{equation}
		2\left(\lfloor\log\beta\rfloor+1\right)\beta\left(1+\sqrt{t}\right)<\e^{\delta n}
		\quad\text{for all $n\geq N$.}
	\end{equation}
	Therefore, according to Lemma
	\ref{lem:L2color:ind}, $Y_1,\cdots,Y_{m^d}$ are independent whenever $n\geq N$
	; and Lemma
	\ref{lem:u-ubeta:L2color} ensures that
	\begin{equation}
		\max_{1\le j\le m}
		\P\left\{\left|S\left( X(x^{(j)} \right) - S(Y_j)\right| >1 \right\} \le K\exp\left( -
		\frac{n^{4/3}}{K t}\right).
	\end{equation}
	Since the constant $K$ does not depend on the choice of $x^{(1)},\ldots,x^{(m)}\in\cS_n$,
	we have shown that the coupling 
	Condition \eqref{cond:LB}  holds, with room to spare. Therefore,
	Theorem \ref{th:Gen:LIL:LB} implies that a.s.,
	\begin{equation}
		\Dimh
		\left\{x\in \R^d: \|x\|>\e^\e, \,
		u_t(x) \ge \e^{\gamma\sqrt{t\log \|x\|}}\right\}
		\ge d-\frac{\gamma^2}{2f(0)},
	\end{equation}
	for all $\gamma>0$. In light of the already-proved upper bound this
	completes our proof of Theorem \ref{th:pam:L2color}.
\end{proof}

\noindent\textbf{Acknowledgement.} We thank Professor Gregory Lawler
heartily for his many insightful remarks, questions, and comments 
that ultimately led us to this enjoyable research problem.
\begin{spacing}{1}
\begin{small}
\end{small}\end{spacing}
\vskip.1in

\begin{small}
\noindent\textbf{Davar Khoshnevisan \&\ Kunwoo Kim}\\
	(\texttt{davar@math.utah.edu}
	\&\ \texttt{kkim@math.utah.edu})\\
\noindent Dept.\ Mathematics, University of Utah,
		Salt Lake City, UT 84112-0090\\

\noindent\textbf{Yimin Xiao}\\
(\texttt{xiao@stt.msu.edu})\\
\noindent Dept.\  Statistics \&\ Probability, 
	Michigan State University, East Lansing, MI 48824

\end{small}


\begin{thebibliography}{99}
%
\bibitem{AMS} Albeverio, Sergio, Stanislav Molchanov, and Donatas Surgailis.
	Stratified structure of the universe and Burgers' equation --- a probabilistic approach,
	{\it Probab.\ Th.\ Rel.\ Fields} {\bf 100} (1994) 457--484.
%
\bibitem{AlbinChoi} Albin, J. M. P.  and H. Choi.
	A new proof of an old result by Pickands,
	{\it Electr.\ Comm.\ in Probab.}\ {\bf 15} (2010) 339--345.
%
\bibitem{BarlowTaylor1} Barlow, M. T., and S. J. Taylor.
	Fractional dimension of sets in discrete spaces,
	{\it J. Phys. A}\ (3) {\bf 64} (1989) 2621--2626.
%
\bibitem{BarlowTaylor} Barlow, Martin T., and S. James Taylor.
	Defining fractal subsets of $\mathbb{Z}^d$,
	{\it Proc.\ London Math.\ Soc.}\ (3) {\bf 64} (1992) 125--152.
%
\bibitem{BertiniCancrini}
	Bertini, Lorenzo and Nicoletta Cancrini.
	The stochastic heat equation: Feynman--Kac formula and
	intermittence, {\it J. Statist.\ Physics} {\bf 78}{\it (5/6)}  (1994)
	1377--1402.
%
\bibitem{BorCor} Borodin, Alexei and Corwin, Ivan,
	Moments and Lyapunov exponents for the parabolic Anderson model,
	\emph{Ann.\ Appl.\ Probab.}\ {\bf 24}{\it (33)}  (2014)   1172--1198.
%
\bibitem{CM} Carmona, Ren\'e A. and S. A. Molchanov.
	Parabolic Anderson Problem and Intermittency,
	\emph{Memoires of the Amer.\ Math.\ Soc.} {\bf 108},
	American Mathematical Society, Rhode Island, 1994.
%
\bibitem{Chen}
	Chen, Xia.
	Spatial asymptotics for the parabolic Anderson models with
	generalized time-space Gaussian noise, submitted, 2014.
%
\bibitem{Chen-book} 
	Chen, Xia.
	{\it Random Walk Intersections: Large Deviations and Related Topics}, 
	American Mathematical Society,
	Providence, RI, 2010.
%
\bibitem{CollelaLanford}
	Collela, Phillip, and Oscar E. Lanford.
	{\it Appendix: Sample Field behavior for the free  Markov random field},
	In: ``Constructive Quantum Field Theory'' (G. Velo and A. S. Wightman, ed's)
	Lecture Notes in Physics,
	Vol.\ 25, pp.\ 44--70, 1973.
%
\bibitem{Conus} Conus, Daniel.
	Moments for the parabolic Anderson model: On a result of Hu and Nualart,
	{\it Comm.\ Stoch.\ Analysis} {\bf 7}{\it (1)} (2013)  125--152.
%
\bibitem{CJK}
	Conus, Daniel, Mathew Joseph and Davar Khoshnevisan. 
	On the chaotic character of the stochastic heat equation, before the onset of intermittency,
	{\it Ann.\ Probab.}\ {\bf 41}{\it (3B)}   (2013) 2225--2260.
	
%
\bibitem{CJK-islands}
	Conus, Daniel, Mathew Joseph and Davar Khoshnevisan. 
	Correlation-length bounds, and estimates for intermittent islands in parabolic SPDEs,
	{\it Electr.\ J. Probab.}\ {\bf 17}{\it (102)} (2013) (15 pp).
%
\bibitem{CJKS}
	Conus, Daniel, Mathew Joseph, Davar Khoshnevisan, and Shang-Yuan Shiu.
	On the chaotic character of the stochastic heat equation, II,
	{\it Probab.\ Th.\ Rel.\ Fields} {\bf 156} (2013) 483--533.
%
\bibitem{dPZ} Da Prato, Giuseppe and Jerzy Zabczyk.
	{\it Stochastic Equations in Infinite Dimensions},
	Cambridge University Press, Cambridge, UK, 1992.
%
\bibitem{Dalang} Dalang, Robert C.
	Extending the martingale measure stochastic integral with
	applications to spatially homogeneous s.p.d.e.'s,
	{\it Electron. J. Probab.}\ {\bf 4}{\it (6)} (1999) 29 pp.\
	(electronic). 
%
\bibitem{Minicourse} Dalang, Robert, Davar Khoshnevisan,
	Carl Mueller, David Nualart, and Yimin Xiao.
	\emph{A Minicourse in Stochastic Partial Differential
	Equations} (2006).
	In: Lecture Notes in Mathematics, vol.\  1962
	(D. Khoshnevisan and F. Rassoul--Agha, editors)
	Springer--Verlag, Berlin, 2009.
%
\bibitem{FA:whitenoise} {Foondun, Mohammud}, and
	{Davar Khoshnevisan}. 
	Intermittence and nonlinear stochastic partial differential equations,
	\emph{Electr.\ J. Probab.} {\bf 14}{\it (2)} (2009) 548--568.
%
\bibitem{GV}
	Gel'fand, I. M. and N. Ya. Vilenkin.
	\emph{Generalized Functions}, Vol. 4, Academic Press 
	[Harcourt Brace Jovanovich Publishers], New York,
	1964 [1977], Applications of harmonic analysis, Translated from the Russian
	by Amiel Feinstein. 
%
\bibitem{GT} Gibbon, J. D. and E. S. Titi.
	Cluster formation in complex multi-scale systems,
	{\it Proc.\ R. Soc.\ A} {\bf 461}  (2005) 3089--3097.
%
\bibitem{Hairer} Hairer, Martin.
	Solving the KPZ equation, 
	{\it Ann.\ Math.}\ {\bf 178}{\it (2)} (2013) 559--664.
%
\bibitem{Harper} Harper, Adam J.
Pickand's constant $H_\alpha$ does not equal   $1/\Gamma(1/\alpha)$, for small $\alpha$.  
Available at {\it http://arxiv.org/pdf/1404.5505v1.pdf}
%
\bibitem{HuNualart} Hu,Yaozhong, and David Nualart.
	Stochastic heat equation driven by fractional noise and local time,
	{\it Probab.\ Th.\ Rel.\ Fields} {\bf 143}{\it (1--2)} (2009) 285--328.
%
\bibitem{JKM} Joseph, Mathew, Davar Khoshnevisan, and Carl Mueller.
	Strong invariance and noise comparison principles for some
	parabolic SPDE, submitted, 2014, preprint available at
	\url{http://arxiv.org/abs/1404.6911}.
%
\bibitem{Kardar} Kardar, Mehran.
	Replica Beth ansatz studies of two-dimensional interfaces with quenched
	random impurities,
	{\it Nucl.\ Physics} {\bf B290} [FS20] (1987) 582--602.
%
\bibitem{KPZ} Kardar, Mehran, Parisi, Giorgio, and Zhang,
	Yi-Cheng. Dynamic scaling of growing interfaces,
	{\it Phys.\ Rev.\ Let.}\ {\bf 56} (1986) 889--892.
%
\bibitem{KZ} Kardar, Mehran and Yi-Cheng Zhang,
	Scaling of directed polymers in random media,
	\emph{Phys.\ Rev.\ Lett.}\ {\bf 58}{\it (20)}  (1987) 2087--2090.
%
\bibitem{cbms} Khoshnevisan, Davar.
	{\it Analysis of Stochastic Partial Differential Equations},
	CBMS Regional Conference Series in Mathematics, 119. 
	American Mathematical Society, Providence, RI, 2014.
%
\bibitem{Motoo} Motoo, Minoru.
	Proof of the law of the iteated logarithm through diffusion equation,
	{\it Ann.\ Instit.\ Statist.\ Math.}\ {\bf 10}{\it (1)}
	(1959) 21--28.
%
\bibitem{Mueller} Mueller, Carl.
	On the support of solutions to the heat equation with noise, 
	\emph{Stochastics and Stochastics Rep.}\ 
	\textbf{37}{\it (4)} (1991) 225--245.
%
\bibitem{MuellerNualart} Mueller, Carl and David Nualart.
	Regularity of the density for the stochastic heat equation,
	{\it Electr.\ J. Probab.}\ {\bf 13}{\it (74)} (2008) 
	2248--2258. 
%
\bibitem{Naudts} Naudts, J.
	Dimension of discrete fractal spaces,
	{\it J. Phys.\ A} {\bf 21} (1988) 447--452.
%
\bibitem{OP} Orey, Steven, and William E. Pruitt.
	Sample functions of the $\rm N$-parameter Wiener process,
	\emph{Ann.\ Probab.}\ {\bf 1}{\it (1)} (1973) 138--163.
%
\bibitem{PaladinPelitiVulpiani} Paladin, G., L. Peliti, and A. Vulpiani.
	Intermittency as multifractality in history space,
	{\it J. Phys.\ A} {\bf 19} (1986) no.\ 16, L991--996.
%
\bibitem{PZ} Paley, R. E. A. C., and A. Zygmund.
	A note on analytic functions on the circle,
	{\it Proc.\ Cambridge Phil.\ Soc.}\ {\bf 28}{\it (3)} (1932) 266--272.
%
\bibitem{Pickands} Pickands, James, III.
	Upcrossing probabilities for stationary Gaussian processes,
	{\it Trans.\ Amer.\ Math.\ Soc.}\ {\bf 145} (1969) 51--73.
%
\bibitem{QW} Qualls, Clifford, and Hisao Watanbe.
	As asymptotic 0-1 behavior of Gaussian process,
	{\it Ann.\ Math.\ Statist.}\ {\bf 42}{\it (6)} (1971) 2029--2035.
%
\bibitem{Strassen} Strassen, V.
	An invariance principle for the law of the iterated logarithm,
	{\it Zeit. f\"ur Wahr. verw. Geb.} {\bf 3} (1964) 211--226.
%
\bibitem{Walsh} Walsh, John B.
	\emph{An Introduction to Stochastic Partial Differential Equations},
	in: \'Ecole d'\'et\'e de probabilit\'es de Saint-Flour, XIV---1984,
	265--439,
	Lecture Notes in Math., vol.\ 1180, Springer, Berlin, 1986.
%
\bibitem{Weber} Weber, M.
	Some examples of application of the metric entropy method,
	{\it Acta Math.\ Hungar.}\ {\bf 105}{\it (1-2)} (2004) 39--83.
%
\end{thebibliography}
\end{document}